\numberwithin{equation}{section}
\theoremstyle{plain}
\newtheorem{thm}{\protect\theoremname}[section]
  \theoremstyle{plain}
  \newtheorem{prop}[thm]{\protect\propositionname}
  \theoremstyle{plain}
  \newtheorem{cor}[thm]{\protect\corollaryname}
  \theoremstyle{definition}
  \newtheorem{defn}[thm]{\protect\definitionname}
  \theoremstyle{remark}
  \newtheorem{rem}[thm]{\protect\remarkname}
  \theoremstyle{plain}
  \newtheorem{lem}[thm]{\protect\lemmaname}
  \theoremstyle{definition}
 \theoremstyle{definition}
  \newtheorem{ass}[thm]{\protect\assumptionname}
  \providecommand{\corollaryname}{Corollary}
  \providecommand{\definitionname}{Definition}
  \providecommand{\lemmaname}{Lemma}
  \providecommand{\propositionname}{Proposition}
  \providecommand{\remarkname}{Remark}
  \providecommand{\examplename}{Example}
  \providecommand{\assumptionname}{Assumption}
\providecommand{\theoremname}{Theorem}
\newcommand{\I}[2]{\ensuremath{\left[#1,#2\right]}}  
\newcommand{\timeInt}{\I{0}{T}}
\newcommand{\R}{\ensuremath{\mathbb{R}}}
\newcommand{\E}{\ensuremath{\mathbf{E}}}
\renewcommand{\P}{\ensuremath{\mathbf{P}}}
\newcommand{\Q}{\ensuremath{\mathbf{Q}}}
\newcommand{\N}{\ensuremath{\mathbb{N}}}
\newcommand{\1}{\ensuremath{\mathbf{1}}}
\renewcommand{\S}{\ensuremath{\mathscr{S}}}
\newcommand{\Sdual}{\ensuremath{\mathscr{S'}}}
\newcommand{\cI}{\ensuremath{\mathcal{I}}}
\newcommand{\cadlag}{c\`adl\`ag}
\newcommand{\M}{\ensuremath{\mathrm{M1}}}
\newcommand{\e}{\ensuremath{\varepsilon}}
\newcommand{\C}{\ensuremath{C^{\mathrm{test}}}}
\newcommand{\dx}{\ensuremath{\partial_{x}}}
\newcommand{\dxx}{\ensuremath{\partial_{xx}}}
\newcommand{\anti}{\ensuremath{\partial_{x}^{-1}}}
\newcommand{\nub}{\ensuremath{\bar{\nu}}}
\newcommand{\weak}{\ensuremath{\Rightarrow}}
\newcommand{\ith}{\ensuremath{i^{\mathrm{th}}}}
\begin{document}

\begin{frontmatter}
\title{A stochastic McKean--Vlasov equation for absorbing diffusions 
on the half-line}
\runtitle{A stochastic McKean--Vlasov equation on the half-line}

\begin{aug}
\author{\fnms{Ben} \snm{Hambly}\thanksref{t1,m1}\ead[label=e1]{hambly@maths.ox.ac.uk}}
\and
\author{\fnms{Sean} \snm{Ledger}\thanksref{t2,m2}\ead[label=e2]{sean.ledger@bristol.ac.uk}}

\thankstext{t1}{First supporter of the project}
\thankstext{t2}{Supported by the Heilbronn Institute for Mathematical Research}

\affiliation{University of Oxford\thanksmark{m1} and Heilbronn Institute, University of Bristol\thanksmark{m2}}

\address{Mathematical Institute\\
University of Oxford\\
Woodstock Road\\
Oxford\\
OX2 6GG\\
\printead{e1}}

\address{Heilbronn Institute for Mathematical Research\\
University of Bristol\\
Howard House\\
Bristol\\
BS8 1SN\\
\printead{e2}}
\end{aug}

\begin{abstract}
\ We study a finite system of diffusions on the half-line, absorbed when they hit zero, with a correlation effect that is controlled 
by the proportion of the processes that have been absorbed. As the number of processes in the system becomes large, the 
empirical measure of the population converges to the solution of a non-linear stochastic heat equation with Dirichlet boundary
condition. The diffusion 
coefficients are allowed to have finitely many discontinuities (piecewise Lipschitz) and we prove pathwise uniqueness 
of solutions to the limiting stochastic PDE. As a corollary we obtain a representation of the limit as the unique solution 
to a stochastic McKean--Vlasov problem. Our techniques involve energy estimation in the dual of the first Sobolev space, 
which connects the regularity of solutions to their boundary behaviour, and tightness calculations in the Skorokhod M1 
topology defined for distribution-valued processes, which exploits the monotonicity of the loss process $L$. The 
motivation for this model comes from the analysis of large portfolio credit problems in finance.
\end{abstract}

\begin{keyword}[class=MSC]
\kwd{60H15}
\kwd{60H30}
\kwd{60F15}
\end{keyword}

\begin{keyword}
\kwd{McKean--Vlasov problem}
\kwd{non-linear SPDE}
\kwd{Skorokhod M1 topology}
\end{keyword}

\end{frontmatter}


\section{Introduction} 
\label{Sect_Intro}


\subsection*{Motivation and framework}

We prove the weak convergence of a system of interacting diffusions to the unique solution of a non-linear stochastic PDE 
on the half-line. In our model the diffusions are absorbed at the origin and the proportion of absorbed particles influences the 
diffusion coefficients, which leads to a description of the limiting system as the solution to a stochastic McKean--Vlasov problem. 
The motivation for studying the model in this paper is to extend the mathematical framework of \cite{bush11} for the pricing 
of large portfolio credit derivatives to include processes whose dynamics are driven by statistics of the entire population.  
With more complicated interaction terms, the methods in \cite{bush11} are no longer tractable and so we require new techniques.
In particular, it is very difficult to analyse the correlation between pairs of particles in our model (an essential ingredient of
\cite{bush11}) and, from a practical perspective, it is desirable to allow the coefficients of the diffusions to be discontinuous, 
which presents a further complication. 

Portfolio credit derivatives (such as the collateralised debt obligation --- CDO)  have a pay-off structure which depends on the 
total notional value of the loss due to default of entities in the portfolio across the lifetime of the product, after a process of 
partial asset recovery takes place. We will not explore the financial details of these contracts (see \cite{schonbucher2003credit}), 
but two important effects the modeller must capture are the intensity of defaults and the tendency for defaults to occur simultaneously. Common modelling approaches include copula-based models, in which the joint probability of default 
over a fixed time period is modelled directly, and reduced-form models, in which the default rates are modelled as correlated
stochastic processes. The model we will consider is a \emph{structural model}: default times are represented as the threshold 
hitting times of a collection of correlated stochastic processes. These models were introduced in the context of portfolio derivatives 
by \cite{hull2001} and \cite{zhou2001}, and their origins trace back to \cite{black1976} and \cite{merton1974} for single-name
derivatives. 

Our general framework is as follows. Suppose we have a collection of $N \geq 1$ defaultable entities and a fixed finite 
time horizon $T>0$. Assign the \ith\ entity a risk process, $X^{i,N}$, called the \emph{distance-to-default} process, with
$\{X^{i,N}_{0}\}_{1 \leq i \leq N}$ chosen to be positive independent random variables supported on $(0,\infty)$ with 
common law $\nu_{0}$. Default of the \ith\ entity is modelled as the first hitting time of zero of the distance-to-default process:
\begin{equation}
\label{Intro_Eq_Tau}
\tau^{i,N} := \inf\{ t > 0 : X^{i,N}_{t} \leq 0\}.
\end{equation}
The \emph{empirical} and \emph{loss processes} then track the spatial evolution of the surviving particles and the proportion 
of killed particles; defined respectively as
\begin{equation}
\label{Intro_Eq_NuL}
\nu^{N}_{t} := \frac{1}{N} \sum_{i=1}^{N} \1_{t < \tau^{i,N}} \delta_{X^{i,N}_{t}},
	\qquad\qquad
	L^{N}_{t} := \frac{1}{N} \sum_{i=1}^{N} \1_{ \tau^{i,N} \leq t}.
\end{equation}
Here, $\delta_{x}$ denotes the Dirac delta measure of the point $x \in \R$. The empirical process takes values in the 
sub-probability measures on \R\ and the loss process takes values in \R. For $S \subseteq \R$, $\nu^{N}_{t}(S)$ is simply 
the proportion of the diffusions that take values in $S$ at time $t$ that have not yet hit the origin by time $t$:
\[
\nu^{N}_{t}(S) = \frac{\#\{1 \leq i \leq N : X^{i,N}_{t} \in S \textrm{ and } t < \tau^{i,N}\}}{N},
\]
hence we have the relationship
\[
L^{N}_{t} = 1 - \nu^{N}_{t}(0,\infty).
\]

In practice, once the dynamics of $X^{i,N}$ have been specified, the model could be used to generate realisations of 
$L^{N}$ from which portfolio credit derivatives (options on $L^{N}$) could be priced using Monte Carlo routines. 
Instead, we will approximate $L^{N}$ by its limit as $N \to \infty$.  This is known as a large portfolio approximation, 
an idea first introduced in \cite{vasicek1991} and now found in several modern frameworks for copula-based models 
\cite{cherubini2004copula, frey2003, merino2002} and reduced-form models \cite{ding2009, errais2010, mortensen2006}. 
We will return to the question of how this approximation is generated in practice after a precise description of the limiting 
objects and mode of convergence.

\subsection*{Model specification}

We will model the processes $\{X^{i,N}\}_{1 \leq i \leq N}$ as correlated diffusions with parameters that are functions 
of the current proportional loss:
\begin{multline}
\label{eq:Intro_Coefficients}
X^{i,N}_{t} 
	= X^{i}_{0}
	+ \int^{t}_{0} \mu(s, X^{i,N}_{s}, L^{N}_{s}) ds
	+ \int^{t}_{0} \sigma(s, X^{i,N}_{s}) \rho(s, L^{N}_{s}) dW_{s}
	\\+ \int^{t}_{0} \sigma(s, X^{i,N}_{s}) (1 - \rho(s, L^{N}_{s})^{2})^{\frac{1}{2}} dW^{i}_{s}.
\end{multline}
Here, $W, W^{1}, W^{2}, \dots$ are independent standard Brownian motions and the precise conditions on the coefficients 
are given in Assumption~\ref{Not_Def_Coefficients}. In particular we assume $\rho$ is  piecewise Lipschitz with finitely many
discontinuities in the loss variable $\ell \mapsto \rho(s, \ell)$. (It is easy, but perhaps not immediate, to show that this collection 
of processes exists, see Remark \ref{Not_Rem_WellDefined}.)

In \cite{bush11} this model is analysed for the case when the coefficients are constants and it is shown that the sequence of 
empirical process, $(\nu^{N})_{N \geq 1}$, converges to a stochastic limit which can be characterised as the unique solution 
to a heat equation with constant coefficients and a random transport term driven by the systemic Brownian motion $W$ 
\cite[Thm.~1.1]{bush11}. However, numerical experiments show that the constant coefficient model is too simple to adequately
capture the traded prices of CDOs across all tranches simultaneously \cite[Sct.~5]{bush11}. This problem is common for 
Gaussian models --- the tails of the risk processes are too light to produce large losses and so a large  correlation parameter 
is required to generate scenarios in which many defaults occur over a given time horizon \cite{finger2005, schonbucher2003credit}.
Consequently, different products on the same underlying portfolio may produce different correlation parameters when calibrated 
to market prices. This phenomenon is known as \emph{correlation skew} (see Figure~\ref{Intro_Fig_Skew}). 

There is a large literature addressing the drawbacks of Gaussian credit models. Examples include the addition of jump processes 
and heavy-tailed distributions \cite{fang2010, lindskog2003, WuYang2013}, stochastic parameters and inhomogeneity 
\cite{andersen2005, burtschell2007} and contagion effects \cite{daipra2009, Giesecke2015, giesecke2006}. Close relatives to 
our framework include \cite{Bujok2012}, in which a jump process is added to the systemic factor, but in a discretised version of 
the system, and \cite{jin2010}, in which the particles are taken to be general diffusions. In \cite{Ahmad2016} the constant 
coefficient model is studied on the unit interval with absorbing boundaries at 0 and 1 and with an additional multiplicative killing 
rate as a model for mortgage pools. 

\begin{figure}
\begin{center}
\includegraphics[width=8cm]{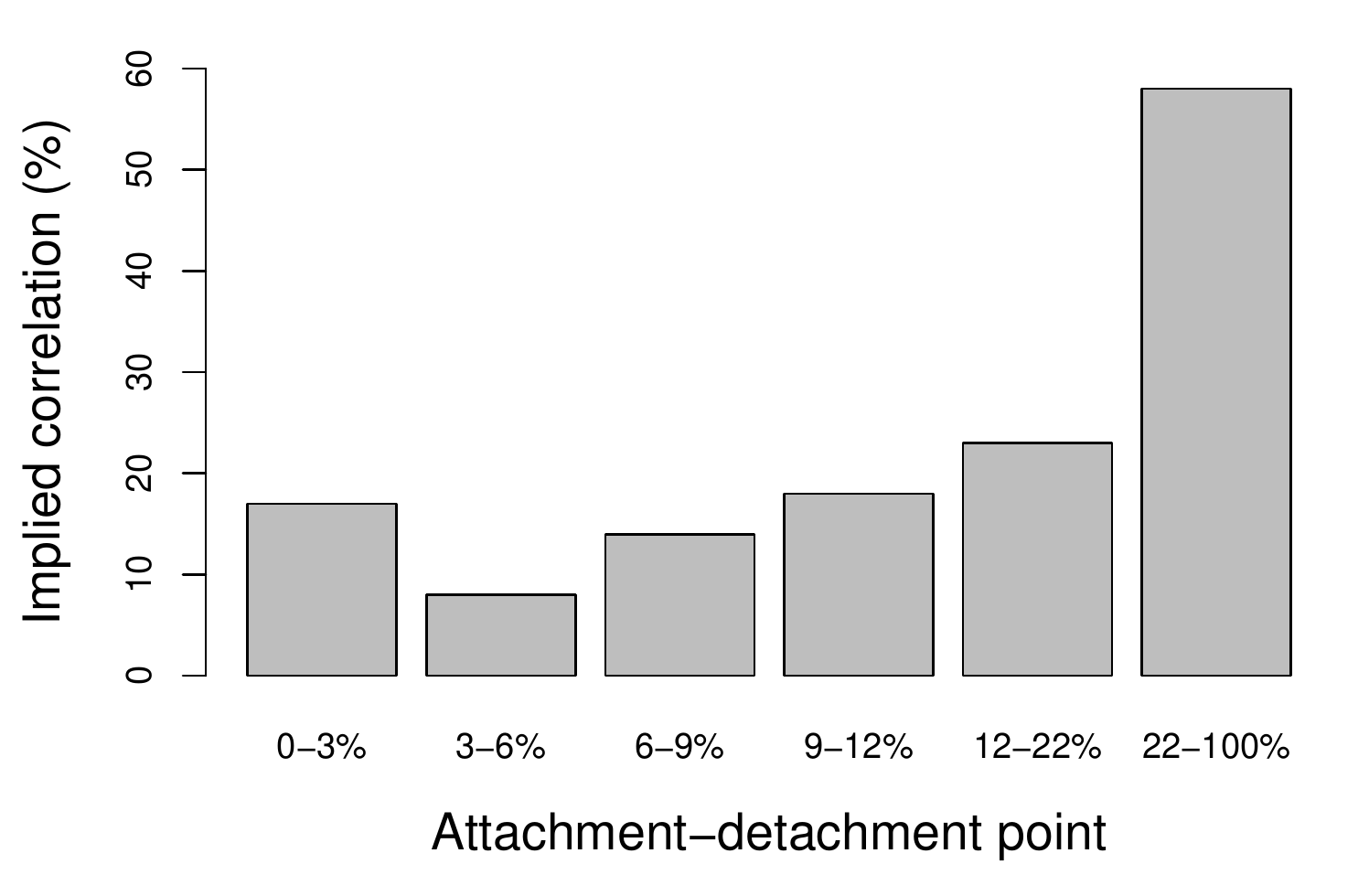}
\par\end{center}
\vspace{-0.4cm}
\caption{{\footnotesize  \label{Intro_Fig_Skew} Implied correlation for each tranche for the data set from 
\cite[Figure 2, 7 year maturity]{bush11}. With $\nu_{0}$, $\mu$ and $\sigma$ fixed in the constant coefficient model, 
the implied correlation for a given tranche is the value of the correlation parameter required to give a model spread 
equal to the market spread for that tranche. This is an example of correlation skew.}}
\end{figure}

Our present approach is inspired by Figure~\ref{Intro_Fig_Skew}. Suppose $\mu$ and $\sigma$ are fixed constants and $\rho$ is 
only a function of $\ell$. If $\ell \mapsto \rho(\ell)$ was piecewise constant across intervals corresponding to the CDO tranches 
in Figure~\ref{Intro_Fig_Skew}, then an obvious strategy for calibrating $\rho$ to the market prices is to calibrate the first level 
of $\rho$ to the traded spread of the most junior tranche, fix this value, repeat the calibration procedure for the next most junior
tranche spread and continue for all tranches. It is therefore a natural assumption to allow the diffusion coefficients in 
(\ref{eq:Intro_Coefficients}) to have finitely many discontinuities. Piecewise Lipschitz coefficients encompass this class of models 
whilst giving an analytically tractable system.

\subsection*{Main results}

The dynamics of an individual distance-to-default process, $X^{i,N}$, are controlled by the population behaviour, hence we 
have an example of a McKean--Vlasov system --- see \cite{sznitman1991} for an overview. Some applications of these systems
include the modelling of large collections of neurons and threshold hitting times for membrane potential levels in mathematical 
neuroscience \cite{deMasi2015, lucon2014}, the modelling of a large number of non-cooperative agents in mean-field games 
\cite{carmona2013, chassagneux2015}, filtering theory \cite{BainCrisan2009, crisan2010} and mathematical genetics 
\cite{dawson2014}. Examples in portfolio credit modelling include \cite{daipra2009, Spiliopoulos2014} in which systems with 
contagion effects are analysed under their large population limits.  

As $N \to \infty$, we will find that the influence of the idiosyncratic Brownian drivers, $W^{1}, W^{2}, \dots$, averages-out to 
a deterministic effect, but that the randomness due to the systemic Brownian motion, $W$, remains present. Hence the system 
must be characterised as the pair $(\nu^{N}, W)$, and we will follow an established strategy to demonstrate the  convergence 
in law of this pair and to characterise the limiting law:
\begin{enumerate}[(i)] 
\item Prove tightness of $(\nu^{N}, W)_{N \geq 1}$ (in a suitable topology), 
\item Characterise the limit points as weak solutions of a non-linear evolution equation,
\item Prove uniqueness of solutions for this equation,
\item Conclude all limiting laws agree, and hence that we have convergence in law.
\end{enumerate} 

The mathematical challenge comes from the interaction of the individuals through the boundary behaviour of the population 
and the discontinuities in the diffusion coefficients. A similar model has recently been studied where the particles interact 
through the quantiles of the empirical measure \cite{CrisanKurtzLee2014}, however there is no general uniqueness theory for 
this problem. For a model without systemic noise there is a uniqueness theory in \cite{kolokoltsov2013}. Discontinuous 
coefficients have been considered in \cite{chiang1994}, but only on the whole space and in the deterministic setting. In our 
model, parameter discontinuities are allowed because the limiting realisations of the loss process are strictly increasing 
(Proposition~\ref{Prob_Prop_LossInc}). This implies the infinite system spends a null set of time at points where the 
discontinuities in the coefficients prevent the application of the continuous mapping theorem (Corollary~\ref{Tight_Cor_Fiddly}).
Stochastic PDEs of McKean--Vlasov type are popular tools in the analysis of mean-field games with common noise 
\cite{carmona2015, kolokoltsov2015}. In \cite{dirt_annalsAP_2015, dirt_SPA_2015} a system of diffusions on the half-line is 
studied in which each particle undergoes a proportional jump towards zero whenever any of the particles hits the absorbing 
boundary at zero. The purpose of the model is to describe the self-excitatory behaviour of a large collection of neurons. 
For small values of the feedback parameter, existence and uniqueness theorems hold for the limiting system. It is shown 
in \cite{carrillo2011}, however, that for large values of the feedback parameter the limiting system must blow-up (in the 
sense that no continuous solutions exist) and a complete existence and uniqueness theory in this case remains a challenge. 

The topology we will use for establishing tightness of the sequence of laws of $(\nu^{N}, W)_{N \geq 1}$ is the product topology 
$(D_{\Sdual}, \M) \times (C_{\R}, \mathrm{U})$, where $(D_{\Sdual}, \M)$ is the \M\ topological space of distribution-valued 
\cadlag\ processes on $[0,T]$, introduced in \cite{ledger2015}, and $(C_{\R}, \mathrm{U})$ is the space of real-valued 
continuous functions on $[0,T]$ with the topology of uniform convergence. (Throughout, \S\ denotes the space of rapidly 
decreasing functions and \Sdual\ the space of tempered distributions.) It will not be necessary to explain the full details of 
the construction of $(D_{\Sdual}, \M)$, as the proof Theorem~\ref{Intro_Thm_Exist} uses only Theorem~3.2 and 
Proposition~2.7 of \cite{ledger2015}, together with facts about the classical M1 toplogy on $D_{\R}$. The \M\ topology is 
helpful because monotone real-valued processes are automatically tight in $(D_{\R}, \M)$, a fact which has been exploited in 
many other applications (see \cite{ledger2015} for references). In our infinite-dimensional setting, the decomposition trick in 
\cite[Prop.~4.2]{ledger2015} enables us to exploit the monotonicity of the loss process in proving tightness of the empirical 
process. Tightness on the product space implies the existence of subsequential limit points, whereby we recover:

\begin{thm}[Existence]
\label{Intro_Thm_Exist}
Let $(\nu, W)$ realise a limiting law of the sequence $(\nu^{N}, W)_{N \geq 1}$. Then $\nu$ is a continuous process taking values 
in the sub-probability measures and satisfies the regularity conditions of Assumption~\ref{Not_Def_RegCond} and the 
\emph{limit SPDE}:
\begin{align*}
\nu_{t}(\phi) 
	&= \nu_{0}(\phi)
	+ \int^{t}_{0} \nu_{s}( \mu(s, \cdot, L_{s}) \dx \phi ) ds
	+ \frac{1}{2} \int^{t}_{0} \nu_{s}( \sigma^{2}(s, \cdot) \dxx \phi ) ds \\  
	&\quad + \int_{0}^{t} \nu_{s}( \sigma(s, \cdot)\rho(s, L_{s}) \dx \phi ) dW_{s},
	\qquad \textrm{with \ \ } L_{t} = 1 - \nu_{t}(\1_{(0,\infty)}),
\end{align*}
for every $t \in [0,T]$ and $\phi \in \C := \{ \phi \in \S : \phi(0) = 0 \}$, with probability 1. Furthermore, if the limit point is 
attained along the subsequence $(\nu^{N_{k}}, W)_{k \geq 1}$, then $(L^{N_{k}}, W)_{k \geq 1}$ converges in law to 
$(L, W)$ on the product space $(D_{\R}, \M) \times (C_{\R}, \mathrm{U})$.
\end{thm}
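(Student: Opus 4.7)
My plan is to start from a pre-limit Itô identity for $\nu^{N}$, combine it with the assumed tightness to extract a weak limit, and then pass each term to the limit, with strict monotonicity of $L$ doing the main work of handling the coefficient discontinuities.

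First I would apply Itô's formula to $\phi(X^{i,N}_{t\wedge\tau^{i,N}})$ for $\phi \in \C$ and note that, since $\phi(0)=0$ and $X^{i,N}$ is continuous, the stopped process collapses to $\1_{t<\tau^{i,N}}\phi(X^{i,N}_{t})$. Averaging over $i$ and using the definition of $\nu^{N}_{t}$ yields the pre-limit identity
\begin{align*}
\nu^{N}_{t}(\phi) &= \nu^{N}_{0}(\phi) + \int_{0}^{t} \nu^{N}_{s}(\mu(s,\cdot,L^{N}_{s})\dx\phi)\, ds + \tfrac{1}{2}\int_{0}^{t} \nu^{N}_{s}(\sigma^{2}(s,\cdot)\dxx\phi)\, ds \\
&\quad + \int_{0}^{t} \nu^{N}_{s}(\sigma(s,\cdot)\rho(s,L^{N}_{s})\dx\phi)\, dW_{s} + M^{N}_{t}(\phi),
\end{align*}
where $M^{N}(\phi)$ is the aggregate of the idiosyncratic stochastic integrals driven by the $W^{i}$; by independence $\E[M^{N}_{t}(\phi)^{2}]=O(1/N)$, so $M^{N}(\phi)\to 0$ in $L^{2}$. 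Assuming tightness of $(\nu^{N},W)_{N\geq 1}$ on $(D_{\Sdual},\M)\times(C_{\R},\mathrm{U})$ (established separately via the decomposition trick of \cite[Prop.~4.2]{ledger2015}, which exploits the monotonicity of $L^{N}$), I extract a weakly convergent subsequence $(\nu^{N_{k}},W)\weak(\nu,W)$.

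Next I would verify the stated regularity. Since each absorption contributes a jump of mass $1/N$, the jumps of $\nu^{N}$ vanish uniformly in any $\Sdual$-seminorm, and so M1 convergence forces $\nu \in C_{\Sdual}$; the sub-probability property and the remaining conditions of Assumption~\ref{Not_Def_RegCond} then pass to the limit from the uniform $H^{-1}$-type energy bounds established during tightness. For the joint convergence of $L^{N_{k}}$, I use that $(L^{N})$ is automatically tight in $(D_{\R},\M)$ because each $L^{N}$ is non-decreasing and $[0,1]$-valued. Along a further subsequence, $(\nu^{N_{k}},L^{N_{k}},W)\weak(\nu,\tilde L,W)$ jointly; approximating $\1_{(0,\infty)}$ from below by an increasing sequence in $\C$ and invoking the no-atom-at-zero part of Assumption~\ref{Not_Def_RegCond}, I identify $\tilde L_{t}=1-\nu_{t}(\1_{(0,\infty)})=L_{t}$ for every $t\in\timeInt$, which gives the second assertion of the theorem.

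Finally I would pass to the limit in the identity, working on a countable dense subset of $\C$ and extracting a diagonal full-measure event. The initial term converges by the law of large numbers, and $M^{N}(\phi)\to 0$ in $L^{2}$. For the drift and diffusion integrals, I apply a continuous mapping argument to the deterministic functional $(\nu,L)\mapsto\int_{0}^{t}\nu_{s}(f(s,\cdot,L_{s}))\, ds$: although $\mu$ and $\rho$ are discontinuous in $\ell$, Proposition~\ref{Prob_Prop_LossInc} ensures $L$ is strictly increasing, so it occupies only a null Lebesgue set at each of the finitely many discontinuity values, which is precisely the content of Corollary~\ref{Tight_Cor_Fiddly}. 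The main obstacle is the stochastic integral term, since M1 convergence of the integrand does not in general commute with stochastic integration; I would resolve this by showing the limiting integrand $s\mapsto \nu_{s}(\sigma(s,\cdot)\rho(s,L_{s})\dx\phi)$ is continuous in $s$ (using continuity of $\nu$ together with the same null-time argument for $L$), which upgrades the pre-limit convergence from M1 to uniform on $\timeInt$, so that the stochastic integrals pass to the limit in $L^{2}$ by the Itô isometry.
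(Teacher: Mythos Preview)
Your overall architecture matches the paper's: derive the finite-$N$ evolution equation (Proposition~\ref{Finite_Prop_Evolution}), kill the idiosyncratic noise (Proposition~\ref{Finite_Prop_Vanish}), extract a limit point via tightness, push through the regularity, identify $L$, and handle the coefficient discontinuities via strict monotonicity (Corollary~\ref{Tight_Cor_Fiddly}). Where you diverge substantively is in the treatment of the $dW$ term, and there your argument has a genuine gap.

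You propose to show that the limiting integrand $s\mapsto \nu_{s}(\sigma\rho(s,L_{s})\dx\phi)$ is continuous, upgrade M1 convergence to uniform, and then pass the stochastic integrals to the limit ``in $L^{2}$ by the It\^o isometry''. The problem is that you only have convergence \emph{in law} of $(\nu^{N_{k}},L^{N_{k}},W)$ to $(\nu,L,W)$; the limit need not live on the original probability space, so there is no $L^{2}(\Omega\times[0,T])$ convergence of integrands to feed into the isometry. Even if you invoke Skorokhod representation to manufacture almost-sure convergence, you must then re-verify that the copy of $W$ is a Brownian motion for a filtration to which the copied integrands are adapted, and that the stochastic integrals on the new space coincide in law with those on the old one --- none of which you address. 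The paper sidesteps all of this with a martingale-problem argument (Definition~\ref{Tight_Def_MGComp}, Lemma~\ref{Tight_Lem_MGapp}, Proposition~\ref{Tight_Prop_EvoEqn}): one shows that $M^{\phi}(\nu,L)$, $S^{\phi}(\nu,L)$ and $C^{\phi}(\nu,L,W)$ are martingales at the limit by passing \emph{deterministic} time-integrals through weak convergence on a cocountable set of times, and then reads off the stochastic-integral representation from the quadratic variation and the cross-variation with $W$. This is robust precisely because it never asks a stochastic integral to commute with a weak limit.

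Two smaller points. First, the regularity conditions of Assumption~\ref{Not_Def_RegCond} are not obtained in the paper from ``$H^{-1}$-type energy bounds established during tightness''; they come from the particle-level probabilistic estimates of Section~\ref{Sect_ProbEstimates} (Corollary~\ref{Prob_Cor_SpatialConc}, Propositions~\ref{Prob_Prop_Boundary}--\ref{Prob_Prop_LossInc}) transferred to the limit in Proposition~\ref{Tight_Prop_RegCond}. The $H^{-1}$ machinery appears only in the uniqueness proof. Second, your approximation of $\1_{(0,\infty)}$ ``from below by an increasing sequence in $\C$'' cannot work as stated, since Schwartz functions vanish at infinity; you need both a tail cut-off controlled by Proposition~\ref{Prob_Prop_Tail} and a boundary cut-off, exactly as in the paper's Proposition~\ref{Tight_Prop_ConvLoss}. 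Finally, the paper deduces continuity of $\nu$ \emph{after} establishing the limit SPDE (the right-hand side is manifestly continuous), rather than from the vanishing-jump heuristic you give; your route may be salvageable but would need a careful statement about M1 limits of processes with uniformly small jumps.
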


\begin{figure}
\vspace{0.4cm}
\begin{center}
\includegraphics[bb=0bp 50bp 576bp 320bp,clip,width=11cm]{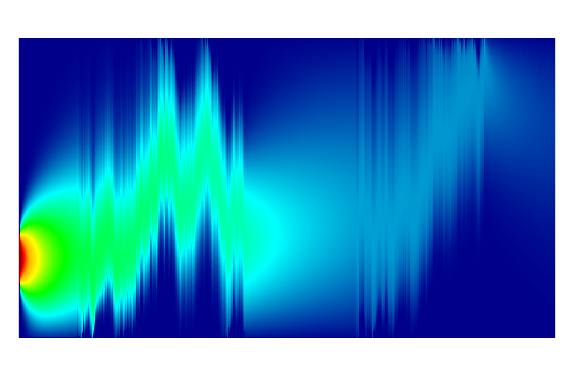}
\par\end{center}
\vspace{-0cm}
{\scriptsize 
\[
\qquad\; L^{-1}(1/5)\;\quad\qquad\quad L^{-1}(2/5)\qquad\quad L^{-1}(3/5)\qquad\quad L^{-1}(4/5)\qquad
\]
}{\scriptsize \par}
{\footnotesize 
\[
\rho\left(t, \ell\right)=\begin{cases}
0 & \textrm{if }\ell\in\left[0,\frac{1}{5}\right)\cup\left[\frac{2}{5},\frac{3}{5}\right)\cup\left[\frac{4}{5},1\right]\\
\frac{9}{10} & \textrm{if }\ell\in\left[\frac{1}{5},\frac{2}{5}\right)\cup\left[\frac{3}{5},\frac{4}{5}\right).
\end{cases}
\]
}{\footnotesize \par}
\vspace{-0.2cm}
\caption{{\footnotesize \label{Intro_Fig_LossDepHeat} Heat plot for the solution, $\nu$, of the limit SPDE for a fixed sample 
path of $W$. Time is plotted on the horizontal axis, space on the vertical axis and the value of a pixel represents the (scaled) 
intensity of $\nu$ at that space-time point (blue for level zero increasing to dark red for maximal value). The initial condition is 
a step function, $\mu = 0$, $\sigma = 1$ and $\rho$ is given above. Markers are added to show the times at which the loss 
process, $L$, reaches levels $1/5$, $2/5$, $3/5$ and $4/5$. Notice the corresponding three periods of smooth heat flow 
between the two periods of highly correlated motion. (Figure produced using the algorithm outlined in Section~\ref{Sect_Open}.)}}
\end{figure}

The limit SPDE is a non-linear heat equation with stochastic transport term driven by the systemic Brownian motion (see 
Figure~\ref{Intro_Fig_LossDepHeat} for an example with an exaggerated correlation change), and the space of test functions, 
\C, encodes the Dirichlet boundary conditions. In the limit, the idiosyncratic noise averages-out to produce the diffusive evolution 
equation. The intuition for this effect is explained easily in Section~\ref{Sect_Dynamics}, however a full proof of 
Theorem~\ref{Intro_Thm_Exist} requires more technical details and is given in Section~\ref{Sect_Tightness}. 
Several estimates involving purely probabilistic arguments are presented in Section~\ref{Sect_ProbEstimates}, where a key 
result is Proposition~\ref{Prob_Prop_LossInc} which shows (in an asymptotic sense) that over any non-zero time interval the 
system must lose a non-zero proportion of mass, and hence any limiting loss process is strictly increasing. 

With Theorem~\ref{Intro_Thm_Exist} established, demonstrating the full weak convergence of $(\nu^{N}, W)_{N \geq 1}$ 
is a matter of proving uniqueness of solutions to the limit SPDE: 

\begin{thm}[Uniqueness/Law of large numbers]
\label{Intro_Thm_Unique}
Let $\nu_{0}$ satisfy Assumption~\ref{Not_Def_Coefficients}. Suppose that $(\nu, W)$ realises a limiting law of 
$(\nu^{N}, W)_{N \geq 1}$ and that $\tilde{\nu}$ satisfies Assumption~\ref{Not_Def_RegCond}.  
If $\nu$ and $\tilde{\nu}$ solve the limit SPDE in Theorem~\ref{Intro_Thm_Exist} with respect to $W$ and $\nu_{0}$, 
then with probability 1
\[
\nu_{t}(S) = \tilde{\nu}_{t}(S),
	\qquad
	\textrm{for every } t \in [0,t] \textrm{ and Borel measurable } S \subseteq \R.
\]
Hence there exists a unique law of a solution to the limit SPDE on $(D_{\Sdual}, \M) \times (C_{\R}, \mathrm{U})$ and 
$(\nu^{N}, W)_{N \geq 1}$ converges weakly to this law. Furthermore, if $(\nu, W)$ realises the unique law, then 
$(L^{N}, W)_{N \geq 1}$ converges in law to $(L, W)$ on $(D_{\R}, \M) \times (C_{\R}, \mathrm{U})$, where 
$L_{t} = 1 - \nu_{t}(0, \infty)$.
\end{thm}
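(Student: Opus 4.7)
The plan is to establish pathwise uniqueness for the limit SPDE driven by a fixed $W$ and initial law $\nu_{0}$; the full weak convergence statement then follows by a standard argument combining Theorem~\ref{Intro_Thm_Exist} (tightness plus identification of subsequential limits as solutions) with uniqueness in law to conclude $(\nu^{N},W)_{N\ge 1}\weak(\nu,W)$, while the convergence of $(L^{N},W)$ on $(D_{\R},\M)\times(C_{\R},\mathrm{U})$ is already part of Theorem~\ref{Intro_Thm_Exist}. The task therefore reduces to showing that any two $W$-adapted solutions $\nu,\tilde\nu$ of the limit SPDE with common initial law $\nu_{0}$ coincide almost surely.

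For this I would employ an energy method in the dual of the first Sobolev space, exploiting the Dirichlet-type constraint $\phi(0)=0$ baked into \C. Set $u_{t}:=\nu_{t}-\tilde\nu_{t}$ and let $V_{t}:=\anti u_{t}$ be the antiderivative that vanishes at $x=0$; equivalently, $V_{t}$ is the pointwise difference of the cumulative distribution functions $\nu_{t}((0,x])-\tilde\nu_{t}((0,x])$, so $V_{t}(0)=0$ and $V_{t}(\infty)=\tilde L_{t}-L_{t}$. Subtracting the two weak formulations, testing against a smoothed antiderivative of $V_{t}$, and applying It\^o's formula in a suitable Gelfand triple gives an equation for $\|V_{t}\|_{L^{2}}^{2}$. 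The heat term $\tfrac{1}{2}\nu_{s}(\sigma^{2}\dxx\phi)$ supplies a dissipative contribution that, together with the Dirichlet behaviour of $V_{s}$, controls part of the noise interaction; because both solutions are driven by the same $W$, the bulk of the martingale increment appears as $u_{s}(\sigma\rho(s,L_{s})\dx\phi)$, whose It\^o correction combines favourably with the heat term; the residual stochastic and drift contributions are governed by the coefficient differences $\rho(s,L_{s})-\rho(s,\tilde L_{s})$ and $\mu(s,\cdot,L_{s})-\mu(s,\cdot,\tilde L_{s})$.

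The main obstacle is the finitely many discontinuities of $\ell\mapsto\rho(s,\ell)$ (and of $\mu$): a crude bound gives $|\rho(s,L_{s})-\rho(s,\tilde L_{s})|=O(1)$ near a discontinuity level, which prevents Gronwall from closing. This is exactly where Proposition~\ref{Prob_Prop_LossInc} is essential: with probability one both $L$ and $\tilde L$ are strictly increasing, so the (finite, random) set of times at which either hits a discontinuity level of $\rho$ has Lebesgue measure zero. Outside this null set the piecewise-Lipschitz bound $|\rho(s,L_{s})-\rho(s,\tilde L_{s})|\lesssim|L_{s}-\tilde L_{s}|$ holds, and $|L_{s}-\tilde L_{s}|=|u_{s}(\1_{(0,\infty)})|$ is controlled by $\|V_{s}\|_{L^{2}}$ via an approximation of $\1_{(0,\infty)}$ from within \C\ (again using that $V_{s}$ vanishes at the boundary). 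Assembling everything yields
\[
\E\,\|V_{t}\|_{L^{2}}^{2}\;\le\; C\int^{t}_{0}\E\,\|V_{s}\|_{L^{2}}^{2}\,ds,
\]
and Gronwall forces $V\equiv 0$, hence $\nu=\tilde\nu$ and $L=\tilde L$ almost surely, completing the pathwise uniqueness and the proof.
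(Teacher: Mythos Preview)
Your high-level strategy --- an $H^{-1}$ energy estimate on the antiderivative of the difference, closing via Gronwall --- is indeed the paper's approach. But your treatment of the coefficient discontinuities has a genuine gap. You argue that because $L$ and $\tilde L$ are strictly increasing, the set of times at which either equals some $\theta_{i}$ has Lebesgue measure zero, and that off this null set the piecewise-Lipschitz bound $|\rho(s,L_{s})-\rho(s,\tilde L_{s})|\lesssim |L_{s}-\tilde L_{s}|$ applies. This is false: the Lipschitz bound from Assumption~\ref{Not_Def_Coefficients}(\ref{Not_Def_Coefficients_4}) holds only when $L_{s}$ and $\tilde L_{s}$ lie in the \emph{same} interval $[\theta_{i-1},\theta_{i})$. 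A priori nothing prevents $L_{s}\in[\theta_{0},\theta_{1})$ and $\tilde L_{s}\in[\theta_{1},\theta_{2})$ on an interval of positive measure, in which case $|\rho(s,L_{s})-\rho(s,\tilde L_{s})|$ is only $O(1)$ and your Gronwall loop does not close. The paper handles this by a stopping-and-restarting argument: run the Lipschitz case on $[0,S_{0})$ with $S_{0}=T_{0}\wedge\tilde T_{0}$ the first time either loss reaches $\theta_{1}$; conclude $\nu=\tilde\nu$ there, hence $L=\tilde L$ and $T_{0}=\tilde T_{0}$; then restart from $S_{0}$ and iterate through finitely many levels. Strict monotonicity is used to ensure this iteration terminates, not to reduce the bad set to a null set.

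Two further points your sketch elides. First, when you expand the difference of the noise terms you pick up a factor like $(\rho(s,L_{s})-\rho(s,\tilde L_{s}))\,\sigma_{s}\,T_{\e}\nu_{s}$, and to bound its $L^{2}$ norm you need $\sup_{s,\e}\Vert T_{\e}\nu_{s}\Vert_{2}<\infty$ almost surely. This is Proposition~\ref{Unique_Prop_L2Reg}, which the paper proves separately by comparison with the whole-space process $\bar\nu^{*}$ and is essential input to the stopping times $t_{n}$; it does not follow from Assumption~\ref{Not_Def_RegCond} alone and is precisely why the hypothesis requires $\nu$ to be a limit point. Second, the control of $|L_{s}-\tilde L_{s}|$ by the $H^{-1}$ norm of $\Delta_{s}$ is not immediate since $\mathbf{1}_{(0,\infty)}\notin H^{1}_{0}$: the paper's Lemma~\ref{Lemmas_Lem_LossInH1} uses a cut-off $\phi_{\delta,\lambda}$ and pays error terms from the boundary decay~(\ref{Not_Def_RegCond4}) and tail estimate~(\ref{Not_Def_RegCond3}), after which a balancing of the free parameters $\delta,\lambda,\alpha$ is needed to absorb these errors against the strictly negative constant $-c_{0}$ coming from the dissipative term.
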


\begin{rem}[Strong solutions]
\label{Intro_Rem_StrongSolutions}
Theorem~\ref{Intro_Thm_Unique} shows that all weak solutions realise limiting laws, and amongst limiting laws we have 
pathwise uniqueness. Following~\cite[Cor.~5.3.23]{karatzas1991brownian}, we deduce that strong solutions exist on a 
sufficiently rich probability space, whereby $\nu$ (and hence $L$) is adapted to the filtration generated by $W$. 
\end{rem}

\begin{rem} (Density) In Corollary~\ref{Unique_Cor_L2reg} we show that $\nu$ has a density process $V_t\in L^2(0,\infty)$ 
such that $\nu_t(\phi) = \int_0^{\infty} \phi(x) V_t(x) dx$ for all $\phi \in L^{2}(0,\infty)$ and $t \in [0,T]$. It is then instructive 
to write the limit SPDE formally as
\begin{align*}
V_t(x) 
 	&=  V_0(x) - \int_{0}^{t} \partial_x(\mu(s,\cdot,L_s)V_s(\cdot)) ds 
 	+ \frac{1}{2} \int_0^t \partial_{xx} (\sigma^2(s,\cdot) V_s(\cdot)) ds \\ 
	&\qquad- \int_0^t \rho(s,L_s) \partial_x (\sigma(s,\cdot)V_s(\cdot)) dW_s,
	\qquad\qquad \textrm{with } V_t(0)=0.
\end{align*}
\end{rem}

To prove Theorem~\ref{Intro_Thm_Unique} (Section~\ref{Sect_Uniqueness}) we use the kernel smoothing method from
\cite{bush11}, which is a technique for mollifying potentially exotic solutions to the limit SPDE in order to work with smooth 
tractable objects, at the expense of a small approximation error. The technique was used on the whole space 
in \cite{kotelenez1995, KurtzXiong1999}. In \cite{bush11} the approximation error is controlled in the space 
$L^{2}(0,\infty)$ and there the key quantity to control is the second moment of the mass near the origin: $\E \nu_{t}(0,\e)^{2}$, 
for a candidate solution $\nu$. This approach succeeds because the quantity can be written in terms of the law of a two-dimensional
Brownian motion in a wedge, for which explicit formulae are available. In that case the kernel smoothing method can be used to 
give a precise description of the regularity of the solution \cite{ledger2014}. As the particle interactions in our model are more
complicated, however, these explicit formula are no longer available. Although we are able to show that the unique solution to 
the limit SPDE has a density in $L^{2}$ (Corollary~\ref{Unique_Cor_L2reg}), which is an auxiliary result towards 
Theorem~\ref{Intro_Thm_Unique}, that method cannot be used to fully establish uniqueness as it relies on a crude upper bound 
for $\nu$ which neglects the effect of the absorbing boundary (Remark~\ref{Unique_Rem_Crude}). Our solution to this problem is 
to adapt the kernel smoothing method to the dual of the first Sobolev space, which then only requires us to control the first 
moment $\E \nu_{t}(0,\e)$ (Section~\ref{Sect_Kernel}). This is an easier quantity to estimate as only individual particles need 
to be studied and not pairs of particles, hence we do not need to consider the complicated correlation between particles 
(see Propositions~\ref{Prob_Prop_Boundary} and~\ref{Tight_Prop_RegCond}).

We must also deal with discontinuities in the coefficients of the limit SPDE and here the strict monotonicity of the limiting 
loss processes is again important. Our strategy is to prove uniqueness up to the first time the level of the loss reaches a 
discontinuity point of the coefficients, whereby continuity allows us to propagate the argument onto the next such time 
interval. With a strictly increasing loss process and only finitely many discontinuities, this argument terminates after finitely 
many iterations, whereby we have uniqueness on the whole time horizon $[0,T]$.

\begin{rem}[Pathological $\rho$]
\label{Ch5_Remark_BadRho}
We cannot choose $\rho$ arbitrarily and expect Theorem~\ref{Intro_Thm_Unique} to hold. As an example, 
let $\mu = 0$, $\sigma = 1$ and 
\[
\rho(t,\ell)
	= \begin{cases}
q^{-1}, & \textrm{if }\ell=kq^{-n}\textrm{ for some prime } q \textrm{, }n\in\mathbb{N}
  \textrm{ and }1\!\leq\! k\!\leq\! q^{n}\!-\!1\\
0, & \textrm{otherwise}.
\end{cases}
\]
For $N=q^{n}$, $L^{N}$ is supported on $\left\{ k q^{-n}\right\} _{0\leq k\leq q^{n}}$, hence $\nu^{N}$ behaves as the 
basic constant correlation system with $\rho=q^{-1}$, which we denote $\nu|_{\rho = q^{-1}}$. Therefore 
$(\nu^{q^{n}})_{n \geq 1}$ converges weakly to $\nu|_{\rho = q^{-1}}$ as $n \to \infty$, hence there is a distinct limit 
point for every prime, so weak convergence fails for this example.
\end{rem}

In Section~\ref{Sect_MV} we recast our results as a stochastic McKean--Vlasov problem (with randomness from $W$) and 
this shows that $\nu$ can be written as the conditional law of a single tagged particle:

\begin{thm}[Stochastic McKean--Vlasov problem]
\label{Intro_Thm_MV}
Let $(\nu, W)$ be a strong solution to the limit SPDE (Remark~\ref{Intro_Rem_StrongSolutions}). For any independent 
Brownian motion, $W^{\perp}$, there exists a continuous real-valued process, $X$, satisfying
\[
\begin{cases}
X_{t} 
	= X_{0}
	+ \int_{0}^{t} \mu(s,X_{s}, L_{s}) ds
	+ \int_{0}^{t} \sigma(s,X_{s})\rho(s,L_{s}) dW_{s} \\
	\qquad \qquad \qquad + \int_{0}^{t} \sigma(s,X_{s})(1-\rho(s,L_{s})^{2})^{\frac{1}{2}} dW^{\perp}_{s}, \\
\tau=\inf \{ t>0 : X_{t} \leq 0 \},\\
\nu_{t}(\phi) 
	= \mathbf{E} [ \phi (X_{t}) \mathbf{1}_{t<\tau}|W], \\
L_{t} = \P(\tau \leq t | W).
\end{cases}
\]
(Here, $X_{0}$ has law $\nu_{0}$ and is independent of all other random variables.) Furthermore, the law of 
$\left(X,W\right)$ is unique.
\end{thm}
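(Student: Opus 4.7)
The plan is to condition on the systemic noise $W$, which reduces the McKean--Vlasov system to a classical SDE with $W$-measurable random coefficients. Since $(\nu, W)$ is a strong solution, the loss process $L_{t} = 1 - \nu_{t}(0,\infty)$ is $\sigma(W)$-measurable and (by Theorem~\ref{Intro_Thm_Exist}) continuous in $t$. Enlarge the probability space to carry an independent Brownian motion $W^{\perp}$ and an initial condition $X_{0} \sim \nu_{0}$ that is independent of $(W, W^{\perp})$. Conditional on $W$, the coefficients $(s,x) \mapsto \mu(s,x,L_{s})$, $\sigma(s,x)\rho(s,L_{s})$ and $\sigma(s,x)(1-\rho(s,L_{s})^{2})^{1/2}$ are bounded and measurable in time with Lipschitz dependence on $x$, so standard SDE theory produces a pathwise unique strong solution $X$, and $\tau$ is then defined as its first hitting time of zero.

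The core step is to verify that the conditional distributions of $X$ recover $\nu$. Set $\tilde{\nu}_{t}(\phi) := \E[\phi(X_{t}) \1_{t<\tau} | W]$ for $\phi \in \C$. Because $\phi(0)=0$ and $X$ is continuous, $\phi(X_{t \wedge \tau}) = \phi(X_{t}) \1_{t<\tau}$. Applying It\^o's formula to $\phi(X_{t \wedge \tau})$ and then taking $\E[\,\cdot\,|W]$, the $dW^{\perp}$ integral vanishes by independence of $W^{\perp}$ from $W$, and the $dW$ integral commutes with the conditional expectation (using $W^{\perp} \perp W$ and boundedness of the integrands, so the identity holds on simple integrands and extends by $L^{2}$-density). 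This yields
\begin{align*}
\tilde{\nu}_{t}(\phi)
    &= \nu_{0}(\phi) + \int_{0}^{t} \tilde{\nu}_{s}(\mu(s,\cdot,L_{s}) \dx \phi ) ds
        + \tfrac{1}{2} \int_{0}^{t} \tilde{\nu}_{s}(\sigma^{2}(s,\cdot) \dxx \phi) ds \\
    &\quad + \int_{0}^{t} \tilde{\nu}_{s}(\sigma(s,\cdot) \rho(s,L_{s}) \dx \phi) dW_{s},
\end{align*}
with the coefficient $L_{s}$ inherited from $\nu$. Since $\nu$ itself solves this same equation, uniqueness --- applied with $L$ viewed as a prescribed continuous $W$-adapted coefficient, which is the framework that the argument of Section~\ref{Sect_Uniqueness} effectively treats --- gives $\tilde{\nu} = \nu$ almost surely. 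Thus $\P(\tau \leq t | W) = 1 - \tilde{\nu}_{t}(0,\infty) = L_{t}$, and all four lines of the system hold.

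For uniqueness of the law of $(X, W)$ I would reverse the argument: any solution $(X, W)$ of the system yields $\nu_{t}(\phi) = \E[\phi(X_{t})\1_{t<\tau}|W]$, which by the same It\^o computation solves the limit SPDE with respect to $W$, so Theorem~\ref{Intro_Thm_Unique} pins down the joint law of $(\nu, W)$ and therefore of $(L, W)$; conditional on $W$ and $L$, pathwise uniqueness for the SDE makes $X$ a deterministic functional of $(X_{0}, W, W^{\perp})$, fixing the law of $(X, W)$. The principal obstacle is the It\^o-plus-conditioning step, and in particular its interpretation: because the coefficient $L$ appearing in the SPDE for $\tilde{\nu}$ belongs to $\nu$ rather than to $\tilde{\nu}$, the resulting equation is linear in $\tilde{\nu}$, and uniqueness has to be read off the proof of Theorem~\ref{Intro_Thm_Unique} in this linearised mode --- which is routine since the kernel-smoothing argument there uses $L$ only through its $W$-measurability, continuity and monotonicity.
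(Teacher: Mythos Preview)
Your proposal is correct and follows essentially the same approach as the paper: construct $X$ as a classical SDE with the $W$-measurable coefficient $L$ fixed, apply It\^o's formula to $\phi(X_{t\wedge\tau})$, take conditional expectation with respect to $W$ (the paper packages the interchange step as Lemma~\ref{Lemmas_Lem_Interchange}), observe that the resulting equation for $\tilde{\nu}$ is the limit SPDE with \emph{linear} coefficients since $L$ comes from $\nu$, and then invoke the uniqueness argument of Section~\ref{Sect_Uniqueness} in this simpler linear setting to force $\tilde{\nu}=\nu$. Your uniqueness-of-law argument is in fact slightly more explicit than the paper's, which simply notes that fixing the law of $\nu$ suffices; your additional step --- that pathwise uniqueness of the SDE then makes $X$ a measurable functional of $(X_0,W,W^\perp)$ --- is the right way to close that gap.
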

 
Returning to the question of applying our model, regarding a portfolio credit derivative as an option on the loss process, $L$, 
with some payoff function,  $\Psi : D_{\R} \to \R$, the main practical question is how to accurately estimate $\E\Psi(L)$. This 
comes in two parts: we must first generate an approximation to $L$ (through $\nu$) to a given level of precision for a fixed 
Brownian trajectory and then we must combine such estimates into a random sample. In Section~\ref{Sect_Open} we give 
an outline of a discrete-time algorithm for approximating the system and some potential variance reduction techniques. We 
leave the tasks of checking the benefits and correctness of these methods as open problems. A number of potential 
modifications to the model are also stated, along with their corresponding mathematical challenges.

\subsection*{Overview}

In Section~\ref{Sect_Not} we state the main technical assumptions on the model parameters and review their purpose. 
In Section~\ref{Sect_Dynamics} we derive the evolution equation satisfied by the empirical measure of the finite system,
which gives a heuristic explanation for arriving at the limit SPDE in Theorem~\ref{Intro_Thm_Exist}. In 
Section~\ref{Sect_ProbEstimates} several probabilistic estimates are derived for the finite system and these are applied in 
Section~\ref{Sect_Tightness} to give a proof of Theorem~\ref{Intro_Thm_Exist}. In Section~\ref{Sect_Kernel} we describe 
the kernel smoothing method, which is the main tool for the proof of Theorem~\ref{Intro_Thm_Unique} in 
Section~\ref{Sect_Uniqueness}. In Section~\ref{Sect_Lemmas} several technical lemmas are presented which are used to 
in Section~\ref{Sect_Uniqueness}, but which are deferred for readability. In Section~\ref{Sect_MV} we use our results to 
give a short proof of Theorem~\ref{Intro_Thm_MV}. In Section~\ref{Sect_Open} we outline an algorithm for simulating the 
solution to the limit SPDE and discuss open problems relating to this and to potential model extensions. 


\section{Notation and assumptions}
\label{Sect_Not}

The purpose of this section is to lay out the technical definitions omitted in the introduction and to explain their purpose. 

\begin{ass}[Coefficient assumptions]
\label{Not_Def_Coefficients}
Let $\mu : [0,T] \times \R \times [0,1] \to \R$, $\sigma : [0,T] \times \R \to [0,\infty)$ and $\rho : [0,T] \times [0,1] \to [0,1)$ be 
the coefficients in (\ref{eq:Intro_Coefficients}) and $\nu_{0}$ be the common law of the initial values of the distance-to-default
processes introduced above (\ref{Intro_Eq_Tau}). We assume that we have a sufficient large constant, $C \in (1, \infty)$, such 
that all the following hold:
\begin{enumerate}[(i)]

\item (Initial condition) \label{Not_Def_Coefficients_1} The probability measure $\nu_{0}$ is supported on $(0,\infty)$, 
has a density $V_{0} \in L^{2}(0,\infty)$ and satisfies
\[
\nu_{0}(\lambda, \infty) = o(\exp\{-\alpha \lambda\}),
	\qquad \textrm{as } \lambda \to +\infty
\]
for every $\alpha > 0$. (Note: $V_{0} \in L^{2}(0,\infty)$ implies $\nu_{0}(0, \e) = O(\e^{1/2}) = o(1)$ as $\e \to 0$.)

\item \label{Not_Def_Coefficients_Smooth} (Spatial regularity) For all fixed $t \in [0,T]$ and 
$\ell \in [0,1]$, $\mu(t, \cdot, \ell), \sigma(t, \cdot) \in C^{2}(\R)$ with 
\[
|\dx^{n} \mu(t,x,\ell)|, |\dx^{n} \sigma(t,x)|
	\leq C
\]
for all $t \in [0,T]$, $x \in \R$, $\ell \in  [0,1]$ and $n = 0, 1, 2$,

\item (Non-degeneracy)  \label{Not_Def_Coefficients_3} For all  $t \in [0,T]$, $x \in \R$, $\ell \in  [0,1]$
\[
\sigma(t, x) \geq C^{-1} > 0,
	\qquad
	0 \leq \rho(t, \ell) \leq 1 - C^{-1} < 1,
\]

\item (Piecewise Lipschitz in loss)  \label{Not_Def_Coefficients_4} There exists 
$0 = \theta_{0} < \theta_{1} < \cdots < \theta_{k} = 1$ such that
\[
|\mu(t,x,\ell) - \mu(t,x,\bar{\ell})|,
	|\rho(t, \ell) - \rho(t, \bar{\ell}) |
	\leq C |\ell - \bar{\ell}|,
\]
whenever $t \in [0,T]$, $x \in \R$ and both $\ell,\bar{\ell} \in [\theta_{i - 1}, \theta_{i})$ for some $i \in \{1,2,\dots,k\}$,

\item (Integral constraint)  \label{Not_Def_Coefficients_5} $\sup_{s \in [0,T]} \int^{\infty}_{0} |\partial_{t} \sigma(s, y)| dy 
< \infty$.
\end{enumerate}
\end{ass}

\begin{rem}[$X^{i,N}$ well defined]
\label{Not_Rem_WellDefined}
To see that we can find $\{ X^{i,N} \}_{1 \leq i \leq N}$ satisfying (\ref{eq:Intro_Coefficients}) notice that initially $L = 0$, 
so we can find $N$ diffusions satisfying (\ref{eq:Intro_Coefficients}) up to the first time one of the diffusions hits the origin 
(i.e.\ with coefficients of the form $g(t,x,0)$) --- notice that the coefficients are globally Lipschitz by
(\ref{Not_Def_Coefficients_Smooth}) of Assumption~\ref{Not_Def_Coefficients}, so standard diffusion theory applies. At this 
stopping time $L^{N} = 1/N$, and so the process can be restarted as a diffusion with coefficients $g(t,x,1/N)$. This gives 
a solution up to the first time two particles have hit the origin. Repeating this argument gives the construction of 
$\{X^{i,N}\}_{1 \leq i \leq N}$. 
\end{rem}

Condition~(\ref{Not_Def_Coefficients_1}) ensures that limiting realisations of the system satisfy the regularity conditions in
Assumption~\ref{Not_Def_RegCond}, as required for Theorem~\ref{Intro_Thm_Exist}. The tail assumption and boundary 
behaviour of $\nu_{0}$ are used in Proposition~\ref{Prob_Prop_Boundary} and~\ref{Prob_Prop_Tail} to show that $\nu^{N}$ 
inherits the corresponding properties at times $t > 0$, and this is transferred to limit points by 
Proposition~\ref{Tight_Prop_RegCond}. 

The boundedness assumption on the coefficients, given by the case $n = 0$ in condition~(\ref{Not_Def_Coefficients_Smooth}), 
is used many times throughout this paper. The cases $n = 1$ and $2$ are used in Lemma~\ref{Prob_Lem_Scale} 
and~\ref{Prob_Lem_Remove} to relate the law of $X^{1, N}$ to that of a standard Brownian motion, and in 
Lemma~\ref{Lemmas_Lem_Switch1stOrder} and~\ref{Lemmas_Lem_Switch2ndOrder} to interchange coefficients and measures 
in the proof of Theorem~\ref{Intro_Thm_Unique}. 

Condition~(\ref{Not_Def_Coefficients_3}) implies that there is always a diffusive effect acting on the system, and this ensures 
that the limiting system does not become degenerate. If $\sigma = 0$ or $\rho = 1$ then the particles are completely dependent 
and move according to a drift term given by $\mu$ and $W$. The assumption that $\rho$ is bounded away from 1 is used directly 
in the proof of Theorem~\ref{Intro_Thm_Unique} in~(\ref{eq:Unique_RhoLess1}) and~(\ref{eq:Unique_Anti5}). 

Condition (\ref{Not_Def_Coefficients_4}) is the main motivating assumption, which we have discussed at length in 
Section~\ref{Sect_Intro}.

Condition (\ref{Not_Def_Coefficients_5}) is purely a technical assumption to ensure that the drift coefficient, $D$, in 
Lemma~\ref{Prob_Lem_Scale} is uniformly bounded by a deterministic constant.

Finally, we will remark on the specific form of $\sigma = \sigma(t, x)$ and $\rho = \rho(t, \ell)$. From (\ref{eq:Intro_Coefficients}) 
we can write the dynamics of a single particle as
\[
dX^{i,N}_{t} 
	= \mu(t, X^{i,N}_{t}, L^{N}_{t}) dt
	+ \sigma(t, X^{i,N}_{t}) dB^{i}_{t},
\]
where $B^{i}$ is a Brownian motion. Although the $\{B^{i}\}_{i}$ are coupled through $L^{N}$, this representation allows 
us to relate the law of an \emph{individual} particle to a standard Brownian motion as in Lemmas~\ref{Prob_Lem_Scale} 
and~\ref{Prob_Lem_Remove}, since $\mu$ is bounded and $\sigma$ is independent of $L^{N}$. A second advantage of the 
taking $\sigma$ and $\rho$ in this form is that the pairwise correlation between particles is purely a function of 
$\rho(t, L^{N}_{t})$, and so is the same for all pairs. This is explicitly made use of in the construction of the time-change defined 
in (\ref{eq:Def_v}), and there it is again important that the correlation function is bounded strictly away from 1, so that the 
system can be compared to a standard multi-dimensional Brownian motion.
%

Below are the constraints we place on solutions to the limit SPDE in Theorem~\ref{Intro_Thm_Unique} to ensure that we 
have uniqueness. As Theorem~\ref{Intro_Thm_Exist} indicates, these conditions are natural in the sense that all limit points 
of the finite system satisfy them. 

\begin{ass}[Regularity conditions]
\label{Not_Def_RegCond}
Let $\nu$ be a \cadlag\ process taking values in the space of sub-probability measures on \R. The regularity conditions on $\nu$ are
\begin{enumerate}[(i)]

\item \label{Not_Def_RegCond1} (Loss function) The process defined by $L_{t} := 1 - \nu_{t}(0,\infty)$ is non-decreasing 
at all times and is strictly increasing when $L_{t} < 1$, 

\item \label{Not_Def_RegCond2} (Support) For every $t \in [0,T]$, $\nu_{t}$ is supported on $[0,\infty)$,

\item \label{Not_Def_RegCond3} (Exponential tails) For every $\alpha > 0$
\[
\E \int^{T}_{0} \nu_{t}(\lambda, +\infty) dt = o(e^{-\alpha \lambda}),
	\qquad \textrm{as } \lambda \to \infty,
	\
\]

\item \label{Not_Def_RegCond4} (Boundary decay) There exists $\beta > 0$ such that

\[
\E \int^{T}_{0} \nu_{t}(0, \e) dt = O(\e^{1 + \beta}),
	\qquad \textrm{as } \e \to 0,
\]

\item \label{Not_Def_RegCond5} (Spatial concentration) There exists $C > 0$ and $\delta > 0$ such that
\[
\E \int^{T}_{0} |\nu_{t}(a,b)|^{2} dt \leq C |b-a|^{\delta},
	\qquad \textrm{for all } a < b.
\]
\end{enumerate}
\end{ass}

It is essential that limit points satisfy condition~(\ref{Not_Def_RegCond1}) in order to apply the continuous mapping theorem 
to recover the limit SPDE for limit points (Corollary~\ref{Tight_Cor_Fiddly}). There, strict monotonicity ensures that there are 
only finitely many $t$ such that $L_{t} = \theta_{i}$ for some $i$, and hence that this set of times is negligible in the limit. 
Knowing that $L$ is monotone also allows us to split $[0,T]$ into consecutive intervals such that in the $i^{\textrm{th}}$ 
interval $L_{t} \in [\theta_{i}, \theta_{i +1})$, and this argument is used in the uniqueness proof in 
Section~\ref{Sect_Uniqueness}~(Case 2). 

Condition~(\ref{Not_Def_RegCond2}) is natural since $\nu^{N}$ is supported on $[0,\infty)$ by construction. However, it is 
also convenient to take our test functions, \C, to be supported on \R, hence (\ref{Not_Def_RegCond2}) is needed to rule 
out pathological solutions that have support on the negative half-line and that would otherwise break the uniqueness claim. 

Condition~(\ref{Not_Def_RegCond3}) is used several times throughout Section~\ref{Sect_Lemmas} to check various 
integrability requirements. It is also used in Lemma~\ref{Lemmas_Lem_LossInH1} to relate $\nu$ and $L$ via the $H^{-1}$ norm. 

Condition~(\ref{Not_Def_RegCond4}) is the key boundary estimate discussed in Section~\ref{Sect_Intro}. Its main use is 
in Lemma~\ref{Unique_Lem_Boundary}. 

Condition~(\ref{Not_Def_RegCond5}) guarantees that solutions cannot become too concentrated in spatial locations. 
This is used to interchange coefficients and measures in Lemma~\ref{Lemmas_Lem_Switch1stOrder} 
and~\ref{Lemmas_Lem_Switch2ndOrder}.


\section{Dynamics of the finite particle system}
\label{Sect_Dynamics}

This section introduces the empirical process approximation to the limit SPDE from Theorem~\ref{Intro_Thm_Exist} and 
explains the intuition behind the convergence of $(\nu^{N})_{N \geq 1}$. Throughout, we will drop the dependence of 
the coefficients on the time, space and loss variables and use the following short-hand when it is safe to do so:

\begin{rem}[Short-hand notation]
\label{Finite_Rem_ShortHand}
For fixed $N$, when there is no confusion, we may use the functional notation
\[
\mu_{t} = \mu(t, \cdot \, , L^{N}_{t}),
	\qquad 
	\sigma_{t} = \sigma(t, \cdot ),
	\qquad 
	\rho_{t} = \rho(t, L^{N}_{t}).
\]
\end{rem}

\begin{prop}[Finite evolution equation]
\label{Finite_Prop_Evolution}
For every $N \geq 1$, $t \in \timeInt$ and $\phi \in \C$
\begin{align*}
\nu^{N}_{t}(\phi)
  &= \nu^{N}_{0}(\phi)
  + \int_{0}^{t} \nu^{N}_{s}(\mu_{s} \dx\phi) ds
  + \frac{1}{2} \int_{0}^{t} \nu^{N}_{s}(\sigma^{2}_{s} \dxx \phi) ds \\
  &\qquad+ \int_{0}^{t} \nu^{N}_{s}(\sigma_{s} \rho_{s} \dx\phi) dW_{s}
  + I^{N}_{t}(\phi),
\end{align*}
where we have the \emph{idiosyncratic driver}
\[
I^{N}_{t}(\phi) 
  := \frac{1}{N} \sum_{i=1}^{N} \int_{0}^{t} \sigma(s, X^{i,N}_{s}) (1 - \rho(s ,L^{N}_{s})^{2})^{\frac{1}{2}} \dx 
  \phi(X^{i, N}_{s}) \1_{s < \tau^{i,N}} dW^{i}_{s}.
\]
\end{prop}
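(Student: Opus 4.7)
The plan is to apply It\^{o}'s formula particle-by-particle to $\phi(X^{i,N})$ stopped at the absorption time $\tau^{i,N}$, use the boundary condition $\phi(0)=0$ to absorb the indicator $\1_{t < \tau^{i,N}}$ into a stopped process, and then average over $i$.

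First, fix $1 \leq i \leq N$. Since $X^{i,N}$ has continuous paths and $\tau^{i,N}$ is the first hitting time of $(-\infty, 0]$, continuity gives $X^{i,N}_{\tau^{i,N}} = 0$ on $\{\tau^{i,N} \leq T\}$. The defining property $\phi \in \C$ that $\phi(0) = 0$ then yields the key identity
\[
\phi\bigl(X^{i,N}_{t \wedge \tau^{i,N}}\bigr) = \1_{t < \tau^{i,N}} \phi(X^{i,N}_{t}),
\]
valid for every $\omega$. Applying It\^{o}'s formula to the stopped process $X^{i,N}_{\cdot \wedge \tau^{i,N}}$ (whose dynamics are given by~(\ref{eq:Intro_Coefficients}) with the driving integrals all stopped at $\tau^{i,N}$) produces drift, second-order, systemic and idiosyncratic terms. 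Using $\int_{0}^{t \wedge \tau^{i,N}} f_{s}\,ds = \int_{0}^{t} \1_{s<\tau^{i,N}} f_{s}\,ds$ and the analogous identity for It\^{o} integrals, we rewrite each contribution as an integral over $[0,t]$ with $\1_{s < \tau^{i,N}}$ inside.

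Next, sum the resulting identity over $i = 1, \dots, N$ and divide by $N$. For the Lebesgue integrals and the systemic stochastic integral, the indicator times a point evaluation collapses, by the definition (\ref{Intro_Eq_NuL}) of $\nu^{N}$, into an action on $\nu^{N}_{s}$:
\[
\frac{1}{N} \sum_{i=1}^{N} \1_{s<\tau^{i,N}}\, f\bigl(s, X^{i,N}_{s}\bigr) = \nu^{N}_{s}(f(s, \cdot)).
\]
In particular, the drift and quadratic-variation contributions yield $\int_{0}^{t} \nu^{N}_{s}(\mu_{s} \dx \phi)\,ds$ and $\tfrac{1}{2}\int_{0}^{t} \nu^{N}_{s}(\sigma^{2}_{s} \dxx \phi)\,ds$, while the systemic integral (driven by the common $W$) can be pulled through the finite sum to give $\int_{0}^{t} \nu^{N}_{s}(\sigma_{s} \rho_{s} \dx \phi)\,dW_{s}$. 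The idiosyncratic terms are driven by distinct Brownian motions $W^{i}$ and cannot be recombined, so by definition they form exactly the residual $I^{N}_{t}(\phi)$.

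This argument is essentially a routine It\^{o}-plus-averaging computation and I do not anticipate a serious obstacle: the only delicate point is justifying the stopped-process identity via $\phi(0)=0$ and continuity of paths, and checking that the systemic stochastic integral commutes with the finite sum (which is immediate since $W$ does not depend on $i$). Integrability of the stochastic integrands up to $\tau^{i,N}$ follows from boundedness of $\sigma$ and $\rho$ in Assumption~\ref{Not_Def_Coefficients} together with the rapid decay of $\dx \phi$, so no localisation beyond $\tau^{i,N}$ is needed.
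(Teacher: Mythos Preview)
Your proposal is correct and follows essentially the same approach as the paper: apply It\^{o}'s formula to $\phi(X^{i,N}_{\cdot\wedge\tau^{i,N}})$, use $\phi(0)=0$ to obtain $\phi(X^{i,N}_{t\wedge\tau^{i,N}}) = \phi(X^{i,N}_{t})\1_{t<\tau^{i,N}}$, then sum over $i$ and divide by $N$. You include a bit more justification (integrability, commuting the finite sum with the $dW$-integral), but the argument is the same.
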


\begin{proof}
Apply It\^o's formula to $\phi(X^{i,N})$ to obtain
\begin{align*}
\phi(X^{i,N}_{t \wedge \tau^{i,N}})
  &= \phi(X^{i,N}_{0}) 
  + \int_{0}^{t} (\mu_{s} \dx \phi)(X^{i,N}_{s}) \1_{s < \tau^{i,N}} ds \\
  &\qquad + \frac{1}{2} \int_{0}^{t} (\sigma_{s}^{2} \dxx \phi) (X^{i,N}_{s}) 1_{s < \tau^{i,N}} ds   \\
   &\qquad + \int_{0}^{t} (\sigma_{s} \rho_{s} \dx \phi)(X^{i,N}_{s}) \1_{s < \tau^{i,N}} dW_{s}  \\
  &\qquad + \int_{0}^{t} (\sigma_{s} (1-\rho_{s}^{2})^{\frac{1}{2}} \dx \phi) (X^{i,N}_{s}) \1_{s < \tau^{i,N}} dW^{i}_{s}.
\end{align*}
If $\phi \in \C$, then 
\begin{equation}
\label{eq:WhyCtest}
\phi(X^{i,N}_{t \wedge \tau^{i,N}}) = \phi(X^{i,N}_{t}) \1_{t < \tau^{i,N}}\end{equation}
Substituting this expression into the left-hand side above, summing over $i \in \{1,2, \dots N \}$ and multiplying by $N^{-1}$ 
gives the result.
\end{proof}

\begin{rem}
We need to ensure that our test functions satisfy $\phi(0) = 0$ so that equation (\ref{eq:WhyCtest}) is valid. 
\end{rem}

Since the idiosyncratic noise, $I^{N}$, is a sum of martingales with zero covariation, the process converges to zero in 
the limit as $N \rightarrow \infty$. This explains why we arrive at the limit SPDE in Theorem~\ref{Intro_Thm_Exist}. 

\begin{prop}[Vanishing idiosyncratic noise]
\label{Finite_Prop_Vanish}
For every $\phi \in \C$
\[
\E \sup_{t \in \timeInt} |I^{N}_{t}(\phi)|^{2} 
  = \Vert \dx \phi \Vert_{\infty}^{2} \cdot O(N^{-1}),
  \qquad \textrm{as } N \to \infty.
\]
\end{prop}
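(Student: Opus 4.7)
The plan is to exploit the martingale structure of $I^{N}(\phi)$ together with the independence of the idiosyncratic Brownian drivers $W^{1},W^{2},\dots$ to control the quadratic variation, then invoke Doob's $L^{2}$ inequality. Each summand
\[
M^{i,N}_{t}(\phi) := \int_{0}^{t} \sigma(s, X^{i,N}_{s})(1 - \rho(s, L^{N}_{s})^{2})^{\frac{1}{2}} \dx\phi(X^{i,N}_{s}) \1_{s < \tau^{i,N}} dW^{i}_{s}
\]
is a continuous square-integrable martingale (the integrand is progressively measurable and uniformly bounded by $\|\sigma\|_{\infty}\|\dx\phi\|_{\infty}$ thanks to Assumption~\ref{Not_Def_Coefficients}(\ref{Not_Def_Coefficients_Smooth}) and $0 \le \rho \le 1$). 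Hence $I^{N}(\phi) = N^{-1}\sum_{i=1}^{N} M^{i,N}(\phi)$ is a continuous martingale starting at zero.

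First I would apply Doob's $L^{2}$ maximal inequality to obtain
\[
\E \sup_{t \in \timeInt} |I^{N}_{t}(\phi)|^{2} \leq 4\, \E |I^{N}_{T}(\phi)|^{2} = 4\, \E \langle I^{N}(\phi) \rangle_{T}.
\]
Next I would compute the quadratic variation. Since $W^{i}$ and $W^{j}$ are independent for $i \neq j$, their covariation vanishes and the cross terms drop out, leaving
\[
\langle I^{N}(\phi) \rangle_{T} = \frac{1}{N^{2}} \sum_{i=1}^{N} \int_{0}^{T} \sigma(s, X^{i,N}_{s})^{2} (1 - \rho(s, L^{N}_{s})^{2}) \dx\phi(X^{i,N}_{s})^{2} \1_{s < \tau^{i,N}} ds.
\]

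Finally, I would bound each integrand pointwise by $\|\sigma\|_{\infty}^{2} \|\dx\phi\|_{\infty}^{2}$ (using Assumption~\ref{Not_Def_Coefficients}(\ref{Not_Def_Coefficients_Smooth}) and $1 - \rho^{2} \le 1$), giving
\[
\E \langle I^{N}(\phi)\rangle_{T} \leq \frac{1}{N^{2}} \cdot N \cdot T \,\|\sigma\|_{\infty}^{2} \, \|\dx\phi\|_{\infty}^{2} = \frac{T\|\sigma\|_{\infty}^{2}}{N} \|\dx\phi\|_{\infty}^{2},
\]
which combined with Doob's inequality yields the claim with explicit constant $4T\|\sigma\|_{\infty}^{2}$. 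There is no substantial obstacle here: the only mildly delicate point is confirming that the cross-covariations truly vanish, which follows because the pairs $(W^{i},W^{j})$ are independent of each other and of $W$, so the standard It\^o isometry for independent Brownian drivers applies term-by-term.
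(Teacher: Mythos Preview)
Your proof is correct and follows exactly the paper's approach: compute the quadratic variation of $I^{N}(\phi)$ (the cross terms vanish by independence of the $W^{i}$), bound the integrand using the boundedness of $\sigma$ and $\dx\phi$, and apply Doob's $L^{2}$ maximal inequality. The paper's proof is simply a terser statement of the same argument.
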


\begin{proof}
Since $\sigma$ and $\dx \phi$ are bounded, the result follows from Doob's martingale inequality and the fact that
\[
[I^{N}_{\,\cdot}(\phi)]_{t}
  = \frac{1}{N^{2}} \sum_{i=1}^{N} \int_{0}^{t} \sigma(s, X^{i,N}_{s})^{2}(1 - \rho(s,L^{N}_{s})^{2}) \dx 
  \phi( X^{i,N}_{s})^{2}ds.
\]
\end{proof}

\subsection*{The whole space process}

In the proceeding sections it will be useful to work with the process defined by 
\begin{equation}
\label{eq:Finite_nub}
\nub^{N}_{t} := \frac{1}{N} \sum_{i=1}^{N} \delta_{X^{i,N}_{t}},
\end{equation}
which is a probability-measure valued processes on the whole of \R. Clearly it is the case that 
\begin{equation}
\label{eq:Finite_SimpleBound}
\nu^{N}_{t}(S) \leq \nub^{N}_{t}(S),
  \qquad \textrm{for all } N \geq 1, t \in \timeInt \textrm{ and } S \subseteq \R. 
\end{equation}

Since $\nub^{N}$ is not affected by the absorbing boundary, from the work in Proposition~\ref{Finite_Prop_Evolution} it 
follows that $\nub^{N}$ satisfies the same evolution equation as $\nu^{N}$, but on the whole space. This is encoded through 
the test functions:

\begin{prop}[Evolution of $\nub^{N}$]
\label{Finite_Prop_EvoNub}
For every $N \geq 1$, $t \in [0,T]$ and $\phi \in \S$
\begin{align*}
\nub^{N}_{t}(\phi)
  &= \nu^{N}_{0}(\phi)
  + \int_{0}^{t} \nub^{N}_{s}(\mu_{s} \dx\phi) ds
  + \frac{1}{2} \int_{0}^{t} \nub^{N}_{s}(\sigma^{2}_{s} \dxx \phi) ds \\
  & \qquad + \int_{0}^{t} \nub^{N}_{s}(\sigma_{s}\rho_{s} \dx\phi) dW_{s}
  + \bar{I}^{N}_{t}(\phi),
\end{align*}
where 
\[
\bar{I}^{N}_{t}(\phi) 
  := \frac{1}{N} \sum_{i=1}^{N} \int_{0}^{t} \sigma(s, X^{i,N}_{s}) (1 - \rho(s, L^{N}_{s})^{2})^{\frac{1}{2}} \dx 
  \phi(X^{i, N}_{s}) dW^{i}_{s}.
\]
\end{prop}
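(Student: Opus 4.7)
The plan is to mimic the derivation of Proposition~\ref{Finite_Prop_Evolution} almost verbatim, simply removing the indicator $\1_{s < \tau^{i,N}}$ that encoded the absorbing boundary, and exploiting the fact that no boundary condition on $\phi$ is then needed. Concretely, I would first observe that each process $X^{i,N}$ is defined on the whole interval $[0,T]$ by the SDE (\ref{eq:Intro_Coefficients}) (not just up to $\tau^{i,N}$), since the coefficients $\mu(\cdot,\cdot,L^{N}_{s})$, $\sigma(\cdot,\cdot)$ and $\rho(\cdot,L^{N}_{s})$ are globally Lipschitz in the spatial variable between jumps of $L^{N}$ (cf.\ Remark~\ref{Not_Rem_WellDefined}). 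Thus $\nub^{N}_{t} = N^{-1} \sum_{i} \delta_{X^{i,N}_{t}}$ is well defined at all times.

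Next, for $\phi \in \S$ I would apply It\^{o}'s formula directly to $\phi(X^{i,N}_{t})$ using (\ref{eq:Intro_Coefficients}). Since nothing is stopped, no indicator appears and the calculation yields
\begin{align*}
\phi(X^{i,N}_{t})
  &= \phi(X^{i,N}_{0})
  + \int_{0}^{t} (\mu_{s} \dx \phi)(X^{i,N}_{s}) \, ds
  + \frac{1}{2} \int_{0}^{t} (\sigma_{s}^{2} \dxx \phi)(X^{i,N}_{s}) \, ds \\
  &\qquad + \int_{0}^{t} (\sigma_{s} \rho_{s} \dx \phi)(X^{i,N}_{s}) \, dW_{s}
  + \int_{0}^{t} (\sigma_{s} (1-\rho_{s}^{2})^{\frac{1}{2}} \dx \phi)(X^{i,N}_{s}) \, dW^{i}_{s}.
\end{align*}
The integrands are bounded (by Assumption~\ref{Not_Def_Coefficients}(\ref{Not_Def_Coefficients_Smooth})) and $\phi, \dx\phi, \dxx\phi$ are bounded since $\phi \in \S$, so all stochastic integrals are genuine martingales and the identity holds almost surely.

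Finally, I would average over $i = 1, \dots, N$: by definition of $\nub^{N}$, the left-hand side becomes $\nub^{N}_{t}(\phi)$; the drift and diffusion terms become $\int_{0}^{t} \nub^{N}_{s}(\mu_{s}\dx\phi)\,ds$, $\tfrac{1}{2}\int_{0}^{t} \nub^{N}_{s}(\sigma_{s}^{2}\dxx\phi)\,ds$ and $\int_{0}^{t} \nub^{N}_{s}(\sigma_{s}\rho_{s}\dx\phi)\,dW_{s}$, while the idiosyncratic sum collects into exactly the definition of $\bar{I}^{N}_{t}(\phi)$ given in the statement. Fubini/stochastic Fubini for a finite sum is trivial here, so no delicate interchange is needed.

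There is no real obstacle: the only point of care is the absence of $\1_{s < \tau^{i,N}}$ and, relatedly, the absence of the identity (\ref{eq:WhyCtest}) which is precisely why the boundary condition $\phi(0) = 0$ can now be dropped and why $\phi$ may range over all of $\S$ rather than only $\C$.
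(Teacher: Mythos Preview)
Your proposal is correct and follows exactly the approach the paper indicates: the paper itself does not give a separate proof but simply remarks that the work in Proposition~\ref{Finite_Prop_Evolution} carries over without the absorbing boundary, which is precisely what you have written out in detail. Your additional observations (that $X^{i,N}$ lives on all of $[0,T]$, that no boundary condition on $\phi$ is needed, and hence that $\phi$ may range over $\S$ rather than $\C$) are exactly the points the paper alludes to when it says ``this is encoded through the test functions.''
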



\section{Probabilistic estimates}
\label{Sect_ProbEstimates}

Here we collect the main probabilistic estimates used in later proofs. The reader may wish to skip this section and use it only 
as a reference.  We begin by noting the following simple result, which is just a consequence of the fact that $\{X^{i,N}\}_{i}$ 
are identically distributed: for any measurable $S \subseteq \R$, $N \geq 1$ and $t \in [0,T]$
\begin{equation}
\label{eq:Prob_Linear}
\E \nu^{N}_{t}(S)
	= \frac{1}{N} \sum_{i=1}^{N} \E [\1_{X^{i,N}_{t} \in S; t < \tau^{i,N}}] 
	= \P(X^{1,N}_{t} \in S; t < \tau^{1,N}).
\end{equation}
Under \P, $X^{1,N}$ is a diffusion and with Lemmas~\ref{Prob_Lem_Scale} and~\ref{Prob_Lem_Remove} we 
are able to estimate (\ref{eq:Prob_Linear}) for relevant choices of $S$ by relating the law of $X^{1,N}$ to that 
of standard Brownian motion. Specifically, in Corollary~\ref{Prob_Cor_SpatialConc} and Propositions~\ref{Prob_Prop_Boundary}
and~\ref{Prob_Prop_Tail} we show that $\nu^{N}$ satisfies the corresponding estimates to those in 
Assumption~\ref{Not_Def_RegCond}~(\ref{Not_Def_RegCond3}),~(\ref{Not_Def_RegCond4}) and~(\ref{Not_Def_RegCond5}), 
which is of direct use in Proposition~\ref{Tight_Prop_RegCond} when we take a limit as $N \to \infty$. In 
Propositions~\ref{Prob_Prop_LossInc} and~\ref{Prob_Prop_LowerLoss} we prove two estimates for which (\ref{eq:Prob_Linear}) 
is not helpful. These results require us to express the quantities of interest in terms of independent particles to show that certain
events concerning the increments in the loss process are asymptotically negligible.

\begin{lem}[Scale transformation]
\label{Prob_Lem_Scale}
Define $\zeta : [0,T] \times \R \to \R$ by
\[
\zeta(t, x) := \int^{x}_{0} \frac{dy}{\sigma(t, y)}
\]
and $Z_{t} := \zeta(t, X^{1,N}_{t})$. Then $\mathrm{sgn}(Z_{t}) = \mathrm{sgn}(X^{1,N}_{t})$ and 
$dZ_{t} = D_{t} dt + dB_{t}$ where $B$ is the Brownian motion
\[
B_{t} = \int^{t}_{0} \rho(s,L^{N}_{s}) dW_{s} + \int^{t}_{0} (1 - \rho(s,L^{N}_{s})^{2})^{\frac{1}{2}} dW^{1}_{s}
\]
and the drift coefficient, $D$, is given by
\[
D_{t}
	= (\frac{\mu}{\sigma} - \partial_{x} \sigma)(t, X^{1,N}_{t}, L^{N}_{t}) 
	- \int_{0}^{X^{1,N}_{t}} \frac{\partial_{t}\sigma}{\sigma^{2}}(t,y)dy,
\]
which is uniformly bounded (in $N$ and $t$).
\end{lem}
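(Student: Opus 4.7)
This is a time-dependent Lamperti transform (scale function) argument. The plan is to apply It\^o's formula to $Z_{t} = \zeta(t, X^{1,N}_{t})$, compute the resulting decomposition explicitly, identify the martingale part as a Brownian motion via L\'evy's characterisation, and then check boundedness of the drift term against Assumption~\ref{Not_Def_Coefficients}.

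First I would record the partial derivatives of $\zeta$: $\partial_{x}\zeta(t,x) = 1/\sigma(t,x)$, $\partial_{xx}\zeta(t,x) = -\partial_{x}\sigma(t,x)/\sigma(t,x)^{2}$, and $\partial_{t}\zeta(t,x) = -\int_{0}^{x} \partial_{t}\sigma(t,y)/\sigma(t,y)^{2}\,dy$ (the differentiation under the integral sign is justified by condition~(v) of Assumption~\ref{Not_Def_Coefficients}, which bounds $\int_{0}^{\infty}|\partial_{t}\sigma|\,dy$ uniformly in $s$). Using the SDE~(\ref{eq:Intro_Coefficients}), the quadratic variation is
\[
d\langle X^{1,N}\rangle_{t} = \sigma(t,X^{1,N}_{t})^{2}\bigl(\rho(t,L^{N}_{t})^{2} + (1 - \rho(t,L^{N}_{t})^{2})\bigr)dt = \sigma(t,X^{1,N}_{t})^{2}dt.
\]
Applying It\^o's formula to $\zeta(t, X^{1,N}_{t})$ and collecting the $dt$ contributions from $\partial_{t}\zeta$, $\partial_{x}\zeta \cdot \mu\,dt$, and $\tfrac{1}{2}\partial_{xx}\zeta \cdot d\langle X^{1,N}\rangle$ yields the drift $D_{t}$ (modulo the factor of $\tfrac12$ on the $\partial_{x}\sigma$ term which must be a typo in the statement), while the $dW$ and $dW^{1}$ contributions combine into
\[
dB_{t} = \rho(t,L^{N}_{t})\,dW_{t} + (1-\rho(t,L^{N}_{t})^{2})^{1/2}\,dW^{1}_{t}.
\]
Since $B$ is a continuous local martingale (in fact a true martingale, as $\rho$ is bounded) with $\langle B\rangle_{t} = t$ by construction, L\'evy's characterisation identifies $B$ as a standard Brownian motion.

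The sign-preservation claim is immediate: for each fixed $t$, $y \mapsto 1/\sigma(t,y)$ is strictly positive by condition~(iii), so $\zeta(t,\cdot)$ is a strictly increasing $C^{2}$ bijection of $\R$ with $\zeta(t,0) = 0$, whence $\mathrm{sgn}(Z_{t}) = \mathrm{sgn}(X^{1,N}_{t})$.

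The main work is the uniform bound on $D_{t}$. The term $\mu/\sigma$ is bounded by $C \cdot C = C^{2}$ using conditions~(ii) and~(iii); $\partial_{x}\sigma$ is bounded by $C$ via~(ii) with $n=1$; and the integral term satisfies
\[
\Bigl|\int_{0}^{X^{1,N}_{t}} \frac{\partial_{t}\sigma(t,y)}{\sigma(t,y)^{2}}dy\Bigr|
\;\le\; C^{2}\sup_{s\in[0,T]}\int_{0}^{\infty}|\partial_{t}\sigma(s,y)|\,dy,
\]
which is finite by condition~(v). The one subtlety I anticipate is the behaviour on $\{X^{1,N}_{t} < 0\}$ (recall $X^{1,N}$ is not stopped at the origin), which is only relevant if the integral bound in~(v) is intended over $\R$ rather than $(0,\infty)$; if the asymmetry matters, one reduces to the positive half-line by invoking the stopping time $\tau^{1,N}$, since downstream applications (see~(\ref{eq:Prob_Linear})) only use $Z$ up to $\tau^{1,N}$.
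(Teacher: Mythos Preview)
Your proposal is correct and follows exactly the approach the paper intends: the paper's own proof is the single line ``Straightforward application of It\^o's formula coupled with Assumption~\ref{Not_Def_Coefficients}'', and you have simply carried out that computation in detail. Your observation about the missing factor $\tfrac12$ in front of $\partial_{x}\sigma$ is correct (the It\^o correction gives $\tfrac12\partial_{xx}\zeta\cdot\sigma^{2}=-\tfrac12\partial_{x}\sigma$), and your flag that condition~(v) is stated only over $(0,\infty)$ while the unstopped process can enter the negative half-line is a genuine technical point the paper does not address; your suggested resolution via $\tau^{1,N}$ covers the main downstream uses.
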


\begin{proof}
Straightforward application of It\^o's formula coupled with Assumption~\ref{Not_Def_Coefficients}).
\end{proof}

\begin{lem}[Removing drift]
\label{Prob_Lem_Remove} 
For every $\delta \in (0,1)$, there exists $c_{\delta} > 0$ such that
\[
\P(X^{1,N}_{t} \in S; t < \tau^{1,N})
	\leq c_{\delta} F_{t}(\zeta(t,S))^{\delta},
	\qquad \textrm{for every measurable } S \subseteq \R,
\]
where $F_{t}$ is the marginal law of a killed Brownian motion at time $t$ with initial distribution 
$\nu_{0} \circ \zeta(0, \cdot)^{-1}$ and $\zeta$ is as defined in Lemma~\ref{Prob_Lem_Scale}. Likewise, 
if $\bar{F}$ is the marginal law of the Brownian motion without killing at the origin and with the same initial distribution 
\[
\P(X^{1,N}_{t} \in S)
	\leq c_{\delta} \bar{F}_{t}(\zeta(t,S))^{\delta},
	\qquad \textrm{for every measurable } S \subseteq \R.
\]
\end{lem}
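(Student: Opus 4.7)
The plan is to reduce the problem to a standard Brownian motion by combining the scale transformation of Lemma~\ref{Prob_Lem_Scale} with a Girsanov change of measure, then to pay the extra exponent $\delta < 1$ via H\"older's inequality in order to absorb the Radon--Nikodym density.

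First I would rewrite the event $\{X^{1,N}_{t} \in S\}$ as $\{Z_{t} \in \zeta(t,S)\}$ via Lemma~\ref{Prob_Lem_Scale}. Because $\mathrm{sgn}(Z_{s}) = \mathrm{sgn}(X^{1,N}_{s})$ for every $s$, the stopping time $\tau^{1,N}$ coincides with $\tau^{Z} := \inf\{s > 0 : Z_{s} \leq 0\}$, so
\[
\P(X^{1,N}_{t} \in S;\, t < \tau^{1,N}) = \P(Z_{t} \in \zeta(t,S);\, t < \tau^{Z}).
\]

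Next I would apply Girsanov's theorem to remove the drift $D$ in the decomposition $dZ_{s} = D_{s} ds + dB_{s}$. Lemma~\ref{Prob_Lem_Scale} guarantees that $D$ is uniformly bounded by a deterministic constant $M$ (this is where assumptions (\ref{Not_Def_Coefficients_Smooth}), (\ref{Not_Def_Coefficients_3}) and (\ref{Not_Def_Coefficients_5}) of Assumption~\ref{Not_Def_Coefficients} come in), so Novikov's condition is trivial and the measure $\Q$ defined on $\mathcal{F}_{T}$ by
\[
\frac{d\Q}{d\P}
	= \exp\left(-\int_{0}^{T} D_{s} dB_{s} - \tfrac{1}{2} \int_{0}^{T} D_{s}^{2} ds\right)
\]
turns $Z$ into a Brownian motion on $[0,T]$ started from $Z_{0} = \zeta(0, X^{1,N}_{0})$, whose law under $\Q$ is exactly $\nu_{0} \circ \zeta(0,\cdot)^{-1}$. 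Switching back to $\P$ and applying H\"older's inequality with conjugate exponents $1/(1-\delta)$ and $1/\delta$ then gives
\[
\P(Z_{t} \in \zeta(t,S);\, t < \tau^{Z})
	\leq \E_{\Q}\!\left[\left(\tfrac{d\P}{d\Q}\right)^{1/(1-\delta)}\right]^{1-\delta}
	\Q(Z_{t} \in \zeta(t,S);\, t < \tau^{Z})^{\delta}.
\]
The second factor equals $F_{t}(\zeta(t,S))^{\delta}$ by the very definition of $F_{t}$, and the first factor is controlled by a deterministic constant $c_{\delta}$ depending only on $\delta$, $T$ and $M$: writing $\frac{d\P}{d\Q}$ in its form under $\Q$ using the $\Q$-Brownian motion $\tilde{B}_{s} = B_{s} + \int_{0}^{s} D_{r} dr$, one extracts a mean-one exponential martingale multiplied by $\exp\bigl(\frac{p^{2}-p}{2}\int_{0}^{T} D_{s}^{2} ds\bigr)$ with $p = 1/(1-\delta)$, and the bound $|D| \leq M$ makes the latter factor deterministic. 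The un-killed inequality is obtained by the identical argument after dropping the stopping conditions throughout.

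The main (and essentially only) technical point is the existence of all moments of $d\P/d\Q$, which is immediate once $D$ is bounded; there is no genuine obstacle beyond this. The exponent loss from $1$ to $\delta$ is precisely the price paid for a bounded but otherwise arbitrary drift, and it is why the estimate is stated with a $\delta$-dependent constant rather than as a straight $\leq F_{t}(\zeta(t,S))$.
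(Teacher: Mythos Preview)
Your proposal is correct and follows essentially the same approach as the paper: scale transformation to $Z$, Girsanov to remove the bounded drift, then H\"older's inequality to trade the Radon--Nikodym density for the exponent $\delta$. The only cosmetic differences are that the paper restricts the change of measure to $\mathcal{F}_{t}$ rather than $\mathcal{F}_{T}$ and computes the moment of the density by switching back to $\P$ via $\E_{\Q}[\Xi_{t}^{-p}] = \E_{\P}[\Xi_{t}^{1-p}]$ rather than staying under $\Q$ as you do; both routes yield the same deterministic bound from $|D|\leq M$.
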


\begin{proof}
Let $Z$ be as in Lemma~\ref{Prob_Lem_Scale}, then $\tau^{1,N}$ is also the first hitting time, $\tau^{Z}$, of 0 by $Z$ so
\begin{equation}
\label{eq:Prob_Calc1a}
\P(X^{1,N}_{t} \in S; t < \tau^{1,N})
	= \P(Z_{t} \in \zeta(t,S); t < \tau^{Z}). 
\end{equation}
Apply Girsanov's Theorem with the change of measure 
\[
\left.\frac{d\mathbf{Q}}{d\mathbf{P}}\right|_{\mathcal{F}_{t}}\!
  = \exp\left\{ -\int_{0}^{t}D_{s}dB_{s}-\frac{1}{2}\int_{0}^{t}D_{s}^{2}ds\right\}
  =: \Xi_{t},
\]
then under \Q, $Z$ is a standard Brownian motion with $Z_{0} = \zeta(0, X^{1,N}_{0})$, and, for any $E \in \mathcal{F}_{t}$ 
and $p^{-1}+q^{-1}=1$, H\"older's inequality gives
\begin{multline*}
\P(E) 
  = \mathbf{E_{Q}} [\Xi_{t}^{-1}\mathbf{1}_{E}] 
  \leq \mathbf{E_{Q}} [\Xi_{t}^{-p}]^{\frac{1}{p}} \mathbf{Q}(E)^{\frac{1}{q}} \\
  =  \mathbf{E_{P}}[\Xi_{t}^{1-p}]^{\frac{1}{p}} \mathbf{Q}(E)^{\frac{1}{q}} 
   =  \exp\Bigl\{ C_{p}\int_{0}^{t} D_{s}^{2} ds\Bigr\} \mathbf{Q}(E)^{\frac{1}{q}}  
   \leq C_{q} \mathbf{Q}(E)^{\frac{1}{q}}, 
\end{multline*}
for some constant $C_{q} > 0$ as $D$ is uniformly bounded. Applying this bound to (\ref{eq:Prob_Calc1a}) gives
\[
\P(X^{1,N}_{t} \in S; t < \tau^{1,N})
  \leq C_{q} \Q(Z_{t} \in \zeta(t,S); t < \tau^{Z})^{\frac{1}{q}}
  = C_{q} F_{t}(\zeta(t, S))^{\frac{1}{q}}.
\]
The result is then complete by taking $\delta = q^{-1}$. The case involving $\bar{F}$ follows by dropping the dependence 
on $\{t < \tau^{1,N}\}$.
\end{proof}

The following result is a simple consequence of Lemma~\ref{Prob_Lem_Remove} and controls the expected mass concentrated 
in an interval.

\begin{cor}[Spatial concentration]
\label{Prob_Cor_SpatialConc}
For every $\delta \in (0,1)$ there exists $c_{\delta} > 0$ such that 
\[
\E \int^{T}_{0} \nu^{N}_{t}(a, b)dt
	\leq \E \int^{T}_{0} \nub^{N}_{t}(a, b)dt
	\leq c_{\delta} (b - a)^{\delta},
\]
for all $a < b$ and $N \geq 1$.
\end{cor}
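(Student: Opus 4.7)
The first inequality is immediate from \eqref{eq:Finite_SimpleBound}. For the second, the plan is to apply Fubini together with the identical distribution of the particles to reduce the estimate to a single-particle probability, then use Lemma~\ref{Prob_Lem_Remove} to replace $X^{1,N}$ by a scale-transformed Brownian marginal, and finally control the resulting time integral via a short-time heat-kernel bound.

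Concretely, Fubini and the fact that $\{X^{i,N}\}_{1 \le i \le N}$ are identically distributed give
\[
\E \int_{0}^{T} \nub^{N}_{t}(a,b)\, dt = \int_{0}^{T} \P\bigl(X^{1,N}_{t} \in (a,b)\bigr)\, dt,
\]
and applying the no-killing version of Lemma~\ref{Prob_Lem_Remove} with exponent $\delta$ yields $\P(X^{1,N}_{t} \in (a,b)) \le c_{\delta}\, \bar{F}_{t}(\zeta(t,(a,b)))^{\delta}$. Because $\partial_{x}\zeta(t,x) = 1/\sigma(t,x)$ is bounded by $C$ via Assumption~\ref{Not_Def_Coefficients}~(\ref{Not_Def_Coefficients_3}), the image $\zeta(t,(a,b))$ is an interval of length at most $C(b-a)$, so it remains to estimate $\bar{F}_{t}(I)$ for intervals $I$ with $|I| \le C(b-a)$, integrated in time.

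Write $g_{t}$ for the density of $\bar{F}_{t}$. Since $\bar{F}$ is the marginal law of a standard Brownian motion, $g_{t} = g_{0} \ast p_{t}$ where $p_{t}$ is the Gaussian kernel. A one-line change of variables using that $\sigma$ is bounded above and below shows that $g_{0}$, the pushforward of $V_{0}$ under $\zeta(0,\cdot)$, satisfies $\|g_{0}\|_{2} \le C\|V_{0}\|_{2} < \infty$ by Assumption~\ref{Not_Def_Coefficients}~(\ref{Not_Def_Coefficients_1}). Cauchy--Schwarz then gives $\|g_{t}\|_{\infty} \le \|g_{0}\|_{2}\|p_{t}\|_{2} = C t^{-1/4}$, so $\bar{F}_{t}(\zeta(t,(a,b)))^{\delta} \le C t^{-\delta/4}(b-a)^{\delta}$. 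Integrating over $[0,T]$ and using $\delta/4 < 1$ concludes the proof.

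The only genuine subtlety is the short-time behaviour of $g_{t}$: because $g_{0}$ is merely $L^{2}$ rather than $L^{\infty}$, the density bound degenerates like $t^{-1/4}$ as $t \to 0$. This is exactly where the sub-linear exponent $\delta \in (0,1)$ provided by Lemma~\ref{Prob_Lem_Remove} does the work---it softens the singularity to the integrable $t^{-\delta/4}$, and the uniformity in $N$ is automatic since all estimates depend on the coefficients only through the constant $C$ from Assumption~\ref{Not_Def_Coefficients}.
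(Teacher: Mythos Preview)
Your proof is correct and follows the same overall strategy as the paper: reduce to a single particle via exchangeability, apply Lemma~\ref{Prob_Lem_Remove} to pass to the scale-transformed Brownian marginal $\bar{F}_{t}$, bound $\bar{F}_{t}$ on the short interval $\zeta(t,(a,b))$, raise to the power~$\delta$, and integrate in time.

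The only difference is in how $\bar{F}_{t}$ is controlled on a short interval. The paper simply bounds the Gaussian density pointwise by $(2\pi t)^{-1/2}$, obtaining $\bar{F}_{t}(\zeta(t,(a,b))) \le C t^{-1/2}(b-a)$ and hence an integrand of order $t^{-\delta/2}$; this uses nothing about $\nu_{0}$ beyond it being a probability measure. You instead invoke the $L^{2}$ hypothesis on $V_{0}$ from Assumption~\ref{Not_Def_Coefficients}~(\ref{Not_Def_Coefficients_1}) to get $\|g_{t}\|_{\infty} \le C t^{-1/4}$ via Cauchy--Schwarz, yielding the milder singularity $t^{-\delta/4}$. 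Your bound is sharper but uses more structure; the paper's is cruder but more elementary, and since $\delta < 1$ ensures $t^{-\delta/2}$ is already integrable, the extra input is not needed here.
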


\begin{proof}
Notice that $\zeta(t, (a, b)) \subseteq [\zeta(t,a), \zeta(t,b)]$, so with $\bar{F}$ as in Lemma~\ref{Prob_Lem_Remove}
\begin{align*}
\bar{F}_{t}(\zeta(t, (a,b)))
	&\leq \int^{\infty}_{0} \int^{\zeta(t,b)}_{\zeta(t,a)} \frac{1}{\sqrt{2 \pi t}} \exp \Big\{ - \frac{(x - \zeta(0,x_{0}))^{2}}{2t} \Big\} dx \nu_{0}(dx_{0}) \\
	&\leq (2\pi t)^{-1/2} (\zeta(t,b) - \zeta(t,a)) \\
	&= (2\pi t)^{-1/2} \int^{b}_{a} \frac{dy}{\sigma(t, y)}
	\leq (2\pi t)^{-1/2} \cdot C \cdot (b - a),
\end{align*}
and then the result is immediate from Lemma~\ref{Prob_Lem_Remove} since $t \mapsto t^{-\delta / 2}$ is integrable at the origin. 
\end{proof}


\subsection*{Boundary estimate}

A sharper application of Lemma~\ref{Prob_Lem_Remove} gives control of the concentration of mass near the origin. 
Notice the stronger rate of convergence due to the absorption at the boundary:

\begin{prop}[Boundary estimate]
\label{Prob_Prop_Boundary}
There exists $\beta > 0$ and $\delta \in (0,1)$ such that as $\e \to 0$
\[
\E\nu^{N}_{t}(0,\e)
	= t^{-\frac{\delta}{2}}O(\e^{1 + \beta})
	\qquad\textrm{and}\qquad
	\E\int^{T}_{0} \nu^{N}_{t}(0,\e) dt = O(\e^{1 + \beta})
\]
where the $O$'s are uniform in $t \in [0,T]$ and $N \geq 1$. 
\end{prop}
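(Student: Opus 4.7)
The plan is to reduce to an estimate on the marginal law $F_t$ of a killed Brownian motion via Lemma~\ref{Prob_Lem_Remove}, and then extract two sources of smallness from $F_t(0,C\e)$: a factor $\e^2$ from the reflection cancellation at the absorbing boundary, and a subcritical $t$-power from the $L^2$ regularity of $V_0$, used through a Cauchy--Schwarz step.

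First, identical distribution gives $\E\nu^N_t(0,\e) = \P(X^{1,N}_t \in (0,\e); t<\tau^{1,N})$ by~(\ref{eq:Prob_Linear}), and Lemma~\ref{Prob_Lem_Remove} then yields $\E\nu^N_t(0,\e) \leq c_{\delta_0} F_t(\zeta(t,(0,\e)))^{\delta_0}$ for any $\delta_0 \in (0,1)$. Since $\sigma^{-1}$ is bounded by Assumption~\ref{Not_Def_Coefficients}(\ref{Not_Def_Coefficients_3}), $\zeta(t,(0,\e)) \subseteq (0, C\e)$, so it suffices to estimate $F_t(0, C\e)$. Writing $\bar V_0$ for the density of $\nu_0 \circ \zeta(0,\cdot)^{-1}$, one checks directly that $\bar V_0 \in L^2(0,\infty)$ using Assumption~\ref{Not_Def_Coefficients}(\ref{Not_Def_Coefficients_1}) and the uniform bounds on $\sigma$.

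Next I would bound the killed-BM density $q_t(x) = \int_0^\infty (p_t(x_0, x) - p_t(-x_0, x)) \bar V_0(x_0)\, dx_0$ pointwise. Cauchy--Schwarz in $x_0$ gives $q_t(x) \leq \|\bar V_0\|_2 \cdot I(x)^{1/2}$, where a direct Gaussian calculation yields
\[
I(x) := \int_0^\infty (p_t(x_0, x) - p_t(-x_0, x))^2\, dx_0 = \frac{1 - e^{-x^2/t}}{2\sqrt{\pi t}} \leq \frac{x^2}{2\sqrt{\pi}\, t^{3/2}}.
\]
Hence $q_t(x) \leq C \|\bar V_0\|_2 \cdot x/t^{3/4}$ for all $x, t > 0$, and integrating in $x \in (0, C\e)$ yields $F_t(0, C\e) \leq C' \e^2/t^{3/4}$.

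Combining gives $\E\nu^N_t(0,\e) \leq C'' \e^{2\delta_0}/t^{3\delta_0/4}$. Picking any $\delta_0 \in (1/2, 2/3)$ and setting $\beta := 2\delta_0 - 1 > 0$ and $\delta := 3\delta_0/2 \in (3/4, 1)$ yields the first bound, and $\int_0^T t^{-3\delta_0/4}\,dt < \infty$ yields the second. The main obstacle is the sharp pointwise estimate $q_t(x) \lesssim x/t^{3/4}$: extracting the factor $x$ requires the reflection identity at the absorbing boundary, and keeping the $t$-power strictly below $1$ requires using the $L^2$ regularity of $V_0$. The weaker bound $\e^2/t^{3/2}$ one obtains from total mass or first-moment bounds alone would leave no admissible window for $\delta_0$, since Lemma~\ref{Prob_Lem_Remove} forces $\delta_0 < 1$ and one simultaneously needs $2\delta_0 > 1$ and a time exponent $< 1$ for integrability.
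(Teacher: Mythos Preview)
Your proof is correct and takes a genuinely different route from the paper's.

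The paper also reduces via Lemma~\ref{Prob_Lem_Remove} to estimating $F_t((0,C\e))$, but then splits the $x_0$-integral at a level $\e + f(\e)$ and uses two separate pointwise bounds on the absorbing kernel: the crude bound $G_t(x_0,x) \leq (2\pi t)^{-1/2}$ near the origin and the reflection bound $G_t(x_0,x) \leq 2x_0x\, t^{-3/2} e^{-(x-x_0)^2/2t}$ away from it. Optimising $f(\e)=\e^{1/3}$ and using only the consequence $\nu_0(0,x)=O(x^{1/2})$ of $V_0\in L^2$ yields $F_t((0,\e)) = t^{-1/2}\,O(\e^{7/6})$, and then any $\delta_0\in(6/7,1)$ in Lemma~\ref{Prob_Lem_Remove} closes the argument.

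Your Cauchy--Schwarz step is cleaner: it exploits the full $L^2$ regularity of $V_0$ (rather than just the integrated estimate $\nu_0(0,x)=O(x^{1/2})$) and replaces the splitting by a single exact computation of $\int_0^\infty G_t(x_0,x)^2\,dx_0$. The payoff is a sharper $\e$-exponent on $F_t$ ($\e^2$ versus $\e^{7/6}$), traded against a larger time singularity ($t^{-3/4}$ versus $t^{-1/2}$). Both exponent pairs leave a nonempty window for $\delta_0$: yours is $(1/2,2/3)$, the paper's is $(6/7,1)$. Your closing remark correctly identifies why the naive first-moment bound $F_t((0,\e))\lesssim \e^2/t^{3/2}$ fails to leave such a window.
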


\begin{proof}
Let $F$ be as in Lemma~\ref{Prob_Lem_Remove}. The heat kernel for a Brownian motion absorbed at the origin is
\begin{equation}
G_{t}(x_{0},x) 
  = (2 \pi t)^{-\frac{1}{2}} \Bigl[ \exp \Bigl\{-\frac{(x - x_{0})^{2}}{2t} \Bigr\} - \exp \Bigl\{-\frac{(x + x_{0})^{2}}{2t} 
  \Bigr\} \Bigr],
\label{eq:hkabsorbing}
\end{equation}
for $x_{0}, x, t > 0$. By using the bounds $G_{t}(x_{0},x) \leq (2 \pi t)^{-1/2}$ and 
\[
G_{t}(x_{0},x)
  \leq \frac{2 x_{0} x}{\sqrt{2 \pi t^{3}}} \exp \Bigl\{-\frac{(x - x_{0})^{2}}{2t} \Bigr\},
\]
which follows from the simple estimate $1-e^{-z} \leq z$, for an arbitrary function $f = f(\e)$ we have, writing 
$\pi_{0} := \nu_{0} \circ \zeta(0, \cdot)^{-1}$, that
\begin{align*}
F_{t}((0,\e))
	&\leq c_{1} t^{-\frac{1}{2}} \int^{\e}_{0} \int^{\e + f(\e)}_{0} \pi_{0}(dx_{0}) dx\\
	& \qquad + c_{1}t^{-\frac{3}{2}} \int^{\e}_{0} \int^{\infty}_{\e + f(\e)} x x_{0} \exp \Bigl\{-\frac{(x - x_{0})^{2}}{2t} \Bigr\} 
	\pi_{0}(dx_{0}) dx  \\
	&\leq c_{1}t^{-\frac{1}{2}} \e \pi_{0}(0, \e + f(\e)) \\
		& \qquad+ c_{1} t^{-\frac{3}{2}} \exp \Big\{ - \frac{f(\e)^{2}}{2t}  \Big\} \cdot \int^{\e}_{0} x dx \cdot \int_{0}^{\infty} 
		x_{0}  \pi_{0}(dx_{0})
\end{align*}
where $c_{1} > 0$ is a numerical constant. By Assumption~\ref{Not_Def_Coefficients}(i) we have a constant $c_{2} > 0$ such that
\[
F_{t}((0,\e))
	\leq c_{1}t^{-\frac{1}{2}} \e \nu_{0}(0, c_{2}(\e + f(\e))) 
	+ c_{2} t^{-\frac{3}{2}} \e^{2}\exp\{-f(\e)^{2} / 2t\}.
\]
Since the function 
\[
u \mapsto u^{-\alpha}\exp\{- \beta / u \},	\qquad \textrm{for } u > 0, \alpha, \beta > 0 
\]
is maximised at $u = \beta/\alpha$, we have the bound
\[
F_{t}((0,\e))
	\leq c_{3} t^{-\frac{1}{2}} \e \{ \nu_{0}(0, c_{2}(\e + f(\e))) + \e f(\e)^{-2} \}.
\]
Taking $f(\e) = \e^{1/3}$ gives 
\[
F_{t}((0,\e)) = t^{-\frac{1}{2}}O(\e^{1 + \frac{1}{6}})
\]
since $\nu_{0}(0,x) = O(x^{1/2})$ as $x \to 0$ (recall 
Assumption~\ref{Not_Def_Coefficients}(\ref{Not_Def_Coefficients_1})). The result is complete by applying Lemma~\ref{Prob_Lem_Remove} and noting that $\zeta(t, (0,\e)) \subseteq [0, \zeta(t,\e)] \subseteq [0,C\e]$. 
\end{proof}


\subsection*{Tail estimate}

A similar analysis applies for the decay of the mass that escapes to infinity.

\begin{prop}[Tail estimate]
\label{Prob_Prop_Tail}
For every $\alpha > 0$, as $\lambda \to +\infty$ 
\[
\E \nu^{N}_{t}(\lambda,\infty)= o(\exp\{-\alpha \lambda\}),
\qquad \textrm{uniformly in } N \geq 1 \textrm{ and } t \in [0,T].
\]
\end{prop}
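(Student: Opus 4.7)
The plan is to mirror the strategy of Proposition~\ref{Prob_Prop_Boundary}, swapping the role of the absorbing boundary for a tail estimate at infinity. Using the exchangeability identity (\ref{eq:Prob_Linear}),
\[
\E \nu^{N}_{t}(\lambda, \infty) \leq \P(X^{1,N}_{t} > \lambda),
\]
and then Lemma~\ref{Prob_Lem_Remove} reduces the problem to controlling $\bar{F}_{t}(\zeta(t,(\lambda,\infty)))^{\delta}$, where $\bar{F}$ is the marginal law of a free Brownian motion with initial distribution $\pi_{0} := \nu_{0} \circ \zeta(0,\cdot)^{-1}$. Since Assumption~\ref{Not_Def_Coefficients}(\ref{Not_Def_Coefficients_3}) gives $\sigma \leq C$, we have the key comparison $\zeta(t,\lambda) \geq C^{-1}\lambda$ and $\zeta(t,(\lambda,\infty)) = (\zeta(t,\lambda),\infty)$.

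The core estimate is to split the Gaussian integral $\bar{F}_{t}((\zeta(t,\lambda),\infty)) = \int_{0}^{\infty} \P(x_{0} + \sqrt{t}Z > \zeta(t,\lambda))\,\pi_{0}(dx_{0})$ (with $Z$ standard normal) according to whether $x_{0} < \zeta(t,\lambda)/2$ or not. On the first region the Gaussian tail bound yields
\[
\P(Z > \zeta(t,\lambda)/(2\sqrt{t})) \leq \exp\{-\zeta(t,\lambda)^{2}/(8t)\} \leq \exp\{-C^{-2}\lambda^{2}/(8T)\},
\]
uniformly in $t \in (0,T]$, and this is $o(e^{-\alpha\lambda})$ for every $\alpha > 0$ because of the quadratic exponent. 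On the second region we dominate the probability by $1$, leaving $\pi_{0}(\zeta(t,\lambda)/2,\infty)$. Using $\zeta(0,x) \leq Cx$, the inclusion $\{x : \zeta(0,x) > \zeta(t,\lambda)/2\} \subseteq \{x : x > \lambda/(2C^{2})\}$ gives
\[
\pi_{0}(\zeta(t,\lambda)/2, \infty) \leq \nu_{0}(\lambda/(2C^{2}), \infty),
\]
which is $o(e^{-\alpha'\lambda})$ for every $\alpha' > 0$ by the exponential-tail hypothesis of Assumption~\ref{Not_Def_Coefficients}(\ref{Not_Def_Coefficients_1}).

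Combining the two bounds and raising to the power $\delta$ preserves the $o(e^{-\alpha\lambda})$ decay: given a target $\alpha > 0$, the two pieces are each $o(e^{-(\alpha/\delta)\lambda})$ after a suitable choice of the free exponent in the initial-data assumption, so their $\delta$-th power is $o(e^{-\alpha\lambda})$. Uniformity in $N \geq 1$ is automatic because the constant $c_{\delta}$ from Lemma~\ref{Prob_Lem_Remove} depends only on the uniform bound on the drift $D$ in Lemma~\ref{Prob_Lem_Scale}; uniformity in $t \in [0,T]$ follows from the worst-case bound $\exp\{-C^{-2}\lambda^{2}/(8T)\}$ (the case $t=0$ being immediate from $\E\nu^{N}_{0}(\lambda,\infty) = \nu_{0}(\lambda,\infty)$).

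The only real subtlety is the interplay between the three exponents: the Girsanov loss $\delta < 1$, the tail rate $\alpha$ we want to achieve, and the rate $\alpha'$ we extract from the initial condition. Because the hypothesis on $\nu_{0}$ is that the tail decays faster than \emph{every} exponential, this trade-off causes no harm --- we simply feed in $\alpha' = \alpha/\delta$ scaled by the appropriate constant from the $\zeta$-change of variables. No further obstacles arise.
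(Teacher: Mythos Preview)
Your proposal is correct and follows essentially the same approach as the paper: reduce to $\bar{F}_{t}$ via Lemma~\ref{Prob_Lem_Remove}, split the $\pi_{0}$-integral at half the target level, and control the two pieces by a Gaussian tail bound and the exponential-tail hypothesis on $\nu_{0}$, respectively. Your version is marginally cleaner in that by using the bound $\P(Z>u)\le e^{-u^{2}/2}$ you avoid the $t^{-1/2}$ prefactor that the paper picks up and then has to dispose of via the observation that $t^{-1/2}e^{-\lambda^{2}/t}$ is uniformly bounded; otherwise the arguments coincide.
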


\begin{proof}
Working with $\bar{F}$ from Lemma~\ref{Prob_Lem_Remove} and splitting the range of integration at $\lambda / 2$ gives
\begin{align*}
\bar{F}_{t}((\lambda, \infty))
	&= \int_{0}^{\infty} \P(B_{t} > \lambda|B_0=x) \pi_{0}(dx) \\
	&\leq c_{1} t^{-\frac{1}{2}} \exp\Big\{ -\frac{\lambda^{2}}{8t} \Big\} 
	+ \pi_{0}(\lambda / 2, \infty),
\end{align*}
where $\pi_{0} = \nu_{0} \circ \zeta(0, \cdot)^{-1}$. By the conditions of Assumption~\ref{Not_Def_Coefficients}, 
$\pi_{0}(\lambda / 2, \infty) = o(e^{-\alpha \lambda})$, so
\[
\bar{F}_{t}((\lambda, \infty))
	\leq c_{1} t^{-\frac{1}{2}}e^{-2\lambda^{2} / t} + o(e^{-\alpha \lambda})
	\leq c_{1} \{t^{-\frac{1}{2}} e^{-\lambda^{2} / t}\} e^{-\lambda^{2} / T} + o(e^{-\alpha \lambda}),
\]
as $\lambda \to \infty$, for every $\alpha > 0$. The result follows since $t \mapsto t^{-\frac{1}{2}} e^{-\lambda^{2} / t}$ is uniformly bounded 
for $\lambda \geq 1$, and using Lemma~\ref{Prob_Lem_Remove} with the fact that $\zeta(t, (\lambda, \infty)) \subseteq [\zeta(t, \lambda), \infty) \subseteq [C^{-1}\lambda, \infty)$.
\end{proof}


\subsection*{Loss increment estimate}

So far the probabilistic estimates we have seen are consequences of the behaviour of the first moment of the diffusion processes. 
The next two estimates require knowledge of the correlation between particles and so are harder to prove. Heuristically, the 
first result shows that over any non-zero time interval a non-zero proportion of particles hit the absorbing boundary. Later 
in Proposition~\ref{Tight_Prop_RegCond} this result will directly imply that limiting loss functions are strictly increasing whenever
there is a non-zero proportion of mass remaining in the system. 

\begin{prop}[Asymptotic loss increment]
\label{Prob_Prop_LossInc}
For all $t \in [0,T)$, $h > 0$ (such that $t + h \in [0,T]$) and $r < 1$
\[
\lim_{\delta \to 0} \limsup_{N \to \infty} \P(L^{N}_{t+ h} - L^{N}_{t} < \delta, L^{N}_{t} < r) =0.
\]
\end{prop}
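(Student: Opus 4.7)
The plan is to compare the interacting system on $[t,t+h]$ to a ``frozen'' auxiliary system in which the loss variable appearing in the coefficients is held fixed at $L^{N}_{t}$. On $\{L^{N}_{t+h}-L^{N}_{t}<\delta\}$ the true and frozen coefficients differ by at most $O(\delta)$, so Gronwall provides a pathwise comparison between the two systems, while the frozen system is, conditional on the common noise and $\mathcal{F}_t$, a collection of independent diffusions to which a law of large numbers applies.

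Concretely, I first construct, for $s\in[t,t+h]$, the frozen processes $Y^{i,N}$ solving (\ref{eq:Intro_Coefficients}) with $L^{N}_{s}$ replaced by the constant $L^{N}_{t}$ and $Y^{i,N}_{t}=X^{i,N}_{t}$; denote by $\tau^{Y,i}$ their first zeros and set $\mathcal{H}_t:=\mathcal{F}_t\vee\sigma(W_u-W_t:u\in[t,t+h])$. Given $\mathcal{H}_t$, the $\{Y^{i,N}\}_i$ are independent, being driven by the independent noises $W^i$ alone. A scale change and Girsanov argument in the spirit of Lemmas~\ref{Prob_Lem_Scale}--\ref{Prob_Lem_Remove}, applied pathwise in $W$, produces a constant $q_K>0$ such that $\P(\tau^{Y,i}\le t+h\mid\mathcal{H}_t,X^{i,N}_t=x)\ge q_K$ for every $x\in(0,K]$, using the non-degeneracy $\rho\le 1-C^{-1}$ to guarantee enough idiosyncratic noise. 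Setting $M_Y:=\#\{i:X^{i,N}_t\in(0,K],\ t<\tau^{i,N},\ \tau^{Y,i}\le t+h\}$, a conditional Chernoff bound combined with the tail estimate (Proposition~\ref{Prob_Prop_Tail}, used to choose $K$ so that $\nu^N_t(0,K]\ge(1-r)/2$ outside a small event) yields a constant $c_0>0$ with
\[
\P\bigl(M_Y\ge c_0 N,\ L^N_t<r\bigr)\ \ge\ \P(L^N_t<r)-o_N(1)-o_K(1).
\]

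To close the argument I compare $X^{i,N}$ to $Y^{i,N}$. Suppose $\delta$ is small enough that $[L^N_t,L^N_t+\delta)\subseteq[\theta_{j-1},\theta_j)$ for some $j$. On $\{L^{N}_{t+h}-L^{N}_{t}<\delta\}$ the Lipschitz bound of Assumption~\ref{Not_Def_Coefficients}(\ref{Not_Def_Coefficients_4}) and Gronwall's inequality give $\sup_{s\in[t,t+h]}|X^{i,N}_s-Y^{i,N}_s|\le C_1\delta$ uniformly in $i$. Each $i$ contributing to $M_Y$ is therefore either a genuine $X$-death (at most $\delta N$ such particles) or a \emph{near-miss} whose $X^{i,N}$ enters the thin strip $(0,C_1\delta]$ at some point in $[t,t+h]$ without ever reaching zero. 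A direct Brownian hitting-time calculation shows that a Brownian motion started at $x_0\in(C_1\delta,K]$ enters $(0,C_1\delta]$ in $[0,h]$ yet avoids zero with probability $O(\delta)$, and Lemma~\ref{Prob_Lem_Remove} transfers this into a bound $O(\delta^{1-\epsilon})$ on the probability that an individual $X^{1,N}$ is a near-miss. Markov's inequality then bounds the total number of near-misses by $c_0 N/2$ outside an event of probability $O(\delta^{1-\epsilon})$, contradicting $M_Y\ge c_0 N$ once $\delta$ is small. This yields $\P(L^{N}_{t+h}-L^{N}_{t}<\delta,\ L^N_t<r)=o_N(1)+o_K(1)+O(\delta^{1-\epsilon})$, from which the claim follows by sending $N\to\infty$ and then $\delta\to 0$.

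The main obstacle is the quantitative near-miss estimate: one has to rule out the scenario where many particles skim the strip $(0,C_1\delta]$ without being absorbed, and this requires a sharp one-particle Brownian hitting-time bound (of the right order in $\delta$) pushed through the Girsanov comparison, rather than the cruder volume estimates given by Proposition~\ref{Prob_Prop_Boundary}. A secondary technical point is the case when $L^N_t$ lies within $\delta$ of a threshold $\theta_i$: the Gronwall step then no longer keeps the system inside one Lipschitz interval, so the argument must be either restricted by conditioning on which tranche $L^N_s$ inhabits over $[t,t+h]$, or combined with a uniform bound on the probability that $L^N_t$ falls in an $O(\delta)$-neighbourhood of the finitely many $\theta_i$.
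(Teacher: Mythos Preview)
Your approach via a frozen auxiliary system is natural but contains a genuine gap in the Gronwall step. You claim that on $\{L^{N}_{t+h}-L^{N}_{t}<\delta\}$ one obtains a \emph{pathwise} bound $\sup_{s\in[t,t+h]}|X^{i,N}_s-Y^{i,N}_s|\le C_1\delta$. However, the difference $X^{i,N}-Y^{i,N}$ is governed by an SDE whose \emph{diffusion} coefficients (not just the drift) differ by $O(\delta)+O(|X-Y|)$, since both $\rho(s,L^N_s)$ and $\sigma(s,X^{i,N}_s)$ enter the noise terms. Gronwall arguments for SDEs with common driving noises yield moment bounds of the form $\E[\sup_s|X_s-Y_s|^p]\le C\delta^p$ via the BDG inequality, but \emph{not} an almost-sure pathwise bound: the stochastic integral contribution cannot be absorbed into a pathwise Gronwall iteration. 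Without the pathwise bound, your near-miss deduction --- that each $i\in M_Y$ is either a genuine $X$-death or has $X^{i,N}$ entering $(0,C_1\delta]$ --- fails, since you cannot locate $X^{i,N}$ relative to $Y^{i,N}$ at the random time $\tau^{Y,i}$.

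A second gap is the uniform lower bound $q_K$ on $\P(\tau^{Y,i}\le t+h\mid\mathcal{H}_t,X^{i,N}_t=x)$. This conditional probability depends on the realised path of $W$ on $[t,t+h]$: the $W$-integral acts as a random shift, and for $W$-paths with a large positive excursion the hitting probability can be arbitrarily small. No $q_K>0$ works uniformly over all $W$-paths; you would need to first restrict to a high-probability event for $W$ before the Chernoff bound. The paper's proof sidesteps both difficulties by never freezing the system. It works directly with the scale-transformed processes $Z^i=\zeta(\cdot,X^{i,N})$, which have \emph{unit} diffusion regardless of $L^N$, and decomposes the increment $Z^i_{t+u}-Z^i_t$ into a bounded drift, a common martingale $I_u$ (controlled in probability by Doob's inequality), and an idiosyncratic part $J^i_u$ which, after a common $L^N$-dependent time-change, becomes a family of genuine independent Brownian motions. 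The conclusion then follows from a law of large numbers for i.i.d.\ Gaussian indicators, with the spatial cutoff $a$ chosen as a slowly diverging function of $\delta$; no SDE comparison estimate is needed at all.
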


\begin{proof}
Begin by noticing that, for any $a,b > 0$, if $L^{N}_{t} < r$ and $\nu^{N}_{t}(a, \infty) \leq b$, then $\nu^{N}_{t}(0, a) > 1 - r - b$. 
By applying Markov's inequality and Proposition~\ref{Prob_Prop_Tail} we get the bound
\begin{align*}
\P(L^{N}_{t+h} - L^{N}_{t} < \delta, L^{N}_{t} < r)
	&\leq \P(L^{N}_{t+h} - L^{N}_{t} < \delta, \nu^{N}_{t}(0, a) > 1 - r - b) \\
	& \qquad + \P(\nu^{N}_{t}(a, \infty) > b)  \\
	&\leq \P(L^{N}_{t+h} - L^{N}_{t} < \delta, \nu^{N}_{t}(0, a) > 1 - r - b)  \\
	& \qquad + o(e^{-a}).
\end{align*}
Therefore fix $b = 1 - r - c_{0}$, for $c_{0} = \frac{1}{2}(1 - r)$, to arrive at
\begin{equation}
\label{eq:Prob_LossInc0}
\P(L^{N}_{t+h} - L^{N}_{t} < \delta, L^{N}_{t} < r) \leq \P(L^{N}_{t+h} - L^{N}_{t} < \delta, \nu^{N}_{t}(0, a) > c_{0}) 
+ o(e^{-a}).
\end{equation}

We now concentrate on the first term in the right-hand side above with $N$, $t$ and $a$ fixed. Let $\cI$ denote the random 
set of indices
\[
\cI := \{ 1 \leq i \leq N : X^{i, N}_{t} < a \textrm{ and } \tau^{i,N} > t \}.
\]
If $\nu^{N}_{t}(0, a) > c_{0}$, then $\# \cI \geq Nc_{0}$, so by conditioning on $\cI$ (which is $\mathcal{F}_{t}$-measurable)
\begin{multline}
\label{eq:Prob_LossInc1}
\P(L^{N}_{t+h} - L^{N}_{t} \leq \delta, \nu^{N}_{t}(0,a) > c_{0}) \\
	\leq \sum_{\cI_{0} : \# \cI_{0} \geq Nc_{0}} \P(L^{N}_{t+h} - L^{N}_{t} < \delta | \cI = \cI_{0}) \P(\cI = \cI_{0})
\end{multline}
and 
\begin{equation}
\label{eq:Prob_LossInc2}
\P(L^{N}_{t+h} - L^{N}_{t} < \delta | \cI = \cI_{0})
	\leq \P(\#\{ i \in \cI_{0} : \inf_{u \leq h} X^{i, N}_{t + u} \leq 0 \} < N\delta | \cI = \cI_{0})
\end{equation}

To estimate the right-hand side of (\ref{eq:Prob_LossInc2}) take $\zeta$ as in Lemma~\ref{Prob_Lem_Scale} and define 
$Z^{i}_{t} := \zeta(t, X^{i,N}_{t})$ for $1 \leq i \leq N$. By Assumption~\ref{Not_Def_Coefficients}, there exists a 
constant $c_{1} > 0$ such that $|D^{i}_{t}| \leq c_{1}$ for all $t$. Returning to (\ref{eq:Prob_LossInc2}), since 
$\zeta(t, x) \leq 0$ if and only if $x \leq 0$, we have 
\[
\P(L^{N}_{t+h} - L^{N}_{t} < \delta | \cI = \cI_{0})
	\leq \P(\#\{ i \in \cI_{0} : \inf_{u \leq h} Z^{i}_{t + u} \leq 0 \} < N\delta | \cI = \cI_{0}).
\]
From the bound $Z^{i}_{t + u} \leq Z^{i}_{t} + c_{1} h + Y^{i}_{u}$, for $0\leq u \leq h$, where
\[
Y^{i}_{u}
	:= I_{u} + J^{i}_{u} := \int^{t + u}_{t} \rho(s, L^{N}_{s}) dW_{s}
		+ \int^{t + u}_{t} \sqrt{1 - \rho(s, L^{N}_{s})^{2}} dW^{i}_{s},
\]
we obtain
\begin{multline*}
\P(L^{N}_{t+h} - L^{N}_{t} < \delta | \cI = \cI_{0}) \\
	\leq \P(\#\{ i \in \cI_{0} : \inf_{u \leq h} Y^{i}_{u} \leq -Z^{i}_{t} - c_{1} h  \} < N\delta | \cI = \cI_{0}). 
\end{multline*}
From Assumption~\ref{Not_Def_Coefficients} $|Z^{i}_{t}| = O(|X^{i,N}_{t}|)$, so we have $c_{2} > 0$ such that
\begin{multline}
\label{eq:Prob_LossInc3}
\P(L^{N}_{t+h} - L^{N}_{t} < \delta | \cI = \cI_{0}) \\
	\leq \P(\#\{ i \in \cI_{0} : \inf_{u \leq h} Y^{i}_{u} \leq -c_{2} a - c_{2}  \} < N\delta | \cI = \cI_{0}).
\end{multline}

Our next step is to remove the dependence on the process $I$ in (\ref{eq:Prob_LossInc3}). To do this we split the probability 
on the event $\{\sup_{u \leq h} |I_{u}| \geq c_{2}a\}$ to get 
\begin{align*}
\P(L^{N}_{t+h} - L^{N}_{t} < \delta &| \cI = \cI_{0}) \\
	&\leq \P(\#\{ i \in \cI_{0} : \inf_{u \leq h} J^{i}_{u} \leq -2c_{2} a - c_{2}  \} < N\delta | \cI = \cI_{0}) \\
	&\qquad + \P(\sup_{u \leq h} |I_{u}| \geq c_{2}a | \cI = \cI_{0}).
\end{align*}
Since $I$ is a martingale, this final probability is $o(1)$ as $a \to \infty$, by Doob's maximal inequality. 

We have reduced the problem far enough to apply a time-change in order to extract the independence between the particles. 
To this end, conditioned on the event $\cI = \cI_{0}$, define 
\begin{equation}
\label{eq:Def_v}
v(s)
	:= \inf \{ u > 0 : \int^{t + u}_{t} (1 - \rho(u_{0}, L^{N}_{u_{0}})^{2}) du_{0} = s \}, 
\end{equation}
then $B$, where $B^{i} := J^{i}_{v(\cdot)}$, is an $\R^{\# \cI_{0}}$-valued standard Brownian motion, therefore 
\begin{multline*}
\P(L^{N}_{t+h} - L^{N}_{t} < \delta | \cI = \cI_{0}) \\
	\leq \P(\#\{ i \in \cI_{0} : \inf_{v(u) \in [0,h]} B^{i}_{u} \leq -2c_{2} a - c_{2}  \} < N\delta | \cI = \cI_{0})
	+ o(1).
\end{multline*}
By Assumption~\ref{Not_Def_Coefficients}, $c_{3}u \leq |v(u)|$, hence 
\begin{align*}
\P(L^{N}_{t+h}&- L^{N}_{t} < \delta | \cI = \cI_{0}) \\
	&\leq \P(\#\{ i \in \cI_{0} : \inf_{u \in [0,h/c_{3}]} B^{i}_{u} \leq -2c_{2} a - c_{2}  \} < N\delta | \cI = \cI_{0})
	+ o(1)   \\
	&\leq \P(\#\{ i \in \cI_{0} :  B^{i}_{h/c_{3}} \leq -2c_{2} a - c_{2}  \} < N\delta | \cI = \cI_{0})
	+ o(1)  \\
	&\leq \P\Big(\frac{1}{N} \sum_{i \in \cI_{0}} \1_{\xi^{i} \leq -c_{4} (a +1)} < \delta\Big)
	+ o(1),
\end{align*}
where $\{\xi^{i}\}_{1 \leq i \leq  N}$ is a collection of i.i.d.\ standard normal random variables and $c_{3}, c_{4} > 0$ 
are further numerical constants. By symmetry, this final probability depends only on $\#\cI_{0}$, hence 
\[
\P(L^{N}_{t+h} - L^{N}_{t} < \delta | \cI = \cI_{0})
	\leq \P\Big(\frac{1}{N} \sum_{i = 1}^{\#\cI_{0}} \1_{\xi^{i} \leq -c_{4} (a +1)} < \delta\Big) + o(1).
\]

Returning to (\ref{eq:Prob_LossInc1}) we now have
\begin{align*}
\P(L^{N}_{t+h} &- L^{N}_{t} < \delta, \nu^{N}_{t}(0,a) > c_{0}) \\
	&\leq \sum_{S_{0} : \# \cI_{0} \geq Nc_{0}} \!\!\! \P\Big(\frac{1}{N} \sum_{i = 1}^{\#\cI_{0}} \1_{\xi^{i} 
	\leq -c_{4} (a +1)} < \delta\Big) \P(\cI = \cI_{0}) 	+ o(1) \\
	&\leq \P\Big(\frac{1}{N} \sum_{i = 1}^{Nc_{0}} \1_{\xi^{i} \leq -c_{4} (a +1)} < \delta\Big) + o(1),
\end{align*}
so the law of large numbers gives
\begin{equation}
\label{eq:Prob_LossInc4}
\limsup_{n \to \infty}\P(L^{N}_{t+h} - L^{N}_{t} < \delta, L^{N}_{t} < r)
	 \leq \1_{c_{0} p(a) \leq \delta} + o(1),
\end{equation}
where $p(a) := \P(\xi^{1} \leq -c_{4} (a +1))$ and where we have substituted back into (\ref{eq:Prob_LossInc0}). This inequality
holds for all $a$ and $\delta$, with the $o(1)$ term denoting convergence as $a \to \infty$. We now choose the free parameter 
$a$ to be a function of $\delta$, specifically
\[
a(\delta) := (2 \log \log (1 / \delta))^{\frac{1}{2}}.
\]
This guarantees that $a(\delta) \to \infty$ as $\delta \to 0$, but also 
\begin{align*}
\delta^{-1} p(a(\delta)) 
	&\geq \frac{1}{2} \delta^{-1} a(\delta)^{-1} e^{-a(\delta)^{2} / 2} \\
	&= \frac{1}{\sqrt{2}} \delta^{-1} (\log(1 / \delta))^{-1} (\log \log(1 / \delta))^{1/2}
		\to \infty
\end{align*}
as $\delta \to 0$, where we have used the well-known Gaussian estimate $\Phi(-x) \geq (x^{-1} - x^{-3})\phi(x) \geq 
\frac{1}{2}x^{-1}\phi(x)$, for $\Phi$ and $\phi$ the c.d.f.\ and p.d.f.\ of the standard normal distribution. Using this 
choice of $a(\delta)$ in (\ref{eq:Prob_LossInc4}) completes the result.
\end{proof}

The following is a partial converse of the previous result in that it shows that the system cannot lose a large amount of mass 
in a short period of time. It will be used in Proposition~\ref{Tight_prop_tight} to verify a sufficient condition for the tightness 
of $(\nu^{N}, W)_{N \geq 1}$. 

\begin{prop}
\label{Prob_Prop_LowerLoss}
For every $t \in [0,T]$ and $\eta > 0$
\[
\lim_{\delta \to 0} \limsup_{N \to \infty} \P(L^{N}_{t + \delta} - L^{N}_{t} \geq \eta)
	= 0.
\]
\end{prop}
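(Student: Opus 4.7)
The plan is to apply Markov's inequality to reduce the claim to a bound on the expected loss increment, and then to split the first-hitting event according to whether the tagged particle is near the absorbing boundary at time $t$ or not.

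By Markov's inequality and exchangeability of the particles (cf.\ (\ref{eq:Prob_Linear})),
\[
\P(L^{N}_{t+\delta} - L^{N}_{t} \geq \eta) \leq \eta^{-1}\, \E[L^{N}_{t+\delta} - L^{N}_{t}] = \eta^{-1}\, \P(\tau^{1,N} \in (t, t+\delta]).
\]
It therefore suffices to show that $\P(\tau^{1,N} \in (t, t+\delta]) \to 0$ as $\delta \to 0$, uniformly in $N$.

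Fix $\e > 0$. If $\tau^{1,N} \in (t, t+\delta]$ then either $X^{1,N}_{t} \in (0, \e)$, or else $X^{1,N}_{t} \geq \e$ and the trajectory must travel a distance of at least $\e$ over $[t, t+\delta]$ in order to reach the origin. Hence
\[
\P(\tau^{1,N} \in (t, t+\delta]) \leq \E\nu^{N}_{t}(0, \e) + \P\Bigl( \sup_{s \in [0,\delta]} |X^{1,N}_{t+s} - X^{1,N}_{t}| \geq \e \Bigr).
\]
For fixed $t > 0$, Proposition~\ref{Prob_Prop_Boundary} gives $\E\nu^{N}_{t}(0, \e) \leq C_{t}\, \e^{1+\beta}$ uniformly in $N$; for the case $t = 0$ the term equals $\nu_{0}(0, \e) = O(\e^{1/2})$ by Assumption~\ref{Not_Def_Coefficients}(\ref{Not_Def_Coefficients_1}). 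For the supremum term, rewriting the dynamics via the single Brownian driver $B^{i}$ as in the discussion after Assumption~\ref{Not_Def_Coefficients}, the increment $X^{1,N}_{t+s} - X^{1,N}_{t}$ decomposes into a drift of magnitude at most $C\delta$ and a martingale with quadratic variation at most $C^{2}\delta$. Choosing $\delta$ small enough that $C\delta \leq \e/2$ and applying Doob's $L^{2}$ maximal inequality to the martingale part yields
\[
\P\Bigl( \sup_{s \in [0,\delta]} |X^{1,N}_{t+s} - X^{1,N}_{t}| \geq \e \Bigr) \leq \frac{4 C^{2}\, \delta}{\e^{2}},
\]
uniformly in $N$.

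Combining these bounds and fixing $\e > 0$, we send $N \to \infty$ and then $\delta \to 0$ to obtain
\[
\limsup_{\delta \to 0} \limsup_{N \to \infty} \P(L^{N}_{t+\delta} - L^{N}_{t} \geq \eta) \leq \eta^{-1}\, C_{t}\, \e^{1+\beta}
\]
(with the right-hand side replaced by an $O(\e^{1/2})$ term when $t = 0$), and letting $\e \to 0$ completes the proof. There is no serious obstacle here; the only mild subtlety is that the prefactor $t^{-\delta/2}$ in Proposition~\ref{Prob_Prop_Boundary} degenerates at $t = 0$, so that boundary case has to be handled separately using the initial-condition hypothesis rather than the inherited boundary estimate.
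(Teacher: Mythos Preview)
Your proof is correct and substantially simpler than the paper's. The paper does not take the first-moment route via Markov's inequality; instead it argues directly on the probability $\P(L^{N}_{t+\delta} - L^{N}_{t} \geq \eta)$: it first peels off the event $\{\nu^{N}_{t}(0,\e) \geq \eta/2\}$ by Markov, then conditions on the random index set $\cI = \{i : X^{i,N}_{t} \geq \e\}$, applies the scale transformation of Lemma~\ref{Prob_Lem_Scale}, separates out the systemic noise $I$, performs the time-change from (\ref{eq:Def_v}) to obtain independent Brownian motions, and finally appeals to the law of large numbers with a carefully tuned choice $\e(\delta) = \delta^{1/2}\log(1/\delta)$, $a(\delta) = \delta^{1/2}\log\log(1/\delta)$.

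The paper's method mirrors the proof of Proposition~\ref{Prob_Prop_LossInc}, where that machinery is genuinely needed because one is bounding a \emph{lower} tail of the loss increment and must therefore control the correlation between particles. For the present upper-tail statement, however, your observation that $\E[L^{N}_{t+\delta} - L^{N}_{t}] = \P(\tau^{1,N} \in (t,t+\delta])$ reduces everything to a single-particle estimate, so the conditioning, time-change, and LLN are unnecessary. Your argument is more elementary and shorter; the only minor cosmetic point is that the constant in your Doob bound should be $16C^{2}\delta/\e^{2}$ rather than $4C^{2}\delta/\e^{2}$ once the $\e/2$ splitting is taken into account, but this is immaterial.
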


\begin{proof}
With $\e > 0$ fixed, we have 
\begin{align}
\label{eq:LossLowerI}
\P(L^{N}_{t + \delta} &- L^{N}_{t} \geq \eta) \\
	&\leq \P(\nu^{N}_{t}(0,\e) \geq \eta / 2) \nonumber
	+ \P(L^{N}_{t + \delta} - L^{N}_{t} \geq \eta, \nu^{N}_{t}(0,\e) < \eta / 2)  \\
	&\leq 2\eta^{-1} \P(X^{1,N}_{t} \in (0, \e))
	+ \P(L^{N}_{t + \delta} - L^{N}_{t} \geq \eta, \nu^{N}_{t}(0,\e) < \eta / 2), \nonumber \\
	&\leq \P(L^{N}_{t + \delta} - L^{N}_{t} \geq \eta, \nu^{N}_{t}(0,\e) < \eta / 2) + o(1), \qquad \textrm{as }\e \to 0, \nonumber
\end{align}
where the second line uses Markov's inequality and (\ref{eq:Prob_Linear}) and the third line uses 
Proposition~\ref{Prob_Prop_Boundary} for $t> 0$ and Assumption~\ref{Not_Def_Coefficients} (\ref{Not_Def_Coefficients_1}) 
for $t = 0$.  Define $\cI$ to be the random set of indices
\[
\cI := \{ 1 \leq i \leq N : X^{i,N}_{t} \geq \e \},
\]
then conditioning on $\cI$ gives
\begin{multline}
\label{eq:LossLowerIII}
\P(L^{N}_{t + \delta} - L^{N}_{t} \geq \eta, \nu^{N}_{t}(0,\e) < \eta / 2) \\
	\leq \sum_{\cI_{0} : \# \cI_{0} \geq N(1-\eta / 2)} \P(L^{N}_{t + \delta} - L^{N}_{t} \geq \eta | \cI = \cI_{0}) \P(\cI = \cI_{0}).
\end{multline}
The conditional expectation in the summand can be bounded by
\begin{align*}
\P(L^{N}_{t + \delta} &- L^{N}_{t} \geq \eta | \cI = \cI_{0}) \\
	&\leq \P(\#\{i \in \cI_{0} : \inf_{s \in [t, t+ \delta]} X^{i,N}_{s} \leq 0 \} \geq \frac{N\eta}{2}  | \cI = \cI_{0})   \\
	&\leq \P(\#\{i \in \cI_{0} : \inf_{s \in [t, t+ \delta]} ( X^{i,N}_{s} - X^{i,N}_{t} ) \leq -\e \} \geq \frac{N\eta}{2}  | \cI = \cI_{0}).
\end{align*}

With $t$ fixed, define the process $U^{i}_{s} := \zeta(t + s, X^{i,N}_{t + s} - X^{i,N}_{t})$, then 
\[
\P(L^{N}_{t + \delta} - L^{N}_{t} \geq \eta | \cI = \cI_{0})
	\leq \P(\#\{i \in \cI_{0} : \inf_{s \in [0, \delta]} U^{i}_{s} \leq -c_{5}\e \} \geq \frac{N\eta}{2}  | \cI = \cI_{0})
\]
for $c_{5} > 0$ a numerical constant. As for $Z$ in Lemma~\ref{Prob_Lem_Scale}, we have 
\begin{align*}
dU^{i}_{s} 
	&= E^{i}_{s} ds 
	+ \rho(t + s, L^{N}_{t + s}) dW_{t + s}
	+ (1 - \rho(t + s, L^{N}_{t + s})^{2})^{1/2} dW^{i}_{t + s} \\
	&=: E^{i}_{s}ds + dI_{s} + dJ^{i}_{s},
\end{align*}
where $E^{i}_{s}$ is uniformly bounded by Assumption~\ref{Not_Def_Coefficients}, therefore we can find $c_{6} > 0$ such that
\begin{align*}
\P(L^{N}_{t + \delta} &- L^{N}_{t} \geq \eta | \cI = \cI_{0}) \\
	&\leq \P(\#\{i \in \cI_{0} : \inf_{s \in [0, \delta]} J^{i}_{s} \leq -c_{6}(\e -\delta - a) \} \geq \frac{N\eta}{2}  | \cI = \cI_{0})  \\
	&\qquad + \P(\sup_{s \in [0,\delta]}|I_{s}| \geq a | \cI = \cI_{0} ).
\end{align*}
By applying the time-change argument from (\ref{eq:Def_v}) and using Markov and Doob's maximal inequality we have
\begin{multline*}
\P(L^{N}_{t + \delta} - L^{N}_{t} \geq \eta | \cI = \cI_{0}) \\
	\leq \P(\#\{i \in \cI_{0} : \inf_{s \in [0, \delta]} B^{i}_{s} \leq -c_{7}(\e -\delta - a) \} \geq \frac{N\eta}{2} ) + O(\delta a^{-2}),
\end{multline*}
where $B^{i}$ are independent standard Brownian motions, $a > 0$ and $c_{7} > 0$ is a numerical constant. 

Returning to (\ref{eq:LossLowerIII}) and noticing the the right-hand side above is maximised when $\cI_{0} = \{1, 2, \dots, N\}$
\begin{multline*}
\P(L^{N}_{t + \delta} - L^{N}_{t} \geq \eta, \nu^{N}_{t}(0,\e) < \eta / 2) \\
	\leq \P \Big( \frac{1}{N} \sum_{i=1}^{N} \1_{\inf_{s \in [0, \delta]} B^{i}_{s} \leq -c_{7}(\e -\delta - a) \}} \geq \eta / 2 \Big)
	 + O(\delta a^{-2}).
\end{multline*}
The law of large numbers and the distribution of the minimum of Brownian motion gives 
\begin{multline}
\label{eq:LowerLossIV}
\limsup_{N \to \infty} \P(L^{N}_{t + \delta} - L^{N}_{t} \geq \eta, \nu^{N}_{t}(0,\e) < \eta / 2) \\
	\leq \1_{\Phi(-c_{7}\delta^{-1/2}(\e - \delta - a)) \geq \eta / 2} 
	+ O(\delta a^{-2}),
\end{multline}
provided $\e - \delta - a > 0$, where $\Phi$ is the normal c.d.f. We now make the choice
\[
\e(\delta) = \delta^{1/2} \log(1/\delta)
	\qquad \textrm{and} \qquad
	a(\delta) = \delta^{1/2} \log \log (1/\delta),
\] 
which guarantees
\[
\e(\delta) \to 0,
	\qquad \delta^{-1/2}(\e(\delta) - \delta - a(\delta)) \to \infty
	\qquad \textrm{and} \qquad
	\delta a(\delta)^{-2} \to 0,
\]
as $\delta \to 0$. Hence the result follows from (\ref{eq:LossLowerI}), (\ref{eq:LossLowerIII}) and (\ref{eq:LowerLossIV}).
\end{proof}


\section{Tightness of the system and existence of solutions; Proof of Theorem \ref{Intro_Thm_Exist}}
\label{Sect_Tightness}

We will now use the results from Section~\ref{Sect_ProbEstimates} to prove Theorem~\ref{Intro_Thm_Exist}, which 
follows directly from the combination of Propositions~\ref{Tight_Prop_ConvLoss},~\ref{Tight_Prop_RegCond}
and~\ref{Tight_Prop_EvoEqn}. We first establish tightness of the sequence of the laws of $(\nu^{N}, W)_{N \geq 1}$
(Proposition~\ref{Tight_prop_tight}) using the framework of \cite{ledger2015}. The reader is referred to that article 
for the technical definitions of the topological spaces used in this section. Once we have tightness we can then extract 
limit points of the sequence $(\nu^{N}, W)_{N \geq 1}$, and Propositions~\ref{Tight_Prop_Measure},~\ref{Tight_Prop_ConvLoss}
and~\ref{Tight_Prop_RegCond} are devoted to recovering the properties of the limiting laws from the probabilistic properties 
of the finite system. Finally, the limit points are shown to satisfy the evolution equation in Theorem~\ref{Intro_Thm_Exist} 
via a martingale argument (Proposition~\ref{Tight_Prop_EvoEqn}) and care needs to be taken over the discontinuities in 
the coefficients of the limit SPDE (Corollary~\ref{Tight_Cor_Fiddly}). 

\begin{prop}[Tightness]
\label{Tight_prop_tight}
The sequence $(\nu^{N})_{N \geq 1}$ is tight on the space $(D_{\Sdual}, \M)$, hence $(\nu^{N}, W)_{N \geq 1}$ is tight 
on the space $(D_{\Sdual}, \M) \times (C_{\mathbb{R}}, \mathrm{U})$, where $(C_{\mathbb{R}}, \mathrm{U})$ is the 
space of real-valued continuous paths with the topology of uniform convergence.  
\end{prop}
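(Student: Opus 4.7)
The plan is to apply the framework of \cite{ledger2015} for tightness of distribution-valued càdlàg processes under the M1 topology. Since $W$ is a single Brownian motion whose law is trivially tight on $(C_{\R}, \mathrm{U})$, the product structure of the topology reduces the problem to showing tightness of $(\nu^N)_{N \geq 1}$ on $(D_{\Sdual}, \M)$. By Theorem~3.2 of \cite{ledger2015}, this in turn reduces to verifying (a) that the real-valued projections $\nu^N_{\cdot}(\phi)$ are tight on $(D_\R, \M)$ for every $\phi$ in a countable dense family of test functions, and (b) a compact containment condition in $\Sdual$.

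The key step for (a) is the decomposition
\[
\nu^N_t(\phi) = \hat{\nu}^N_t(\phi) - \phi(0)\, L^N_t,
	\qquad
	\hat{\nu}^N_t(\phi) := \frac{1}{N} \sum_{i=1}^{N} \phi(X^{i,N}_{t \wedge \tau^{i,N}}).
\]
The first piece is continuous in $t$ because each stopped diffusion $X^{i,N}_{\cdot \wedge \tau^{i,N}}$ has continuous sample paths. Applying It\^o's formula to $\phi(X^{i,N}_{\cdot \wedge \tau^{i,N}})$ and averaging produces drift, quadratic variation, and martingale terms that are uniformly bounded in $N$ via the coefficient bounds in Assumption~\ref{Not_Def_Coefficients}; Aldous' criterion then yields tightness of $\hat{\nu}^N_{\cdot}(\phi)$ in $(C_\R, \mathrm{U})$, which is strictly stronger than tightness in $(D_\R, \M)$. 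The second piece $\phi(0) L^N$ is, up to a sign, a monotone process with values in a bounded interval, so it is automatically tight in $(D_\R, \M)$; the asymptotic loss-increment bound of Proposition~\ref{Prob_Prop_LowerLoss} supplies the precise modulus-of-continuity estimate required by the M1 characterisation. Finally I reassemble via Proposition~2.7 of \cite{ledger2015}, which shows that the sum of a $U$-tight continuous process and an $\M$-tight monotone process is $\M$-tight; no M1 pathologies arise because $\hat{\nu}^N_{\cdot}(\phi)$ is fully continuous, not merely càdlàg, so its graph cannot interfere with the monotone jumps of $L^N$.

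For (b), since each $\nu^N_t$ is a sub-probability measure on $[0,\infty)$ with total mass at most one, compact containment in $\Sdual$ reduces to the uniform tail control provided by Proposition~\ref{Prob_Prop_Tail}: a Markov inequality applied to $\int_0^T \nu^N_t(\lambda,\infty)\,dt$ places the trajectories inside a compact subset of $\Sdual$ with arbitrarily high probability, uniformly in $N$. The main technical obstacle is the assembly step invoking Proposition~2.7 of \cite{ledger2015}; the rest of the argument is a packaging of Itô's formula, Aldous' tightness criterion, and the probabilistic estimates of Section~\ref{Sect_ProbEstimates}.
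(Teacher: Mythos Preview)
Your approach is correct and shares the paper's central idea: the decomposition $\nu^N_t(\phi) = \hat\nu^N_t(\phi) - \phi(0)L^N_t$ into a continuous piece plus a monotone piece. The paper executes this slightly differently, verifying the M1 criterion of \cite[Thm.~12.12.2]{whitt2002} directly via \cite[Prop.~4.1, 4.2]{ledger2015}: the three-point oscillation $H_{\R}$ is bounded by increments of the continuous part alone (the monotone part contributes nothing to $H_{\R}$), and those increments are controlled by a fourth-moment BDG estimate rather than Aldous' criterion; the endpoint conditions near $t=0$ and $t=T$ use Proposition~\ref{Prob_Prop_LowerLoss} exactly as you do.

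Two small corrections. First, \cite[Thm.~3.2]{ledger2015} as invoked here requires only tightness of the real-valued projections $\nu^N(\phi)$ for every $\phi\in\S$; no separate compact-containment condition in $\Sdual$ is needed, so your step (b) is superfluous. Second, \cite[Prop.~2.7]{ledger2015} is about continuity of the projection $\xi\mapsto\xi(\phi)$ from $D_{\Sdual}$ to $D_{\R}$, not about summing a continuous and a monotone process; for your assembly step you want instead the standard fact that addition is continuous from $(C_{\R},\mathrm{U})\times(D_{\R},\M)$ to $(D_{\R},\M)$ (see \cite{whitt2002}). The mathematics of your assembly is fine, only the citation is misplaced.
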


\begin{rem}
We note that a version of this result is given in \cite[Thm.~4.3]{ledger2015} for the case $\mu = 0$, $\sigma = 1$. 
\end{rem}

\begin{proof}
The second statement follows from the first and the fact that joint tightness is implied by marginal tightness. 

By \cite[Thm.~3.2]{ledger2015} it suffices to show that $(\nu^{N}(\phi))_{N \geq 1}$ is tight on $(D_{\R}, \M)$ for every 
$\phi \in \S$. To prove this we verify the conditions of \cite[Thm.~12.12.2]{whitt2002}, the first of which is trivial because 
$\nu^{N}$ is a sub-probability measure so $|\nu^{N}_{t}(\phi)| \leq \Vert \phi \Vert_{\infty}$. Hence we concentrate on 
condition (ii), which is implied by \cite[Prop.~4.1]{ledger2015}, therefore we are done if we can find $a,b,c > 0$ such that
\begin{equation}
\label{eq:Tight1}
\P(H_{\R}(\nu^{N}_{t_{1}}(\phi),  \nu^{N}_{t_{2}}(\phi), \nu^{N}_{t_{3}}(\phi)) \geq \eta)
	\leq c\eta^{-a}|t_{3} - t_{1}|^{1 + b},
\end{equation}
for all $N \geq 1$, $\eta > 0$ and $0 \leq t_{1} < t_{2} < t_{3} \leq T$, where 
\[
H_{\R}(x_{1}, x_{2}, x_{3}) 
	:= \inf_{\lambda \in (0,1)} |x_{2} - (1-\lambda)x_{1} - \lambda x_{3}|
	\qquad \textrm{for } x_{1}, x_{2}, x_{3} \in \R, 
\]
and if
\begin{equation}
\label{eq:Tight2}
\lim_{N \to \infty} \P(\sup_{t \in (0, \delta)} |\nu^{N}_{t}(\phi) - \nu^{N}_{0}(\phi)| + \sup_{t \in (T - \delta, T)} 
|\nu^{N}_{T}(\phi) - \nu^{N}_{t}(\phi)| \geq \eta ) = 0,
\end{equation}
for every $\eta > 0$. 

With $\nub^{N}$ as defined in (\ref{eq:Finite_nub}), the decomposition in \cite[Prop.~4.2]{ledger2015} and 
Markov's inequality give
\begin{align*}
\P(H_{\R}(\nu^{N}_{t_{1}}(\phi), & \nu^{N}_{t_{2}}(\phi), \nu^{N}_{t_{3}}(\phi)) \geq \eta) \\
	&\leq \eta^{-4} \E[(|\nub^{N}_{t_{1}}(\phi)-\nub^{N}_{t_{2}}(\phi)|
	+ |\nub^{N}_{t_{2}}(\phi)-\nub^{N}_{t_{3}}(\phi)|)^{4}]  \\
	&\leq 8 \eta^{-4} (\E|\nub^{N}_{t_{1}}(\phi)-\nub^{N}_{t_{2}}(\phi)|^{4} + 
	\E|\nub^{N}_{t_{2}}(\phi)-\nub^{N}_{t_{3}}(\phi)|^{4}).
\end{align*}
For any $t, s \in [0,T]$, from H\"older's inequality we obtain
\begin{align*}
\E|\nub^{N}_{t}(\phi) - \nub^{N}_{s}(\phi)|^{4}
	&\leq \frac{1}{N} \sum_{i=1}^{N} \E|\phi(X^{i,N}_{t \wedge \tau^{i,N}}) - \phi(X^{i,N}_{s \wedge \tau^{i,N}})|^{4} \\
	&\leq \Vert \phi \Vert_{\mathrm{lip}}^{4} \E|X^{i,N}_{t \wedge \tau^{i,N}} - X^{i,N}_{s \wedge \tau^{i,N}}|^{4},
\end{align*}
where $\Vert \phi \Vert_{\mathrm{lip}}$ is the Lipschitz constant of $\phi$. By Assumption~\ref{Not_Def_Coefficients} and 
the Burkholder--Davis--Gundy inequality \cite[Thm.~IV.42.1]{rogerswilliams2}, the final expectation above is $O(|t - s|^{2})$
uniformly in $N$. Therefore we have (\ref{eq:Tight1}) with $a = 4$ and $b = 1$. 

Now consider the first supremum in (\ref{eq:Tight2}). By again using the decomposition from \cite[Prop.~4.2]{ledger2015}, 
that is $\nu^{N}_{t}(\phi) = \nub^{N}_{t}(\phi) - \phi(0)L^{N}_{t}$, we have 
\begin{multline*}
\P(\sup_{t \in (0,\delta)} |\nu^{N}_{t}(\phi) - \nu^{N}_{0}(\phi)| \geq \eta) \\
	\leq \P(\sup_{t \in (0,\delta)} |\nub^{N}_{t}(\phi) - \nub^{N}_{0}(\phi)| \geq \eta / 2)
	+ \P(|\phi(0)|L^{N}_{\delta} \geq \eta / 2).
\end{multline*}
The first term on the right-hand side vanishes as $\delta \to 0$ by the same work as for (\ref{eq:Tight1}) and the second 
term vanishes by Proposition~\ref{Prob_Prop_LowerLoss}. Therefore  
\[
\P(\sup_{t \in (0,\delta)} |\nu^{N}_{t}(\phi) - \nu^{N}_{0}(\phi)| \geq \eta) \to 0, \qquad \textrm{as } \delta \to 0,
\]
and likewise for $\P(\sup_{t \in (T-\delta,T)} |\nu^{N}_{T}(\phi) - \nu^{N}_{t}(\phi)| \geq \eta)$, so we have (\ref{eq:Tight2}), 
which completes the proof.
\end{proof}


\subsection*{Limit points}

Tightness of $(\nu^{N}, W)_{N \geq 1}$ ensures that the sequence is relatively compact \cite[Thm.~3.2]{ledger2015}, 
hence every subsequence of $(\nu^{N}, W)_{N \geq 1}$ has a further subsequence which converges in law. To avoid 
possible confusion about multiple distinct limit points, we will denote by $(\nu^{*}, W)$ any pair of processes that realises 
one of these limiting laws. Using $\Rightarrow$ to denote convergence in law, we have
\[
(\nu^{N_{k}}, W) \Rightarrow (\nu^{*}, W),
	\qquad \textrm{on } (D_{\Sdual}, \M) \times (C_{\R}, \mathrm{U}),
\]
as $k \to \infty$, for some subsequence $(N_{k})_{k \geq 1}$. Establishing full weak convergence is equivalent to showing 
that there is exactly one limiting law. 

So far we have that any limiting empirical process, $\nu^{*}$, is an element of $D_{\Sdual}$. The following result 
recovers $\nu^{*}$ as a probability-measure-valued process:

\begin{prop}
\label{Tight_Prop_Measure}
Let $(\nu^{*}, W)$ realise a limiting law. Then $\nu^{*}_{t}$ is a sub-probability measure supported on $[0,\infty)$ 
for every $t \in [0,T]$, with probability 1. 
\end{prop}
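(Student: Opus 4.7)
The plan is to transfer the properties of each $\nu^{N_k}_t$ (non-negativity, total mass at most $1$, and support in $[0,\infty)$) to the limiting distribution $\nu^*_t$ via the M1 convergence, and then invoke the classical fact that a positive tempered distribution is realised by a (positive) Radon measure. First, by Skorokhod's representation theorem I would replace weak convergence with almost sure convergence $\nu^{N_k} \to \nu^*$ in $(D_{\Sdual}, \M)$ on a common probability space. By \cite[Thm.~3.2]{ledger2015} this is equivalent to $\nu^{N_k}(\phi) \to \nu^*(\phi)$ in $(D_{\R}, \M)$ for every $\phi \in \S$, and the latter yields pointwise convergence $\nu^{N_k}_t(\phi) \to \nu^*_t(\phi)$ at every continuity point of the cadlag path $s \mapsto \nu^*_s(\phi)$.

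Fix a countable family $\mathcal{F} \subset \S$ that is (i) dense (in the Schwartz topology) within the cone $\{\phi \in \S : \phi \geq 0\}$, (ii) contains a monotone sequence $\psi_m$ with $0 \leq \psi_m \leq 1$ and $\psi_m \uparrow 1$ pointwise, and (iii) contains a dense family of non-negative Schwartz functions supported in $(-\infty, 0)$. On a single event of full probability, the set of discontinuity times of $s \mapsto \nu^*_s(\phi)$ is countable for every $\phi \in \mathcal{F}$, so continuity times are dense in $[0,T]$. At any common continuity time $t$, passing to the limit in $\nu^{N_k}_t(\phi) \geq 0$, in $\nu^{N_k}_t(\phi) = 0$ (for $\phi$ supported in $(-\infty, 0)$), and in $\nu^{N_k}_t(\psi_m) \leq 1$, immediately transfers each property to $\nu^*_t$; right-continuity of $s \mapsto \nu^*_s(\phi)$ then extends the inequalities to every $t \in [0,T]$. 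Density within $\mathcal{F}$ upgrades positivity to all non-negative $\phi \in \S$, so $\nu^*_t$ is a positive tempered distribution supported on $[0,\infty)$, hence a positive Radon measure by the standard representation result. Monotone convergence applied to $\psi_m \uparrow 1$ finally yields $\nu^*_t(\R) \leq 1$, so $\nu^*_t$ is a sub-probability measure.

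The main technical obstacle is the mismatch between the M1 mode of convergence --- which only guarantees pointwise convergence at (random) continuity times of the cadlag limit --- and the goal of establishing measure-theoretic conclusions for \emph{every} $t \in [0,T]$. This is overcome by combining the density of continuity times in $[0,T]$ with the right-continuity of $s \mapsto \nu^*_s(\phi)$, which propagates each inequality from the dense set of continuity times to every $t$. A secondary subtlety is the need to enforce all three properties simultaneously on a single almost-sure event, which is handled by working with the fixed countable family $\mathcal{F}$ and intersecting the (countably many) null sets arising from its individual members.
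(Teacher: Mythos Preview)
Your proof is correct and arrives at the same conclusion as the paper, but by a different mechanism for obtaining the properties uniformly in $t$. The paper avoids Skorokhod representation and the continuity-point extension entirely: from $\nu^{N_k}(\phi) \Rightarrow \nu^{*}(\phi)$ in $(D_{\R},\M)$ it applies the $\M$-continuity of the supremum functional \cite[Lem.~13.4.1]{whitt2002} to obtain $\sup_{t}|\nu^{N_k}_{t}(\phi)| \Rightarrow \sup_{t}|\nu^{*}_{t}(\phi)|$ in $\R$, and then the portmanteau theorem yields $\P(\sup_{t}|\nu^{*}_{t}(\phi)| > \Vert\phi\Vert_{\infty}) = 0$ in one line, with analogous arguments for non-negativity and for vanishing on $\phi$ supported in $(-\infty,0)$. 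This handles every $t$ simultaneously without needing a countable dense family or the right-continuity propagation. Your route is more elementary in that it does not rely on the sup-functional lemma, but it requires more bookkeeping (the Skorokhod coupling, tracking continuity times, density arguments in $\S$). Both approaches then finish identically: the positive linear functional on $\S$ extends and is represented by a Radon measure (Riesz), and the mass bound forces it to be a sub-probability measure. One small correction: the projection $\nu^{N_k}(\phi) \Rightarrow \nu^{*}(\phi)$ is \cite[Prop.~2.7(i)]{ledger2015}, not Thm.~3.2.
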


\begin{rem}
Technically, what we will show is that, for every $t$, $\nu^{*}_{t}$ agrees with a sub-probability measure on $\S$ and 
from now on we associate $\nu^{*}_{t}$ with this measure.
\end{rem}

\begin{proof}[Proof of Proposition~\ref{Tight_Prop_Measure}]
Take $(\nu^{N_{k}}, W) \Rightarrow (\nu^{*}, W)$. Fix $\phi \in \S$, then by \cite[Prop.~2.7 (i)]{ledger2015} 
$\nu^{N_{k}}(\phi) \Rightarrow \nu^{*}(\phi)$ on $(D_{\R}, \M)$. Lemma~13.4.1 of \cite{whitt2002} gives
\[
\sup_{t \in [0,T]} |\nu^{N_{k}}_{t}(\phi)|
	\Rightarrow
	\sup_{t \in [0,T]} |\nu^{*}_{t}(\phi)|,
	\qquad \textrm{on } \R,
\]
therefore the portmanteau theorem~\cite[Thm.~2.1]{billingsley1999} gives
\[
\P(\sup_{t \in [0,T]} |\nu^{*}_{t}(\phi)| > \Vert \phi \Vert_{\infty})
	\leq \liminf_{k \to \infty} \P(\sup_{t \in [0,T]} |\nu^{N_{k}}_{t}(\phi)| > \Vert\phi \Vert_{\infty})
	= 0,
\]
with the final equality due to $\nu^{N}_{t}$ being a sub-probability measure. (The supremum over $t$ ensures that 
the following argument holds for all $t$ simultaneously.) By a similar analysis we have that $\nu^{*}_{t}(\phi)$ is 
non-negative when $\phi$ is non-negative and $\nu^{*}_{t}(\phi) = 0$ when $\phi$ is supported on $(-\infty, 0)$. 
Hence, $\nu^{*}_{t}$ is a positive linear functional on \S, so extends to a positive linear functional, $\xi_{t}$,  on 
the space, $C_{0}$, of continuous and compactly support function on \R\ with the uniform topology. The Riesz 
representation theorem~\cite[Thm.~2.14]{rudin} then implies that, for every $t$, there exists a regular Borel 
measure, $\zeta_{t}$, such that
\[
\xi_{t}(\phi) = \int_{\R} \phi(x) \zeta_{t}(dx)
	\qquad \textrm{for every } \phi \in C_{0}.
\]
Associating $\zeta$ and $\nu^{*}$ gives the result. 
\end{proof}

Now that it is safe to regard a limit point, $\nu^{N_{k}} \Rightarrow \nu^{*}$, as taking values in the sub-probability 
measures, it makes sense to introduce the \emph{limit loss process} as $L^{*}_{t} := 1 - \nu^{*}_{t}(0, \infty)$. 
Of course we would like to know that $L^{N_{k}} \Rightarrow L^{*}$ on $(D_{\R}, \M)$, however the function 
$x \mapsto 1$ is not an element of \S, so \cite[Prop.~2.7]{ledger2015} does not allow us to deduce this fact from the 
continuous mapping theorem. To remedy this we must work slightly harder:

\begin{prop}[Convergence of the loss process]
\label{Tight_Prop_ConvLoss}
Suppose that $(\nu^{N_{k}}, W)_{k \geq 1}$ converges weakly to $(\nu^{*}, W)$ and that 
$L_{t}^{*} := 1 - \nu^{*}_{t}(0, \infty)$. Then $(L^{N_{k}}, W)_{k \geq 1}$ converges weakly 
to $(L^{*}, W)$ on $(D_{\R}, \M) \times (C_{\R}, \mathrm{U})$. 
\end{prop}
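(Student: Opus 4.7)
The obstacle is that $\mathbf{1}_{(0,\infty)} \notin \mathscr{S}$, so the continuous mapping theorem via Proposition~2.7 of \cite{ledger2015} does not directly yield convergence of $1 - \nu^{N_k}_\cdot(0, \infty) = L^{N_k}$. My plan is: (i) tighten $(L^{N_k})$ in $(D_{\mathbb{R}}, \mathrm{M1})$ using the monotonicity of $L^N$; (ii) extract a joint subsequential limit $(\nu^*, W, \tilde L)$; and (iii) identify $\tilde L = L^*$ by approximating $\mathbf{1}_{(0,\infty)}$ with smooth compactly supported test functions, controlling the error with the boundary and tail estimates from Section~\ref{Sect_ProbEstimates}.

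For step~(i), since $L^N$ is non-decreasing with values in $[0,1]$, the sequence $(L^N)_N$ is automatically tight in $(D_{\mathbb{R}}, \mathrm{M1})$ (a standard feature of M1 for bounded monotone processes, cf.\ Theorem~12.12.2 of Whitt). Joint tightness of $(\nu^{N_k}, W, L^{N_k})_k$ then follows from marginal tightness, and Prokhorov's theorem allows me to extract, from any subsequence, a further subsequence $(N_{k_l})$ along which $(\nu^{N_{k_l}}, W, L^{N_{k_l}}) \Rightarrow (\nu^*, W, \tilde L)$ jointly; by the subsubsequence principle it suffices to show $\tilde L = L^*$ almost surely for every such limit. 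For step~(iii) I choose $\phi_{\delta, M} \in C_c^\infty \subset \mathscr{S}$ with $0 \leq \phi_{\delta, M} \leq \mathbf{1}_{(0, \infty)}$, $\phi_{\delta, M} \equiv 1$ on $[\delta, M]$, and $\mathrm{supp}(\phi_{\delta, M}) \subset (\delta/2, M+1)$, arranged so that $\phi_{\delta, M} \uparrow \mathbf{1}_{(0, \infty)}$ pointwise as $\delta \downarrow 0$, $M \uparrow \infty$. Then
\[
0 \leq (1 - L^N_t) - \nu^N_t(\phi_{\delta, M}) \leq \nu^N_t(0, \delta) + \nu^N_t(M, \infty),
\]
and Propositions~\ref{Prob_Prop_Boundary} and~\ref{Prob_Prop_Tail} bound the time-integrated expectation of the right-hand side by $C \delta^{1 + \beta} + o(e^{-\alpha M})$, uniformly in $N$.

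For any $g \in C([0,T])$, integrating the above against $g$ and taking expectations, then using that M1 convergence of uniformly bounded processes on $[0,T]$ implies convergence in Lebesgue measure (so that $x \mapsto \int_0^T g(t)\, x_t\, dt$ is continuous on bounded subsets of $(D_\mathbb{R}, \mathrm{M1})$, with uniform integrability supplied by boundedness), yields in the limit $k_l \to \infty$,
\[
\Bigl| \mathbb{E}\int_0^T g(t)(1 - \tilde L_t)\, dt - \mathbb{E}\int_0^T g(t)\, \nu^*_t(\phi_{\delta, M})\, dt \Bigr| \leq \|g\|_\infty \bigl( C \delta^{1 + \beta} + o(e^{-\alpha M}) \bigr).
\]
Sending $\delta \downarrow 0$ and $M \uparrow \infty$ and invoking monotone convergence for $\nu^*_t$, which is a sub-probability measure on $[0, \infty)$ by Proposition~\ref{Tight_Prop_Measure}, gives $\mathbb{E}\int_0^T g(t)(1 - \tilde L_t)\, dt = \mathbb{E}\int_0^T g(t)(1 - L^*_t)\, dt$ for every continuous $g$. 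Hence $\tilde L_t = L^*_t$ for Lebesgue-a.e.\ $t$, almost surely; since both processes are right-continuous, they coincide as paths with probability one, and the claimed joint convergence follows from the subsubsequence principle together with the continuous mapping theorem applied to the projection onto $(L, W)$.

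The main technical obstacle is that the boundary and tail estimates are only available in time-integrated expectation rather than pointwise in $t$; this rules out a direct sup-norm bound on $|\nu^N_t(\phi_{\delta, M}) - (1 - L^N_t)|$ and forces one to work against arbitrary continuous time-weights $g$, invoking the continuity of the integration functional on bounded subsets of $(D_\mathbb{R}, \mathrm{M1})$ rather than pointwise evaluation (which is of course discontinuous in M1).
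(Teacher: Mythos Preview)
Your overall strategy is reasonable and close in spirit to the paper's, but step~(iii) contains a genuine gap. From
\[
\mathbf{E}\int_0^T g(t)(1 - \tilde L_t)\, dt \;=\; \mathbf{E}\int_0^T g(t)(1 - L^*_t)\, dt \qquad \text{for all } g \in C([0,T])
\]
you can only conclude $\mathbf{E}[\tilde L_t] = \mathbf{E}[L^*_t]$ for a.e.\ $t$; this does \emph{not} give $\tilde L_t = L^*_t$ almost surely. Equality of expectations against deterministic time-weights says nothing about the joint randomness of $\tilde L$ and $L^*$ on the common probability space carrying $(\nu^*,W,\tilde L)$. The gap is repairable: since $(1 - L^{N}_t) - \nu^{N}_t(\phi_{\delta,M}) \geq 0$ for every $t$, and M1 convergence implies pointwise convergence at each continuity point of the limit (a cocountable set of times), one gets $(1-\tilde L_t) - \nu^*_t(\phi_{\delta,M}) \geq 0$ for a.e.\ $t$, a.s.; letting $\delta\downarrow 0$, $M\uparrow\infty$ yields $L^*_t - \tilde L_t \geq 0$ a.e.\ $t$, a.s., and together with your expectation identity (for $g\equiv 1$) this forces equality. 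But this sign argument is absent from your write-up, and without it the conclusion does not follow.

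The paper avoids the issue entirely by working directly with finite-dimensional distributions at times in a cocountable set $\mathbb{T}$ of joint continuity points, and by using the uniform-in-$t$ tail estimate of Proposition~\ref{Prob_Prop_Tail} (not a time-integrated bound) to replace $L^{N_k}_{t_i}$ by $1-\nu^{N_k}_{t_i}(\phi_\lambda)$ with $\phi_\lambda\equiv 1$ on $[-\lambda,\lambda]$. No boundary cutoff near zero is needed, since $\nu^{N}_t$ is supported on $(0,\infty)$; and the comparison is made at the level of the functionals $\mathbf{E}\prod_i f_i(L_{t_i})g_i(W_{t_i})$ that determine the joint law, so the expectation-versus-random-variable problem never arises. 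A secondary point: bounded monotone processes are \emph{not} automatically tight in $(D_{\mathbb{R}}[0,T],\mathrm{M1})$ --- for instance $\mathbf{1}_{[1/n,\,T]}$ has no M1-convergent subsequence --- so you must also invoke Proposition~\ref{Prob_Prop_LowerLoss} for the endpoint control, as the paper does.
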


\begin{proof}
For a contradiction suppose that the weak convergence does not hold. Since $t \mapsto L^{N}_{t}$ is increasing, 
$L^{N}_{t} \in [0,1]$ and we have Proposition~\ref{Prob_Prop_LowerLoss}, the conditions of 
\cite[Thm.~12.12.2]{whitt2002} are satisfied and so $(L^{N})_{N \geq 1}$ is tight on $(D_{\R}, \M)$, and 
because marginal tightness implies joint tightness, $(L^{N}, W)_{N \geq 1}$ is also tight. By taking a further 
subsequence if needed, assume that $(L^{N_{k}}, W)_{k \geq 1} \Rightarrow (L^{\dagger}, W)_{k \geq 1}$ 
for some $L^{\dagger} \in D_{\R}$. 

Notice from \cite[Thm.~12.4.1]{whitt2002} that the canonical time projection from $(D_{\R}, \M)$ to \R\ is only 
continuous at times for which its argument does not jump. That is, for every $t$, $\pi_{t}(x) := x_{t}$ is continuous 
at $x \in D_{\R}$ if and only if $x_{t-} = x_{t}$. To this end, define $\mathrm{cont}(L^{\dagger}) = 
\{s \in [0,T] : \P(L^{\dagger}_{s-} = L^{\dagger}_{s}) = 1\}$, which we know by \cite[Sec.~13]{billingsley1999} 
is cocountable in $[0,T]$. For $\lambda \in \N$ define $\phi_{\lambda} \in \S$ to be any function satisfying 
$\phi_{\lambda} = 1$ on $[-\lambda, \lambda]$, $\phi_{\lambda} = 0$ on $(-\infty, -2\lambda) \cup (2\lambda, \infty)$ 
and $\phi_{\lambda} \in (0,1)$ otherwise. By \cite[Prop.~2.7(i)]{ledger2015} $\nu^{N_{k}}(\phi_{\lambda}) 
\Rightarrow \nu^{*}(\phi_{\lambda})$, and define $\mathrm{cont}(\nu^{*}(\phi_{\lambda})) = \{s \in [0,T] : 
\P(\nu^{*}_{s-}(\phi_{\lambda}) = \nu^{*}_{s}(\phi_{\lambda})) = 1\}$. Take 
\[
\mathbb{T} := \mathrm{cont}(L^{\dagger})
	\cap \bigcap_{\lambda = 1}^{\infty} \mathrm{cont}(\nu^{*}(\phi_{\lambda})),
\]
which is cocountable (since it is the countable intersection of cocountable sets) and so is dense in $[0,T]$. 

Since $(L^{N_{k}}, W) \Rightarrow (L^{\dagger}, W)$ and $\mathbb{T}$ is dense in $[0,T]$, if $(L^{*}, W)$ 
and $(L^{\dagger}, W)$ are not equal in law on $(D_{\R}, \M)$, then it must be the case that not all of the 
finite-dimensional marginals of $L^{*}$ and $L^{\dagger}$ on $\mathbb{T}$ are equal in law. It is no loss of generality 
to assume that there exists $\e > 0$, $m \in \N$, $f_i, g_i : \R \to \R$ bounded and Lipschitz and $t_1,\dots,t_m \in \mathbb{T}$ such that
\[
\E \prod_{i=1}^{m} f_i(L^{*}_{t_i})g_i(W_{t_i}) + \e \leq  \limsup_{k \to \infty} \E \prod_{i=1}^{m} f_i(L^{N_{k}}_{t_i})g_i(W_{t_i}).
\]
By Proposition~\ref{Prob_Prop_Tail}
\[
\E|L^{N_{k}}_{t} - (1 - \nu^{N_{k}}_{t}(\phi_{\lambda}))|
	= O(e^{-\lambda}),
	\qquad \textrm{uniformly in } t \textrm{ and } N_{k},
\]
as $\lambda \to \infty$, therefore the Lipschitz property of $f_i$ gives
\[
\E \prod_{i=1}^{m} f_i(L^{*}_{t_i})g_i(W_{t_i}) + \e
	\leq \limsup_{k \to \infty} \E \prod_{i=1}^{m} f_i(1 - \nu^{N_{k}}_{t_i}(\phi_{\lambda}))g_i(W_{t_i})
	+ O(e^{-\lambda}),
\]
but $t_i \in \mathrm{cont}(\nu^{*}(\phi_{\lambda}))$, so
\[
\E \prod_{i=1}^{m} f_i(L^{*}_{t_i})g_i(W_{t_i}) + \e
	\leq \E \prod_{i=1}^{m} f_i(1 - \nu^{*}_{t_i}(\phi_{\lambda}))g_i(W_{t_i})
	+ O(e^{-\lambda}).
\]
Since $\nu^{*}_{t}$ is a probability measure $\nu^{*}_{t}(\phi_{\lambda}) \to \nu^{*}_{t}(\R) = 1 - L^{*}_{t}$ (recall 
from Proposition~\ref{Tight_Prop_Measure} that $\nu^{*}_{t}$ is supported on $[0,\infty)$), so taking $\lambda \to \infty$ 
gives the required contradiction. 
\end{proof}

We are now in a position to verify the first half of Theorem~\ref{Intro_Thm_Exist}, which is that any limit point must 
satisfy the regularity conditions from Assumption~\ref{Not_Def_RegCond}:

\begin{prop}[Regularity conditions]
\label{Tight_Prop_RegCond}
If $(\nu^{*}, W)$ realises a limiting law of $(\nu^{N}, W)_{N \geq 1}$, then $\nu^{*}$ 
satisfies Assumption~\ref{Not_Def_RegCond}.
\end{prop}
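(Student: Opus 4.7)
My plan is to verify the five conditions of Assumption~\ref{Not_Def_RegCond} one at a time by lifting the corresponding finite-$N$ estimates from Section~\ref{Sect_ProbEstimates} to $\nu^{*}$. Fix a subsequence $(N_{k})$ with $(\nu^{N_{k}}, W) \weak (\nu^{*}, W)$. Condition~(ii) is already handled by Proposition~\ref{Tight_Prop_Measure}, so four conditions remain.

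For conditions~(iii), (iv) and the first-moment version of~(v), I would choose non-negative Schwartz bumps majorising the indicators of the relevant sets: a scaled bump $\chi^{\e} \in \S$ with $\1_{(0,\e)} \leq \chi^{\e} \leq \1_{(-\e, 2\e)}$ for the boundary estimate; a bump $\chi^{(a,b)}$ with $\1_{(a,b)} \leq \chi^{(a,b)} \leq \1_{(2a-b, 2b-a)}$ for spatial concentration; and a dyadic family $\{\chi^{k}_{\lambda}\}_{k \geq 0}$ with $\chi^{k}_{\lambda}$ supported in $[2^{k-1}\lambda, 5 \cdot 2^{k-1}\lambda]$ and $\sum_{k} \chi^{k}_{\lambda} \geq \1_{(\lambda, \infty)}$ for the tail. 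By \cite[Prop.~2.7]{ledger2015}, $\nu^{N_{k}}(\chi) \weak \nu^{*}(\chi)$ in $(D_{\R}, \M)$ for each such $\chi$. Passing to a Skorokhod representation and using that an $\M$-limit path has at most countably many discontinuities, $t \mapsto \nu^{N_{k}}_{t}(\chi)$ converges to $\nu^{*}_{t}(\chi)$ at Lebesgue-a.e.\ $t$, and bounded convergence (sub-probability measures) yields $\int_{0}^{T}\nu^{N_{k}}_{t}(\chi)\,dt \to \int_{0}^{T}\nu^{*}_{t}(\chi)\,dt$ almost surely. Fatou's lemma then transfers the uniform-in-$N$ bounds of Propositions~\ref{Prob_Prop_Boundary}, \ref{Prob_Prop_Tail} and Corollary~\ref{Prob_Cor_SpatialConc} to $\nu^{*}$ (for the tail, after summing the dyadic contributions, which preserves the exponential rate up to a constant shift of $\alpha$). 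The squared form of~(v) follows from the elementary bound $\nu^{*}_{t}(a,b)^{2} \leq \nu^{*}_{t}(a,b)$, valid because $\nu^{*}_{t}$ is a sub-probability measure.

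For condition~(i), Proposition~\ref{Tight_Prop_ConvLoss} gives $L^{N_{k}} \weak L^{*}$ in $(D_{\R}, \M)$; monotonicity is preserved under $\M$-limits, so $L^{*}$ is non-decreasing a.s. For strict monotonicity on $\{L^{*} < 1\}$, fix rational $0 \leq t < s \leq T$ and $r \in (0,1)$ with $t, s$ continuity points of $L^{*}$ (a cocountable, hence dense, set). The evaluation map $x \mapsto (x_{t}, x_{s})$ is continuous at such paths in $\M$, so $(L^{N_{k}}_{t}, L^{N_{k}}_{s}) \weak (L^{*}_{t}, L^{*}_{s})$ in $\R^{2}$. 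The portmanteau theorem applied to the open set $\{(x,y) : y - x < \delta,\ x < r\}$ gives
\[
\P(L^{*}_{s} - L^{*}_{t} < \delta,\, L^{*}_{t} < r)
    \leq \liminf_{k} \P(L^{N_{k}}_{s} - L^{N_{k}}_{t} < \delta,\, L^{N_{k}}_{t} < r),
\]
and letting $\delta \downarrow 0$ (using monotone convergence on the nested events $\{L^{*}_{s} - L^{*}_{t} < \delta\}$) together with Proposition~\ref{Prob_Prop_LossInc} yields $\P(L^{*}_{s} = L^{*}_{t},\, L^{*}_{t} < r) = 0$. A countable union over rational $(t,s,r)$ combined with the already-established monotonicity to cover non-rational times produces strict monotonicity on $\{L^{*} < 1\}$ almost surely.

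I expect most of the work to be careful bookkeeping, but the main technical friction lies in the indicator-to-Schwartz gap: none of the sets $(0,\e)$, $(a,b)$, $(\lambda, \infty)$ yield test functions in $\S$, so bump approximations must be chosen to respect the scaling of each estimate, and for the tail the dyadic decomposition must be set up so the exponential decay survives summation. A secondary point is that the portmanteau step for strict monotonicity must be anchored on the cocountable continuity set of $L^{*}$ and then propagated by monotonicity to all times.
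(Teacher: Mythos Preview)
Your proposal is correct and follows essentially the same approach as the paper: approximate indicators by Schwartz bumps, pass the time-integrated finite-$N$ estimates (Propositions~\ref{Prob_Prop_Boundary}, \ref{Prob_Prop_Tail}, Corollary~\ref{Prob_Cor_SpatialConc}) to the limit via $\M$-convergence, and handle condition~(\ref{Not_Def_RegCond1}) by portmanteau on the cocountable continuity set together with Proposition~\ref{Prob_Prop_LossInc}. The only notable tactical differences are that the paper treats the tail $(\lambda,\infty)$ by first working on the finite interval $(\lambda,\eta)$ with a single bump and then sending $\eta\to\infty$ via dominated convergence, which sidesteps your dyadic decomposition, and that the paper invokes \cite[Thm.~11.5.1]{whitt2002} directly for convergence of $\int_{0}^{T}\nu^{N_{k}}_{t}(\phi)\,dt$ rather than passing through a Skorokhod representation.
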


\begin{proof}
Firstly, $\nu^{*}$ takes values in the sub-probability measures by Proposition~\ref{Tight_Prop_Measure}, and that result
also gives Assumption~\ref{Not_Def_RegCond}~(\ref{Not_Def_RegCond2}). 

For conditions~(\ref{Not_Def_RegCond4}) and~(\ref{Not_Def_RegCond5}) of Assumption~\ref{Not_Def_RegCond}, 
let $I = (x, y) \subseteq \R$ be any finite open interval. For $\delta > 0$, take any $\phi_{\delta} \in \S$ satisfying 
$\phi_{\delta} = 1$ on $I$, $\phi_{\delta} = 0$ on $(-\infty, x- \delta)\cup(y + \delta, \infty)$ and $\phi_{\delta} \in (0,1)$ 
otherwise. Taking $(\nu^{N_{k}}, W) \Rightarrow (\nu^{*}, W)$ and noting that 
$\int^{t}_{0} \nu^{N_{k}}_{s}(\phi_{\lambda}) ds \Rightarrow \int^{t}_{0} \nu^{*}_{s}(\phi_{\lambda}) ds$ in \R\ 
by \cite[Thm.~11.5.1]{whitt2002} and that these integrals are uniformly bounded (by 
$T\Vert\phi_{\lambda} \Vert_{\infty} = T$), we have
\[
\E \int^{T}_{0} \nu^{*}_{t}(I)dt 
	\leq \E \int^{T}_{0} \nu^{*}_{t}(\phi_{\delta})dt
	= \lim_{k \to \infty} \E \int^{T}_{0} \nu^{N_{k}}_{t}(\phi_{\delta})dt.
\]
For both conditions~(\ref{Not_Def_RegCond4}) and~(\ref{Not_Def_RegCond5}) we have bounds on the right-hand 
side which are independent of $N_{k}$ (Propositions~\ref{Prob_Cor_SpatialConc} and~\ref{Prob_Prop_Boundary}), 
and then the conditions hold by sending $\delta \to 0$. For condition~(\ref{Not_Def_RegCond3}) we have $y = \infty$, 
so $\phi_{\delta} \notin \S$. However, for $I = (\lambda, \eta)$ with $\eta > 0$, the above work gives
\begin{align*}
\E \int^{T}_{0} \nu^{*}_{t}(\lambda, \eta)dt 
	&\leq \lim_{k \to \infty} \E \int^{T}_{0} \nu^{*}_{t}(\phi_{\lambda}) dt \\
	&\leq \liminf_{k \to \infty} \E \int^{T}_{0} \nu^{N_{k}}_{t}(\lambda - \delta, \eta + \delta)dt \\
	&= o(e^{-\alpha(\lambda - \delta)}),
\end{align*}
so sending $\delta \to 0$ and $\eta \to \infty$ (using the dominated convergence theorem) gives the result. 

It remains to show~(\ref{Not_Def_RegCond1}) of Assumption~\ref{Not_Def_RegCond}. First we prove that $L^{*}$ is 
non-decreasing. By \cite[Sec.~13]{billingsley1999} there is a (deterministic) cocountable set, $\mathbb{T}$, on 
which $(L^{N_{k}}_{t}, L^{N_{k}}_{s}) \Rightarrow (L^{*}_{t}, L^{*}_{s})$ in $\R \times \R$. So for $s < t$ in 
$\mathbb{T}$ \cite[Thm.~2.1]{billingsley1999} implies
\[
\P(L_{t}^{*} - L_{s}^{*} < 0) 
	\leq \liminf_{k \to \infty} \P(L_{t}^{N_{k}} - L_{s}^{N_{k}} < 0)
	= 0,
\]
and hence $L^{*}$ is non-decreasing on $\mathbb{T}$. But $\mathbb{T}$ is dense in $[0,T]$ and $L^{*}$ \cadlag, 
so we conclude $L^{*}$ is non-decreasing on $[0,T]$. To deduce the strict monotonicity, Proposition~\ref{Prob_Prop_LossInc} 
implies
\begin{multline*}
\P(L^{*}_{t} - L^{*}_{s} = 0, L^{*}_{s} < r)
	= \lim_{\delta} \P(L^{*}_{t} - L^{*}_{s} < \delta, L^{*}_{s} < r)
	\\\leq \limsup_{\delta \to 0} \limsup_{k \to \infty} \P(L^{N_{k}}_{t} - L^{N_{k}}_{s} < \delta, L^{N_{k}}_{s} < r) = 0,
\end{multline*}
whenever $r < 1$ and sending $r \uparrow 1$ gives the required result. 
\end{proof}

So far we have seen no reason why it is important $L^{*}$ should be strictly increasing whenever the mass in the system 
is not completely depleted ($L^{*} < 1$). The following result is such an example and shows why this condition is needed 
to pass to a weak limit. The result will be applied directly in the next subsection.

\begin{cor}[Weak convergence of integrals]
\label{Tight_Cor_Fiddly}
Fix $t \in [0,T]$ and $\phi \in \S$. Let $g = g(t, x, \ell)$ be equal to either $\mu(t, x, \ell)$, $\sigma(t,x)^{2}$ or 
$\sigma(t, x, \ell)\rho(t, \ell)$. Define $A$ to be all elements in $D_{\Sdual}$ that take values in the sub-probability 
measures and let $B = D_{[0,1]}\subseteq D_{\R}$. Then the map
\[
(\xi, \ell) \in A \times B
	\mapsto \int^{t}_{0} \xi_{s}(g(s, \cdot \, , \ell_{s}) \phi(\cdot)) ds \in \R
\]
is continuous (with respect to the product topology on $(D_{\Sdual}, \M) \times (D_{[0,1]}, \M)$) at all point $(\xi, \ell)$ 
which satisfy the conditions of Assumption~\ref{Not_Def_RegCond}. Consequently, if 
$(\nu^{N_{k}}, W) \Rightarrow (\nu^{*}, W)$ then  
\[
\int^{t}_{0} \nu^{N_{k}}(g(s, \cdot \, , L^{N_{k}}_{s})\phi(\cdot))ds
	\Rightarrow 
	\int^{t}_{0} \nu^{*}(g(s, \cdot \, , L^{*}_{s})\phi(\cdot))ds
	\qquad \textrm{on } \R.
\]
\end{cor}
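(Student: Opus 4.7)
The strategy is to apply the continuous mapping theorem. The second assertion will follow from the first once we have joint weak convergence $(\nu^{N_k}, L^{N_k}) \weak (\nu^{*}, L^{*})$ on $(D_{\Sdual}, \M) \times (D_{\R}, \M)$. Joint tightness of $(\nu^{N}, L^{N})_{N}$ comes from Proposition~\ref{Tight_prop_tight} together with the tightness of $(L^{N})_{N}$ already used in the proof of Proposition~\ref{Tight_Prop_ConvLoss}, and the identification $L^{*}_{t} = 1 - \nu^{*}_{t}(0,\infty)$ for any subsequential limit is obtained by the argument in that same proposition. By Proposition~\ref{Tight_Prop_RegCond} the limit pair almost surely lies in the subset of $A\times B$ on which the map is claimed to be continuous, so only the continuity statement needs to be established.

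For the continuity, fix $(\xi, \ell) \in A \times B$ satisfying Assumption~\ref{Not_Def_RegCond} and a sequence $(\xi^{(n)}, \ell^{(n)}) \to (\xi, \ell)$ in the product M1 topology. Since $\ell$ is non-decreasing it has only countably many jumps, so M1 convergence gives $\ell^{(n)}_{s} \to \ell_{s}$ on a cocountable set $T_{1}\subseteq [0,t]$. By Proposition~2.7 of~\cite{ledger2015}, $\xi^{(n)}(\psi) \to \xi(\psi)$ in $(D_{\R}, \M)$ for every $\psi \in \S$, whence $\xi^{(n)}_{s}(\psi) \to \xi_{s}(\psi)$ off the countable jump set of $s \mapsto \xi_{s}(\psi)$. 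Intersecting over $\psi$ in a countable family of $\S$ that is dense in $C_{0}(\R)$ in the uniform norm, and intersecting with $T_{1}$, I obtain a cocountable set $T \subseteq [0,t]$ on which $\ell^{(n)}_{s} \to \ell_{s}$ and $\xi^{(n)}_{s} \to \xi_{s}$ narrowly as sub-probability measures on $[0,\infty)$ (the uniform density plus $\|\xi^{(n)}_{s}\|_{\mathrm{TV}} \leq 1$ extends the pointwise convergence from $\S$ to all of $C_{0}(\R)$).

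The strict monotonicity of $\ell$ on $\{\ell < 1\}$ from Assumption~\ref{Not_Def_RegCond}(\ref{Not_Def_RegCond1}) enters crucially here: it forces the preimage $\ell^{-1}(\{\theta_{1},\dots,\theta_{k-1}\})$ to be finite and hence Lebesgue-null. For any $s \in T$ with $\ell_{s} \notin \{\theta_{i}\}$, the piecewise Lipschitz property of Assumption~\ref{Not_Def_Coefficients}(\ref{Not_Def_Coefficients_4}) applied in a $\theta_{i}$-free neighbourhood of $\ell_{s}$ yields $\sup_{x \in \R}|g(s,x,\ell^{(n)}_{s}) - g(s,x,\ell_{s})| \leq C|\ell^{(n)}_{s} - \ell_{s}| \to 0$. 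Since $g(s,\cdot,\ell_{s})\phi \in C_{0}(\R)$ (because $\phi \in \S$ decays rapidly and $g$ is bounded), adding and subtracting $\xi^{(n)}_{s}(g(s,\cdot,\ell_{s})\phi)$ and using $\|\xi^{(n)}_{s}\|_{\mathrm{TV}} \leq 1$ gives $\xi^{(n)}_{s}(g(s,\cdot,\ell^{(n)}_{s})\phi) \to \xi_{s}(g(s,\cdot,\ell_{s})\phi)$ for Lebesgue-a.e. $s \in [0,t]$. The integrand is uniformly bounded by $\|g\|_{\infty}\|\phi\|_{\infty}$, so dominated convergence yields convergence of the integrals and hence the desired continuity.

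I expect the main obstacle to be the treatment of the finitely many discontinuities $\theta_{i}$ in the loss variable. The argument depends decisively on the strict monotonicity of the limit loss process, itself a nontrivial consequence of Proposition~\ref{Prob_Prop_LossInc}: without it, $\ell$ could in principle linger on a plateau at some $\theta_{i}$, and the M1 approximants $\ell^{(n)}$ could oscillate across $\theta_{i}$ on a set of positive Lebesgue measure, at which point no pointwise identification of the limiting integrand would be available and the argument would fail --- this is precisely the mechanism behind the pathology in Remark~\ref{Ch5_Remark_BadRho}.
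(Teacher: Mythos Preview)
Your proof is correct and follows the same overall strategy as the paper: establish continuity of the integral map by a pointwise-in-$s$ argument that separates the contribution from the measure variable (handled via convergence against test functions and the uniform bound $\|\xi^{(n)}_s\|_{\mathrm{TV}}\le 1$) from the contribution of the loss variable (handled via the piecewise Lipschitz condition, with strict monotonicity of $\ell$ reducing the discontinuity set $\ell^{-1}\{\theta_i\}$ to a Lebesgue-null set), then apply dominated convergence and the continuous mapping theorem. The one technical difference is that where the paper mollifies $g$ to force $g^{\e}(s,\cdot,\ell_s)\phi\in\S$ and then controls the mollification error via compactness and tail decay of $\phi$, you instead pass through a countable $\S$-family dense in $C_0(\R)$ and use the total-variation bound to upgrade convergence from $\S$ to $C_0$; this is a slightly more direct route that avoids the mollifier bookkeeping but delivers the same conclusion.
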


\begin{proof}
For short-hand we will denote this map $\Psi: A \times B \to \R$. Suppose that $(\bar{\xi}, \bar{\ell}) \to (\xi, \ell)$ in 
$A \times B$, then 
\begin{multline}
\label{eq:FiddlyCor1}
|\Psi(\bar{\xi}, \bar{\ell}) - \Psi(\xi,\ell)|
	\leq \Big| \int^{t}_{0} \bar{\xi}_{s}(g(s, \cdot, \ell_{s}) \phi) ds - \int^{t}_{0} \xi_{s}(g(s, \cdot, \ell_{s})\phi) ds \Big|
	\\+ \int^{t}_{0} |\bar{\xi}_{s}(g(s, \cdot, \ell_{s})\phi - g(s, \cdot, \bar{\ell}_{s})\phi) | ds =: I + J.
\end{multline}
We will control $I$ and $J$ separately.

Begin by fixing $\e > 0$ and $\delta > 0$. Take $k = k(\delta) > 0$ sufficiently large so that $|g(s,x,\ell)\phi(x)| < \delta$ 
for all $s \in [0,T]$, $x \in \R \smallsetminus [-k, k]$ and $\ell \in [0,1]$, which is possible because $g$ is bounded and 
$\phi$ is rapidly decreasing. Let $\psi_{\e}$ be a mollifier and set $g^{\e}(s, x , \ell) := (g(s, \cdot, \ell) * \psi_{\e})(x) 
\in C^{\infty}(\R)$, then we have
\begin{multline*}
I \leq \Big| \int^{t}_{0} \bar{\xi}_{s}(g^{\e}(s, \cdot, \ell_{s}) \phi) ds - \int^{t}_{0} \xi_{s}(g^{\e}(s, \cdot, \ell_{s})\phi) ds \Big|  \\
	+ 2\int^{t}_{0} \sup_{x \in \R} |\phi(x)||g^{\e}(s,x,\ell_{s}) - g(s,x,\ell_{s})| ds.
\end{multline*}
Since $g^{\e}(s, \cdot, \ell) \in C^{\infty}(\R)$ and $\phi \in \S$, $g^{\e}(s, \cdot, \ell) \phi(\cdot) \in \S$ so the first term 
vanishes as $\bar{\xi} \to \xi$. We can then split the second term as
\begin{align*}
\limsup_{\bar{\xi} \to \xi}I 
	&\leq 2\Vert \phi \Vert_{\infty}\int^{t}_{0} \sup_{x \in [-2k, 2k]} |g^{\e}(s,x,\ell_{s}) - g(s,x,\ell_{s})| ds \\
& \qquad + 2c \int^{t}_{0} \sup_{x \in \R \smallsetminus [-2k,2k]} |\phi(x)| ds,
\end{align*}
and here the first term vanishes as $\e \to 0$ by \cite[App.~C, Thm.~6]{evans2010partial} since $[-k,k]$ is compact and 
the second term can be guaranteed to be less than $2\delta$ for $k$ sufficiently large. Taking $\delta \to 0$ gives $\limsup I = 0$. 

To deal with $J$ in (\ref{eq:FiddlyCor1}), first notice that since $\bar{\xi} \in A$ 
\[
J \leq \Vert \phi \Vert_{\infty} \int^{t}_{0} \sup_{x \in \R} |g(s, x, \ell_{s}) - g(s, x, \bar{\ell}_{s})| ds.
\]
Define $\mathbb{T}_{0} := \{ s \in [0,t] : \ell_{s} = \theta_{i} \textrm{ for some } i \in \{0,1,\dots,k\} \}$, where we 
recall Assumption~\ref{Not_Def_Coefficients} condition~\ref{Not_Def_Coefficients_5}. For $\delta > 0$, let
$\mathbb{T}_{0}^{\delta} :=\{ s \in [0,t] : \min_{0 \leq i \leq k}|\theta_{i} - \ell_s| < \delta\}$. Define $\mathbb{T}_{1}$ 
to be all $s \in [0,t]$ such that $\ell_{s} = \ell_{s-}$, which we know is a cocountable set~\cite[Cor.~12.2.1]{whitt2002}. 
For $s \in \mathbb{T}_{1}$, $\bar{\ell}_{s} \to \ell_{s}$ in \R, so if $s \in \mathbb{T}_{1} \smallsetminus
\mathbb{T}_{0}^{\delta}$ then eventually $\ell_{s}, \bar{\ell}_{s} \in [\theta_{i-1}, \theta_{i})$ for some $i \in \{1,2,\dots,k\}$,
whence $\sup_{x \in \R} |g(s, x, \ell_{s}) - g(s, x, \bar{\ell}_{s})| \to 0$ by Assumption~\ref{Not_Def_Coefficients} 
condition~(\ref{Not_Def_Coefficients_4}). We conclude
\[
\limsup_{\bar{\xi} \to \xi} J
	\leq c_{1} \int_{([0,T] \smallsetminus \mathbb{T}_{1}) \cup  \mathbb{T}_{0}^{\delta}} ds
	\leq c_{1} k \delta,
	\qquad \textrm{for every } \delta > 0,
\]
where $c_{1} > 0$ is a numerical constant due to Assumption~\ref{Not_Def_Coefficients}. This completes the result. 
\end{proof}

\subsection*{Martingale approach}

We complete this section and the proof of Theorem~\ref{Intro_Thm_Exist} by showing that the limit SPDE holds 
for a general limit point. For this we will use a martingale argument and we introduce three processes:

\begin{defn}[Martingale components]
\label{Tight_Def_MGComp}
For a fixed test function $\phi \in \C$, define the maps:
\begin{enumerate}[(i)]
\item $M^{\phi} : D_{\Sdual} \times D_{[0,1]} \to D_{\R}$,
\begin{multline*}
M^{\phi}(\xi, \ell)(t) 
  := \xi_{t}(\phi) 
  - \nu_{0}(\phi)
  - \int^{t}_{0} \xi_{s}(\mu(s, \cdot, \ell_{s})\dx\phi) ds \\
   \qquad - \frac{1}{2} \int^{t}_{0} \xi_{s}(\sigma^{2}(s, \cdot)\dxx\phi) ds,
\end{multline*}

\item $S^{\phi} : D_{\Sdual} \times D_{[0,1]} \to D_{\R}$,
\[
S^{\phi}(\xi, \ell)(t)
  := M^{\phi}(\xi, \ell)(t)^{2}
  - \int^{t}_{0} \xi_{s}(\sigma(s, \cdot) \rho(s, \ell_{s})\dx\phi)^{2} ds,
\]

\item $C^{\phi} : D_{\Sdual} \times D_{\R} \times C_{\R} \to D_{\R}$,
\[
C^{\phi}(\xi,\ell,w)(t)
  := M^{\phi}(\xi, \ell)(t) \cdot w(t)
  - \int^{t}_{0} \xi_{s}(\sigma(s, \cdot, \ell_{s})\rho(s, \ell_{s}) \dx\phi) ds
\]
\end{enumerate}
\end{defn}
These processes capture the dynamics of the limit SPDE:

\begin{lem}[Martingale approach]
\label{Tight_Lem_MGapp}
Let $W$ be a standard Brownian motion and let $\xi$ and $L_{t} = 1 -\xi_{t}(0,\infty)$ be random processes 
satisfying the conditions of Assumption~\ref{Not_Def_RegCond}. If 
\[
M^{\phi}(\xi, L), \qquad S^{\phi}(\xi, L) \qquad \textrm{and} \qquad C^{\phi}(\xi, L, W)
\]
are martingales for every $\phi \in \C$, then $\xi$, $L$ and $W$ satisfy the limit SPDE from Theorem~\ref{Intro_Thm_Exist}. 
\end{lem}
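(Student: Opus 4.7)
The plan is to view this as a standard martingale identification argument: the three processes $M^\phi$, $S^\phi$ and $C^\phi$ encode, respectively, the ``signed drift'' in the limit SPDE, the quadratic variation structure, and the covariation with the driving Brownian motion $W$. Together they pin down the stochastic integral that must appear on the right-hand side.

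First I would observe that $M^{\phi}$ is a martingale (by hypothesis) starting from $M^{\phi}_0 = \xi_0(\phi) - \nu_0(\phi) = 0$, where the initial matching is inherited from the context in which the lemma is applied (limit points have $\xi_0 = \nu_0$). The hypothesis that $S^{\phi}(\xi, L) = (M^\phi)^{2} - \int_0^{\cdot} \xi_s(\sigma_s \rho_s \partial_x \phi)^2\, ds$ is a martingale, combined with uniqueness of the Doob--Meyer decomposition, identifies the predictable quadratic variation
\[
\langle M^{\phi} \rangle_t = \int_0^t \xi_s(\sigma_s \rho_s \partial_x \phi)^2\, ds.
\]
Since this compensator is absolutely continuous, $M^{\phi}$ cannot carry any jump component (jumps would produce a purely discontinuous contribution to the quadratic variation), so $M^{\phi}$ is in fact a continuous martingale. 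Similarly, the hypothesis that $C^{\phi}(\xi, L, W)$ is a martingale identifies the cross-variation
\[
\langle M^{\phi}, W \rangle_t = \int_0^t \xi_s(\sigma_s \rho_s \partial_x \phi)\, ds.
\]

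Set $H_s := \xi_s(\sigma_s \rho_s \partial_x \phi)$ which, by boundedness of $\sigma$, $\rho$, $\partial_x \phi$ and the sub-probability property of $\xi_s$, is bounded and hence square-integrable, so $\int_0^{\cdot} H_s\, dW_s$ is a well-defined continuous martingale. Define
\[
N_t := M^{\phi}_t - \int_0^t H_s\, dW_s.
\]
Using the It\^o isometry and the Kunita--Watanabe formulae,
\[
[N]_t = [M^{\phi}]_t - 2 \int_0^t H_s\, d\langle M^{\phi}, W \rangle_s + \int_0^t H_s^2\, ds = \int_0^t H_s^2\, ds - 2\int_0^t H_s^2\, ds + \int_0^t H_s^2\, ds = 0.
\]
Hence $N$ is a continuous martingale with vanishing quadratic variation starting at $N_0 = 0$, so $N \equiv 0$. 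This gives
\[
\xi_t(\phi) = \nu_0(\phi) + \int_0^t \xi_s(\mu_s \partial_x \phi)\, ds + \tfrac{1}{2}\int_0^t \xi_s(\sigma_s^2 \partial_{xx} \phi)\, ds + \int_0^t \xi_s(\sigma_s \rho_s \partial_x \phi)\, dW_s,
\]
which is exactly the limit SPDE.

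The main point requiring care is the step that upgrades $M^{\phi}$ from c\`adl\`ag (inherited from the c\`adl\`ag regularity of $\xi$ in Assumption~\ref{Not_Def_RegCond}) to continuous, so that the quadratic-variation calculation above is legitimate. This is handled by the absolute continuity of $\langle M^{\phi}\rangle$. Everything else is routine once the three hypotheses are translated into the identifications of $\langle M^{\phi}\rangle$ and $\langle M^{\phi}, W\rangle$; no further probabilistic input is needed, and the lemma deliberately offloads all the work onto verifying the martingale properties of $M^{\phi}$, $S^{\phi}$ and $C^{\phi}$ in the subsequent proposition.
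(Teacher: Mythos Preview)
Your approach is essentially the same as the paper's: identify the (co)variations from the martingale hypotheses on $S^\phi$ and $C^\phi$, form $N = M^\phi - \int_0^\cdot H_s\,dW_s$, and show its quadratic variation vanishes. The paper's proof is just a terser version of this.

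There is, however, a genuine gap in your continuity step. You claim that absolute continuity of the compensator $\langle M^\phi\rangle$ forces $M^\phi$ to be continuous, but this is false: a compensated Poisson process $N_t - t$ has $\langle N_\cdot - \cdot\rangle_t = t$, which is absolutely continuous, yet the process jumps. What $S^\phi$ being a martingale gives you, via Doob--Meyer, is the \emph{predictable} quadratic variation $\langle M^\phi\rangle$, not the optional one $[M^\phi]$; you cannot read off continuity from the former. Your display then mixes $[N]$ with $\langle M^\phi, W\rangle$, which only coincide once continuity is known.

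The fix is simple and does not require continuity at all: work with $\langle\cdot\rangle$ throughout. From $S^\phi$ and $C^\phi$ you get $\langle M^\phi\rangle_t = \int_0^t H_s^2\,ds$ and $\langle M^\phi, W\rangle_t = \int_0^t H_s\,ds$, whence $\langle M^\phi, \int_0^\cdot H\,dW\rangle_t = \int_0^t H_s^2\,ds$ and so $\langle N\rangle \equiv 0$. For any square-integrable c\`adl\`ag martingale $N$ with $N_0=0$, $\mathbf{E}[N_t^2] = \mathbf{E}[\langle N\rangle_t] = 0$ gives $N_t = 0$ a.s.\ for each $t$, and right-continuity upgrades this to $N\equiv 0$. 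Continuity of $\xi_t(\phi)$ then follows \emph{a posteriori} from the SPDE representation, which is exactly how the paper recovers it in the subsequent proposition.
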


\begin{proof}
The hypothesis gives
\begin{gather*}
[M^{\phi}(\xi,L)]_{t} = \int_{0}^{t} \xi_{s}(\sigma(s, \cdot, L_{s}) \rho(s, L_{s}) \dx\phi)^{2} ds,
  \\
  [M^{\phi}(\xi,L),W]_{t} = \int^{t}_{0} \xi_{s}(\sigma(s, \cdot, L_{s}) \rho(s, L_{s})\dx \phi)ds,
\end{gather*}
hence 
\[
[M^{\phi}(\xi, L) - \int^{\cdot}_{0} \xi_{s}(\sigma(s, \cdot, L_{s}) \rho(s, L_{s})\dx\phi)dW_{s}]_{t} = 0,
\]
for every $t \in \timeInt$, which completes the proof.
\end{proof}

Our strategy is to take a limit in Proposition~\ref{Finite_Prop_Evolution} and apply weak convergence. First notice that we have:

\begin{lem}
\label{Tight_Lem_WeakConvHelper}
For every fixed $\phi \in \C$, there exists a deterministic cocountable subset of $[0,T]$ on which
\begin{gather*}
M^{\phi}(\nu^{N_{k}},L^{N_{k}})(t) \weak M^{\phi}(\nu^{*},L^{*})(t),
  \qquad
S^{\phi}(\nu^{N_{k}},L^{N_{k}})(t) \weak S^{\phi}(\nu^{*},L^{*})(t),  \\
C^{\phi}(\nu^{N_{k}},L^{N_{k}}, W)(t) \weak C^{\phi}(\nu^{*},L^{*}, W)(t) \qquad \textrm{in } \R.
\end{gather*}
Furthermore, these sequences are uniformly bounded (for fixed $\phi$).
\end{lem}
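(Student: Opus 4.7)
The plan is to realise each of the three maps $M^{\phi}$, $S^{\phi}$, $C^{\phi}$ evaluated at time $t$ as a continuous functional of the triple $(\xi,\ell,w)$ on a suitable product Skorokhod space, and then apply the continuous mapping theorem to a joint weak limit. First, since $L^{N_k}$ is a deterministic functional of $\nu^{N_k}$, the argument of Proposition~\ref{Tight_Prop_ConvLoss} (which uses tightness of $L^{N_k}$ on $(D_{\R},\M)$) upgrades directly to the joint convergence
\[
(\nu^{N_k}, L^{N_k}, W) \weak (\nu^{*}, L^{*}, W) \qquad \textrm{on } (D_{\Sdual},\M)\times(D_{\R},\M)\times(C_{\R},\mathrm{U}),
\]
where $L^{*}_{t} = 1-\nu^{*}_{t}(0,\infty)$. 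I would then choose the exceptional set by excluding those $t\in[0,T]$ for which either $\P(\nu^{*}_{t-}(\phi)\neq \nu^{*}_{t}(\phi))>0$ or $\P(L^{*}_{t-}\neq L^{*}_{t})>0$; a Fubini argument on the (countable, almost surely) jump sets of $\nu^{*}(\phi)$ and $L^{*}$ shows that the complement $\mathbb{T}$ is cocountable in $[0,T]$.

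For $t\in\mathbb{T}$, the evaluation $\pi_{t}:x\mapsto x_{t}$ from $(D_{\R},\M)$ to $\R$ is $\P$-almost surely continuous at $\nu^{*}(\phi)$ and at $L^{*}$ by \cite[Thm.~12.4.1]{whitt2002}, combined with \cite[Prop.~2.7]{ledger2015} to translate this to the \Sdual-valued setting; and $w\mapsto w_{t}$ on $(C_{\R},\mathrm{U})$ is continuous everywhere. Now decompose each of $M^{\phi}(\xi,\ell)(t)$, $S^{\phi}(\xi,\ell)(t)$ and $C^{\phi}(\xi,\ell,w)(t)$ into a polynomial expression involving $\pi_{t}(\xi(\phi))$, $w(t)$, and finitely many integrals of the form $\int^{t}_{0}\xi_{s}(g(s,\cdot,\ell_{s})\phi)ds$ where $g\in\{\mu,\sigma^{2},\sigma\rho\}$. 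By Proposition~\ref{Tight_Prop_RegCond}, the limit $(\nu^{*},L^{*})$ satisfies Assumption~\ref{Not_Def_RegCond}, so Corollary~\ref{Tight_Cor_Fiddly} delivers continuity of each such integral at $(\nu^{*},L^{*})$. Composing with the evaluation maps and applying the continuous mapping theorem to the joint weak limit yields the three claimed weak convergences in \R.

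The main obstacle is the quadratic integrand $\xi_{s}(\sigma\rho\,\dx\phi)^{2}$ that appears inside $S^{\phi}$: Corollary~\ref{Tight_Cor_Fiddly} is stated only for linear functionals of $\xi_{s}$, so it does not directly apply. I would handle this by repeating the mollifier/truncation argument in the proof of that corollary: replace $g=\sigma\rho$ by its $\e$-mollification $g^{\e}\in C^{\infty}_{b}$ and $\phi$ by its truncation to a compact interval $[-k,k]$, so that $(\xi_{s}(g^{\e}\phi))^{2}$ is a continuous function of $\xi_{s}$ in $(\Sdual,\mathrm{weak-}*)$ uniformly in $s$, control the approximation error using the bound $|\xi_{s}(\psi)|\leq \Vert\psi\Vert_{\infty}$ valid for any sub-probability $\xi_{s}$, and let $\e\to 0$ and $k\to\infty$. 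This produces continuity of the quadratic-integral functional at any $(\xi,\ell)$ satisfying Assumption~\ref{Not_Def_RegCond}, closing the $S^{\phi}$ case.

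For the uniform boundedness claim, Assumption~\ref{Not_Def_Coefficients}~(\ref{Not_Def_Coefficients_Smooth}) and~(\ref{Not_Def_Coefficients_3}) provide deterministic bounds on $\mu$, $\sigma$, $\rho$, while $|\nu^{N_{k}}_{s}(\psi)|\leq\Vert\psi\Vert_{\infty}$ gives a deterministic bound on the integrands involving $\phi$, $\dx\phi$ and $\dxx\phi$. This yields a constant (depending only on $T$, $\phi$ and the coefficient bounds) uniformly dominating $M^{\phi}(\nu^{N_{k}},L^{N_{k}})(t)$ and $S^{\phi}(\nu^{N_{k}},L^{N_{k}})(t)$ pointwise in $t$ and $N_{k}$; the extra factor $W_{t}$ in $C^{\phi}$ gives uniform $L^{p}$-boundedness for every $p<\infty$, which is the form of uniform integrability needed for the subsequent martingale limit argument.
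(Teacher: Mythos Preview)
Your proposal is correct and follows the same route as the paper: the paper's proof is the single line ``uniform boundedness follows since $\nu^{N}$ is a sub-probability measure, and the result follows from Corollary~\ref{Tight_Cor_Fiddly}'', and you have spelled out precisely the details this sentence hides (joint convergence of $(\nu^{N_k},L^{N_k},W)$, cocountable continuity set for the point evaluations $\pi_t$, decomposition into integral pieces covered by Corollary~\ref{Tight_Cor_Fiddly}). You are in fact more careful than the paper on two points: you notice that the quadratic integrand $\xi_s(\sigma\rho\,\dx\phi)^2$ in $S^{\phi}$ is not literally covered by Corollary~\ref{Tight_Cor_Fiddly} (a slightly quicker fix than redoing the mollifier argument is to write the difference of squares as a product and use $|\bar\xi_s(g\phi)+\xi_s(g\phi)|\leq 2\Vert g\phi\Vert_\infty$ together with the linear case), and you correctly observe that $C^{\phi}$ is not pointwise bounded because of the factor $W_t$, so that uniform integrability (your $L^{p}$ bound) is the right statement for the subsequent passage to the limit in Proposition~\ref{Tight_Prop_EvoEqn}.
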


\begin{proof}
Note that all the above processes are uniformly bounded (for fixed $\phi$) since $\nu^{N}$ is a probability measure. 
The result then follows by Corollary~\ref{Tight_Cor_Fiddly}.
\end{proof}

\begin{prop}[Evolution equation]
\label{Tight_Prop_EvoEqn}
Suppose $(\nu^{N_{k}}, W) \Rightarrow (\nu^{*}, W)$. Then, for every $\phi \in \C$, the processes 
$M^{\phi}(\nu^{*}, L^{*})$, $S^{\phi}(\nu^{*}, L^{*})$ and $C^{\phi}(\nu^{*}, L^{*}, W)$ from
Definition~\ref{Tight_Def_MGComp} are martingales. Hence $\nu^{*}$ and $W$ satisfy the evolution 
equation from Theorem~\ref{Intro_Thm_Exist}. Furthermore, $\nu^{*}$ is continuous. 
\end{prop}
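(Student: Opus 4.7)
The plan is to pass to the limit in the martingale identities that hold at the finite level. Fix $\phi \in \C$. Proposition~\ref{Finite_Prop_Evolution} identifies
\[
M^{\phi}(\nu^{N}, L^{N})(t) = \int^{t}_{0} \nu^{N}_{s}(\sigma_{s}\rho_{s} \dx \phi) dW_{s} + I^{N}_{t}(\phi),
\]
which is a martingale, and standard bracket calculations, using the mutual independence of $W, W^{1}, W^{2}, \dots$, yield
\begin{align*}
[M^{\phi}(\nu^{N}, L^{N})]_{t} &= \int^{t}_{0} \nu^{N}_{s}(\sigma_{s}\rho_{s} \dx\phi)^{2} ds + [I^{N}(\phi)]_{t}, \\
[M^{\phi}(\nu^{N}, L^{N}), W]_{t} &= \int^{t}_{0} \nu^{N}_{s}(\sigma_{s}\rho_{s} \dx \phi) ds.
\end{align*}
Consequently both $S^{\phi}(\nu^{N}, L^{N}) - [I^{N}(\phi)]$ and $C^{\phi}(\nu^{N}, L^{N}, W)$ are martingales, while Proposition~\ref{Finite_Prop_Vanish} gives $\E \sup_{t \leq T}[I^{N}(\phi)]_{t} = O(N^{-1})$.

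To transfer these identities to the limit, fix $0 \leq u_{1} < \cdots < u_{m} \leq s < t$ all in the cocountable continuity set provided by Lemma~\ref{Tight_Lem_WeakConvHelper}, bounded continuous $h : \R^{2m} \to \R$ and $\psi_{1}, \dots, \psi_{m} \in \S$, and set $F_{N} := h(\nu^{N}_{u_{1}}(\psi_{1}), \dots, \nu^{N}_{u_{m}}(\psi_{m}), W_{u_{1}}, \dots, W_{u_{m}})$. The finite-level martingale property yields
\[
\E\bigl[F_{N_{k}} \bigl(M^{\phi}(\nu^{N_{k}}, L^{N_{k}})(t) - M^{\phi}(\nu^{N_{k}}, L^{N_{k}})(s)\bigr)\bigr] = 0,
\]
with analogous identities (up to $O(N_{k}^{-1})$ errors) for $S^{\phi}$ and $C^{\phi}$. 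The joint weak convergence $(\nu^{N_{k}}, L^{N_{k}}, W) \weak (\nu^{*}, L^{*}, W)$ from Proposition~\ref{Tight_Prop_ConvLoss}, combined with the uniform boundedness in Lemma~\ref{Tight_Lem_WeakConvHelper}, passes each identity to the limit. A monotone class argument upgrades the resulting property to the natural filtration of $(\nu^{*}, W)$, and Lemma~\ref{Tight_Lem_MGapp} then delivers the limit SPDE.

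The principal technical obstacle is the $S^{\phi}$ bracket term, whose integrand $(\xi_{s}(\sigma_{s}\rho(\ell_{s}) \dx\phi))^{2}$ is the square of a measure integral and hence is not covered directly by Corollary~\ref{Tight_Cor_Fiddly}. One adapts the proof of that corollary: at any $(\xi, \ell)$ satisfying Assumption~\ref{Not_Def_RegCond}, the strict monotonicity of $\ell$ together with the finiteness of the coefficient discontinuities $\{\theta_{i}\}$ makes $s \mapsto \xi_{s}(\sigma_{s}\rho(\ell_{s}) \dx\phi)$ continuous outside a Lebesgue-null set, so after mollifying $\sigma\rho$ and squaring the smoothed quantity pointwise one obtains a continuous functional on $(D_{\Sdual}, \M) \times (D_{[0,1]}, \M)$ at such points, enabling a continuous-mapping argument.

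Continuity of $\nu^{*}$ follows swiftly: $L^{*}$ is non-decreasing by Proposition~\ref{Tight_Prop_RegCond} and Proposition~\ref{Prob_Prop_LowerLoss} rules out fixed points of discontinuity, so $L^{*}$ is almost surely continuous. For $\phi \in \C$ the limit SPDE writes $\nu^{*}_{t}(\phi)$ as a sum of Lebesgue integrals and a single It\^o integral against $W$, each continuous in $t$. Extension to general $\phi \in \S$ follows by applying the same martingale passage to the whole-space system in Proposition~\ref{Finite_Prop_EvoNub} to obtain a continuous limit $\nub^{*}$, and then invoking the identity $\nu^{*}_{t}(\phi) = \nub^{*}_{t}(\phi) - \phi(0) L^{*}_{t}$ inherited in the limit from \cite[Prop.~4.2]{ledger2015}.
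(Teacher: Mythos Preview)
Your core argument is the paper's: identify $M^{\phi}(\nu^{N},L^{N})$, $S^{\phi}(\nu^{N},L^{N})-[I^{N}(\phi)]$ and $C^{\phi}(\nu^{N},L^{N},W)$ as martingales at the finite level, then pass the increment identities to the limit at continuity times using Lemma~\ref{Tight_Lem_WeakConvHelper} and uniform boundedness. The only stylistic difference is that you test against functionals of $(\nu^{N_{k}}_{u_{i}}(\psi_{j}),W_{u_{i}})$, whereas the paper tests against functionals of $M^{\phi}$ (resp.\ $S^{\phi}$, $C^{\phi}$) at earlier times; both work, and yours is a little tidier about the filtration. Your explicit handling of the squared integrand in $S^{\phi}$ is correct and spells out a step the paper's one-line proof of Lemma~\ref{Tight_Lem_WeakConvHelper} leaves implicit.

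Your continuity paragraph, however, has two slips. First, ``no fixed time of discontinuity'' does not give almost-sure continuity of $L^{*}$: a Poisson process is monotone, c\`adl\`ag, and satisfies $\P(\text{jump at }t)=0$ for every fixed $t$, yet is not continuous. Proposition~\ref{Prob_Prop_LowerLoss} on its own does not close this. Second, the identity $\nu^{*}_{t}(\phi)=\nub^{*}_{t}(\phi)-\phi(0)L^{*}_{t}$ holds only when $\nub^{N}$ tracks the \emph{stopped} particles $X^{i,N}_{t\wedge\tau^{i,N}}$, but the $\nub^{N}$ of Proposition~\ref{Finite_Prop_EvoNub} lets particles evolve freely after hitting zero; you cannot invoke both that decomposition and that evolution equation for the same object. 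The paper simply reads continuity off the right-hand side of the limit SPDE, which directly gives continuity of $t\mapsto\nu^{*}_{t}(\phi)$ for $\phi\in\C$. If you want to upgrade to all $\phi\in\S$, a clean route is to observe that $\C$ is the kernel of evaluation at $0$ in $\S$, so any jump $\nu^{*}_{t}-\nu^{*}_{t-}$ in $\Sdual$ that vanishes on $\C$ must be a multiple of $\delta_{0}$; one then only needs to rule out an atom of $\nu^{*}$ at the origin, for which the boundary estimate is the natural tool.
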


\begin{proof}
Fix $\phi \in \C$ and let $\mathbb{T}$ be the cocountable set of times on which we have the conclusion of 
Lemma~\ref{Tight_Lem_WeakConvHelper}. To show that $M^{\phi}(\nu^{*},L^{*})$ is a martingale, it is 
enough to show that, for any arbitrary $k \geq 1$, $s,t \in \mathbb{T}$, $s_{1}, \dots, s_{k} \in [0,s] \cap \mathbb{T}$ 
and $f_{1}, \dots, f_{k} : \R \to \R$ continuous and bounded, that the map defined by
\[
F(\xi, \ell) := 
  (M^{\phi}(\xi,\ell)(t) - M^{\phi}(\xi,\ell)(s))
  \prod_{i=1}^{k} f_{i}(M^{\phi}(\xi, \ell)(s_{i}))
\] 
satisfies $\E F(\nu^{*}, L^{*}) = 0$. By Lemma~\ref{Tight_Lem_WeakConvHelper} and the boundedness 
and continuity of the $f_{i}$'s
\[
\E F(\nu^{*}, L^{*}) = \lim_{k \to \infty} \E F(\nu^{N_{k}}, L^{N_{k}}).
\]
However, from Proposition~\ref{Finite_Prop_Evolution}, we have that $M^{\phi}(\nu^{N_{k}}, L^{N_{k}})$ is a 
martingale since
\begin{equation}
\label{eq:Exist_MartingaleIto}
M^{\phi}(\nu^{N_{k}}, L^{N_{k}})(t)
  = \int^{t}_{0} \nu^{N_{k}}(\sigma(s, \cdot, L^{N_{k}}_{s}) \rho(s, L^{N_{k}}) \phi) dW_{s} 
  + I^{N_{k}}_{t}(\phi),
\end{equation}
therefore $\E F(\nu^{N_{k}}, L^{N_{k}}) = 0$ and so $M^{\phi}(\nu^{*}, L^{*})$ is a martingale.

For $S^{\phi}$, define the map 
\[
G(\xi, \ell) := 
  (S^{\phi}(\xi,\ell)(t) - S^{\phi}(\xi,\ell)(s))
  \prod_{i=1}^{k} f_{i}(S^{\phi}(\xi, \ell)(s_{i})).
\] 
By applying It\^o's formula to (\ref{eq:Exist_MartingaleIto}), we have
\[
S^{\phi}(\nu^{N_{k}}, L^{N_{k}})(t)
  = S^{\phi}(\nu^{N_{k}}, L^{N_{k}})(0)
  + \textrm{martingale term}
  + 2 [I^{N_{k}}(\phi)]_{t}.
\]
So be the boundedness of the $f_{i}$ and Proposition~\ref{Finite_Prop_Vanish}
\[
\E G(\nu^{N_{k}}, L^{N_{k}}) = O(1/N_{k}),
\]
so $\E G(\nu^{*}, L^{*}) = 0$ and $S^{\phi}(\nu^{*}, L^{*})$ is a martingale. The work for $C^{\phi}$ follows 
similarly, so we omit it. The result is then complete by Lemma~\ref{Tight_Lem_MGapp}, and the continuity of 
$t \mapsto \nu^{*}_{t}$ follows by the fact that the right-hand side of the evolution equation in Theorem~\ref{Intro_Thm_Exist} 
is continuous.
\end{proof}

\section{The kernel smoothing method}
\label{Sect_Kernel}

The kernel smoothing method converts a measure into an approximating family of functions and, by 
establishing uniform results on the functions, enables us to show the existence of a density for 
the measure. In the next section we will use this to 
prove Theorem~\ref{Intro_Thm_Unique}. Let $\zeta$ be a finite signed-measure and $p_{\e}$ the Gaussian 
heat kernel
\[
p_{\e}(x) := (2 \pi \e)^{-1/2} \exp\{ -x^{2} / 2\e  \},
  \qquad x \in \R. 
\]
Begin by noting the familiar fact that $\zeta$ can be approximated by its convolution with $p_{\e}$: For every continuous and bounded $\phi : \R \to \R$
\begin{equation}
\label{eq:Kernel_BasicSmooth}
\int_{\R} \phi(x) (\zeta * p_{\e}) (x) dx \rightarrow \zeta(\phi) 
	= \int_{\R} \phi(x)\zeta(dx),
\end{equation}
as $\e \rightarrow 0$, and
\begin{equation}
\label{eq:KSM_Tbar}
\bar{T}_{\e} \zeta (x) := (p_{\e} * \zeta)(x) = \int_{\R} p_{\e}(x - y) \zeta(dy) 
\end{equation}
is a $C^{\infty}(\R)$ function. We will sometimes abuse notation and write $\bar{T}_{\e}\phi = p_{\e} * \phi$ 
when $\phi : \R \to \R$ is a function. With $(\cdot, \cdot)_{2}$ denoting the usual $L^{2}(\R)$ inner product, we have 
\begin{equation}
\label{eq:Unique_L2Switch}
(\phi, \bar{T}_{\e} \zeta)_{2} = \zeta(\bar{T}_{\e} \phi).
\end{equation}

Our first observation is that $\bar{T}_{\e}$ is a contraction on $L^{2}(\R)$:

\begin{prop}[Contraction]
\label{KSM_Prop_Contract}
Let $f \in L^{2}(\R)$. Then $\Vert \bar{T}_{\e}f  \Vert_{2}  \leq \Vert f  \Vert_{2}$, where $\Vert \cdot \Vert_{2}$ 
is the $L^{2}$ norm on \R. 
\end{prop}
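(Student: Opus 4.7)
The plan is to exploit the fact that $p_\e$ is a probability density on $\R$ (i.e.\ $\int_\R p_\e(y)dy = 1$) and apply Jensen's inequality pointwise before integrating. This is really a special case of Young's convolution inequality $\|f*g\|_2 \leq \|f\|_2 \|g\|_1$ with $g = p_\e$, but I would avoid invoking it abstractly and instead give the short self-contained argument.

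First I would write $(\bar{T}_\e f)(x) = \int_\R p_\e(x-y) f(y)dy$ and observe that, for each fixed $x \in \R$, the map $y \mapsto p_\e(x-y)$ is the density of a probability measure on $\R$. Applying Jensen's inequality to the convex function $u \mapsto u^2$ with respect to this measure yields
\begin{equation*}
(\bar{T}_\e f)(x)^2 \leq \int_\R p_\e(x-y) f(y)^2 dy
\end{equation*}
for every $x \in \R$.

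Next I would integrate both sides over $x$. Since the integrand on the right is non-negative, Tonelli's theorem allows swapping the order of integration, and translation-invariance of Lebesgue measure gives $\int_\R p_\e(x-y) dx = 1$ for each $y$. Hence
\begin{equation*}
\|\bar{T}_\e f\|_2^2 \leq \int_\R f(y)^2 \Bigl( \int_\R p_\e(x-y) dx \Bigr) dy = \|f\|_2^2,
\end{equation*}
which is the stated contraction.

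There is no real obstacle here; the only thing to be a little careful about is measurability and integrability so that Jensen and Tonelli both apply, but since $f \in L^2(\R)$ and $p_\e$ is a bounded continuous probability density this is automatic. An alternative route would be Fourier-analytic: $\widehat{\bar{T}_\e f}(\xi) = e^{-\e \xi^2/2} \hat f(\xi)$, and then Plancherel together with $|e^{-\e\xi^2/2}| \leq 1$ gives the same bound. I would choose the Jensen approach since it is slightly more elementary and keeps everything on the physical side.
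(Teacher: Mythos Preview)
Your proof is correct and essentially identical to the paper's: both obtain the pointwise bound $(\bar{T}_\e f)(x)^2 \leq \int_\R p_\e(x-y) f(y)^2\,dy$ and then integrate over $x$ using Fubini/Tonelli. The only cosmetic difference is that the paper phrases the pointwise step as Cauchy--Schwarz (splitting $p_\e = \sqrt{p_\e}\cdot\sqrt{p_\e}$) whereas you phrase it as Jensen for $u\mapsto u^2$, which is the same inequality.
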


\begin{proof}
The Cauchy--Schwarz inequality gives
\[
|\bar{T}_{\e}f(x)|^{2}
  = \Big| \int_{\R} p_{\e}(x - y) f(y) dy \Big|^{2}
  \leq \int_{\R} p_{\e}(x - y)dy 
  \cdot \int_{\R} p_{\e}(x - y) f(y)^{2} dy.
\]
The first integral on the right-hand side integrates to one, then integrating over $x \in \R$ completes the proof.
\end{proof}

We now give a condition which shows how to recover the existence of a density via kernel smoothing.

\begin{prop}
\label{KSM_Prop_liminfL2}
Suppose that $\zeta$ is a finite signed measure and 
\[
\liminf_{\e \to 0} \Vert \bar{T}_{\e}\zeta \Vert_{2} < \infty.
\]
Then $\zeta$ has an $L^{2}(\R)$ density, i.e.\ there exists $f \in  L^{2}(\R)$ such that $\zeta(\phi) = (f, \phi)_{2}$, 
for every $\phi \in L^{2}(\R)$. Furthermore $\Vert \bar{T}_{\e}\zeta \Vert_{2} \to \Vert f \Vert_{2}$ in \R.
\end{prop}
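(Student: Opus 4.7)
The plan is to obtain $f$ as a weak $L^{2}$ limit of a subsequence of $\bar{T}_{\e}\zeta$ and then identify it as the density of $\zeta$ through the adjoint identity (\ref{eq:Unique_L2Switch}).

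First, by the liminf hypothesis, pick a sequence $\e_n \downarrow 0$ along which $\sup_n \Vert \bar{T}_{\e_n}\zeta \Vert_2 < \infty$. Since $L^{2}(\R)$ is a Hilbert space, bounded sequences admit weakly convergent subsequences, so passing to a further subsequence we may assume $\bar{T}_{\e_n}\zeta \rightharpoonup f$ in $L^{2}(\R)$ for some $f \in L^{2}(\R)$, with $\Vert f \Vert_2 \leq \liminf_n \Vert \bar{T}_{\e_n}\zeta \Vert_2$ by weak lower semi-continuity of the norm.

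Next, I would identify $f$ as the density of $\zeta$. Fix $\phi \in C^{\infty}_c(\R)$, viewed as an element of $L^{2}(\R)$. Weak convergence yields $(\bar{T}_{\e_n}\zeta, \phi)_2 \to (f, \phi)_2$, while (\ref{eq:Unique_L2Switch}) gives $(\bar{T}_{\e_n}\zeta, \phi)_2 = \zeta(\bar{T}_{\e_n}\phi)$. Since $\phi$ is smooth and compactly supported, $\bar{T}_{\e_n}\phi \to \phi$ uniformly and the functions $\bar{T}_{\e_n}\phi$ are uniformly bounded; as $\zeta$ is a finite signed measure this gives $\zeta(\bar{T}_{\e_n}\phi) \to \zeta(\phi)$ (essentially the content of (\ref{eq:Kernel_BasicSmooth})). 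Hence $\zeta(\phi) = (f, \phi)_2 = \int_\R f\phi\,dx$ for every $\phi \in C^{\infty}_c(\R)$, and standard measure-theoretic uniqueness then yields $d\zeta = f\,dx$, i.e.\ $\zeta$ is absolutely continuous with density $f$. The Cauchy--Schwarz bound $|\int \phi f\,dx| \leq \Vert \phi \Vert_2 \Vert f \Vert_2$ extends the identity $\zeta(\phi) = (f, \phi)_2$ to all $\phi \in L^{2}(\R)$.

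Finally, I would upgrade this to the norm convergence claim. Since $\zeta = f\,dx$, the pointwise identity $\bar{T}_\e \zeta(x) = (p_\e * f)(x)$ holds for every $x \in \R$ and every $\e > 0$. Standard mollifier theory in $L^{2}(\R)$ then gives $p_\e * f \to f$ in $L^{2}(\R)$ as $\e \to 0$, so $\Vert \bar{T}_\e \zeta \Vert_2 \to \Vert f \Vert_2$; because this pins down the limit $f$ uniquely, the convergence holds for the full net $\e \to 0$ and not only along the extracted subsequence. The main delicate point is really the identification step --- once $\zeta$ is shown to admit some $L^{2}$ density everything else is routine --- and there the key observation is that the weak $L^{2}$ limit pairs correctly with $\zeta$ via (\ref{eq:Unique_L2Switch}), which works because for $\phi \in C^{\infty}_c(\R)$ the convergence $\bar{T}_{\e_n}\phi \to \phi$ is uniform on a common compact support, so that the finiteness of the signed measure $\zeta$ absorbs the approximation.
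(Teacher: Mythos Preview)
Your proof is correct and follows essentially the same approach as the paper: extract a weakly convergent subsequence in $L^{2}$, identify its limit with $\zeta$ via the adjoint identity (\ref{eq:Unique_L2Switch}) and (\ref{eq:Kernel_BasicSmooth}), and then upgrade to norm convergence. The only minor difference is in this last step: the paper bounds $\limsup_{\e}\Vert\bar{T}_{\e}\zeta\Vert_{2}\leq\Vert f\Vert_{2}$ using the contraction property (Proposition~\ref{KSM_Prop_Contract}) and $\liminf_{\e}\Vert\bar{T}_{\e}\zeta\Vert_{2}\geq\Vert f\Vert_{2}$ via weak lower semi-continuity, whereas you invoke the stronger fact $p_{\e}*f\to f$ in $L^{2}$ directly. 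One small wording issue: the functions $\bar{T}_{\e_n}\phi$ do \emph{not} have common compact support (they are Gaussian convolutions), but your argument is unaffected since what you actually use is uniform boundedness by $\Vert\phi\Vert_{\infty}$ together with uniform convergence on $\R$, which is enough for dominated convergence against the finite signed measure $\zeta$.
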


\begin{proof}
The hypothesis gives a bounded sequence $(\bar{T}_{\e_{n}} \zeta)_{n \geq 1}$ in $L^{2}(\R)$, with $\e_{n} \to 0$. 
By \cite[App.~D, Thm.~3]{evans2010partial}, we can extract a weakly convergent subsequence 
\[
(\bar{T}_{\e_{n_{k}}}, \phi)_{2} 
	\to (f, \phi)_{2},
	\qquad\textrm{for every } \phi \in L^{2}(\R), 
\] 
for some $f \in L^{2}(\R)$. But by (\ref{eq:Kernel_BasicSmooth}) we conclude that $\zeta(\phi) = (f, \phi)_{2}$ 
for all $\phi \in \S$, and this gives the first result since \S\ is dense in $L^{2}(\R)$. 

We now have that $\bar{T}_{\e} \zeta = \bar{T}_{\e} f$, therefore by Proposition~\ref{KSM_Prop_Contract} 
\[\limsup_{\e \to 0} \Vert \bar{T}_{\e} \zeta \Vert_{2} \leq \Vert f  \Vert_{2}.\] By (\ref{eq:Kernel_BasicSmooth}) we also have
\[
|(f, \phi)_{2}|
	= \lim_{\e \to 0} |(\bar{T}_{\e} \zeta, \phi)_{2}|
	\leq \liminf_{\e \to 0} \Vert \bar{T}_{\e} \zeta \Vert_{2} \Vert \phi \Vert_{2},
		\qquad\textrm{for all } \phi \in\S,
\]
so $ \Vert f  \Vert_{2} \leq \liminf_{\e \to 0} \Vert \bar{T}_{\e} \zeta \Vert_{2}$, which completes the proof. 
\end{proof}

\subsection*{Smoothing in $H^{-1}$ and the anti-derivative}

The material above will be used to establish a preliminary regularity result (Proposition~\ref{Unique_Prop_L2Reg}) in 
Section~\ref{Sect_Uniqueness}. However, for the main uniqueness proof we will work in  a space of lower regularity and on 
the half-line. Recall that the first Sobolev space with Dirichlet boundary condition, $H^{1}_{0}(0,\infty)$, is defined to be the 
closure of $C^{\infty}_{0}(0, \infty)$ under the norm
\[
\Vert f \Vert_{H^{1}(0,\infty)}
	:= (\Vert f \Vert_{L^{2}(0,\infty)}^{2} + \Vert \partial_{x}f \Vert_{L^{2}(0,\infty)}^{2})^{1/2}.
\]
The dual of $H^{1}_{0}(0,\infty)$ will be denoted by $H^{-1}$ and its norm by
\[
\Vert \zeta \Vert_{-1}
	:= \sup_{\Vert \phi \Vert_{H^{1}(0,\infty)} = 1} |\zeta(\phi)|.
\]
This is a natural space for us to work in due to the following.

\begin{prop}
\label{KSM_Prop_SignedMeasure}
If $\zeta$ is a finite signed measure, then $\zeta \in H^{-1}$. 
\end{prop}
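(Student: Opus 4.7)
The plan is to show that $H^{1}_{0}(0,\infty)$ embeds continuously into $L^{\infty}(0,\infty)$ with embedding constant at most 1, which then makes the pairing with any finite signed measure bounded in the $H^{1}$ norm.

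First I would take $\phi \in C^{\infty}_{0}(0,\infty)$. Since $\phi(0) = 0$ (and $\phi$ has compact support), for each $x > 0$ I can write
\[
\phi(x)^{2} = \int_{0}^{x} \partial_{y}\phi(y)^{2} \, dy = 2 \int_{0}^{x} \phi(y) \partial_{y}\phi(y)\, dy,
\]
and Cauchy--Schwarz combined with the AM--GM inequality $2ab \leq a^{2} + b^{2}$ gives
\[
|\phi(x)|^{2} \leq 2 \Vert \phi \Vert_{L^{2}(0,\infty)} \Vert \partial_{x}\phi \Vert_{L^{2}(0,\infty)} \leq \Vert \phi \Vert_{H^{1}(0,\infty)}^{2}.
\]
Taking the supremum over $x$ yields $\Vert \phi \Vert_{\infty} \leq \Vert \phi \Vert_{H^{1}(0,\infty)}$ for all $\phi \in C^{\infty}_{0}(0,\infty)$, and by the density of $C^{\infty}_{0}(0,\infty)$ in $H^{1}_{0}(0,\infty)$ this estimate extends to all of $H^{1}_{0}(0,\infty)$ (with the resulting limit being continuous on $[0,\infty)$ and vanishing at the origin, so extendable by $0$ to a bounded continuous function on $\R$).

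Given this embedding, for any $\phi \in H^{1}_{0}(0,\infty)$, extending $\phi$ by zero on $(-\infty, 0]$ and denoting by $|\zeta|$ the total variation measure of $\zeta$,
\[
|\zeta(\phi)| \leq \int_{\R} |\phi(x)| \, |\zeta|(dx) \leq \Vert \phi \Vert_{\infty} \cdot |\zeta|(\R) \leq |\zeta|(\R) \cdot \Vert \phi \Vert_{H^{1}(0,\infty)}.
\]
Since $\zeta$ is a finite signed measure, $|\zeta|(\R) < \infty$, so taking the supremum over $\phi$ with $\Vert \phi \Vert_{H^{1}(0,\infty)} = 1$ gives $\Vert \zeta \Vert_{-1} \leq |\zeta|(\R) < \infty$, whence $\zeta \in H^{-1}$.

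There is no real obstacle here; the only mildly subtle point is the domain issue (the measure lives on $\R$ while the test functions live on $(0,\infty)$), but this is handled by the extension-by-zero remark together with the fact that such $\phi$ vanish at the origin, so no boundary contribution is lost.
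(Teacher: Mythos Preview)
Your proof is correct and follows essentially the same approach as the paper: bound $|\zeta(\phi)|$ by $|\zeta|\,\Vert\phi\Vert_\infty$ and then control $\Vert\phi\Vert_\infty$ by $\Vert\phi\Vert_{H^1}$. The only difference is that the paper invokes Morrey's inequality as a black box for the embedding $H^1_0(0,\infty)\hookrightarrow L^\infty$, whereas you prove this embedding directly via the fundamental theorem of calculus and Cauchy--Schwarz, obtaining the explicit constant $1$.
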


\begin{proof}
First observe that $|\zeta(\phi)| \leq |\zeta| \Vert \phi \Vert_{\infty}$, for every $\phi \in C^{\infty}_{0}(0,\infty)$. 
Morrey's inequality \cite[Sec.~5.6, Thm.~4]{evans2010partial} gives a universal constant, $C > 0$, such that 
$\Vert \phi \Vert_{\infty} \leq C \Vert \phi \Vert_{H^{1}}$, and this completes the proof. 
\end{proof}

To work on the half-line we will use the absorbing heat kernel defined, as in the proof of Proposition~\ref{Prob_Prop_Boundary}, by 
\begin{equation}
\label{eq:KSM_Absorbing}
G_{\e}(x, y) := p_{\e}(x - y) - p_{\e}(x + y),
	\qquad \textrm{for } x,y> 0
\end{equation}
and define 
\[
T_{\e} \zeta (x) 
	:= \int^{\infty}_{0} G_{\e}(x,y) \zeta(dy).
\]
Notice that $G_{\e}(x, 0) = 0$ for every $x$, so $y \mapsto G_{\e}(x, y)$ is an element of \C, and also notice 
that $T_{\e}\zeta(0) = 0$. For $T_{\e}\zeta$ to approximate $\zeta$, we need $\zeta$ to be supported on $[0,\infty)$:

\begin{prop}
\label{KSM_BasicSmooth}
If $\zeta$ is supported on $[0,\infty)$, then 
\[
(T_{\e} \zeta, \phi)_{2}
	 \to \zeta(\phi), 
\]
as $\e \to 0$, for every $\phi$ continuous, bounded and supported on $(0,\infty)$: 
\end{prop}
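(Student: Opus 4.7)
The plan is to use Fubini's theorem to swap the order of integration, and then show the inner $x$-integral converges pointwise in $y$ to $\phi(y)$ and is uniformly bounded, so that dominated convergence on the outer $\zeta$-integral yields the result.

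First I would write
\[
(T_{\e} \zeta, \phi)_{2} = \int_{0}^{\infty} \phi(x) \int_{0}^{\infty} G_{\e}(x,y) \zeta(dy) \, dx.
\]
Since $G_\e \geq 0$ on $(0,\infty)^2$ (because $|x-y| \leq x+y$ when $x,y \geq 0$, and $p_\e$ is decreasing in $|\cdot|$) and $|\zeta|$ is a finite measure with $\phi$ bounded, Fubini applies, giving
\[
(T_{\e} \zeta, \phi)_{2} = \int_{0}^{\infty} \Bigl( \int_{0}^{\infty} \phi(x) G_{\e}(x,y) \, dx \Bigr) \zeta(dy).
\]

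Next I would analyse the inner integral $F_\e(y) := \int_{0}^{\infty} \phi(x) G_{\e}(x,y) \, dx$ for fixed $y > 0$. Splitting $G_\e(x,y) = p_\e(x-y) - p_\e(x+y)$, the first piece is a standard mollification against $\phi \mathbf{1}_{(0,\infty)}$; since $\phi$ is continuous at $y > 0$ (and $\phi \equiv 0$ on $(-\infty,0]$ by the support hypothesis, so no boundary issue), this tends to $\phi(y)$. The second piece satisfies
\[
\Bigl| \int_{0}^{\infty} \phi(x) p_{\e}(x + y) \, dx \Bigr|
    \leq \Vert \phi \Vert_{\infty} \int_{y}^{\infty} p_{\e}(u) \, du
    \longrightarrow 0
\]
as $\e \to 0$ for any fixed $y > 0$, since the Gaussian mass concentrates at the origin. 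Hence $F_\e(y) \to \phi(y)$ for every $y > 0$. At $y = 0$ (which only matters if $\zeta$ has an atom there), we have $G_\e(x,0) \equiv 0$, so $F_\e(0) = 0 = \phi(0)$, consistent with the support of $\phi$.

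For the dominated convergence step, I need a uniform-in-$\e$ bound on $F_\e(y)$. Using $G_\e(x,y) \geq 0$ on the quadrant together with the probabilistic interpretation that $\int_{0}^{\infty} G_\e(x,y) \, dx$ is the survival probability by time $\e$ of a Brownian motion started at $y$ and killed at $0$, we get $\int_0^\infty G_\e(x,y) \, dx \leq 1$, hence
\[
|F_\e(y)| \leq \Vert \phi \Vert_{\infty}
\]
for all $\e > 0$ and $y \geq 0$. Since $|\zeta|$ is a finite measure, the dominated convergence theorem yields $(T_\e \zeta, \phi)_2 \to \int_0^\infty \phi(y) \, \zeta(dy) = \zeta(\phi)$. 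The main obstacle is the usual care required at the absorbing boundary, but the support hypotheses on $\zeta$ and $\phi$ make both terms in $G_\e$ well behaved: the reflected heat kernel $p_\e(x+y)$ integrates out to zero over $(0,\infty)$ in the limit, and the atom at $0$ contributes nothing from either side.
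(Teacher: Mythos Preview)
Your proof is correct. The paper's proof is shorter but relies on the same decomposition $G_\e(x,y)=p_\e(x-y)-p_\e(x+y)$: instead of applying Fubini and then dominated convergence by hand, the paper makes the substitution $x\mapsto -x$ in the reflected term to write
\[
(T_\e\zeta,\phi)_2=(\bar T_\e\zeta,\phi)_2-(\bar T_\e\zeta,\tilde\phi)_2,\qquad \tilde\phi(x):=\phi(-x),
\]
and then invokes the whole-space mollification fact~(\ref{eq:Kernel_BasicSmooth}) for both terms; the second limit $\zeta(\tilde\phi)$ vanishes because $\zeta$ is supported on $[0,\infty)$ and $\tilde\phi$ on $(-\infty,0)$. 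Your direct Fubini/DCT argument is slightly longer but more self-contained, since it does not appeal to~(\ref{eq:Kernel_BasicSmooth}); the uniform bound $\int_0^\infty G_\e(x,y)\,dx\le 1$ that you use for the dominating function is exactly what makes the DCT work cleanly here.
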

\begin{proof}
Let $\tilde{\phi}(x):=\phi(-x)$, then from (\ref{eq:Kernel_BasicSmooth})
\[
(T_{\e}\zeta, \phi)_2
	= (\bar{T}_{\e}\zeta, \phi)_2
	- (\bar{T}_{\e}\zeta, \tilde{\phi})_2
	\to \zeta(\phi)
	- \zeta(\tilde{\phi}).
\]
But by the hypotheses $\zeta(\tilde{\phi}) = 0$, as required. 
\end{proof}

To access the $H^{-1}$ norm, we will use the \emph{anti-derivative} defined by 
\[
\anti f(x) := -\int^{\infty}_{x} f(y)dy,
	\qquad \textrm{for } f:\R \to \R \textrm{ integrable}.
\]
Notice that $\dx \anti f = f$, and if $\dx f$ is also integrable, then $\anti \dx f = f$ too. The result we will use 
in Section~\ref{Sect_Uniqueness} is the following.

\begin{prop}
\label{KSM_Prop_liminfH-1}
If $\zeta \in H^{-1}$, then $\Vert \zeta \Vert_{-1} \leq \liminf_{\e \to 0} \Vert \anti T_{\e} \zeta \Vert_{L^{2}(0,\infty)}$. 
\end{prop}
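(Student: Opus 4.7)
The plan is to test $\zeta$ against a dense subclass of $H^{1}_{0}(0,\infty)$, replace $\zeta$ by its smooth approximant $T_{\e}\zeta$ using Proposition~\ref{KSM_BasicSmooth}, then integrate by parts to transfer a derivative from the test function to the approximant, producing the $L^{2}$--pairing of $\partial_{x}^{-1}T_{\e}\zeta$ with $\partial_{x}\phi$. Cauchy--Schwarz and a $\liminf$ then give the bound; taking the supremum over unit-norm $\phi$ delivers the $H^{-1}$ norm on the left.

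Fix $\phi\in C^{\infty}_{0}(0,\infty)$. This $\phi$ is continuous, bounded and supported away from $0$, so Proposition~\ref{KSM_BasicSmooth} applies (provided $\zeta$ is supported on $[0,\infty)$, which is the context in which Proposition~\ref{KSM_BasicSmooth} was stated; otherwise one first splits $\zeta$ into its restriction to $[0,\infty)$ and the antisymmetric part used to define $G_{\e}$, the point being that pairings of $T_{\e}\zeta$ with test functions on $(0,\infty)$ recover the restriction of $\zeta$ to $(0,\infty)$ in the limit, which is all that $\zeta\in H^{-1}$ sees anyway). Thus
\[
\zeta(\phi) = \lim_{\e\to 0} (T_{\e}\zeta,\phi)_{2}.
\]

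Now I carry out the integration by parts. Because $T_{\e}\zeta = G_{\e}*_{\text{half-line}}\zeta$ is smooth and satisfies $T_{\e}\zeta(0)=0$, and because $|T_{\e}\zeta(x)|$ is bounded above by $(p_{\e}*|\zeta|)(x)$, a quick Fubini check gives $T_{\e}\zeta\in L^{1}(0,\infty)$, so the anti-derivative $\partial_{x}^{-1}T_{\e}\zeta(x)=-\int_{x}^{\infty}T_{\e}\zeta(y)dy$ is well-defined and continuous, with $\partial_{x}^{-1}T_{\e}\zeta(\infty)=0$ and $\partial_{x}(\partial_{x}^{-1}T_{\e}\zeta)=T_{\e}\zeta$. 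Since $\phi$ has compact support in $(0,\infty)$ (so $\phi(0)=0$ and $\phi$ vanishes at infinity),
\[
(T_{\e}\zeta,\phi)_{2}
 = \int_{0}^{\infty} \partial_{x}(\partial_{x}^{-1}T_{\e}\zeta)(x)\,\phi(x)\,dx
 = -(\partial_{x}^{-1}T_{\e}\zeta,\partial_{x}\phi)_{2},
\]
with no boundary contributions. Cauchy--Schwarz on $(0,\infty)$ then yields
\[
|(T_{\e}\zeta,\phi)_{2}|
 \leq \|\partial_{x}^{-1}T_{\e}\zeta\|_{L^{2}(0,\infty)}\,\|\partial_{x}\phi\|_{L^{2}(0,\infty)}
 \leq \|\partial_{x}^{-1}T_{\e}\zeta\|_{L^{2}(0,\infty)}\,\|\phi\|_{H^{1}(0,\infty)}.
\]
Sending $\e\to 0$ and using the first display,
\[
|\zeta(\phi)| \leq \bigl(\liminf_{\e\to 0}\|\partial_{x}^{-1}T_{\e}\zeta\|_{L^{2}(0,\infty)}\bigr)\,\|\phi\|_{H^{1}(0,\infty)}
\quad\text{for all }\phi\in C^{\infty}_{0}(0,\infty).
\]

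To conclude, I extend the inequality to all $\phi\in H^{1}_{0}(0,\infty)$ by density: $C^{\infty}_{0}(0,\infty)$ is dense in $H^{1}_{0}(0,\infty)$ and $\zeta\in H^{-1}$ is by hypothesis continuous on $H^{1}_{0}(0,\infty)$, so both sides of the displayed inequality are continuous in $\phi$ for the $H^{1}$ topology. Taking the supremum over $\phi$ with $\|\phi\|_{H^{1}(0,\infty)}=1$ gives the claim. The only delicate step is the integration by parts, specifically confirming that $\partial_{x}^{-1}T_{\e}\zeta$ is finite at $0$ and vanishes at infinity so that the boundary terms really do drop; the $L^{1}$ bound $\|T_{\e}\zeta\|_{L^{1}}\leq|\zeta|(\mathbb{R})$ from Fubini handles this cleanly.
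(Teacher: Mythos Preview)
Your proof is correct and follows essentially the same route as the paper's: approximate $\zeta(\phi)$ by $(T_{\e}\zeta,\phi)_{2}$ via Proposition~\ref{KSM_BasicSmooth}, integrate by parts to rewrite this as a pairing of $\partial_{x}^{-1}T_{\e}\zeta$ with $\partial_{x}\phi$, apply Cauchy--Schwarz, and take the $\liminf$. You are slightly more explicit than the paper about the integrability check justifying the integration by parts, about the support hypothesis in Proposition~\ref{KSM_BasicSmooth}, and about the density step extending from $C^{\infty}_{0}(0,\infty)$ to $H^{1}_{0}(0,\infty)$, but these are elaborations of the same argument rather than a different approach.
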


\begin{proof}
First notice that for fixed \e\
\[
\int^{\infty}_{0} \int^{\infty}_{0} (p_{\e}(x - y) + p_{\e}(x + y)) dx |\zeta|(dy) < \infty,
\]
so $T_{\e} \zeta$ is integrable and hence $\anti T_{\e} \zeta$ is well-defined. Integration by parts gives
\[
(\anti T_{\e} \zeta, \dx \phi)_{L^{2}(0,\infty)} 
	= (T_{\e} \zeta, \phi)_{L^{2}(0,\infty)}
	= \zeta(T_{\e}\phi),
\]
for $\phi \in C^{\infty}(0,\infty)$. Therefore by Proposition~\ref{KSM_BasicSmooth} we have 
\begin{align*}
|\zeta(\phi)|
	&= \lim_{\e \to 0} |(\anti T_{\e} \zeta, \dx \phi)_{2}| \\
	&\leq \liminf_{\e \to 0} \Vert \anti T_{\e} \zeta \Vert_{2} \Vert \phi \Vert_{2} \\
	&\leq \liminf_{\e \to 0} \Vert \anti T_{\e} \zeta \Vert_{2} \Vert \phi \Vert_{H^{1}},
\end{align*}
which gives the result.  
\end{proof} 

\section{Uniqueness of solutions; Proof of Theorem \ref{Intro_Thm_Unique}}
\label{Sect_Uniqueness}

In this section we will prove Theorem~\ref{Intro_Thm_Unique}. Therefore take $\nu$, $\tilde{\nu}$ and $W$ as in the 
statement with $(\nu^{N_{k}}, W)_{k \geq 1} \Rightarrow (\nu, W)$ along some subsequence. Let $L_{t} = 1 - \nu_{t}(0, \infty)$ 
and $\tilde{L}_{t} = 1 - \tilde{\nu}_{t}(0, \infty)$. The first step will be to show that $\nu$ has some $L^{2}$ regularity 
(Proposition~\ref{Unique_Prop_L2Reg}), which is due to a comparison with $\bar{\nu}^{N_{k}}$ from (\ref{eq:Finite_nub}) 
and from the dynamics of Proposition~\ref{Finite_Prop_EvoNub}. We then use this fact, along with energy estimates in 
$H^{-1}$, to complete the proof. Several technical lemmas are used throughout this section, however, to aid readability, 
their full statements and proofs are deferred until Section~\ref{Sect_Lemmas}.

\subsection*{$L^{2}$-regularity} 

The result we will prove in this subsection is the following:

\begin{prop}[$L^{2}$-regularity]
\label{Unique_Prop_L2Reg}
With $\nu$ as introduced at the start of Section~\ref{Sect_Uniqueness}, 
\[
\sup_{s \in [0,T]} \sup_{\e > 0} \Vert T_{\e}\nu_{s} \Vert_{2}^{2} < \infty, 
	\qquad \textrm{with probability 1}.
\]
\end{prop}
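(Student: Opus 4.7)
My plan is to combine a comparison with the unabsorbed empirical process $\bar{\nu}^{N_k}$ with an energy estimate derived from its whole-space SPDE (Proposition~\ref{Finite_Prop_EvoNub}), and then use the monotonicity of $\e \mapsto \Vert T_\e \nu_s \Vert_{L^2(0,\infty)}$ (a consequence of $T_\e T_{\e'} = T_{\e+\e'}$ being a contraction semigroup on $L^2(0,\infty)$) to promote the resulting bound in expectation to the almost-sure double supremum. For the comparison, set $U^N_t := \bar{T}_\e \bar{\nu}^N_t$; the domination $\nu^N_s \leq \bar{\nu}^N_s$ and the pointwise bound $0 \leq G_\e(x,y) \leq p_\e(x-y)$ for $x,y \geq 0$ give $\nu^{N_k}_s(G_\e(x,\cdot)) \leq U^{N_k}_s(x)$. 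Since $G_\e(x,\cdot) \in \C$ is bounded, weak convergence $\nu^{N_k} \Rightarrow \nu$ (at all times, using the continuity of $\nu$ from Proposition~\ref{Tight_Prop_EvoEqn}) and bounded convergence yield $T_\e \nu_s(x) = \lim_k \nu^{N_k}_s(G_\e(x,\cdot)) \leq \liminf_k U^{N_k}_s(x)$, so squaring, integrating in $x$, and applying Fatou in both $x$ and $k$ gives $\E \sup_{s \leq T} \Vert T_\e \nu_s\Vert_{L^2(0,\infty)}^2 \leq \liminf_{k\to\infty} \E \sup_{s \leq T}\Vert U^{N_k}_s \Vert_{L^2(\R)}^2$.

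For the energy estimate, Proposition~\ref{Finite_Prop_EvoNub} applied with the test function $p_\e(x-\cdot) \in \S$ (using $\partial_y p_\e(x-y) = -\partial_x p_\e(x-y)$) gives a stochastic equation for $U^N_t(x)$ with drift $-\partial_x (\mu)^N_t(x) + \tfrac{1}{2} \partial_{xx}(\sigma^2)^N_t(x)$, systemic diffusion $-\rho_t \partial_x (\sigma)^N_t(x)\,dW_t$ and idiosyncratic driver $d\bar{I}^N_t(p_\e(x-\cdot))$, where $(g)^N_t(x) := \int g(t,y,L^N_t)\,p_\e(x-y)\,\bar{\nu}^N_t(dy)$. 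Applying It\^o to $\Vert U^N_t\Vert_2^2$ and integrating by parts, the leading second-order contribution collapses to $(\rho_t^2-1)\int \sigma(t,x)^2 |\partial_x U^N_t(x)|^2\,dx$, which is $\leq -c\Vert \partial_x U^N_t\Vert_2^2$ thanks to $\sigma \geq C^{-1}$ and $\rho \leq 1-C^{-1}$ (Assumption~\ref{Not_Def_Coefficients}). The $x$-dependence of $\mu$ and $\sigma$ produces commutator remainders of the form $\int \partial_x p_\e(x-y)[g(t,y)-g(t,x)]\,\bar{\nu}^N_t(dy)$ whose $L^2(\R)$-norm is $\leq c\Vert U^N_t\Vert_2$, by the Lipschitz property of $g$, the pointwise identity $\tfrac{(x-y)^2}{\e} p_\e(x-y) \leq c\, p_{c\e}(x-y)$, and contractivity of $\bar{T}_{c\e}$ (Proposition~\ref{KSM_Prop_Contract}); these are absorbed into the principal term via Young's inequality. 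The idiosyncratic QV contributes $O((N\e^{3/2})^{-1})$ in total, which vanishes in $\liminf_N$. Combining Doob's inequality for the martingale part with Gronwall then yields
\[
\liminf_{N \to \infty} \E\sup_{t \leq T}\Vert U^N_t\Vert_{L^2(\R)}^2 \leq C(1 + \Vert V_0\Vert_2^2),
\]
uniformly in $\e > 0$.

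Combining the two steps gives $\E\sup_{s \leq T}\Vert T_\e \nu_s\Vert_{L^2(0,\infty)}^2 \leq C$ uniformly in $\e > 0$. Since $T_\e$ is a contraction semigroup on $L^2(0,\infty)$, the map $\e \mapsto \Vert T_\e \nu_s\Vert_2$ is non-increasing, hence $\sup_\e \Vert T_\e \nu_s\Vert_2^2 = \lim_{\e \to 0}\Vert T_\e \nu_s\Vert_2^2$; monotone convergence then delivers $\E\sup_s \sup_\e \Vert T_\e \nu_s\Vert_2^2 = \lim_{\e \to 0} \E\sup_s \Vert T_\e \nu_s\Vert_2^2 \leq C$, so the double supremum is finite almost surely. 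The main obstacle is the energy estimate: the $x$-dependence of $\sigma$ destroys the clean cancellation available in the constant-coefficient setting of \cite{bush11}, so the commutator remainders must be controlled uniformly in $\e$; meanwhile the idiosyncratic QV diverges as $(N\e^{3/2})^{-1}$ and is not uniform in $N$, which is precisely why the $\liminf_N$ in the comparison step is essential.
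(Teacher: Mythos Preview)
Your overall strategy is sound and runs parallel to the paper's, but you take a genuinely different route for the energy estimate and there is a real gap in your comparison step.

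\textbf{The gap.} You write that ``weak convergence $\nu^{N_k}\Rightarrow\nu$ and bounded convergence yield $T_\e\nu_s(x)=\lim_k\nu^{N_k}_s(G_\e(x,\cdot))$'' and then deduce a pathwise inequality. But $\Rightarrow$ is convergence in law: $\nu^{N_k}$ and $\nu$ need not live on the same probability space, so a pointwise comparison is meaningless as written. The paper resolves this in two steps. First (Lemma~\ref{Unique_Lem_WholeSpace}) it passes to a further subsequence along which $(\nu^{N_k},\bar{\nu}^{N_k},W)$ converges jointly and extracts a coupled limit $(\nu^*,\bar{\nu}^*,W)$ with $\nu^*\leq\bar{\nu}^*$ on a common space. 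Second, it must still transfer the bound from $\nu^*$ to $\nu$ (equal only in law): since $\xi\mapsto\Vert\bar{T}_\e\xi_t\Vert_2$ is not obviously $\Sdual$-continuous, the paper does this via a Haar-basis truncation so that only finitely many evaluations $\xi_t(\bar{T}_\e\phi_i)$ with $\bar{T}_\e\phi_i\in\S$ appear at each stage, and these \emph{are} preserved in law. Your argument can be repaired along these lines, but as stated it skips both steps.

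\textbf{The different route.} The paper performs the $L^2$ energy estimate on the \emph{limit} $\bar{\nu}^*$, using its whole-space SPDE; there is then no idiosyncratic noise, but the price is that one does not know a priori that $\bar{\nu}^*_t$ has an $L^2$ density, so contractivity cannot be used to bound $\Vert\bar{T}_{2\e}\bar{\nu}^*_s\Vert_2$ by $\Vert\bar{T}_\e\bar{\nu}^*_s\Vert_2$, and the paper closes with an iterative short-time argument on $f(t)=\liminf_\e\E\sup_{s\leq t}\Vert\bar{T}_\e\bar{\nu}^*_s\Vert_2^2$ rather than Gronwall. Your choice to work directly with $\bar{\nu}^N$ is a legitimate alternative: since $\bar{T}_\e\bar{\nu}^N_t$ is a finite sum of Gaussians and hence in $L^2$, contractivity applies throughout and a clean Gronwall closes the estimate, at the cost of carrying the idiosyncratic quadratic variation $O(N^{-1}\e^{-3/2})$ and the initial-condition contribution $\E\Vert\bar{T}_\e\nu_0^N\Vert_2^2=\Vert V_0\Vert_2^2+O(N^{-1}\e^{-1/2})$, both of which disappear under your $\liminf_N$. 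Your final monotonicity step ($\e\mapsto\Vert T_\e\nu_s\Vert_2$ non-increasing once $T_\e\nu_s\in L^2$, so $\sup_\e=\lim_{\e\downarrow 0}$ commutes with $\E\sup_s$ by monotone convergence) is equivalent to the paper's route through Proposition~\ref{KSM_Prop_liminfL2} and contractivity, just packaged more directly.
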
 

We would like to work with some process $\bar{\nu}$ defined analogously to (\ref{eq:Finite_nub}) that would satisfy 
the bound $\nu_{t}(S) \leq \bar{\nu}_{t}(S)$, for every $t\in[0,T]$ and $S \subseteq \R$. At this stage, however, we 
are dealing only with weak limit points, so must recover the required process through a limiting procedure on 
$(\bar{\nu}^{N})_{N \geq 1}$:

\begin{lem}[Whole space SPDE]
\label{Unique_Lem_WholeSpace}
On a sufficiently rich probability space, there exists $(\nu^{*}, \bar{\nu}^{*}, W)$ such that $(\nu^{*}, W)$ is equal in 
law to $(\nu, W)$, ${\nu}^{*}_{t}(S) \leq \bar{\nu}^{*}_{t}(S)$, for every $t \in [0,T]$ and $S\subseteq \R$, and 
$\bar{\nu}^{*}$ satisfies the limit SPDE on the whole space: 
\begin{align*}
\bar{\nu}^{*}_{t}(\phi) 
	&= \nu_{0}(\phi)
	+ \int^{t}_{0} \bar{\nu}^{*}_{s}( \mu(s, \cdot, L_{s}) \dx \phi ) ds
	+ \frac{1}{2} \int^{t}_{0} \bar{\nu}^{*}_{s}( \sigma^{2}(s, \cdot) \dxx \phi ) ds \\  
	&\quad + \int_{0}^{t} \bar{\nu}^{*}_{s}( \sigma(s, \cdot)\rho(s, L_{s}) \dx \phi ) dW_{s},
	\qquad \textrm{with \ \ } L^{*}_{t} = 1 - \nu^{*}_{t}(0,\infty),
\end{align*}
for every $t \in [0,T]$ and $\phi \in \S$, together with condition~(\ref{Not_Def_RegCond5}) of 
Assumption~\ref{Not_Def_RegCond} and the two-sided tail bound
\[
\E \bar{\nu}^{*}_{t}((-\infty, -\lambda) \cup (\lambda, \infty)) = o(e^{-\alpha \lambda}),
	\qquad \textrm{as } \lambda \to +\infty,
\]
for every $\alpha > 0$.
\end{lem}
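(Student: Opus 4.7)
The plan is to lift the argument that produced $\nu$ (Section~\ref{Sect_Tightness}) to the richer triple $(\nu^{N}, \bar{\nu}^{N}, W)_{N \geq 1}$ and then to read off the required properties of $\bar{\nu}^{*}$ from those of $\bar{\nu}^{N}$ in exactly the same way that $\nu^{*}$ was produced from $\nu^{N}$.

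First I would establish tightness of $(\bar{\nu}^{N}, W)_{N \geq 1}$ on $(D_{\Sdual}, \M) \times (C_{\R}, \mathrm{U})$. The proof is a direct copy of Proposition~\ref{Tight_prop_tight}: by \cite[Thm.~3.2]{ledger2015} it suffices to check tightness of $(\bar{\nu}^{N}(\phi))_{N \geq 1}$ in $(D_{\R}, \M)$ for each $\phi \in \S$, and since $\bar{\nu}^{N}$ is a probability measure the decomposition trick is not even needed: the fourth-moment bound on $\bar{\nu}^{N}_{t}(\phi) - \bar{\nu}^{N}_{s}(\phi)$ using H\"older and Burkholder–Davis–Gundy applied to $X^{1,N}$ gives the analogue of~(\ref{eq:Tight1}), while a similar estimate at the endpoints gives~(\ref{eq:Tight2}). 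Since marginal tightness of $(\nu^{N})_{N}$ and $(\bar{\nu}^{N})_{N}$ yields joint tightness of the triple $(\nu^{N}, \bar{\nu}^{N}, W)_{N \geq 1}$, we may restrict to the subsequence along which $(\nu^{N_{k}}, W) \Rightarrow (\nu, W)$ and, passing to a further subsequence if necessary, obtain $(\nu^{N_{k}}, \bar{\nu}^{N_{k}}, W) \Rightarrow (\nu^{*}, \bar{\nu}^{*}, W)$ on a suitable probability space. By construction $(\nu^{*}, W) \stackrel{d}{=} (\nu, W)$.

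Next I would read off the pointwise properties. Proposition~\ref{Tight_Prop_Measure} applied to $\bar{\nu}^{*}$ gives that $\bar{\nu}^{*}_{t}$ is a sub-probability measure on $\R$ (not restricted to the half-line this time). The inequality $\nu^{*}_{t}(S) \leq \bar{\nu}^{*}_{t}(S)$ for all Borel $S$ follows by first passing to the weak limit for $\phi \in \S$ non-negative using \cite[Prop.~2.7(i)]{ledger2015} and the portmanteau theorem (as in the proof of Proposition~\ref{Tight_Prop_Measure}), and then extending by the Riesz representation identification used there. The spatial concentration bound and the two-sided exponential tail bound for $\bar{\nu}^{*}$ follow exactly as in Proposition~\ref{Tight_Prop_RegCond}, using the estimates already established for $\bar{\nu}^{N}$ (Corollary~\ref{Prob_Cor_SpatialConc} and the whole-space version of Proposition~\ref{Prob_Prop_Tail}, which is what is actually proved there since the proof goes through $\bar{F}$ without using the killing); note that for the two-sided tail we apply the tail estimate to both $\bar{\nu}^{N}_{t}(\lambda, \infty)$ and $\bar{\nu}^{N}_{t}(-\infty, -\lambda)$, with the latter requiring only that $\nu_{0}$ is supported on $(0,\infty)$ together with the boundedness of $\mu$ and $\sigma$, which let us bound the probability that $X^{1,N}_{t} < -\lambda$ using Lemma~\ref{Prob_Lem_Remove} applied with the unkilled kernel.

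Finally I would obtain the SPDE for $\bar{\nu}^{*}$ by the martingale method of Section~\ref{Sect_Tightness}. Define the analogues of the three processes in Definition~\ref{Tight_Def_MGComp}, but allow test functions in $\S$ rather than $\C$, and with the limit-SPDE coefficients still built from $L^{*}_{t} = 1 - \nu^{*}_{t}(0, \infty)$. The key observation is that for $\phi \in \S$, Proposition~\ref{Finite_Prop_EvoNub} gives
\[
\bar{M}^{\phi}(\bar{\nu}^{N_{k}}, L^{N_{k}})(t)
  = \int^{t}_{0} \bar{\nu}^{N_{k}}_{s}(\sigma(s, \cdot)\rho(s, L^{N_{k}}_{s}) \dx\phi) dW_{s} + \bar{I}^{N_{k}}_{t}(\phi),
\]
together with the $L^{2}$-estimate $\E \sup_{t} |\bar{I}^{N_{k}}_{t}(\phi)|^{2} = O(N_{k}^{-1})$ (same proof as Proposition~\ref{Finite_Prop_Vanish}). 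Corollary~\ref{Tight_Cor_Fiddly} still applies with $\xi = \bar{\nu}^{*}$ and $\ell = L^{*}$, since the continuity statement there depends on $\ell$ (which still satisfies the strict-monotonicity clause of Assumption~\ref{Not_Def_RegCond} by Proposition~\ref{Tight_Prop_RegCond} applied to $\nu^{*}$), rather than on the measure-valued component. Thus, in analogy with Proposition~\ref{Tight_Prop_EvoEqn}, for each $\phi \in \S$ the three processes are martingales for $\bar{\nu}^{*}$, which by the analogue of Lemma~\ref{Tight_Lem_MGapp} produces the whole-space SPDE for $\bar{\nu}^{*}$ driven by the same $W$ and with coefficients built from $L^{*}$. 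The subtle point that requires care, and what I would flag as the main obstacle, is that the coefficients in the limit SPDE for $\bar{\nu}^{*}$ depend on $L^{*}$ which comes from the \emph{other} coordinate $\nu^{*}$ of the triple; it is precisely the joint convergence of $(\nu^{N_{k}}, \bar{\nu}^{N_{k}}, W)$ and the application of Corollary~\ref{Tight_Cor_Fiddly} to the joint limit that makes this linkage legitimate.
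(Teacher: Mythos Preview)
Your proposal is correct and follows essentially the same route as the paper's proof: establish tightness of the triple $(\nu^{N}, \bar{\nu}^{N}, W)_{N \geq 1}$ by repeating the work of Proposition~\ref{Tight_prop_tight}, extract a further subsequence, and then transfer the measure-valuedness, dominance inequality, SPDE, spatial-concentration bound, and tail estimate to $\bar{\nu}^{*}$ by rerunning Propositions~\ref{Tight_Prop_Measure}, \ref{Tight_Prop_RegCond}, and~\ref{Tight_Prop_EvoEqn} with $\bar{\nu}^{N}$ in place of $\nu^{N}$. Your explicit remark that the coefficients of the whole-space SPDE depend on $L^{*}$ coming from the \emph{other} coordinate $\nu^{*}$, and that it is the joint convergence that makes Corollary~\ref{Tight_Cor_Fiddly} applicable, is exactly the point the paper leaves implicit.
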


\begin{proof}
Notice that in Proposition~\ref{Tight_prop_tight} we have carried out sufficient work to prove $(\bar{\nu}^{N})_{N \geq 1}$ 
is tight on $(D_{\Sdual}, \M)$, hence $(\nu^{N}, \bar{\nu}^{N}, W)_{N \geq 1}$ is tight. We can therefore conclude that 
there is a subsequence $(N_{k_{r}})_{r\geq 1}$ for which $(\nu^{N_{k_{r}}}, \bar{\nu}^{N_{k_{r}}}, W)_{r \geq 1}$ 
converges in law. Any realisation of this limit must have a marginal law that agrees with the law of $(\nu, W)$. As the work 
in Propositions~\ref{Tight_Prop_Measure} and~\ref{Tight_Prop_EvoEqn} is unchanged for $\bar{\nu}^{N}$ in place of 
$\nu^{N}$, we conclude that $\bar{\nu}^{*}$ is probability-measure-valued and, due to Proposition~\ref{Finite_Prop_EvoNub}, 
that $\bar{\nu}^{*}$ satisfies the limit SPDE on the whole space. Finally, we note that for every $\phi \in \S$ with $\phi \geq 0$ 
we have $\nu^{N_{k_{r}}}_{t}(\phi) \leq \bar{\nu}^{N_{k_{r}}}_{t}(\phi)$, therefore 
\[
\P(\nu^{*}_{t}(\phi) > \bar{\nu}^{*}_{t}(\phi) )
	\leq \liminf_{r \to \infty} \P(\nu^{N_{k_{r}}}_{t}(\phi) > \bar{\nu}^{N_{k_{r}}}_{t}(\phi) ) = 0,
\]
for every $\phi \in \S, \phi \geq 0$, by \cite[Thm.~2.1]{billingsley1999}. This inequality holds for all $t$ by the continuity of $\nu^{*}$ and $\bar{\nu}^{*}$ 
(which follows from being solutions to the limit SPDE) and suffices to give the required dominance.  
Condition~(\ref{Not_Def_RegCond5}) of Assumption~\ref{Not_Def_RegCond} is satisfied by $\bar{\nu}^{*}$ because the 
proof of Corollary~\ref{Prob_Cor_SpatialConc} uses only the behaviour of $\bar{\nu}^{N}$. Likewise, the two-sided tail 
estimate is satisfied due to the same work as in Proposition~\ref{Prob_Prop_Tail}.   
\end{proof}

Our strategy is to use the kernel smoothing method with $L^{2}$-energy estimates on the SPDE satisfied by $\bar{\nu}^{*}$. 
This is possible because we do not have to take boundary effects into account, which is the main difficulty in the uniqueness 
proof that will follow. The following lemma relates $\bar{\nu}^{*}$ to Proposition~\ref{Unique_Prop_L2Reg}.

\begin{lem}
With $\nu$ and $\bar{\nu}^{*}$ as above and $\bar{T}_{\e}$ as in (\ref{eq:KSM_Tbar}), if 
\[
\liminf_{\e \to \infty} \E [\sup_{s \in [0,T]}  \Vert \bar{T}_{\e}\bar{\nu}^{*}_{s} \Vert_{2}^{2}\,] < \infty
\]
then Proposition \ref{Unique_Prop_L2Reg} holds. 
\end{lem}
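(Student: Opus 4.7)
The plan is to work with the coupled triple $(\nu^{*}, \bar{\nu}^{*}, W)$ from Lemma~\ref{Unique_Lem_WholeSpace}, to convert the absorbing heat kernel bound on $\nu^*$ into a whole-space bound on $\bar{\nu}^*$, and then to exploit semigroup monotonicity to upgrade the given $\liminf$ to an almost-sure supremum. Since $\nu^{*} \stackrel{d}{=} \nu$, it suffices to prove $\sup_{\e>0}\sup_{s\in[0,T]}\|T_\e\nu^{*}_s\|_{2}^{2}<\infty$ almost surely.

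First I would convert $T_\e\nu^*_s$ into something whole-space. Because $\nu^{*}_s$ is supported on $[0,\infty)$ and $p_\e$ is even, the method of images gives, for $x>0$,
\[
T_\e\nu^{*}_s(x)=\bar{T}_\e\nu^{*}_s(x)-\bar{T}_\e\nu^{*}_s(-x),
\]
so $|T_\e\nu^{*}_s(x)|^2\le 2\bar{T}_\e\nu^{*}_s(x)^{2}+2\bar{T}_\e\nu^{*}_s(-x)^{2}$, and integrating over $x>0$ yields $\|T_\e\nu^{*}_s\|_{L^{2}(0,\infty)}^{2}\le 2\|\bar{T}_\e\nu^{*}_s\|_{L^{2}(\R)}^{2}$. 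Combined with $\nu^{*}_s\le\bar{\nu}^{*}_s$ as measures and the positivity of $p_\e$, we obtain the pointwise bound $\bar{T}_\e\nu^{*}_s\le \bar{T}_\e\bar{\nu}^{*}_s$, hence
\[
\|T_\e\nu^{*}_s\|_{L^{2}(0,\infty)}^{2}\le 2\,\|\bar{T}_\e\bar{\nu}^{*}_s\|_{L^{2}(\R)}^{2}.
\]

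Next I would use the Gaussian semigroup identity $\bar{T}_{\e+\delta}=\bar{T}_\e\bar{T}_\delta$ together with the $L^2$ contractivity (Proposition~\ref{KSM_Prop_Contract}) to deduce that, for every fixed $s$, the map $\e\mapsto\|\bar{T}_\e\bar{\nu}^{*}_s\|_{2}$ is monotone non-increasing on $(0,\infty)$. Taking the supremum over $s\in[0,T]$ preserves monotonicity, so $F(\e):=\sup_{s\in[0,T]}\|\bar{T}_\e\bar{\nu}^{*}_s\|_{2}^{2}$ is non-increasing, equivalently non-decreasing as $\e\downarrow 0$, and converges to $\sup_{\e>0}F(\e)$. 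By the monotone convergence theorem,
\[
\E\Bigl[\sup_{\e>0}F(\e)\Bigr]=\lim_{\e\downarrow 0}\E[F(\e)]=\liminf_{\e\downarrow 0}\E[F(\e)],
\]
which is finite by hypothesis. Hence $\sup_{\e>0}\sup_{s}\|\bar{T}_\e\bar{\nu}^{*}_s\|_{2}^{2}<\infty$ almost surely, and the inequality from the previous step transfers this to $\nu^{*}$, and then to $\nu$ via $\nu^{*}\stackrel{d}{=}\nu$.

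The only nontrivial issue is justifying the interchange of $\lim_{\e\downarrow 0}$ with the expectation, which is precisely what the semigroup monotonicity step delivers; everything else is either the coupling from Lemma~\ref{Unique_Lem_WholeSpace} or routine pointwise manipulations of the absorbing kernel. A minor technical check is that the functional $\xi\mapsto\sup_{\e>0}\sup_s\|T_\e\xi_s\|_{2}^{2}$ is Borel measurable on $(D_{\Sdual},\M)$, which follows because for each fixed $(\e,s,x)$ the map $\xi\mapsto T_\e\xi_s(x)=\xi_s(G_\e(x,\cdot))$ is measurable and the two suprema can be restricted to countable dense subsets by continuity in $\e$ and $s$.
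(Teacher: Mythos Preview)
Your argument is correct and takes a genuinely different route from the paper. The paper first transfers the expectation bound from $\nu^{*}$ to $\nu$ at the level of a fixed $\e$ by expanding $\|\bar T_\e\cdot\|_2^2$ along a Haar basis (so that only finite sums of evaluations $\xi_t(\bar T_\e\phi_i)$ appear, for which equality in law is immediate), then pushes the $\liminf_{\e\to0}$ inside via Fatou, and finally invokes Proposition~\ref{KSM_Prop_liminfL2} to produce the $L^2$ density $V_t$ before using contractivity to bound $\sup_\e\|T_\e\nu_s\|_2$ by $\|V_s\|_2$. Your key simplification is the observation that the semigroup identity $\bar T_{\e+\delta}=\bar T_\delta\bar T_\e$ together with Proposition~\ref{KSM_Prop_Contract} makes $\e\mapsto\|\bar T_\e\bar\nu^*_s\|_2$ monotone; this lets you replace the Fatou--density--contractivity chain by a single application of monotone convergence, upgrading the hypothesis directly to $\E[\sup_{\e>0}\sup_s\|\bar T_\e\bar\nu^*_s\|_2^2]<\infty$. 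The trade-off is that you must then justify the law transfer $\nu^*\Rightarrow\nu$ at the very end, which requires the measurability of $\xi\mapsto\sup_\e\sup_s\|T_\e\xi_s\|_2^2$; your sketch for this is correct (continuity of $\nu$ and the analogous $T_\e$-semigroup monotonicity let one restrict both suprema to countable sets), whereas the paper's Haar-basis device sidesteps this by never leaving finite-dimensional projections. Both approaches yield Corollary~\ref{Unique_Cor_L2reg} as a by-product.
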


\begin{proof}
Since $\nu^{*} \leq \bar{\nu}^{*}$, $\liminf_{\e \to \infty} \E [\sup_{s \in [0,T]}  \Vert \bar{T}_{\e}\nu^{*}_{s} 
\Vert_{2}^{2}\,] < \infty$. We would first like to deduce that this fact also holds for $\bar{T}_{\e} \nu$, but since 
the map $\nu_{t} \mapsto \Vert \bar{T}_{\e} \nu_{t} \Vert_{2}$ might not be continuous on \Sdual, more care must be taken.

By fixing $\{ \phi_{i} \}_{i \geq 1}$ to be the Haar basis of $L^{2}(\R)$ we have
\begin{align}
\label{eq:Unique_Lem_L2_1}
\E \sup_{t \in [0,T]} \Vert \bar{T}_{\e} \nu_{t} \Vert_{2}^{2}
	&= \E \sup_{t \in [0,T]} \lim_{k \to \infty} \sum_{i = 1}^{k} (\bar{T}_{\e} \nu_{t} , \phi_{i})^{2}_{2} \\
	&\leq \liminf_{k \to \infty} \E \sup_{t \in [0,T]} \sum_{i = 1}^{k} \nu_{t}(\bar{T}_{\e} \phi_{i})^{2}. \nonumber
\end{align}
by (\ref{eq:Unique_L2Switch}) and Fatou's Lemma. Since each $\phi_{i}$ is compactly supported, we have that 
$\bar{T}_{\e} \phi_{i} \in \S$, therefore $\nu_{t}(\bar{T}_{\e} \phi_{i})$ is equal in law to $\nu^{*}_{t}(\bar{T}_{\e} 
\phi_{i})$, so by \cite[Lem.~13.4.1]{whitt2002}
\[
\sup_{t \in [0,T]} \sum_{i = 1}^{k} \nu_{t}(\bar{T}_{\e} \phi_{i})^{2}
	 =_{\textrm{law}} \sup_{t \in [0,T]} \sum_{i = 1}^{k} \nu^{*}_{t}(\bar{T}_{\e} \phi_{i})^{2}.
\]
Returning to (\ref{eq:Unique_Lem_L2_1}), we now have that
\[
\E \sup_{t \in [0,T]} \Vert \bar{T}_{\e} \nu_{t} \Vert_{2}^{2}
	\leq \liminf_{k \to \infty} \E \sup_{t \in [0,T]} \sum_{i = 1}^{k} \nu^{*}_{t}(\bar{T}_{\e} \phi_{i})^{2}
	\leq \E \sup_{t \in [0,T]} \Vert \bar{T}_{\e} \nu^{*}_{t} \Vert_{2}^{2}.
\]
By noting that $0 \leq T_{\e} \nu_{t} \leq \bar{T}_{\e} \nu_{t}$ and applying Fatou's Lemma once more we arrive at:
\begin{align*}
\E [ \, \liminf_{\e \to \infty} \sup_{s \in [0,T]} \Vert T_{\e}\nu_{s} \Vert_{2}^{2} \,]
	&\leq \E [ \, \liminf_{\e \to \infty} \sup_{s \in [0,T]} \Vert \bar{T}_{\e}\nu_{s} \Vert_{2}^{2} \,] \\
	&\leq \liminf_{\e \to \infty} \E\sup_{t \in [0,T]} \Vert \bar{T}_{\e} \nu^{*}_{t} \Vert_{2}^{2}
	 < \infty.
\end{align*}

We now have that $ \liminf_{\e \to \infty} \Vert T_{\e}\nu_{s} \Vert_{2} < \infty$, for every $s \in [0,T]$, with probability 1. 
Proposition~\ref{KSM_Prop_liminfL2} implies that $\nu_{t}$ has an $L^{2}(\R)$-density, $V_{t}$, for every $t$ and that 
\[
 \Vert V_{s} \Vert_{2} 
	\leq \liminf_{\e \to 0} \Vert T_{\e}\nu_{s} \Vert_{2}
	\leq \liminf_{\e \to \infty} \sup_{s \in [0,T]} \Vert T_{\e}\nu_{s} \Vert_{2},
\] 
therefore $\sup_{s \in [0,T]} \Vert V_{s} \Vert_{2} < \infty$, with probability 1. Then by Proposition~\ref{KSM_Prop_Contract}
\[
\sup_{s \in [0,T]} \sup_{\e > 0} \Vert T_{\e} \nu_{s} \Vert_{2} 
	\leq \sup_{s \in [0,T]}  \Vert  V_{s} \Vert_{2}  
	< \infty,
\]
almost surely, as required. 
\end{proof}

As an immediate consequence of the final part of the previous proof and of the forthcoming proof of Proposition \ref{Unique_Prop_L2Reg}, we have the existence of a density process for $\nu$:

\begin{cor}[$L^{2}(\R)$-regularity]
\label{Unique_Cor_L2reg}
With probability 1, for every $t \in [0,T]$ there exists $V_{t} \in L^{2}(\R)$ such that $V_{t}$ is supported on $[0,\infty)$ 
and is a density of $\nu_{t}$, i.e.~
\[
\nu_{t}(\phi) = \int_{0}^{\infty} \phi(x) V_{t}(x) dx,
	\qquad \textrm{for every } \phi \in L^{2}(\R). 
\]
Furthermore $\sup_{t \in [0,T]} \Vert V_{t} \Vert_{2} < \infty$, with probability 1. 
\end{cor}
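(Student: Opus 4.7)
The plan is to bootstrap the uniform bound supplied by Proposition~\ref{Unique_Prop_L2Reg} into the existence of a density via the kernel smoothing machinery of Section~\ref{Sect_Kernel}, adapted from the whole-space operator $\bar{T}_\e$ (for which Proposition~\ref{KSM_Prop_liminfL2} was stated) to the half-line operator $T_\e$. The strict $[0,\infty)$-support of $\nu_t$, guaranteed by condition~(\ref{Not_Def_RegCond2}) of Assumption~\ref{Not_Def_RegCond}, is what makes this adaptation natural and identifies the resulting density on the correct half-line.

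First I would fix $\omega$ in the probability-$1$ event on which $\sup_{s \in [0,T]} \sup_{\e > 0} \Vert T_\e \nu_s \Vert_2^2 < \infty$ (available by Proposition~\ref{Unique_Prop_L2Reg}). For each fixed $t \in [0,T]$, this gives a bounded sequence $(T_{\e_n} \nu_t)_{n \geq 1}$ in $L^2(\R)$ for any $\e_n \to 0$. Weak sequential compactness of bounded sets in $L^2$ (\cite[App.~D, Thm.~3]{evans2010partial}) then yields $V_t \in L^2(\R)$ and a subsequence along which $T_{\e_{n_k}} \nu_t \rightharpoonup V_t$. Proposition~\ref{KSM_BasicSmooth} identifies the weak limit: for any $\phi$ continuous, bounded and supported on $(0,\infty)$, one has $(T_{\e_{n_k}} \nu_t, \phi)_2 \to \nu_t(\phi)$ by Proposition~\ref{KSM_BasicSmooth} and $(T_{\e_{n_k}} \nu_t, \phi)_2 \to (V_t, \phi)_2$ by weak convergence, so $\nu_t(\phi) = (V_t, \phi)_2$. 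Since $\nu_t$ is supported on $[0,\infty)$, the equality $\nu_t(\phi) = \int_0^\infty \phi(x) V_t(x) dx$ extends to all $\phi \in L^2(\R)$ by a density argument, and replacing $V_t$ by $V_t \mathbf{1}_{[0,\infty)}$ (which can only decrease its $L^2$ norm) arranges the required support property.

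For the uniform bound, weak lower semicontinuity of the $L^2$ norm gives
\[
\Vert V_t \Vert_2 \leq \liminf_{k \to \infty} \Vert T_{\e_{n_k}} \nu_t \Vert_2 \leq \sup_{s \in [0,T]} \sup_{\e > 0} \Vert T_\e \nu_s \Vert_2,
\]
and the right-hand side is almost surely finite and independent of $t$. Taking the supremum over $t \in [0,T]$ yields $\sup_{t \in [0,T]} \Vert V_t \Vert_2 < \infty$ with probability $1$, which is the remaining claim.

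There is no real obstacle here: once Proposition~\ref{Unique_Prop_L2Reg} is in hand the argument is essentially the extraction portion of Proposition~\ref{KSM_Prop_liminfL2} adapted to $T_\e$ in place of $\bar{T}_\e$. The only mild care needed is to use Proposition~\ref{KSM_BasicSmooth} (rather than \eqref{eq:Kernel_BasicSmooth}) to identify the weak limit as a density for $\nu_t$ on $(0,\infty)$, which is precisely where the support assumption (\ref{Not_Def_RegCond2}) is consumed.
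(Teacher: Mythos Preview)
Your proposal is correct and follows essentially the same route as the paper. The paper derives the corollary as an immediate consequence of the final paragraph of the preceding lemma's proof together with Proposition~\ref{Unique_Prop_L2Reg}: from the uniform $L^2$ bound one extracts a weak $L^2$ limit and identifies it as a density, exactly as you do. The only cosmetic difference is that the paper invokes Proposition~\ref{KSM_Prop_liminfL2} (stated for $\bar{T}_\e$) somewhat loosely, whereas you make the adaptation to $T_\e$ explicit by using Proposition~\ref{KSM_BasicSmooth} to identify the limit on $(0,\infty)$; this is in fact the cleaner way to justify the step.
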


\begin{rem}
\label{Unique_Rem_Crude}
We might hope that this argument could be used to prove uniqueness. However, notice that we have no control over 
$\nu - \tilde{\nu}$, as all we have are upper bounds on solutions. 
\end{rem}

\begin{proof}[Proof of Proposition \ref{Unique_Prop_L2Reg}]
Fix $x \in \R$ and set the function $y \mapsto p_{\e}(x - y) \in \S$ into the SPDE from Lemma \ref{Unique_Lem_WholeSpace} to get
\begin{align*}
d\bar{T}_{\e}\bar{\nu}^{*}_{t}(x)
	&= \bar{\nu}^{*}_{t}(\mu_{t}(y)\partial_{y} p_{\e}(x - y))dt 
	+ \frac{1}{2} \bar{\nu}^{*}_{t}(\sigma_{t}(y)^{2}\partial_{yy} p_{\e}(x - y))dt  \\
	& \qquad + \bar{\nu}^{*}_{t}(\sigma_{t}(y)\rho_{t} \partial_{y} p_{\e}(x - y)) dW_{t}  \\
	&= -\dx\bar{\nu}^{*}_{t}(\mu_{t}p_{\e}(x - \cdot))dt
	+ \frac{1}{2} \dxx \bar{\nu}^{*}_{t}(\sigma^{2}_{t} p_{\e}(x - \cdot)) dt \\
	& \qquad - \rho_{t}\dx \bar{\nu}^{*}_{t}(\sigma_{t} p_{\e}(x - \cdot))dW_{t},
\end{align*}
with the short-hand from Remark~\ref{Finite_Rem_ShortHand}. We would like to move the diffusion coefficients out of 
the integral against $\bar{\nu}^{*}$, and to do so we use Lemma~\ref{Lemmas_Lem_Switch2ndOrder}:
\begin{align*}
d\bar{T}_{\e}\bar{\nu}^{*}_{t}
	&= -(\mu_{t} \dx \bar{T}_{\e}\bar{\nu}^{*}_{t}
	- \dx \mu_{t} \bar{\mathcal{H}}^{\mu}_{t, \e}
	+ \bar{\mathcal{E}}^{\mu}_{t,\e} ) dt \\
	& \qquad + \frac{1}{2} \dx (\sigma^{2}_{t} \dx \bar{T}_{\e}\bar{\nu}^{*}_{t} 
	- \dx \sigma_{t}^{2} \bar{\mathcal{H}}^{\sigma^{2}}_{t, \e}
	+ \bar{\mathcal{E}}^{\sigma^{2}}_{t,\e} ) dt  \\
	&\qquad
	-\rho_{t}(\sigma_{t} \dx \bar{T}_{\e}\bar{\nu}^{*}_{t}
	- \dx \sigma_{t} \bar{\mathcal{H}}^{\sigma}_{t, \e}
	+ \bar{\mathcal{E}}^{\sigma}_{t,\e} ) dW_{t},
\end{align*}
where $\bar{\mathcal{H}}$ is as defined in Lemma~\ref{Lemmas_Lem_Switch2ndOrder} and the dependence 
on $x$ is omitted for clarity. Applying It\^o's formula to $(\bar{T}_{\e}\bar{\nu}^{*}_{t}(x))^{2}$ gives
\begin{align*}
d(\bar{T}_{\e}\bar{\nu}^{*}_{t})^{2}
	&= -2\bar{T}_{\e}\bar{\nu}^{*}_{t}(\mu_{t} \dx \bar{T}_{\e}\bar{\nu}^{*}_{t}
	- \dx \mu_{t} \bar{\mathcal{H}}^{\mu}_{t, \e}
	+ \bar{\mathcal{E}}^{\mu}_{t,\e} ) dt \\
	& \qquad + \bar{T}_{\e}\bar{\nu}^{*}_{t} \dx (\sigma^{2}_{t} \dx \bar{T}_{\e}\bar{\nu}^{*}_{t} 
	- \dx \sigma_{t}^{2} \bar{\mathcal{H}}^{\sigma^{2}}_{t, \e}
	+ \bar{\mathcal{E}}^{\sigma^{2}}_{t,\e} ) dt  \\
	&\qquad
	-2\rho_{t}\bar{T}_{\e}\bar{\nu}^{*}_{t}(\sigma_{t} \dx \bar{T}_{\e}\bar{\nu}^{*}_{t}
	- \dx \sigma_{t} \bar{\mathcal{H}}^{\sigma}_{t, \e}
	+ \bar{\mathcal{E}}^{\sigma}_{t,\e} ) dW_{t} \\
	& \qquad + \rho_{t}^{2}(\sigma_{t} \dx \bar{T}_{\e}\bar{\nu}^{*}_{t}
	- \dx \sigma_{t} \bar{\mathcal{H}}^{\sigma}_{t, \e}
	+ \bar{\mathcal{E}}^{\sigma}_{t,\e} )^{2} dt.
\end{align*}

Our strategy is to integrate over $x \in \R$, take a supremum over $t \in [0,T]$ and then take an expectation over 
the previous equation. For the first task we appeal to Lemma~\ref{Lemmas_Lem_Switch2ndOrder},
Lemma~\ref{Lemmas_Lem_StochasticFubini} and Young's inequality with free parameter $\eta > 0$ to obtain 
\begin{align*}
 \big\Vert \bar{T}_{\e} \nub_{t}^{*} \big\Vert^{2}_{2}
  &\leq \big\Vert \bar{T}_{\e} \nu_{0} \big\Vert^{2}_{2}
   + c_{\eta} \int^{t}_{0} \big\Vert \bar{T}_{\e} \nub_{s}^{*} \big\Vert_{2}^{2}ds + c_{\eta} \int_{0}^{t} 
   \big\Vert \bar{T}_{2\e} \nub_{s}^{*} \big\Vert_{2}^{2}ds  \\
   & \qquad + c_{\eta} \int_{0}^{t} \big\Vert \bar{\mathcal{E}}_{s, \e}^{\mu} \big\Vert_{2}^{2} + \big\Vert 
   \bar{\mathcal{E}}_{s, \e}^{\sigma^{2}} \big\Vert_{2}^{2} + \big\Vert \bar{\mathcal{E}}_{s, \e}^{\sigma} 
   \bigr\Vert_{2}^{2} ds  \\
  & \qquad - \int_{0}^{t} \int_{\R} [\,\sigma^{2}_{s} \cdot(1 -(1 + \eta)\rho_{s}^{2}) -\eta -\eta \mu_{s}^{2}\,] 
  (\dx \bar{T}_{\e} \nub^{*}_{s})^{2}dx ds   \\
  & \qquad - 2 \int_{0}^{t} \int_{\R} \rho_{s} \bar{T}_{\e} \nub_{s}^{*} (\sigma_{s} \dx \bar{T}_{\e} \nub_{s}^{*} 
  + \dx \sigma_{s} \bar{\mathcal{H}}_{s,\e} + \bar{\mathcal{E}}^{\sigma}_{s, \e} ) dx dW_{s}  
\end{align*}
where $c_{\eta} > 0$ is a constant depending only on $\eta$. Considering the third line, by Assumption~\ref{Not_Def_Coefficients} 
it is possible to choose $\eta > 0$ small enough so that
\begin{equation}
\label{eq:Unique_RhoLess1}
\sigma^{2}_{s}(x)(1 -(1 + \eta)\rho_{s}^{2}) -\eta -\eta \mu_{s}(x)^{2} \geq 0,
	\qquad \textrm{for all } x \in \R, s \in [0,T],
\end{equation}
therefore 
\begin{align*}
 \big\Vert \bar{T}_{\e} \nub_{t}^{*} \big\Vert^{2}_{2}
  &\leq \big\Vert \bar{T}_{\e} \nu_{0} \big\Vert^{2}_{2}
   + c_{\eta} \int^{t}_{0} \big\Vert \bar{T}_{\e} \nub_{s}^{*} \big\Vert_{2}^{2}ds + c_{\eta} \int_{0}^{t} 
   \big\Vert \bar{T}_{2\e} \nub_{s}^{*} \big\Vert_{2}^{2}ds  \\
   & \qquad + c_{\eta} \int_{0}^{t} \big( \big\Vert \bar{\mathcal{E}}_{s, \e}^{\mu} \big\Vert_{2}^{2} + 
   \big\Vert \bar{\mathcal{E}}_{s, \e}^{\sigma^{2}} \big\Vert_{2}^{2} + 
   \big\Vert \bar{\mathcal{E}}_{s, \e}^{\sigma} \bigr\Vert_{2}^{2} \big) ds  \\
  & \qquad - 2 \int_{0}^{t} \int_{\R} \rho_{s} \bar{T}_{\e} \nub_{s}^{*} (\sigma_{s} \dx \bar{T}_{\e} \nub_{s}^{*} + 
  \dx \sigma_{s} \bar{\mathcal{H}}_{s,\e} + \bar{\mathcal{E}}^{\sigma}_{s, \e} ) dx dW_{s}.
\end{align*}
Using Lemma~\ref{Lemmas_Lem_B--D--G} to take a supremum over $t$ and then expectation gives
\begin{multline*}
 \E \sup_{s \in [0,t]}\big\Vert \bar{T}_{\e} \nub_{s}^{*} \big\Vert^{2}_{2}
  \leq \big\Vert \bar{T}_{\e} \nu_{0} \big\Vert^{2}_{2}
   + c_{1} \E \int^{t}_{0} \big\Vert \bar{T}_{\e} \nub_{s}^{*} \big\Vert_{2}^{2}ds + c_{1} \E \int_{0}^{t} 
   \big\Vert \bar{T}_{2\e} \nub_{s}^{*} \big\Vert_{2}^{2}ds  \\
    \qquad + c_{1} \E \int_{0}^{t} \big( \big\Vert \bar{\mathcal{E}}_{s, \e}^{\mu} \big\Vert_{2}^{2} + 
    \big\Vert \bar{\mathcal{E}}_{s, \e}^{\sigma^{2}} \big\Vert_{2}^{2} + 
    \big\Vert \bar{\mathcal{E}}_{s, \e}^{\sigma} \bigr\Vert_{2}^{2} \big) ds,
\end{multline*}
where $c_{1} > 0$ is a numerical constant. 

Taking $\liminf$ as $\e \to 0$ over the previous inequality and applying Proposition~\ref{KSM_Prop_Contract} 
(to $V_{0} \in L^{2}$) and Lemma~\ref{Lemmas_Lem_Switch2ndOrder} yields
\begin{align*}
f(t) &:= \liminf_{\e \to 0} \E \sup_{s \in [0,t]} \Vert \bar{T}_{\e} \bar{\nu}^{*}_{s} \Vert_{2}^{2} \\
	&\leq c_{1} \Vert V_{0} \Vert_{2}^{2}
	+ 2c_{1} \liminf_{\e \to 0} \E \int^{t}_{0} \Vert \bar{T}_{\e} \bar{\nu}^{*}_{s} \Vert_{2}^{2} ds \\
	&\leq c_{1} \Vert V_{0} \Vert_{2}^{2}
	+ 2c_{1}tf(t).
\end{align*}
Hence for $t < 1/4c_{1}$ we have $f(t) \leq 2c_{1} \Vert V_{0} \Vert_{2}^{2}$. The proof is completed by 
propagating the argument onto $[1/4c_{1}, 2/4c_{1}]$ by the same work as above but started from $s = 1/4c_{1}$, 
rather than $s = 0$. This gives
\begin{multline*}
\liminf_{\e \to 0} \E \,[ \sup_{s \in [(4c_{1})^{-1}, 2(4c_{1})^{-1}]} \Vert \bar{T}_{\e} \bar{\nu}^{*}_{s} \Vert_{2}^{2}\,] \\
	\leq 2c_{1} \liminf_{\e \to 0} \E \,[ \sup_{s \in [0, (4c_{1})^{-1}]} \Vert \bar{T}_{\e} \bar{\nu}^{*}_{s} \Vert_{2}^{2}\,]
	\leq (2c_{1})^{2},
\end{multline*}
and so in general
\[
\liminf_{\e \to 0} \E \,[ \sup_{s \in [k(4c_{1})^{-1}, (k+1)(4c_{1})^{-1}]} \Vert \bar{T}_{\e} \bar{\nu}^{*}_{s} \Vert_{2}^{2}\,]
	\leq (2c_{1})^{k+1},
	\qquad \textrm{for } k \geq 0.
\]
Since the largest such $k$ we need to take is $k_{0} := 4c_{1}T$, the simple bound
\[
f(T) \leq \liminf_{\e \to 0} \E  \sum_{k = 0}^{k_{0} - 1}  \sup_{s \in [k(4c_{1})^{-1}, (k+1)(4c_{1})^{-1}]} 
	\Vert \bar{T}_{\e} \bar{\nu}^{*}_{s} \Vert_{2}^{2}
	\leq \sum_{k = 0}^{k_{0} - 1} (2c_{1})^{k + 1}< \infty
\]
completes the proof.
\end{proof}

\subsection*{Resuming the uniqueness proof}

Returning to proof of Theorem~\ref{Intro_Thm_Unique}, notice that for a fixed $x > 0$, the function $y \mapsto G_{\e}(x,y)$ 
from (\ref{eq:KSM_Absorbing}) is an element of \C. Setting into the SPDE for $\nu$ gives
\begin{multline*}
d \nu_{t}(G_{\e}(x, \cdot ))
  = \nu_{t}(\mu_{t} \partial_{y} G_{\e}(x, \cdot )) dt
  + \frac{1}{2} \nu_{t}(\sigma^{2}_{t} \partial_{yy}  G_{\e}(x, \cdot )) dt\\
  + \rho_{t} \nu_{t}(\sigma_{t} \partial_{y}  G_{\e}(x, \cdot )) dW_{t},
\end{multline*}
and by applying Lemma~\ref{Lemmas_Lem_SwitchDeriv}
\begin{align*}
d T_{\e} \nu_{t}(x)
  &= -\dx \nu_{t}(\mu_{t}  G_{\e}(x, \cdot )) dt
  + \frac{1}{2} \dxx \nu_{t}(\sigma^{2}_{t}  G_{\e}(x, \cdot )) dt\\
  &\qquad- \rho_{t} \dx \nu_{t}(\sigma_{t}   G_{\e}(x, \cdot )) dW_{t}    
    - 2\dx \nu_{t}(\mu_{t}  p_{\e}(x + \cdot )) dt\\
  &\qquad- 2\rho_{t}\dx \nu_{t}(\sigma_{t}  p_{\e}(x + \cdot )) dW_{t}.
\end{align*} 
To introduce the anti-derivative we integrate the above equation over $x > 0$ and 
apply Lemma~\ref{Lemmas_Lem_StochasticFubini} to switch the time and space integrals. (Note: 
Lemma~\ref{Lemmas_Lem_StochasticFubini}
is stated for $\bar{\nu}^{*}$, however the proof only relies on the tail bound from Assumption~\ref{Not_Def_RegCond} 
condition~(\ref{Not_Def_RegCond3}), which is satisfied by $\nu$ and $\tilde{\nu}$.) We arrive at 
\begin{align*}
d \anti T_{\e} \nu_{t}(x)
  &= - \nu_{t}(\mu_{t}  G_{\e}(x, \cdot )) dt
  + \frac{1}{2} \dx \nu_{t}(\sigma^{2}_{t}  G_{\e}(x, \cdot )) dt \\
  & \qquad -  \rho_{t} \nu_{t}(\sigma_{t}   G_{\e}(x, \cdot )) dW_{t}    
   - 2 \nu_{t}(\mu_{t}  p_{\e}(x + \cdot )) dt \\
  & \qquad - 2 \rho_{t} \nu_{t}(\sigma_{t}  p_{\e}(x + \cdot )) dW_{t},
\end{align*}
which, after rewriting using the notation from Lemma~\ref{Lemmas_Lem_Switch1stOrder}, becomes
\begin{align}
\label{eq:Unique_Anti1}
d \anti T_{\e} \nu_{t}
  &= - (\mu_{t}T_{\e} \nu_{t} + \mathcal{E}^{\mu}_{t, \e}) dt
  + \frac{1}{2} \dx ( \sigma^{2}_{t}T_{\e}\nu_{t} + \mathcal{E}^{\sigma^{2}}_{t, \e}) dt \\
  & \qquad -  \rho_{t} (\sigma_{t} T_{\e} \nu_{t} + \mathcal{E}^{\sigma}_{t, \e}) dW_{t}    
 - 2 \nu_{t}(\mu_{t}  p_{\e}(x + \cdot )) dt \nonumber \\
  & \qquad - 2 \rho_{t} \nu_{t}(\sigma_{t}  p_{\e}(x + \cdot )) dW_{t}. \nonumber
\end{align}

We will now introduce the simplifying notation $o_{\mathrm{sq}}(1)$ to denote any family of $L^{2}(0,\infty)$-valued processes, 
$\{(f_{t, \e})_{t \in [0,T]}\}_{\e > 0}$, satisfying 
\[
\E \int^{T}_{0} \Vert f_{t, \e} \Vert_{L^2(0,\infty)}^{2} dt \to 0,
	\qquad \textrm{as } \e \to 0. 
\]
Thus a formal linear combination of $o_{\mathrm{sq}}(1)$ terms is of order $o_{\mathrm{sq}}(1)$. 
Therefore (\ref{eq:Unique_Anti1}) can be written (using  Lemma~\ref{Lemmas_Lem_Switch1stOrder}) as
\begin{align}
\label{eq:Unique_Anti2}
d \anti T_{\e} \nu_{t}
  &= - \mu_{t}T_{\e} \nu_{t} dt
  + \frac{1}{2} \dx ( \sigma^{2}_{t}T_{\e}\nu_{t} + \mathcal{E}^{\sigma^{2}}_{t, \e}) dt
  -  \sigma_{t}\rho_{t} T_{\e} \nu_{t} dW_{t} \\
  &\qquad +  o_{\mathrm{sq}}(1) dt + o_{\mathrm{sq}}(1) dW_{t}  \nonumber   \\
  & \qquad - 2 \nu_{t}(\mu_{t}  p_{\e}(x + \cdot )) dt
  - 2 \rho_{t} \nu_{t}(\sigma_{t}  p_{\e}(x + \cdot )) dW_{t}, \nonumber
\end{align}
and we claim that the integrands in the final two terms are also of order $o_{\mathrm{sq}}(1)$. This claim is in fact 
the critical boundary result from \cite{bush11}, but here we only need first moment estimates:

\begin{lem}[Boundary estimate]
\label{Unique_Lem_Boundary}
We have 
\[
\E \int^{T}_{0} \int^{\infty}_{0} \Big( \int^{\infty}_{0} p_{\e} (x + y) \nu_{t}(dy) \Big)^{2} dx dt	\to 0,
	\qquad \textrm{as } \e \to 0,
\]
hence $\nu_{t}(\mu_{t}  p_{\e}(x + \cdot )) = o_{\mathrm{sq}}(1)$ and $\nu_{t}(\sigma_{t}  p_{\e}(x + \cdot )) 
= o_{\mathrm{sq}}(1)$. 
\end{lem}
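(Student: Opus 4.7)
The strategy is to apply Cauchy--Schwarz on the inner integral, swap the order of integration, and then carefully exploit the boundary decay estimate (Assumption \ref{Not_Def_RegCond}(\ref{Not_Def_RegCond4})), which $\nu$ satisfies by Proposition \ref{Tight_Prop_RegCond}. Since $\nu_{t}(0,\infty) \leq 1$, Cauchy--Schwarz gives
\[
\Big(\int_{0}^{\infty} p_{\e}(x+y) \nu_{t}(dy)\Big)^{2}
  \leq \int_{0}^{\infty} p_{\e}(x+y)^{2} \nu_{t}(dy),
\]
and Tonelli then reduces the problem to estimating $\int_{0}^{\infty} A_{\e}(y) \nu_{t}(dy)$, where $A_{\e}(y) := \int_{0}^{\infty} p_{\e}(x+y)^{2}dx = (2\pi\e)^{-1} \int_{y}^{\infty} e^{-z^{2}/\e}dz$.

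Next I would establish two complementary bounds on $A_{\e}$: the uniform estimate $A_{\e}(y) \leq A_{\e}(0) = O(\e^{-1/2})$ coming from dropping the shift, and the tail estimate
\[
A_{\e}(y) \leq \frac{1}{2\pi y}\, e^{-y^{2}/\e}, \qquad y > 0,
\]
obtained from $\int_{y}^{\infty} e^{-z^{2}/\e}dz \leq \int_{y}^{\infty} e^{-yz/\e}dz = (\e/y) e^{-y^{2}/\e}$. These two bounds behave oppositely in $y$: the first is useful near the origin, the second away from it.

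I would then split the integral in $y$ at a threshold $\e^{a}$, with $a$ chosen in the (non-empty) interval
\[
\frac{1}{2(1+\beta)} < a < \frac{1}{2},
\]
where $\beta > 0$ is the exponent in Assumption~\ref{Not_Def_RegCond}(\ref{Not_Def_RegCond4}). On $\{y \leq \e^{a}\}$ use the uniform bound to get $\e^{-1/2} \nu_{t}(0, \e^{a})$, whose time-integrated expectation is $O(\e^{a(1+\beta) - 1/2}) \to 0$ by the boundary decay estimate and the choice $a > 1/(2(1+\beta))$. On $\{y > \e^{a}\}$ the tail bound yields a prefactor of order $\e^{-a} \exp(-\e^{2a-1})$, which vanishes faster than any power of $\e$ because $2a-1 < 0$ makes $\e^{2a-1} \to \infty$. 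Combining these two estimates gives the main claim.

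The final two assertions follow at once: since $|\mu|$ and $\sigma$ are bounded by Assumption~\ref{Not_Def_Coefficients}, we have $|\nu_{t}(\mu_{t} p_{\e}(x+\cdot))|^{2} \leq C (\int_{0}^{\infty} p_{\e}(x+y) \nu_{t}(dy))^{2}$, and similarly for $\sigma$, so both expressions are $\sq$ in the sense defined above. The main obstacle is essentially locating the right threshold exponent $a$ — it is precisely the strict positivity of $\beta$ in the boundary decay condition (which ultimately comes from Proposition~\ref{Prob_Prop_Boundary} and the initial density assumption) that leaves any room at all to make the splitting argument work.
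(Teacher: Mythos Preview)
Your proof is correct and follows essentially the same approach as the paper: both split the $y$-integral at a threshold $\e^{a}$ with $a \in \bigl(\tfrac{1}{2(1+\beta)}, \tfrac{1}{2}\bigr)$, apply the boundary decay estimate (Assumption~\ref{Not_Def_RegCond}(\ref{Not_Def_RegCond4})) on the near piece and the Gaussian tail on the far piece. The only cosmetic difference is that you reach the split via Cauchy--Schwarz and Tonelli, whereas the paper first separates variables by the pointwise bound $p_{\e}(x+y) \leq e^{-x^{2}/2\e} p_{\e}(y)$ before squaring and integrating in $x$; both routes lead to the same estimate and the same admissible range for the splitting exponent.
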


\begin{proof}
Begin by noting that 
\begin{align*}
|\nu_{t}(p_{\e}(x + \cdot))|
  &\leq e^{-x^{2} / \e} \int_{0}^{\infty} p_{\e}(y)\nu_{t}(dy) \\
  &\leq c_{1} e^{-x^{2} / \e} \e^{-1/2} [\, \nu_{t}(0,\e^{\eta}) + \exp\{-\e^{2\eta -1} / 2\} \,],
\end{align*}
for $\eta \in (0, \frac{1}{2})$ a free parameter and $c_{1} > 0$ a universal constant. Squaring and integrating over $x > 0$ gives
\[
\int_{0}^{\infty} |\nu_{t}( p_{\e}(x + \cdot))|^{2} dx
  \leq c_{2} \e^{-1/2} [\, \nu_{t}(0,\e^{\eta})^{2} + \exp\{-\e^{2\eta -1}\} \,],
\]
with $c_{2} > 0$ another numerical constant. Condition~(\ref{Not_Def_RegCond4}) of Assumption~\ref{Not_Def_RegCond} 
and the fact that $\nu_{t}(S)^{2} \leq \nu_{t}(S)$, since $\nu_{t}$ is a sub-probability measure, allows us to write
\[
\E \int^{T}_{0} \int_{0}^{\infty} |\nu_{t}( p_{\e}(x + \cdot))|^{2} dx
  = O(\e^{\eta(1 + \beta)-1/2})
  + O(\e^{-1/2}\exp\{-\e^{2\eta -1}\}),
\]
which vanishes if we choose $\eta$ to satisfy
\[
\frac{1}{2(1 + \beta)} < \eta < \frac{1}{2}, 
\]
and this completes the proof.
\end{proof}

With Lemma~\ref{Unique_Lem_Boundary}, we can now reduce (\ref{eq:Unique_Anti2}) to 
\begin{align}
\label{eq:Unique_Anti3}
d \anti T_{\e} \nu_{t}
  &= - \mu_{t}T_{\e} \nu_{t} dt
  + \frac{1}{2} \dx ( \sigma^{2}_{t}T_{\e}\nu_{t} + \mathcal{E}^{\sigma^{2}}_{t, \e}) dt
  -  \sigma_{t}\rho_{t} T_{\e} \nu_{t} dW_{t} \\
  & \qquad + o_{\mathrm{sq}}(1) dt + o_{\mathrm{sq}}(1) dW_{t} \nonumber,
\end{align}
and this equation is also satisfied by $\tilde{\nu}$, as so far all we have used is Assumption~\ref{Not_Def_RegCond}. 
Writing $\Delta := \nu - \tilde{\nu}$ and $\delta^{g}_{t}(x) := g(t,x,L_{t}) - g(t, x, \tilde{L}_{t})$, taking the difference 
of (\ref{eq:Unique_Anti3}) for $\nu$ and $\tilde{\nu}$ yields
\begin{align*}
d \anti T_{\e} \Delta_{t}
  &= - (\tilde{\mu}_{t}T_{\e} \Delta_{t} + \delta^{\mu}_{t} T_{\e} \nu_{t}) dt
  + \frac{1}{2} \dx ( \sigma^{2}_{t}T_{\e}\Delta_{t} + \mathcal{E}^{\sigma^{2}}_{t, \e} -
   \tilde{\mathcal{E}}^{\sigma^{2}}_{t, \e}) dt  \\
  &\qquad -\sigma_{t}(\tilde{\rho}_{t} T_{\e} \Delta_{t} + \delta^{\rho}_{t}T_{\e}\nu_{t}) dW_{t}
  + o_{\mathrm{sq}}(1) dt + o_{\mathrm{sq}}(1) dW_{t},
\end{align*}
where $\tilde{\mathcal{E}}^{\sigma^{2}}_{t, \e}$ is as in Lemma~\ref{Lemmas_Lem_Switch1stOrder}, but with $\nu$ 
replaced by $\tilde{\nu}$. Applying It\^o's formula to the square $(\anti T_{\e} \Delta_{t})^{2}$ gives
\begin{align}
\label{eq:Unique_Anti4}
d (\anti T_{\e} \Delta_{t})^{2}
  &= - 2\anti T_{\e} \Delta_{t}(\tilde{\mu}_{t}T_{\e} \Delta_{t} + \delta^{\mu}_{t} T_{\e} \nu_{t}) dt \\
  & \qquad + \anti T_{\e} \Delta_{t} \dx ( \sigma^{2}_{t}T_{\e}\Delta_{t} + \mathcal{E}^{\sigma^{2}}_{t, \e} -
   \tilde{\mathcal{E}}^{\sigma^{2}}_{t, \e}) dt  \nonumber \\
  &\qquad -2\anti T_{\e} \Delta_{t}\sigma_{t}(\tilde{\rho}_{t} T_{\e} \Delta_{t} + \delta^{\rho}_{t}T_{\e}\nu_{t}) dW_{t} \nonumber \\
  &\qquad + (\tilde{\rho}_{t} T_{\e} \Delta_{t} + \delta^{\rho}_{t}T_{\e}\nu_{t})^{2} dt \nonumber \\
  &\qquad+ \anti T_{\e} \Delta_{t} \cdot o_{\mathrm{sq}}(1) dt 
  + \anti T_{\e} \Delta_{t} \cdot o_{\mathrm{sq}}(1) dW_{t} 
  + o_{\mathrm{sq}}(1)^{2} dt. \nonumber
\end{align}
Note that the initial condition for this equation is zero because $\nu$ and $\tilde{\nu}$ have the same initial condition. 

Since the work in establishing the bounds in Lemma~\ref{Lemmas_Lem_StochasticFubini} only uses the tail estimate
(\ref{Not_Def_RegCond3}) of Assumption~\ref{Not_Def_RegCond}, they remain valid and so, together with 
Lemma~\ref{Lemmas_Lem_AntiTail}, the stochastic integrals in (\ref{eq:Unique_Anti4}) are martingales for fixed $x$ and $\e$. 
Therefore first taking an expectation and then integrating over $x > 0$ and using Young's inequality with free parameter 
$\eta > 0$ produces a constant $c_{\eta} > 0$ such that 
\begin{align}
\label{eq:Unique_Anti5}
\E \left\Vert \anti T_{\e} \Delta_{t} \right\Vert_{2}^{2}
   &\leq c_{\eta} \E \int^{t}_{0} \left\Vert \anti T_{\e} \Delta_{s} \right\Vert_{2}^{2} ds  
  + c_{\eta} \E \int_{0}^{t} \big\Vert (|\delta^{\mu}_{s}| + |\delta^{\rho}_{s}|) |T_{\e} \tilde{\nu}_{s}| \big\Vert_{2}^{2} ds    \\
  &\quad - \E \int^{t}_{0} \int^{\infty}_{0} [\, \sigma^{2}_{s} ( 1 - (1+\eta)\tilde{\rho}^{2}_{s}) - \eta - 
  \eta \tilde{\mu}_{s}^{2}\,]|T_{\e}\Delta_{s}|^{2}dx ds \nonumber \\
  &\quad + o(1),\nonumber
\end{align}
where the terms involving $o_{\mathrm{sq}}(1)$ have collapsed to order $o(1)$. Also notice that (\ref{eq:Unique_Anti5}) 
remains valid if $t$ is a stopping time. 

If it was the case that $\E \int^{t}_{0} \Vert T_{\e}\Delta_{s}\Vert_{2}^{2}ds = 0$, then by 
Proposition~\ref{KSM_Prop_liminfL2} we would have $\Delta = 0$ on $[0,t]$, and so would have completed the proof for this 
value of $t$. It is therefore no loss of generality to assume that this value is bounded away from zero for all $\e > 0$ sufficiently 
small. Then by taking $\eta > 0$ we can find a positive value $c_{0} >0$ such that
\begin{align}
\label{eq:Unique_Anti6}
\E \left\Vert \anti T_{\e} \Delta_{t} \right\Vert_{2}^{2}
  &\leq c \E \int^{t}_{0} \left\Vert \anti T_{\e} \Delta_{s} \right\Vert_{2}^{2} ds \\
  & \qquad + c \E \int_{0}^{t} \big\Vert (|\delta^{\mu}_{s}| + |\delta^{\rho}_{s}|) |T_{\e} \tilde{\nu}_{s}| \big\Vert_{2}^{2} ds
  - c_{0}    
  + o(1),\nonumber 
\end{align}
for $c > 0$ constant. We now want to introduce a comparison between solutions in the $\delta$ terms, and to do so we consider 
two cases.

\subsection*{\emph{Case 1}: Globally Lipschitz coefficients} 
First consider the simpler case where $\mu$ and $\rho$ are Lipschitz in the loss variable, rather than piecewise Lipschitz. 
Therefore we have $|\delta^{g}_{t}| \leq C|L_{t} - \tilde{L}_{t}|$, so the inequality in (\ref{eq:Unique_Anti6}) becomes
\begin{align*}
\E \left\Vert \anti T_{\e} \Delta_{t} \right\Vert_{2}^{2}
  &\leq c_{1} \E \int^{t}_{0} \left\Vert \anti T_{\e} \Delta_{s} \right\Vert_{2}^{2} ds  \\ 
  &\qquad + c_{1} \E \int_{0}^{t} |L_{s} - \tilde{L}_{s}|^2 \big\Vert  T_{\e} \tilde{\nu}_{s} \big\Vert_{2}^{2} ds    
  - c_{0} + o(1),
\end{align*}
with $c_{1} > 0$ constant.  

To bound the second term above, we introduce the stopping times 
\[
t_{n} := \inf\{t > 0:  \sup_{s \in [0,T]} \sup_{\e > 0} \Vert T_{\e} \tilde{\nu}_{s} \Vert_{2}^{2} > n \} \wedge T.
\]
From Proposition~\ref{Unique_Prop_L2Reg} we know that $t_{n} \to T$ as $n \to \infty$, with probability 1. 
Since (\ref{eq:Unique_Anti5}) is valid for stopping times we have 
\begin{align*}
\E \left\Vert \anti T_{\e} \Delta_{t \wedge t_n} \right\Vert_{2}^{2}
  &\leq c_{1} \E \int^{t \wedge t_n}_{0} \left\Vert \anti T_{\e} \Delta_{s} \right\Vert_{2}^{2} ds  \\
  & \qquad + c_{1}n \E \int_{0}^{t \wedge t_n} |L_{s} - \tilde{L}_{s}|^2ds    
  - c_{0}  + o(1) \\
  &\leq c_{1} \E \int^{t}_{0} \left\Vert \anti T_{\e} \Delta_{s\wedge t_n} \right\Vert_{2}^{2} ds   \\
 & \qquad + c_{1}n \E \int_{0}^{t} |L_{s\wedge t_n} - \tilde{L}_{s\wedge t_n}|^2 ds    
  - c_{0}  + o(1).
\end{align*}
By using the integrating factor $e^{-c_{1}t}$ we obtain
\[
\E \left\Vert \anti T_{\e} \Delta_{t \wedge t_n} \right\Vert_{2}^{2}
	\leq c_{1} n e^{c_{1} T} \E \int^{t}_{0} |L_{s\wedge t_n} - \tilde{L}_{s\wedge t_n}|^2 ds - c_{0}' ,
\]
and applying Fatou's lemma and Propositions~\ref{KSM_Prop_SignedMeasure} and~\ref{KSM_Prop_liminfH-1} gives
\[
\E \left\Vert  \Delta_{t \wedge t_n} \right\Vert_{-1}^{2}
	\leq c_{1} n e^{c_{1} T} \E \int^{t}_{0} |L_{s\wedge t_n} - \tilde{L}_{s\wedge t_n}|^2 ds - c_{0}',
\]
where $c_{0}' = c_{0} e^{-c_{1}T} > 0$. 
 
Finally we apply Lemma~\ref{Lemmas_Lem_LossInH1} to the above inequality to reintroduce $\Delta$ to the right-hand side. 
With fixed $\alpha > 0$ we have
\[
\E \left\Vert  \Delta_{t \wedge t_n} \right\Vert_{-1}^{2}
	\leq c_{2}(\delta^{-1} + \lambda) \E \int^{t}_{0} \left\Vert  \Delta_{s \wedge t_n} \right\Vert_{-1}^{2} ds
	+ c_{2} \delta + c_{\alpha}e^{-\alpha \lambda}
	-c_{0}',
\]
where $c_{2} > 0$ does not depend on $\alpha$ (but does depend on $n$). Now fix $\delta = c_{0}' / c_{2}$ so that we have 
\[
\E \left\Vert  \Delta_{t \wedge t_n} \right\Vert_{-1}^{2}
	\leq c_{3}(1 + \lambda) \E \int^{t}_{0} \left\Vert  \Delta_{s \wedge t_n} \right\Vert_{-1}^{2} ds
	+ c_{\alpha}e^{-\alpha \lambda}
\]
with $c_{3} > 0$ independent of $\alpha$. By using the integrating factor $e^{-c_{3}(1 + \lambda)t}$ we deduce
\[
\E \left\Vert  \Delta_{t \wedge t_n} \right\Vert_{-1}^{2}
	\leq c_{\alpha}e^{c_{3}(1 +\lambda) t - \alpha \lambda},
\]
so setting $\alpha = 2c_{3}t$ and sending $\lambda \to \infty$ gives $\E \left\Vert  \Delta_{t \wedge t_n} \right\Vert_{-1}^{2} 
= 0$. Therefore $\nu = \tilde{\nu}$ on $[0,t_{n}]$, and since $t_n \to T$ we have Theorem~\ref{Intro_Thm_Unique} in Case 1. 

\subsection*{\emph{Case 2}: Piecewise Lipschitz coefficients}

To extend the argument to the general case, we use a stopping argument and consider the system only on time 
intervals where the loss processes are in the same interval $[\theta_{i}, \theta_{i + 1})$ --- recall 
Assumption~\ref{Not_Def_Coefficients}. 

Define the stopping times
\[
T_{0} := \inf\{ t > 0 : L_{t} \geq \theta_{1} \} \wedge T
\qquad
\tilde{T}_{0} := \inf\{ t > 0 : \tilde{L}_{t} \geq \theta_{1} \} \wedge T
\]
and $S_{0} = T_{0} \wedge \tilde{T}_{0}$. For the reason immediately proceeding (\ref{eq:Unique_Anti4}), the 
argument in Case 1 can be replicated on $[0,S_{0})$ by replacing $t$ by $t \wedge S_{0}$, since before $S_{0}$, 
the coefficients can be compared using the Lipschitz property on $[\theta_{0}, \theta_{1})$. Therefore we conclude 
$\nu_{t} = \tilde{\nu}_{t}$ for $t \leq S_{0}$, which forces $L_{t} = \tilde{L}_{t}$ for $t \leq S_{0}$ and thus 
$T_{0} = S_{0} = \tilde{T}_{0}$.

We can then repeat the argument for the interval $[S_{0}, S_{1})$, since $\Delta_{S_{0}} = 0$ (by continuity of 
$\nu$ and $\tilde{\nu}$), where
\[
T_{1} := \inf\{ t > S_{0} : L_{t} \geq \theta_{2} \} \wedge T
\qquad
\tilde{T}_{1} := \inf\{ t > S_{0} : \tilde{L}_{t} \geq \theta_{2} \} \wedge T
\] 
and $S_{1} = T_{1} \wedge \bar{T}_{1}$. Continuing upto $S_{k}$ covers all the $[\theta_{i}, \theta_{i + 1})$ 
intervals, and this completes the proof, since $L$ and $\tilde{L}$ are increasing (Assumption~\ref{Not_Def_RegCond}, 
condition~(\ref{Not_Def_RegCond1})) so $[0,T] \subseteq \cup_{i = 0}^{k-1} [S_{i}, S_{i+1})$. \qed

\section{Technical lemmas}
\label{Sect_Lemmas}

This section collects all the technical lemmas that were used in Section~\ref{Sect_Uniqueness}, and should be read only
 as a reference.

\begin{lem}
\label{Lemmas_Lem_Switch1stOrder}
Let $g_{s}(x) = g(s, x, L_{s})$ where $g$ is one of $\mu$, $\sigma$ or $\sigma^{2}$ and $L_{s} = 1- \nu_{s}(0,\infty)$. 
Define the error term
\[
\mathcal{E}^{g}_{t, \e}(x) 
	:= \nu_{t}(g_{t}(\cdot) G_{\e}(x , \cdot)) 
	- g_{t}(x)  T_{\e}\nu_{t}(x).
\]
Then 
\[
\E \int^{T}_{0} \Vert \mathcal{E}^{g}_{t, \e} \Vert_{L^{2}(0,\infty)}^{2} dt \to 0,
	\qquad \textrm{as } \e \to 0.
\]
\end{lem}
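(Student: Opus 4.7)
Starting from the definition of $T_{\e}\nu_{t}$, I would first rewrite
\[
\mathcal{E}^{g}_{t,\e}(x) = \int_{0}^{\infty}(g_{t}(y) - g_{t}(x))\, G_{\e}(x,y)\,\nu_{t}(dy),
\]
which is the natural ``commutator'' form: the error measures by how much the function $g_{t}$ fails to be constant on the scale on which the kernel $G_{\e}(x,\cdot)$ is concentrated. The bounded-derivative part of Assumption~\ref{Not_Def_Coefficients}~(\ref{Not_Def_Coefficients_Smooth}) supplies a constant $K$, uniform in $t$ and $L_{t}$, such that $|g_{t}(y) - g_{t}(x)| \leq K|y-x|$ for each of $g = \mu,\sigma,\sigma^{2}$ (recalling $|\dx \sigma^{2}| = 2|\sigma||\dx \sigma|$ is bounded). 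Combined with the trivial pointwise bound $0 \leq G_{\e}(x,y) \leq p_{\e}(x-y)$, this reduces the problem to estimating $\int_{0}^{\infty}|y-x|\,p_{\e}(x-y)\nu_{t}(dy)$.

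The second step is the standard Gaussian moment inequality: rescaling $u = (y-x)/\sqrt{\e}$ and using that $u \mapsto |u|e^{-u^{2}/4}$ attains a finite maximum on $\R$ gives an absolute constant $c_{0}$ with
\[
|y-x|\,p_{\e}(x-y) \leq c_{0}\sqrt{\e}\, p_{2\e}(x-y).
\]
Substituting back yields the pointwise bound $|\mathcal{E}^{g}_{t,\e}(x)| \leq K c_{0}\sqrt{\e}\,\bar{T}_{2\e}\nu_{t}(x)$, and squaring and integrating over $x \in (0,\infty)$ gives
\[
\Vert\mathcal{E}^{g}_{t,\e}\Vert_{L^{2}(0,\infty)}^{2} \leq K^{2}c_{0}^{2}\,\e\,\Vert\bar{T}_{2\e}\nu_{t}\Vert_{L^{2}(\R)}^{2}.
\]

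For the final step I would deliberately avoid invoking Proposition~\ref{Unique_Prop_L2Reg} or Corollary~\ref{Unique_Cor_L2reg}, since a clean deterministic bound is already available: the sub-probability property gives $\Vert\bar{T}_{2\e}\nu_{t}\Vert_{1} \leq \nu_{t}(\R) \leq 1$ and $\Vert\bar{T}_{2\e}\nu_{t}\Vert_{\infty} \leq p_{2\e}(0) = (4\pi\e)^{-1/2}$, so by the elementary interpolation $\Vert f \Vert_{2}^{2} \leq \Vert f\Vert_{\infty}\Vert f \Vert_{1}$ we get $\Vert\bar{T}_{2\e}\nu_{t}\Vert_{2}^{2} \leq (4\pi\e)^{-1/2}$. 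Multiplying by the $\e$ from the previous step collapses this to $\Vert\mathcal{E}^{g}_{t,\e}\Vert_{L^{2}(0,\infty)}^{2} = O(\sqrt{\e})$ uniformly in $t$ and $\omega$, and integrating over $[0,T]$ and taking expectations completes the proof.

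There is no significant obstacle here; the argument is essentially a concentration-of-measure statement expressing that $G_{\e}$ lives near the diagonal, where $g_{t}$ is approximately constant by Lipschitz continuity. The one point worth flagging is the sidestep in the last step: working with $L^{1}$-$L^{\infty}$ interpolation instead of the $L^{2}$-regularity of $\nu$ gives a fully deterministic bound and keeps the lemma usable as an \emph{input} to the proof of Proposition~\ref{Unique_Prop_L2Reg}, rather than needing to be proved after it.
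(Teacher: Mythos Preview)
Your proof is correct and takes a genuinely different, more elementary route than the paper's. The paper proceeds by a spatial splitting: for $|x|<\lambda(\e)$ it decomposes the $y$-integral on $|y-x|<\e^{\eta}$ versus its complement, invoking the spatial-concentration condition~(\ref{Not_Def_RegCond5}) of Assumption~\ref{Not_Def_RegCond} to control $\E\int_0^T\nu_t(x-\e^{\eta},x+\e^{\eta})^2\,dt$; for $|x|\geq\lambda(\e)$ it uses the exponential tail condition~(\ref{Not_Def_RegCond3}); then it tunes $\eta$ and $\lambda(\e)=\log(\e^{-2})$. Your argument bypasses both regularity conditions entirely: the Gaussian moment trick $|y-x|\,p_{\e}(x-y)\leq c_0\sqrt{\e}\,p_{2\e}(x-y)$ followed by the $L^1$--$L^{\infty}$ interpolation $\Vert\bar T_{2\e}\nu_t\Vert_2^2\leq \Vert\bar T_{2\e}\nu_t\Vert_{\infty}\Vert\bar T_{2\e}\nu_t\Vert_1\leq (4\pi\e)^{-1/2}$ uses only that $\nu_t$ is a sub-probability measure, and produces the explicit deterministic rate $O(\sqrt{\e})$ uniformly in $t$ and $\omega$. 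This is cleaner and strictly weaker in hypotheses. One minor correction to your closing remark: this particular lemma is not an input to Proposition~\ref{Unique_Prop_L2Reg} (that proposition uses the whole-space analogue, Lemma~\ref{Lemmas_Lem_Switch2ndOrder}); Lemma~\ref{Lemmas_Lem_Switch1stOrder} is invoked later in the uniqueness argument after~(\ref{eq:Unique_Anti1}). The paper's splitting approach, while heavier, has the advantage of being the same template used in Lemma~\ref{Lemmas_Lem_Switch2ndOrder}, where a second-order error term appears and the same decomposition applies.
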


\begin{proof}
Let $\lambda = \lambda(\e) \rightarrow \infty$, as $\e \rightarrow 0$, be a function that we will specify later. For any $x >0$
\begin{align*}
|\mathcal{E}^{g}_{t, \e}(x)|
  &\leq \Vert \dx g \Vert_{\infty} \int_{0}^{\infty} |x-y|p_{\e}(x-y) \nu_{t}(dy)\\
  &\leq c_{1} \e^{\eta - \frac{1}{2}} \nu_{t}(x - \e^{\eta}, x + \e^{\eta}) 
  + c_{1} \e^{-1/2} \exp\{-\e^{2 \eta - 1} / 2\},
\end{align*}
with $c_{1} > 0$ a universal constant, and where the second line follows by splitting the integral on $|y - x| < \e^{\eta}$ 
and its complement. By considering the range $x < \lambda$ and using condition~(\ref{Not_Def_RegCond5}) of 
Assumption~\ref{Not_Def_RegCond}
\begin{equation}
\label{eq:Kerenl_Switch1}
\E \int^{T}_{0} \left\Vert \mathcal{E}^{g}_{t, \e} \right\Vert_{L^{2}(-\lambda, \lambda)}^{2} dt
  = \lambda(\e) O(\e^{(2 + \delta) \eta - 1} + \e^{-1} \exp\{-\e^{2 \eta - 1}\} )
  = \lambda(\e) O(\e^{\gamma}),
\end{equation}
for some $\delta, \gamma > 0$, by fixing $\eta$ in the range 
\[
\frac{1}{2 + \delta} < \eta < \frac{1}{2}.
\] 

Now consider the range $x \geq \lambda$. Decomposing the $y$-integral on the range $y < x / 2$ and its complement gives 
\[
|\mathcal{E}^{g}_{t, \e}(x)| 
  \leq 2 \left\Vert g \right\Vert_{\infty} \int_{0}^{\infty} p_{\e}(x - y) \nu_{t}(dy)
  \leq c_{2} p_{\e}(x/2)
  + c_{2} \e^{-1/2} \nu_{t} (|x|/2, +\infty),
\]
with $c_{2} > 0$ another universal constant. Therefore 
\begin{align}
\label{eq:Kerenl_Switch2}
\E \int^{T}_{0} & \left\Vert \mathcal{E}^{g}_{t, \e}  \right\Vert_{L^{2}((-\lambda, \lambda)^{c})}^{2} dt \\
  &= O\Big(\e^{-1/2}e^{-\lambda(\e)^{2}/8\e} \int^{\infty}_{-\infty} p_{\e}(x/2)dx  + \e^{-1}\int_{\lambda(\e)}^{\infty} e^{-x} dx \Big) \nonumber \\
  &= O(\e^{-1} e^{-\lambda(\e)}). \nonumber
\end{align}
Summing (\ref{eq:Kerenl_Switch1}) and (\ref{eq:Kerenl_Switch2}) and fixing $\lambda(\e) = \log(\e^{-2})$ completes the proof.
\end{proof}

\begin{lem}
\label{Lemmas_Lem_Switch2ndOrder}
Let $g_{s}(x) = g(s, x, L^{*}_{s})$ where $g$ is one of $\mu$, $\sigma$ or $\sigma^{2}$ and $L^{*}_{s} = 
1- \bar{\nu}^{*}_{s}(0,\infty)$. Define the error term
\begin{gather*}
\bar{\mathcal{E}}^{g}_{t, \e}(x) 
	:= \dx\bar{\nu}^{*}_{t}(g_{t} p_{\e}(x - \cdot)) 
	- g_{t}(x) \dx \bar{T}_{\e}\bar{\nu}^{*}_{t}(x)
	+ \dx g_{t}(x) \bar{\mathcal{H}}^{g}_{t, \e}(x)
	\\ \textrm{where} \qquad \bar{\mathcal{H}}^{g}_{t, \e}(x) :=  \bar{\nu}^{*}_{t}((x - y)\dx p_{\e}(x - \cdot)).
\end{gather*}
Then 
\[
\E \int^{T}_{0} \Vert \bar{\mathcal{E}}^{g}_{t, \e} \Vert_{L^{2}(\R)}^{2} dt \to 0,
	\qquad \textrm{as } \e \to 0
\]
and there exists a numerical constant $c > 0$ such that 
\[
|\bar{\mathcal{H}}^{g}_{t, \e}(x)| \leq c \bar{T}_{2\e}\bar{\nu}^{*}_{t}(x),
	\qquad \textrm{for all } t \in [0,T], x \in \R \textrm{ and } \e > 0.
\] 
\end{lem}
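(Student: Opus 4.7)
The plan is to Taylor-expand $g_t(y)$ to second order about $y=x$ and recognise $\bar{\mathcal{E}}^g_{t,\e}$ as the resulting remainder. First I would observe that $\dx \bar\nu^*_t(g_t p_\e(x-\cdot)) = \bar\nu^*_t(g_t\,\dx p_\e(x-\cdot))$, the $x$-derivative passing through the integral because $p_\e$ and its first two derivatives are rapidly decreasing and $\bar\nu^*_t$ is a finite measure. By Assumption~\ref{Not_Def_Coefficients}(ii), $g_t \in C^2(\R)$ with uniformly bounded second derivative, so
\[
g_t(y) = g_t(x) + \dx g_t(x)(y-x) + R_t(x,y),\qquad |R_t(x,y)| \leq C(y-x)^2.
\]
Inserting this into $\bar\nu^*_t(g_t\,\dx p_\e(x-\cdot))$ the constant piece yields $g_t(x)\dx \bar T_\e \bar\nu^*_t(x)$, while the linear piece yields $-\dx g_t(x)\bar{\mathcal{H}}^g_{t,\e}(x)$ (using $(y-x) = -(x-y)$ under the integral defining $\bar{\mathcal{H}}$). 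Rearranging then identifies
\[
\bar{\mathcal{E}}^g_{t,\e}(x) = \int_\R R_t(x,y)\,\dx p_\e(x-y)\,\bar\nu^*_t(dy).
\]

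Next I estimate this remainder. Cauchy--Schwarz together with $\bar\nu^*_t(\R)\leq 1$ yields
\[
|\bar{\mathcal{E}}^g_{t,\e}(x)|^2 \leq C^2\int_\R (y-x)^4\,(\dx p_\e(x-y))^2\,\bar\nu^*_t(dy).
\]
Integrating over $x\in\R$, using Fubini and substituting $u=x-y$, reduces the bound to the Gaussian moment $\int_\R u^4(\dx p_\e(u))^2\,du$. Since $\dx p_\e(u) = -(u/\e)p_\e(u)$, the rescaling $u = \sqrt\e\,v$ shows this integral equals a constant multiple of $\e^{1/2}$; combined with $\bar\nu^*_t(\R)\leq 1$ this gives $\|\bar{\mathcal{E}}^g_{t,\e}\|_{L^2(\R)}^2 \leq C\e^{1/2}$ pointwise in $(\omega,t)$. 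Integrating in $t$ and taking expectation yields the vanishing claim.

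For the bound on $\bar{\mathcal{H}}^g_{t,\e}$, the identity $u\,\dx p_\e(u) = -(u^2/\e)p_\e(u)$ gives
\[
|\bar{\mathcal{H}}^g_{t,\e}(x)| \leq \int_\R \frac{(x-y)^2}{\e}\,p_\e(x-y)\,\bar\nu^*_t(dy),
\]
and a direct comparison of Gaussian densities shows $(u^2/\e)p_\e(u) \leq c\,p_{2\e}(u)$ pointwise: after cancelling normalisations, the inequality reduces to the elementary estimate $v\,e^{-v/4} \leq c'$ with $v = u^2/\e$. Integrating this pointwise bound against $\bar\nu^*_t$ delivers the required inequality.

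The whole argument is a routine Gaussian Taylor-type computation and presents no serious obstacle. The only point that requires any care is balancing the $\e^{-1}$ singularity coming from $\dx p_\e$ against the quadratic vanishing of the remainder $R_t(x,y)$; the surplus two powers of $|y-x|$ sitting against the derivative of the heat kernel produce, after the $\sqrt\e$ scaling, exactly the $\e^{1/2}$ decay needed for $\|\bar{\mathcal{E}}^g_{t,\e}\|_{L^2(\R)}^2$ to vanish.
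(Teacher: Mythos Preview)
Your argument is correct. The identification of $\bar{\mathcal{E}}^g_{t,\e}$ as the second-order Taylor remainder integrated against $\dx p_\e$ is exactly what the paper does, and your treatment of $\bar{\mathcal{H}}^g_{t,\e}$ via the pointwise inequality $(u^2/\e)p_\e(u)\le c\,p_{2\e}(u)$ coincides with the paper's.

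Where you diverge from the paper is in bounding $\E\int_0^T\|\bar{\mathcal{E}}^g_{t,\e}\|_{L^2(\R)}^2\,dt$. The paper stops at the estimate $|\bar{\mathcal{E}}^g_{t,\e}(x)|\le c\int|x-y|^3\e^{-1}p_\e(x-y)\,\bar\nu^*_t(dy)$ and then declares the rest ``the same work'' as Lemma~\ref{Lemmas_Lem_Switch1stOrder}: split the $y$-integral on $|y-x|<\e^\eta$, invoke the spatial-concentration condition~(\ref{Not_Def_RegCond5}) of Assumption~\ref{Not_Def_RegCond} for $|x|<\lambda(\e)$, and use the tail bound for $|x|\ge\lambda(\e)$. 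Your route is more direct: Cauchy--Schwarz against the probability measure $\bar\nu^*_t$, then Fubini in $x$, reduces everything to the scalar Gaussian moment $\int u^6\e^{-2}p_\e(u)^2\,du=O(\e^{1/2})$, yielding a \emph{deterministic} pointwise-in-$(\omega,t)$ bound $\|\bar{\mathcal{E}}^g_{t,\e}\|_{L^2(\R)}^2\le C\e^{1/2}$. This is genuinely simpler: it uses only that $\bar\nu^*_t$ has total mass at most one, and avoids the spatial-concentration and tail hypotheses altogether. The paper's approach, on the other hand, is written to parallel Lemma~\ref{Lemmas_Lem_Switch1stOrder} (where $T_\e$ rather than $\bar T_\e$ appears and the analogous Cauchy--Schwarz trick is less clean), so the reuse of machinery is its rationale.
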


\begin{proof}
Interchanging differentiation and integration with respect to $\bar{\nu
}^{*}_{t}$ gives
\[
\bar{\mathcal{E}}^{g}_{t, \e}(x) 
	= \int_{\R}[g_{t}(y) - g_{t}(x) + (y - x)\dx g_{t}(x)]\dx p_{\e}(x - y)\bar{\nu}^{*}_{t}(dy).
\]
By bounding with the second-order derivative and using $\dx p_{\e}(x -y) = -2\e^{-1}(x-y)p_{\e}(x - y)$ gives
\[
|\bar{\mathcal{E}}^{g}_{t, \e}(x)|
	\leq \frac{1}{2} \int_{\R} |\dxx g_{t}(x)| |x - y|^{3} \e^{-1} p_{\e}(x-y) \bar{\nu}^{*}_{t} (dy).
\]
We therefore have the same order of \e\ as in Lemma~\ref{Lemmas_Lem_Switch1stOrder}, so the first result follows 
by the same work. For the second result, notice that
\[
|z \dx p_{\e}(z)| 
	= \frac{1}{\sqrt{2\pi \e}} \e^{-1} z^{2} e^{-z^{2} / 2\e}
	= \sqrt{2}\e^{-1} z^{2} e^{-z^{2} / 4\e} p_{2\e}(z),
\]
and $\sup_{z \in \R} z^{2} e^{-z^{2} / 4\e} = \e$. 
\end{proof}

\begin{lem}[Stochastic Fubini]
\label{Lemmas_Lem_StochasticFubini}
For all $n, m \geq 0$, $\e > 0$ and $t \in \timeInt$
\[
\int_{\R} \Bigl(  \int^{t}_{0} \E [ |\dx^{n} \bar{T}_{\e} \nub^{*}_{s}(x) \cdot \dx^{m} \bar{T}_{\e} \nub^{*}_{s}(x)|^{2} ] 
ds \Bigr)^{1/2} dx < \infty,
\]
hence the stochastic Fubini theorem \cite[1.4]{veraar2012} gives
\begin{multline*}
\int_{\R} \int_{0}^{t}  g_{t}(x) \cdot \dx^{n} \bar{T}_{\e} \nub^{*}_{s}(x) \cdot \dx^{m} \bar{T}_{\e} \nub^{*}_{s}(x) 
dW_{s} dx \\
  = \int_{0}^{t} \int_{\R} g_{t}(x) \cdot \dx^{n}\bar{T}_{\e} \nub^{*}_{s}(x) \cdot \dx^{m} \bar{T}_{\e} \nub^{*}_{s}(x) 
  dx dW_{s}
\end{multline*}
whenever $\sup_{t \in  [0,T], x \in \R}|g_{t}(x)| < \infty$.
\end{lem}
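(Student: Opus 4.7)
The plan is to verify the integrability hypothesis explicitly; the stochastic Fubini theorem quoted from~\cite{veraar2012} then applies directly. The subtle point is that the trivial pointwise bound $|\dx^{n} \bar T_{\e} \nub^{*}_{s}(x)| \leq \Vert \dx^{n} p_{\e} \Vert_{\infty}$ leaves the inner $\E$-integrand uniformly bounded in $x$, which is not integrable on $\R$. Genuine decay in $|x|$ is therefore required, and the input that delivers it is the tail bound for $\nub^{*}$ established in Lemma~\ref{Unique_Lem_WholeSpace}.

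The first move is a pointwise Gaussian bound on derivatives of the heat kernel: writing $\dx^{n} p_{\e}(z) = H_{n}(z,\e) p_{\e}(z)$ with $H_{n}(\cdot,\e)$ a polynomial of degree $n$, and factoring $e^{-z^{2}/2\e} = e^{-z^{2}/4\e}\, e^{-z^{2}/4\e}$ so as to absorb $H_{n}$ into one copy of the Gaussian, yields $|\dx^{n} p_{\e}(z)| \leq C_{\e,n}\, p_{2\e}(z)$, and hence $|\dx^{n} \bar T_{\e} \nub^{*}_{s}(x)| \leq C_{\e,n}\, \bar T_{2\e} \nub^{*}_{s}(x)$. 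Applied to both factors in the integrand, this reduces matters to showing that $\E |\bar T_{2\e} \nub^{*}_{s}(x)|^{4}$ is an integrable function of $x \in \R$, uniformly in $s$.

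For the decay I split $\bar T_{2\e} \nub^{*}_{s}(x) = \int p_{2\e}(x-y)\, \nub^{*}_{s}(dy)$ at $|y| = |x|/2$. On $|y| \leq |x|/2$ the estimate $|x-y| \geq |x|/2$ forces $p_{2\e}(x-y) \leq (4\pi\e)^{-1/2} e^{-x^{2}/16\e}$; on $|y| > |x|/2$ the kernel is trivially bounded by $(4\pi\e)^{-1/2}$ and the mass is controlled by the tail bound of Lemma~\ref{Unique_Lem_WholeSpace}, which is uniform in $s \in [0,T]$ by the same argument as in Proposition~\ref{Prob_Prop_Tail}. Using that $\nub^{*}_{s}$ is sub-probability (to dominate the fourth power of the mass by the mass itself) and taking expectations yields, uniformly in $s \in [0,T]$,
\[
\E |\bar T_{2\e} \nub^{*}_{s}(x)|^{4} \leq C(\e) \bigl[ e^{-x^{2}/4\e} + o(e^{-\alpha |x|/2}) \bigr], \qquad \alpha > 0.
\]
Multiplying by $t$, taking a square root, and integrating in $x$ therefore produces a finite outer integral. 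Once this integrability has been verified, boundedness of $g$ lets \cite[Theorem~1.4]{veraar2012} apply directly, giving the claimed interchange of integrals. The main obstacle is precisely this first step: derivatives of $\bar T_{\e} \nub^{*}_{s}$ do not improve decay, so they must inherit their decay from the mass of $\nub^{*}_{s}$ far from $x$, and the splitting above is what transfers the exponential tail of $\nub^{*}$ into the outer integral.
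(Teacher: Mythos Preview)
Your proposal is correct and follows essentially the same route as the paper. Both arguments reduce to showing that $\E|\dx^{n}\bar T_{\e}\nub^{*}_{s}(x)|^{4}$ (or, in your version, $\E|\bar T_{2\e}\nub^{*}_{s}(x)|^{4}$) decays integrably in $x$, and both do this by splitting the $y$-integral at $|y|=|x|/2$ and invoking the two-sided tail bound from Lemma~\ref{Unique_Lem_WholeSpace}. The only cosmetic difference is that you first absorb the polynomial Hermite factor into the Gaussian via $|\dx^{n}p_{\e}|\leq C_{\e,n}\,p_{2\e}$, whereas the paper carries the polynomial factor $P_{n}(\e^{-1}(x-y))$ and instead uses H\"older's inequality against the probability measure $\nub^{*}_{s}$ to push the fourth power inside; after that the splitting and tail argument are identical.
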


\begin{proof}
By applying Young's inequality and concavity of $z \mapsto \sqrt{z}$, it suffices to show that 
\[
\int_{\R} \Bigl( \int^{t}_{0} \E [ |\dx^{n} \bar{T}_{\e} \nub^{*}_{s}(x)|^{4} ] ds \Bigr)^{1/2} dx < \infty.
\]
First notice that 
\[
\dx^{n} \bar{T}_{\e} \nub^{*}_{s}(x)
  = \nub^{*}_{s}(\dx^{n} p_{\e}(x - \cdot))
  =  \nub^{*}_{s}(P_{n}(\e^{-1}(x - \cdot))p_{\e}(x - \cdot) ),
\]
where $P_{n}$ is a polynomial of degree $n$. Since $\nub^{*}_{s}$ is a probability measure, H\"older's inequality gives
\begin{equation}
\label{eq:Lemmas_InterchangeFinite1}
\E [|\dx^{n} \bar{T}_{\e} \nub^{*}_{s}(x)|^{4}]
  \leq \E \int_{\R} |P_{n}(\e^{-1}(x - y))|^{4}p_{\e}(x - y)^{4} \nub^{*}_{s}(dy).
\end{equation}
For any value of $x$, the integrand above is bounded (recall that \e\ is fixed). Hence it suffices to bound the right-hand side 
of (\ref{eq:Lemmas_InterchangeFinite1}) in terms of $x$ only for large values of $|x|$. Splitting the $y$-integral on the 
region $|y| < x/2$ and its complement gives the bound
\begin{multline*}
\E [|\dx^{n} \bar{T}_{\e} \nub^{*}_{s}(x)|^{4}] \\
  \leq c_{\e} \E \nub^{*}_{s}((x/2, +\infty) \cup (-\infty, -x/2))
  + c_{\e} \exp\{ - x^{2}/2\e \}
  = O(e^{-x}),
\end{multline*}
where $c_{\e}$ and the $O$ depend only on \e\ and where we have used the tail estimate from 
Lemma~\ref{Unique_Lem_WholeSpace}. This suffices to complete the proof. 
\end{proof}

\begin{lem}[An integration-by-parts calculation]
\label{Lemmas_Lem_EasyIBP}
Let $f, g \in C^{1}(\R)$ be bounded with bounded first derivatives. Assume also that these functions and their 
first derivatives vanish at $\pm \infty$. Then 
\[
\int_{\R} g(x) f(x) \dx f(x) dx 
  = - \frac{1}{2} \int_{\R} \dx g(x) f(x)^{2} dx.
\]
\end{lem}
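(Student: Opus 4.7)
The plan is to use the standard identity $f \, \partial_x f = \tfrac{1}{2} \partial_x (f^2)$ to rewrite the integrand, and then apply a single integration by parts. The boundary terms will be handled by the decay assumptions on $f$ at infinity.

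More precisely, first I would observe that since $f \in C^1(\R)$, the product rule gives
\[
f(x) \partial_x f(x) = \tfrac{1}{2} \partial_x (f(x)^2),
\]
so the left-hand side of the claimed identity equals $\tfrac{1}{2} \int_{\R} g(x) \partial_x(f(x)^2) \, dx$. Since $g$ and $f$ are bounded $C^1$ functions with bounded derivatives, $g \cdot f^2 \in C^1(\R)$ and its derivative $\partial_x g \cdot f^2 + 2 g f \partial_x f$ is integrable (each factor is bounded and we only need integrability on the support; in fact, the integrability will follow from the decay at infinity together with dominated convergence on a truncation). Thus I can integrate by parts on a bounded interval $[-R, R]$ to obtain
\[
\tfrac{1}{2} \int_{-R}^{R} g(x) \partial_x(f(x)^2) \, dx
= \tfrac{1}{2} \bigl[ g(x) f(x)^2 \bigr]_{-R}^{R} - \tfrac{1}{2} \int_{-R}^{R} \partial_x g(x) \, f(x)^2 \, dx.
\]

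The final step is to send $R \to \infty$. Since $g$ is bounded and $f(x) \to 0$ as $|x| \to \infty$ by hypothesis, the boundary term $\tfrac{1}{2}[g(x) f(x)^2]_{-R}^{R}$ vanishes in the limit. For the two integrals, boundedness of $g$, $\partial_x g$, $f$ together with the decay of $f$ at infinity ensures that the integrands are controlled uniformly by integrable dominating functions (or, more simply, one checks integrability directly using that $f$ decays), so dominated convergence extends both integrals to all of $\R$ and the identity follows.

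There is no real obstacle here; the only subtlety is making sure the boundary terms actually vanish, which is exactly why the hypothesis of vanishing at $\pm\infty$ is imposed. The analogous condition on the derivatives is not strictly needed for this computation, but is included in the hypotheses of the lemma presumably for applications in subsequent proofs.
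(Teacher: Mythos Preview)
Your proof is correct and takes the same approach as the paper, which simply writes ``Integration by parts.'' You have supplied the details the paper omits, including the observation $f\,\partial_x f = \tfrac12\partial_x(f^2)$ and the handling of boundary terms via the decay hypotheses.
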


\begin{proof}
Integration by parts. 
\end{proof}

\begin{lem}
\label{Lemmas_Lem_B--D--G}
There exists a constant $c > 0$ such that 
\begin{multline*}
\E \sup_{u \in [0,t]} \Big| 2 \int_{0}^{u} \int_{\R} \rho_{s}\bar{T}_{\e} \nub_{s}^{*} (\sigma_{s} \dx \bar{T}_{\e} 
\nub_{s}^{*} + \dx \sigma_{s} \bar{\mathcal{H}}_{s, \e} + \bar{\mathcal{E}}_{s, \e}^{\sigma} ) dx dW_{s} \Big|    \\
  \leq \frac{1}{2} \E \sup_{s \in [0,t]} \big\Vert \bar{T}_{\e} \nub_{s}^{*} \big\Vert_{2}^{2}
  + c \E \int^{t}_{0} \big\Vert \bar{T}_{\e} \nub_{s}^{*} \big\Vert_{2}^{2} ds
  + c \E \int^{t}_{0} \big\Vert \bar{\mathcal{E}}_{s, \e}^{\sigma} \big\Vert_{2}^{2} ds
\end{multline*}
for all $t \in [0,T]$.
\end{lem}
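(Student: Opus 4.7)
The plan is to apply the Burkholder--Davis--Gundy inequality to the martingale
\[
M_u := 2\int_0^u \int_\R \rho_s \bar{T}_\e \nub^*_s \bigl(\sigma_s \dx \bar{T}_\e \nub^*_s + \dx\sigma_s \bar{\mathcal{H}}_{s,\e} + \bar{\mathcal{E}}^\sigma_{s,\e}\bigr) dx\, dW_s,
\]
obtaining $\E\sup_{u\in[0,t]}|M_u| \leq c_0 \E[M]_t^{1/2}$. This reduces matters to bounding the inner spatial integral pointwise in $s$ \emph{without} involving $\Vert\dx \bar{T}_\e \nub^*_s\Vert_2$, since we have no uniform-in-$\e$ control on that quantity.

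I would split the spatial integral into three pieces. For the first (dangerous) piece $\int_\R \rho_s \sigma_s \bar{T}_\e \nub^*_s \dx \bar{T}_\e \nub^*_s\, dx$, the essential observation is that $\rho_s$ is $x$-independent, so it factors out and Lemma~\ref{Lemmas_Lem_EasyIBP} applies with $f = \bar{T}_\e \nub^*_s$ and $g = \sigma_s$, rewriting the integral as $-\tfrac{1}{2}\rho_s \int_\R \dx\sigma_s (\bar{T}_\e \nub^*_s)^2 dx$; the boundary terms vanish thanks to the Gaussian decay of $\bar{T}_\e \nub^*_s$ inherited from the tail bound in Lemma~\ref{Unique_Lem_WholeSpace}, and Assumption~\ref{Not_Def_Coefficients} then bounds this by $c_1\Vert\bar{T}_\e \nub^*_s\Vert_2^2$. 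For the $\bar{\mathcal{H}}$-piece, Cauchy--Schwarz combined with the pointwise bound $|\bar{\mathcal{H}}_{s,\e}| \leq c\,\bar{T}_{2\e}\nub^*_s$ from Lemma~\ref{Lemmas_Lem_Switch2ndOrder}, together with the semigroup identity $\bar{T}_{2\e} = \bar{T}_\e \circ \bar{T}_\e$ and the $L^2$-contraction in Proposition~\ref{KSM_Prop_Contract}, again yields a bound of order $c_1\Vert\bar{T}_\e \nub^*_s\Vert_2^2$. For the $\bar{\mathcal{E}}^\sigma$-piece, Cauchy--Schwarz directly gives $c_1\Vert\bar{T}_\e \nub^*_s\Vert_2 \,\Vert\bar{\mathcal{E}}^\sigma_{s,\e}\Vert_2$. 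Collecting these via $(a+b)^2\leq 2a^2+2b^2$ yields
\[
[M]_t \leq c_2 \int_0^t \Vert\bar{T}_\e \nub^*_s\Vert_2^4 ds + c_2 \int_0^t \Vert\bar{T}_\e \nub^*_s\Vert_2^2 \Vert\bar{\mathcal{E}}^\sigma_{s,\e}\Vert_2^2 ds.
\]

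The final step is to apply $\sqrt{a+b}\leq \sqrt{a}+\sqrt{b}$ inside $\E[M]_t^{1/2}$, factor $(\sup_{s\in[0,t]}\Vert\bar{T}_\e \nub^*_s\Vert_2^2)^{1/2}$ out of each resulting term, and invoke Young's inequality $ab\leq \tfrac{\eta}{2}a^2+\tfrac{1}{2\eta}b^2$ with $\eta$ tuned so that each of the two contributions carries coefficient $1/4$ in front of $\E\sup_s\Vert\bar{T}_\e \nub^*_s\Vert_2^2$; their sum is then exactly $\tfrac{1}{2}$, and the leftover lower-order terms assemble into the desired $c\E\int \Vert\bar{T}_\e \nub^*_s\Vert_2^2 ds$ and $c\E\int \Vert\bar{\mathcal{E}}^\sigma_{s,\e}\Vert_2^2 ds$. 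The main obstacle is the appearance of $\dx\bar{T}_\e \nub^*_s$ in the integrand, which cannot be controlled uniformly in $\e$; the integration-by-parts manoeuvre from Lemma~\ref{Lemmas_Lem_EasyIBP} is what circumvents this, and it works only because $\rho$ depends on $x$ solely through $L_s$, a structural feature the model was set up to exploit.
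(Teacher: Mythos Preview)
Your proposal is correct and follows essentially the same route as the paper: apply BDG, eliminate the $\dx\bar{T}_\e\nub^*_s$ term via the integration-by-parts Lemma~\ref{Lemmas_Lem_EasyIBP}, then factor out $\sup_s\Vert\bar{T}_\e\nub^*_s\Vert_2$ and close with Young's inequality. The only organisational difference is that the paper applies H\"older once to the full spatial integrand after the integration by parts, whereas you split into three pieces and treat each separately; your explicit handling of the $\bar{\mathcal{H}}$-term via $\bar{T}_{2\e}=\bar{T}_\e\circ\bar{T}_\e$ and Proposition~\ref{KSM_Prop_Contract} makes visible a step the paper leaves implicit under ``boundedness of the coefficients''.
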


\begin{proof}
By a similar analysis to (\ref{eq:Lemmas_InterchangeFinite1}) we know that, for every fixed \e, the integrand above is a 
rapidly decaying function of $x$, hence the stochastic integral is a martingale, so the Burkholder--Davis--Gundy
inequality~\cite[Thm.~IV.42.1]{rogerswilliams2} gives a universal constant, $c_{1}>0$, for which the left-hand side above is 
bounded by
\[
2c_{1} \E \Big[ \Big( \int^{t}_{0} \Big( \int_{\R} \rho_{s} \bar{T}_{\e} \nub_{s}^{*} (\sigma_{s} \dx \bar{T}_{\e} \nub_{s}^{*} 
+ \dx \sigma_{s} \bar{\mathcal{H}}_{s, \e} + \bar{\mathcal{E}}^{\sigma}_{s, \e}  ) dx \Big)^{2} ds \Big)^{1/2} \Big].
\]
By Lemma~\ref{Lemmas_Lem_EasyIBP}, this is equal to a constant multiple of 
\[
\E \Big[ \Big( \int^{t}_{0} \Big( \int_{\R} \rho_{s} \bar{T}_{\e} \nub_{s}^{*} (- \dx \sigma_{s} \bar{T}_{\e} \nub_{s}^{*} 
+ \dx \sigma_{s} \bar{\mathcal{H}}_{s,\e} + \bar{\mathcal{E}}^{\sigma}_{s, \e}) dx \Big)^{2} ds \Big)^{1/2} \Big],
\]
which, by H\"older's inequality, is bounded by a constant multiple of 
\begin{multline*}
\E \Big[ \Big( \int^{t}_{0}  \big\Vert \bar{T}_{\e} \nub_{s}^{*} \big\Vert_{2}^{2} \big\Vert - \dx \sigma_{s}  
\bar{T}_{\e} \nub_{s}^{*} + \dx \sigma_{s} \bar{\mathcal{H}}^{\sigma}_{s, \e} + \bar{\mathcal{E}}^{\sigma}_{s, \e}
 \big\Vert_{2}^{2} ds \Big)^{1/2} \Big]  \\
  \leq \E \Big[ \sup_{s \in [0,t]} \big\Vert \bar{T}_{\e} \nub_{s}^{*} \big\Vert_{2} \Big( \int^{t}_{0} \big\Vert - \dx \sigma_{s}  
  \bar{T}_{\e} \nub_{s}^{*} + \dx \sigma_{s} \bar{\mathcal{H}}^{\sigma}_{s, \e} + \bar{\mathcal{E}}^{\sigma}_{s, \e} 
  \big\Vert_{2}^{2} ds \Big)^{1/2} \Big].
\end{multline*}
The result then follows by applying Young's inequality with parameter $1/2$ and using the boundedness of the coefficients.
\end{proof}

\begin{lem}[Switching derivatives]
\label{Lemmas_Lem_SwitchDeriv}
For all $x,y \in \R$ and $\e > 0$ we have
\begin{enumerate}[(i)]
\item $\partial_{y} G_{\e}(x, y) = - \dx G_{\e}(x, y) - 2 \dx p_{\e}(x + y)$,
\item $\partial_{yy} G_{\e}(x, y) = \dxx G_{\e}(x, y)$.
\end{enumerate}
\end{lem}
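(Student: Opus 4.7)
My plan is to reduce the lemma to the elementary observation that, because $p_\e$ is a function of a single scalar variable, the partials in $x$ and in $y$ of $p_\e(x\pm y)$ differ only by overall signs determined by the $\pm$. First I would fix notation by writing $p_\e'$ and $p_\e''$ for the derivatives of the scalar function $z\mapsto p_\e(z)$, and then apply the chain rule termwise to the definition $G_\e(x,y) = p_\e(x-y) - p_\e(x+y)$. This produces four expressions, each a linear combination of $p_\e'(x-y)$ and $p_\e'(x+y)$ with explicit signs: $\dx G_\e = p_\e'(x-y) - p_\e'(x+y)$ and $\partial_y G_\e = -p_\e'(x-y) - p_\e'(x+y)$, together with $\dx p_\e(x+y) = p_\e'(x+y)$.

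For part (i) I would then simply add $\dx G_\e(x,y)$ and $\partial_y G_\e(x,y)$ and observe that the $p_\e'(x-y)$ contributions cancel identically, leaving $-2p_\e'(x+y) = -2\dx p_\e(x+y)$; rearranging gives the claimed identity. For part (ii), applying the chain rule a second time, the two minus signs in $\partial_y$ acting twice on $p_\e(x-y)$ cancel, so $\partial_{yy} p_\e(x-y) = p_\e''(x-y) = \dxx p_\e(x-y)$, and likewise $\partial_{yy} p_\e(x+y) = \dxx p_\e(x+y)$; subtracting gives $\partial_{yy} G_\e = \dxx G_\e$.

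There is no real obstacle here; the lemma is a chain-rule exercise. The only subtle point worth flagging is that the extra term $-2\dx p_\e(x+y)$ in (i) is exactly the boundary contribution generated by the antisymmetric construction of the absorbing heat kernel, and it is this term that gives rise to the two ``extra'' summands $-2\dx\nu_t(\mu_t p_\e(x+\cdot))\,dt$ and $-2\rho_t\dx\nu_t(\sigma_t p_\e(x+\cdot))\,dW_t$ appearing after the application of the lemma at the start of Section~\ref{Sect_Uniqueness}; these are precisely the terms subsequently shown to be $\sq$ by Lemma~\ref{Unique_Lem_Boundary}. Part (ii), by contrast, requires no such correction because the parity considerations align.
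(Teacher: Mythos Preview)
Your proposal is correct and is precisely the easy chain-rule calculation the paper alludes to with its one-line proof ``An easy calculation.'' Your added remark connecting the correction term in (i) to the boundary contributions handled by Lemma~\ref{Unique_Lem_Boundary} is accurate but is commentary rather than part of the proof itself.
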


\begin{proof}
An easy calculation. 
\end{proof}

\begin{lem}
\label{Lemmas_Lem_AntiTail}
For all $x > 0$, $t \in [0,T]$ and $\e > 0$
\[
|\anti T_{\e} \Delta_{t}(x)|
	\leq \nu_{t}(x/2, +\infty) + \tilde{\nu}_{t}(x/2, +\infty)
	+ e^{-x^{2} / 8\e}.
\]
\end{lem}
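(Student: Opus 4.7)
The plan is to rewrite $\anti T_\e \Delta_t(x)$ via Fubini as the integral of an explicit kernel against $\nu_t - \tilde\nu_t$, then split the domain of integration at $y=x/2$ and bound each piece separately. Since $G_\e(z,y) = p_\e(z-y) - p_\e(z+y) \geq 0$ for $y, z > 0$ (the Gaussian is decreasing in $|\cdot|$ and $|z-y| \leq z+y$), both $T_\e \nu_t$ and $T_\e \tilde\nu_t$ are nonnegative functions. Fubini then gives
\[
\anti T_\e \nu_t(x)
= -\int_{x}^{\infty} \int_{0}^{\infty} G_\e(z,y)\, \nu_t(dy)\, dz
= -\int_{0}^{\infty} H_\e(x,y)\, \nu_t(dy),
\]
where $H_\e(x,y) := \int_{x}^{\infty} G_\e(z,y)\, dz$, and the exchange is justified by nonnegativity. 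Applying the same identity to $\tilde\nu_t$ and using the total-variation bound $|\nu_t - \tilde\nu_t| \leq \nu_t + \tilde\nu_t$ yields
\[
|\anti T_\e \Delta_t(x)|
\leq \int_{0}^{\infty} H_\e(x,y)\, (\nu_t + \tilde\nu_t)(dy).
\]

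Next I would estimate $H_\e$ in two regimes. Dropping the negative part, $H_\e(x,y) \leq \int_x^\infty p_\e(z-y)\, dz = \P(N(0,\e) > x - y)$, which is certainly bounded by $1$. When $y \in [0, x/2)$ one has $x - y \geq x/2 > 0$, so the standard Gaussian tail estimate $\P(N(0,1) > u) \leq \tfrac{1}{2}e^{-u^2/2}$ (for $u \geq 0$) applied with $u = (x-y)/\sqrt{\e} \geq x/(2\sqrt{\e})$ gives $H_\e(x,y) \leq \tfrac{1}{2} e^{-x^2/8\e}$.

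Finally, splitting the $y$-integral at $x/2$: the contribution from $[x/2,\infty)$ is at most $\nu_t(x/2,\infty) + \tilde\nu_t(x/2,\infty)$, using $H_\e \leq 1$; and the contribution from $[0,x/2)$ is at most $\tfrac{1}{2} e^{-x^2/8\e} \cdot (\nu_t + \tilde\nu_t)([0,x/2)) \leq e^{-x^2/8\e}$, since both $\nu_t$ and $\tilde\nu_t$ are sub-probability measures so that $(\nu_t+\tilde\nu_t)([0,x/2)) \leq 2$. Adding the two pieces produces exactly the stated bound. No real obstacle is anticipated: the result is a direct consequence of the explicit form of the absorbing kernel together with the sub-probability property of the solutions, and the only care needed is to track the constants tightly enough for the exponential term to carry no prefactor.
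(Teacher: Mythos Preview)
Your argument is correct and follows essentially the same route as the paper: split the $y$-integration at $x/2$, use the trivial bound $H_\e\le 1$ on $(x/2,\infty)$, and a Gaussian tail estimate on $[0,x/2]$. The only cosmetic difference is that the paper bounds $|\anti T_\e \nu_t|$ and $|\anti T_\e \tilde\nu_t|$ separately and then applies the triangle inequality, whereas you work directly with $\Delta_t$ via $|\Delta_t|\le \nu_t+\tilde\nu_t$ and use the slightly sharper bound $\P(N(0,1)>u)\le \tfrac12 e^{-u^2/2}$, which lets you recover the constant in the exponential term exactly as stated.
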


\begin{proof}
Split the integral 
\[
\anti T_{\e} \nu_{t}(x)
	 = -\int^{\infty}_{x} \int^{\infty}_{0} G_{\e}(y, z) \nu_{t} (dz) dy
\]
at $z < x/2$ and its complement to obtain
\begin{align*}
|\anti T_{\e} \nu_{t}(x)|
	&\leq \frac{1}{\sqrt{2\pi\e}} \int_{x}^{\infty} e^{-(y - x/2)^{2} / 2\e} dy
	+ \nu_{t}(x/2, +\infty) \\
	&\leq e^{-x^{2} / 8\e} + \nu_{t}(x/2, +\infty).
\end{align*}
The triangle inequality completes the result.
\end{proof}

\begin{lem}
\label{Lemmas_Lem_LossInH1}
Let $\nu$, $\tilde{\nu}$, $L$, $\tilde{L}$ and $\Delta$ be as in Section~\ref{Sect_Uniqueness}. 
For every $\alpha > 0$ there exists a constant $c_{\alpha} > 0$ such that
\[
\E \int^{t}_{0} |L_{s} - \tilde{L}_{s}|^{2} ds
	\leq c(\delta^{-1} + \lambda)\E \int^{t}_{0} \Vert \Delta_{s} \Vert_{-1}^{2} ds 
	+ c \delta 
	+ c_{\alpha} e^{-\alpha \lambda}.
\]
for all $t \in [0,1]$, $0<\delta<1$ and $\lambda \geq 1$, where $c>0$ is a constant that does not depend on $\alpha$. 
\end{lem}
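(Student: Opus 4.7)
The identity $L_s - \tilde L_s = -\Delta_s(\mathbf{1}_{(0,\infty)})$ reduces the lemma to bounding $\E\int_0^t |\Delta_s(\mathbf{1}_{(0,\infty)})|^2 ds$. The obstruction is that $\mathbf{1}_{(0,\infty)} \notin H^1_0(0,\infty)$, so we cannot directly apply $|\zeta(\phi)| \leq \Vert\zeta\Vert_{-1}\Vert\phi\Vert_{H^1}$ from the definition of $\Vert\cdot\Vert_{-1}$. The plan is to approximate $\mathbf{1}_{(0,\infty)}$ by an admissible test function $\phi_{\delta,\lambda} \in H^1_0(0,\infty)$ and control the approximation error separately via the regularity assumptions on $\nu$ and $\tilde\nu$.

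Concretely, for $0<\delta<1$ and $\lambda\geq 1$, I would take $\phi_{\delta,\lambda}$ to be the continuous, piecewise-linear bump that equals $x/\delta$ on $[0,\delta]$, equals $1$ on $[\delta,\lambda]$, equals $\lambda+1-x$ on $[\lambda,\lambda+1]$, and vanishes elsewhere. A direct calculation gives $\Vert\phi_{\delta,\lambda}\Vert_{L^2(0,\infty)}^2 \leq \lambda+1$ and $\Vert\partial_x\phi_{\delta,\lambda}\Vert_{L^2(0,\infty)}^2 = \delta^{-1}+1$, so
\[
\Vert\phi_{\delta,\lambda}\Vert_{H^1(0,\infty)}^2 \leq c_1(\delta^{-1}+\lambda).
\]
Meanwhile $|\mathbf{1}_{(0,\infty)} - \phi_{\delta,\lambda}| \leq \mathbf{1}_{(0,\delta)} + \mathbf{1}_{(\lambda,\infty)}$ pointwise.

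Writing $\Delta_s(\mathbf{1}_{(0,\infty)}) = \Delta_s(\phi_{\delta,\lambda}) + \Delta_s(\mathbf{1}_{(0,\infty)}-\phi_{\delta,\lambda})$ and squaring gives
\[
|L_s-\tilde L_s|^2 \leq 2|\Delta_s(\phi_{\delta,\lambda})|^2 + 2|\Delta_s(\mathbf{1}_{(0,\infty)}-\phi_{\delta,\lambda})|^2.
\]
The first term is controlled by $2\Vert\Delta_s\Vert_{-1}^2\Vert\phi_{\delta,\lambda}\Vert_{H^1}^2 \leq 2c_1(\delta^{-1}+\lambda)\Vert\Delta_s\Vert_{-1}^2$. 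For the second term, the pointwise bound on $|\mathbf{1}_{(0,\infty)}-\phi_{\delta,\lambda}|$ together with the triangle inequality yields
\[
|\Delta_s(\mathbf{1}_{(0,\infty)}-\phi_{\delta,\lambda})| \leq \nu_s(0,\delta) + \tilde\nu_s(0,\delta) + \nu_s(\lambda,\infty) + \tilde\nu_s(\lambda,\infty),
\]
and since each summand lies in $[0,1]$, squaring and using $a^2\leq a$ for $a\in[0,1]$ gives an upper bound which is linear in these four masses.

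Taking $\E\int_0^t\cdot\,ds$ of this inequality, I apply condition~(\ref{Not_Def_RegCond4}) of Assumption~\ref{Not_Def_RegCond} (which both $\nu$ and $\tilde\nu$ satisfy by hypothesis and Proposition~\ref{Tight_Prop_RegCond}) to obtain the bound $O(\delta^{1+\beta}) = O(\delta)$ for the boundary contributions, and condition~(\ref{Not_Def_RegCond3}) to obtain the bound $c_\alpha e^{-\alpha\lambda}$ for the tail contributions, for every $\alpha>0$. Combining gives
\[
\E\int_0^t |L_s-\tilde L_s|^2 ds \leq c(\delta^{-1}+\lambda)\E\int_0^t\Vert\Delta_s\Vert_{-1}^2 ds + c\delta + c_\alpha e^{-\alpha\lambda},
\]
which is exactly the stated inequality. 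The main (mild) obstacle is only the choice of cutoff $\phi_{\delta,\lambda}$: it must be in $H^1_0(0,\infty)$, so requires $\phi(0)=0$, and its $H^1$ norm must grow in $\delta$ and $\lambda$ at the precise rate that balances against the decay rates in the regularity assumptions.
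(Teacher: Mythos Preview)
Your proof is correct and follows essentially the same approach as the paper: both approximate $\mathbf{1}_{(0,\infty)}$ by a cutoff $\phi_{\delta,\lambda}\in H^1_0(0,\infty)$ with $\Vert\phi_{\delta,\lambda}\Vert_{H^1}^2\leq c(\delta^{-1}+\lambda)$, bound the main term via the $H^{-1}$ duality, and control the remainder using the boundary and tail estimates~(\ref{Not_Def_RegCond3}),~(\ref{Not_Def_RegCond4}) together with $a^2\le a$ for $a\in[0,1]$. The only difference is that you explicitly take the piecewise-linear tent, whereas the paper leaves the cutoff abstract with the same properties.
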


\begin{proof}
For $0 <\delta < 1$ and $\lambda \geq 1$, let $\phi_{\delta, \lambda} \in H^{1}_{0}(0, \infty)$ be any cut-off function satisfying
\[
\phi_{\delta, \lambda}(x)\begin{cases}
=0, & \textrm{if }x=0\\
\in(0,1), & \textrm{if }0<x<\delta\\
=1, & \textrm{if }\delta\leq x\leq\lambda\\
\in(0,1), & \textrm{if }\lambda<x<\lambda + 1\\
=0, & \textrm{if }x\geq  \lambda + 1,
\end{cases}
\]
$\left\Vert \dx \phi_{\delta, \lambda} \right\Vert_{L^{\infty}(0,\delta)} \leq c_{1} \delta^{-1}$ and $\left\Vert \dx 
\phi_{\delta, \lambda} \right\Vert_{L^{\infty}(\lambda,\lambda+1)} \leq c_{1}$, for some constant $c_{1}>0$. Then 
\[
\left\Vert \phi_{\delta, \lambda} \right\Vert_{H^{1}_{0}}^{2}
  \leq \int_{0}^{\lambda + 1} dx
  + \int_{0}^{\delta} c_{1}^{2} \delta^{-2} dx
  + \int_{\lambda}^{\lambda + 1} c_{1}^{2} dx
  = c_{2} (\delta^{-1} + \lambda),
\]
for $c_{2}> 0$ a constant. Therefore
\begin{align*}
|L_{t}  - \tilde{L}_{t}|
	&= |\nu_{t}(0,\infty) - \tilde{\nu}_{t}(0,\infty)|  \\
	&\leq |\nu_{t}(\phi_{\delta, \lambda}) - \tilde{\nu}_{t}(\phi_{\delta, \lambda})|
	+ |\nu_{t}(0, \delta)| + |\tilde{\nu}_{t}(0, \delta)| \\
	& \qquad + |\nu_{t}(\lambda, +\infty)| + |\tilde{\nu}_{t}(\lambda, +\infty)|  \\
	&\leq c_{2}^{1/2}(\delta^{-1} + \lambda)^{1/2}\Vert \nu_{t} - \tilde{\nu}_{t} \Vert_{-1}
	+ |\nu_{t}(0, \delta)| + |\tilde{\nu}_{t}(0, \delta)| \\
	& \qquad + |\nu_{t}(\lambda, +\infty)| + |\tilde{\nu}_{t}(\lambda, +\infty)|
\end{align*}
and so the result follows from conditions~(\ref{Not_Def_RegCond3}) and~(\ref{Not_Def_RegCond4}) of 
Assumption~\ref{Not_Def_RegCond} (and that $|\nu_{t}(S)|^{2} \leq |\nu_{t}(S)|$ for all $S \subseteq \R$). 
\end{proof}

The following result will be used in Section~\ref{Sect_MV}.

\begin{lem}[Interchanging stochastic integration and conditional expectation]
 \label{Lemmas_Lem_Interchange}
Suppose we are working on a probability space with filtration $\{\mathcal{F}_{t}\}$ and $W$ is a standard Brownian motion 
with natural filtration $\{\mathcal{F}_{t}^{W}\}$. Let $H$ be a real-valued $\{\mathcal{F}_{t}\}$-adapted process with 
\[
\mathbf{E}\int_{0}^{T}H_{s}^{2}ds<\infty.
\]
Then, with probability 1, 
\[
\mathbf{E}\left[\left.\int_{0}^{t}H_{s}dW_{s}\right|\mathcal{F}_{t}^{W}\right]=\int_{0}^{t}
\mathbf{E}\left[\left.H_{s}\right|\mathcal{F}_{s}^{W}\right]dW_{s}
\]
and 
\[
\mathbf{E}\left[\left.\int_{0}^{t}H_{s}dW_{s}^{1}\right|\mathcal{F}_{t}^{W}\right]=0
\]
for every $t\in\left[0,T\right].$
\end{lem}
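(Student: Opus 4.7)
The approach is the classical one: verify both identities for elementary integrands, then extend by $L^{2}$-continuity. Recall that elementary processes are finite sums $H^{n}_{s} = \sum_{i=0}^{n-1}\xi_{i} \mathbf{1}_{(t_{i},t_{i+1}]}(s)$ with $0 = t_{0} < \cdots < t_{n} = T$ and each $\xi_{i}$ bounded and $\mathcal{F}_{t_{i}}$-measurable; these are dense in the space of $\{\mathcal{F}_{t}\}$-progressively measurable $L^{2}([0,T]\times\Omega)$ integrands. So first I would reduce to this case.

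For elementary $H^{n}$ and the first identity, the stochastic integral reads $\sum_{i}\xi_{i}(W_{t_{i+1}\wedge t} - W_{t_{i}\wedge t})$. Since the increments are $\mathcal{F}_{t}^{W}$-measurable, they factor out of the conditional expectation, and the proof reduces to showing $\mathbf{E}[\xi_{i} \mid \mathcal{F}_{t}^{W}] = \mathbf{E}[\xi_{i} \mid \mathcal{F}_{t_{i}}^{W}]$. This follows from the decomposition $\mathcal{F}_{t}^{W} = \mathcal{F}_{t_{i}}^{W} \vee \sigma(W_{r} - W_{t_{i}} : t_{i} \leq r \leq t)$ together with the fact that the second factor is independent of $\mathcal{F}_{t_{i}}$ (because $W$ is a Brownian motion in the ambient filtration $\{\mathcal{F}_{t}\}$), so a standard conditioning lemma collapses the two expressions. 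For the second identity, the increments $W^{1}_{t_{i+1}} - W^{1}_{t_{i}}$ have mean zero and are independent of $\mathcal{F}_{t}^{W} \vee \mathcal{F}_{t_{i}}$ (using independence of $W^{1}$ and $W$ together with the Brownian-motion property of $W^{1}$ in $\{\mathcal{F}_{t}\}$), so conditioning on this larger sigma-algebra by the tower property yields $\mathbf{E}\bigl[\xi_{i}(W^{1}_{t_{i+1}} - W^{1}_{t_{i}}) \bigm| \mathcal{F}_{t}^{W}\bigr] = 0$, hence the sum vanishes.

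To pass to the limit, pick $H^{n} \to H$ in $L^{2}([0,T] \times \Omega)$. It\^{o}'s isometry gives $L^{2}$-convergence of the stochastic integrals on both sides of each identity, and the contraction property of conditional expectation in $L^{2}$ — applied pointwise in $s$ for the first identity and combined with a second application of It\^{o}'s isometry — gives $\int_{0}^{t} \mathbf{E}[H^{n}_{s} \mid \mathcal{F}_{s}^{W}] dW_{s} \to \int_{0}^{t} \mathbf{E}[H_{s} \mid \mathcal{F}_{s}^{W}] dW_{s}$ in $L^{2}$. The main technical point (and the only nuisance worth spelling out) is that for the right-hand side of the first identity to be defined as a stochastic integral, the process $s \mapsto \mathbf{E}[H_{s} \mid \mathcal{F}_{s}^{W}]$ must have a progressively measurable version relative to $\{\mathcal{F}_{s}^{W}\}$; this is supplied by the optional projection of $H$ onto $\{\mathcal{F}_{s}^{W}\}$, which exists, is progressively measurable, and agrees $ds \otimes d\mathbf{P}$-a.e.\ with the pointwise conditional expectations. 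Fixing this version throughout and sending $n \to \infty$ yields both identities.
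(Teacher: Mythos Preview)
Your proof is correct and follows essentially the same route as the paper's: verify both identities for simple (basic/elementary) integrands using the independence structure of Brownian increments and the tower property, then pass to general $H$ by $L^{2}$-density and the It\^{o} isometry. You are in fact slightly more careful than the paper in one place: you flag the need for a progressively measurable version of $s\mapsto\mathbf{E}[H_{s}\mid\mathcal{F}_{s}^{W}]$ via the optional projection, a point the paper's ``usual density argument'' leaves implicit.
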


\begin{proof}
As we can multiply $H_{s}$ by $\mathbf{1}_{s<t}$, it suffices to
take $t=T$. First, suppose that $H$ is a basic process, that is
\[
H_{u}=Z\mathbf{1}_{s_{1}<u\leq s_{2}},
\]
where $s_{1}<s_{2}\leq T$ are real numbers and $Z$ is $\mathcal{F}_{s_{1}}$-measurable.
Then 
\begin{align*}
\mathbf{E}\left[\left.\int_{0}^{T}H_{s}dW_{s}\right|\mathcal{F}_{T}^{W}\right] & =\mathbf{E}\left[\left.Z\left(W_{s_{2}}-
W_{s_{1}}\right)\right|\mathcal{F}_{T}^{W}\right]\\
 & =\mathbf{E}\left[\left.Z\right|\mathcal{F}_{s_{1}}^{W}\right]\left(W_{s_{2}}-W_{s_{1}}\right)\\
 & =\int_{0}^{T}\mathbf{E}\left[\left.Z\right|\mathcal{F}_{s}^{W}\right]\mathbf{1}_{s_{1}<s\leq s_{2}}dW_{s}\\
 & =\int_{0}^{T}\mathbf{E}\left[\left.H_{s}\right|\mathcal{F}_{s}^{W}\right]dW_{s}
\end{align*}
and 
\begin{align*}
\mathbf{E}\left[\left.\int_{0}^{T}H_{s}dW_{s}^{1}\right|\mathcal{F}_{t}^{W}\right] & 
=\mathbf{E}\left[\left.Z\left(W_{s_{2}}^{1}-W_{s_{1}}^{1}\right)\right|\mathcal{F}_{t}^{W}\right]\\
 & =\mathbf{E}\left[\left.\mathbf{E}\left[\left.Z\left(W_{s_{2}}^{1}-W_{s_{1}}^{1}\right)\right|\sigma\left(\mathcal{F}_{T}^{W},
 \mathcal{F}_{s_{1}}\right)\right]\right|\mathcal{F}_{t}^{W}\right]\\
 & =\mathbf{E}\left[\left.Z\mathbf{E}\left[\left.\left(W_{s_{2}}^{1}-W_{s_{1}}^{1}\right)\right|\sigma\left(\mathcal{F}_{T}^{W},
 \mathcal{F}_{s_{1}}\right)\right]\right|\mathcal{F}_{t}^{W}\right]\\
 & =\mathbf{E}\left[\left.Z\mathbf{E}\left[W_{s_{2}}^{1}-W_{s_{1}}^{1}\right]\right|\mathcal{F}_{T}^{W}\right]=0,
\end{align*}
where we have used the fact that $W_{s_{2}}^{1}-W_{s_{1}}^{1}$ is
independent of $\sigma\!\left(\mathcal{F}_{T}^{W}\!,\mathcal{F}_{s_{1}}\!\right)$
since $W^{1}$ and $W$ are independent and $W^{1}$ has independent
increments.  So the result holds in this case and immediately extends
to linear combinations of basic processes. The usual density argument
then allows us to extend the result to all required $H$. 
\end{proof}

\section{Stochastic McKean--Vlasov problem; Proof of Theorem \ref{Intro_Thm_MV}}
\label{Sect_MV}

This section presents a short proof of Theorem~\ref{Intro_Thm_MV}. Take a strong solution $(\nu, W)$ to the limit SPDE 
(Remark~\ref{Intro_Rem_StrongSolutions}), an independent Brownian motion $W^{\perp}$ and define $X$ by
\[
\begin{cases}
X_{t} 
	= X_{0}
	+ \int_{0}^{t} \mu(s,X_{s}, L_{s}) ds
	+ \int_{0}^{t} \sigma(s,X_{s})\rho(s,L_{s}) dW_{s} \\
	\qquad \qquad + \int_{0}^{t} \sigma(s,X_{s})(1-\rho(s,L_{s})^{2})^{\frac{1}{2}} dW^{\perp}_{s} \\
\tau=\inf \{ t>0 : X_{t} \leq 0 \}.
\end{cases}
\]
(It is possible to find such an $X$ by standard diffusion theory, since $t \to L_{t} = 1 - \nu_{t}(0,\infty)$ 
is given and fixed.) Let $\tilde{\nu}$ be the conditional law of $X$ given $W$ killed at zero, that is 
\[
\tilde{\nu}_{t}(S) := \P(X_{t} \in S; t < \tau | W).
\]
We will have the existence statement of Theorem~\ref{Intro_Thm_MV} if we can prove $\nu = \tilde{\nu}$. 

Applying It\^o's formula to $\phi(X_{t})$ as in the proof of Proposition~\ref{Finite_Prop_Evolution} gives
\begin{multline*}
\phi(X_{t}) \1_{t < \tau}
  = \phi(X_{0}) 
  + \int_{0}^{t} (\mu_{s} \dx \phi)(X_{s}) \1_{s < \tau} ds
  + \frac{1}{2} \int_{0}^{t} (\sigma_{s}^{2} \dxx \phi) (X_{s}) 1_{s < \tau} ds   \\
   \qquad 
   + \int_{0}^{t} (\sigma_{s} \rho_{s} \dx \phi)(X_{s}) \1_{s < \tau} dW_{s} 
  + \int_{0}^{t} (\sigma_{s} (1-\rho_{s}^{2})^{\frac{1}{2}} \dx \phi) (X_{s}) \1_{s < \tau} dW^{\perp}_{s}.
\end{multline*}
Take a conditional expectation with respect to $W$ by applying Lemma~\ref{Lemmas_Lem_Interchange} (and using 
that $L$ is $\sigma(W)$-measurable) to get
\begin{align*}
\tilde{\nu}_{t}(\phi) 
	&= \nu_{0}(\phi)
	+ \int^{t}_{0} \tilde{\nu}_{s}( \mu(s, \cdot, L_{s}) \dx \phi ) ds
	+ \frac{1}{2} \int^{t}_{0} \tilde{\nu}_{s}( \sigma^{2}(s, \cdot, L_{s}) \dxx \phi ) ds \\  
	&\quad + \int_{0}^{t} \tilde{\nu}_{s}( \sigma(s, \cdot)\rho(s, L_{s}) \dx \phi ) dW_{s},
	\qquad \textrm{with \ \ } L_{t} = 1 - \nu_{t}(0,\infty).
\end{align*}
Now, $\nu$ also satisfies this equation, however in both cases the coefficients depend only on $L$. Therefore we 
can regard $L$ as fixed and $\nu$ and $\tilde{\nu}$ as solving the limit SPDE in the special case when coefficients 
do not depend on the loss-variable. This is a much easier linear problem and Theorem~\ref{Intro_Thm_Unique} is 
certainly sufficient to conclude $\nu = \tilde{\nu}$, as required.

We have also just shown that if $(X, W)$ solves the McKean--Vlasov problem in Theorem~\ref{Intro_Thm_MV}, then its 
conditional law $\nu = \tilde{\nu}$ solves the limit SPDE. By Theorem~\ref{Intro_Thm_Unique}, this fixes the law of $\nu$, 
hence we have the uniqueness statement too. \qed 

\section{Open problems}
\label{Sect_Open}

We end by giving some open problems arising from our model and its related extensions:

\begin{enumerate}[(i)]

\item As indicated at the end of Section~\ref{Sect_Intro}, the most important practical question is how do we numerically
approximate $\nu$ from a given realisation of $W$? This leads to the further questions of how do we combine these 
approximations to get an estimator for $\E \Psi(L)$, where $\Psi : D_{\R} \to \R$ is some pay-off function, and how do we 
calibrate the model to any data on traded prices for options with payoff $\Psi(L)$?

Our proposed algorithm for the first problem is as follows. Here, we discretise the time variable and treat the outputs of 
the following subroutines as functions on $[0,\infty)$ --- in practise we would also need a discretisation scheme for the 
spatial variable too, but we will not consider that problem here. Fix a precision level $\delta > 0$ and assume we are given 
a piecewise constant or piecewise linear approximation to a Brownian trajectory $t \mapsto w_{t}$ to precision at least 
$\delta$ (generating such a path contributes negligible computational cost in this algorithm) and an initial density 
$V^{(0)}$. Set $L^{(0)} = 0$. 
For $1 \leq n \leq T / \delta - 1$, form $V^{(n)}$ recursively by setting $V^{(n)} = u_{\delta}$ where $u$ solves the 
deterministic linear PDE
\begin{align}
\label{eq:Open_Algo1}
du_{t}(x) 
	&= -\mu(t, x, L^{(n-1)}) \partial_{x} u_{t}(x) dt
	+ \frac{1}{2}\sigma(t,x)\rho(t, L^{(n-1)})\partial_{xx} u_{t}(x)dt  \\
	&\qquad- \sigma(t,x)\sqrt{1-\rho(t, L^{(n-1)})^{2}}\partial_{x} u_{t}(x) dw_{t},
		\qquad \textrm{with } u_{t}(0) = 0, \nonumber
\end{align}
for $t \in [0,\delta]$ and $x > 0$. Set $L^{(n)} = 1 - \int^{\infty}_{0} V^{(n)}(x)dx$ (calculated using some quadrature 
routine). Our approximation to the density process, $V$, of $\nu$ and the loss process, $L$, are given by piecewise 
interpolation of $\{V^{(n)}\}_{n}$ and $\{L^{(n)} \}_{n}$:
\begin{align*}
\tilde{V}_{t} 
	&:= (1-\mathrm{frac}\{s\}) V^{([s])} + \mathrm{frac}\{s\} V^{([s] + 1)}
	\\
	\tilde{L}_{t} 
	&:= (1-\mathrm{frac}\{s\}) L^{([s])} + \mathrm{frac}\{s\} L^{([s] + 1)},
\end{align*}
where $s := t / \delta$, $[s]$ is the floor of $s$ and $\mathrm{frac}\{s\} = s - [s]$. 

In the case when $\sigma$ and $\mu$ are constant and $\rho$ depends only on the loss variable and $w$ is 
given as a piecewise constant interpolation of $W$ with precision $\delta$, the solution to (\ref{eq:Open_Algo1}) 
can be written explicitly in terms of the Brownian transition kernel. A numerical solution can then be found by 
quadrature. (This instance of the algorithm was used to produce Figure~\ref{Intro_Fig_LossDepHeat}.) If these 
assumption do not hold, then further approximations may be necessary. In \cite{giles2012} (\ref{eq:Open_Algo1}) 
is solved (for the constant coefficient case) by finite element methods and the scheme is proven to converge when 
the system is considered on the whole space. The authors conjecture and provide numerical evidence for a 
convergence rate for the scheme on the half-line with space-time discretisation. A first open problem is to verify 
that the piecewise-constant time-discretisation, $\tilde{V}$, above converges in law to the solution $\nu$ of limit 
SPDE as $\delta \to 0$. A harder problem is to establish the rate of convergence, in some appropriate norm, 
averaged over realisations of $W$. 

Returning to the task of calculating the pay-off $\E\Psi(L)$, we have the estimator
\[
\mathcal{E}_{m, \delta}
	:= \frac{1}{m} \sum_{i = 1}^{m} \Psi(\tilde{L}_{w^{i}, \delta})
\]
where $\{w^{i}\}_{1 \leq i \leq m}$ are independent standard Brownian motions and $\tilde{L}_{w, \delta}$ 
denotes the approximation to the loss function using the algorithm above with precision $\delta$ and Brownian 
trajectory $w$. As the Monte Carlo routine depends on $\delta$, a natural variance reduction technique is to 
use multi-level Monte Carlo as in \cite{giles2012}. Another potentially useful technique is to alter the drift 
coefficient in (\ref{eq:Intro_Coefficients}) using Girsanov's theorem to produce a reweighted estimator. In the 
case when the pay-off function, $\Psi$, is supported on large losses, and hence is sensitive only to rare events, 
changing the measure to one under which the particles have a large negative drift and multiplying by the appropriate 
Radon--Nikodym derivative is a form of importance sampling. A simpler observation in this scenario is that if the 
systemic Brownian motion has a realisation that has followed a largely increasing path on $[0,T]$, then although 
that realisation is likely to contribute little to $\mathcal{E}_{m, \delta}$, the negative of this realisation is likely to 
give a heavy contribution. Hence the simple antithetic sampling routine in which we draw $2m$ samples of the 
common Brownian motion in pairs $(w, -w)$ is a candidate for variance reduction. An open problem is to verify the 
usefulness of these techniques either numerically or analytically. 

\item Following on from the previous point, a natural extension to the model is to replace the systemic Brownian 
motion term in (\ref{eq:Intro_Coefficients}) with a L\'evy process. This would allow the possibility of generating 
extreme losses. Mathematically we expect to arrive at a non-linear SPDE driven by a L\'evy process on the 
half-line --- see, for example, \cite{kim2014}.

\item Another possibility for generating large systemic losses is to incorporate a contagion term in the particle 
dynamics along the lines of \cite{dirt_annalsAP_2015, dirt_SPA_2015}. For simplicity, consider the model where 
particles move according to the dynamics
\begin{align}
\label{eq:Open_FiniteContagion}
X^{i,N}_{t} &= X^{i}_{0} + W^{i}_{t} - \alpha L^{N}_{t}
	\\ \tau^{i} &= \inf\{ t > 0 : X^{i,N}_{t} \leq 0 \} \nonumber
	\\ L^{N}_{t} &= \sum_{i=1}^{N} \1_{ \tau^{i} \leq t}, \nonumber
\end{align}
with $\alpha > 0$. Whenever a particle hits the origin, every other particle jumps by size $\alpha / N$ towards the 
boundary. This can begin an avalanche effect where a default causes many other entities to default. Convergence 
of a finite particle system to a limiting McKean--Vlasov equation is shown in \cite{dirt_SPA_2015}, and it is known 
that for small values of $\alpha$ the solution is unique. For large values of $\alpha$ the limiting system undergoes 
a jump, whereby a macroscopic proportion of mass is lost in an infinitesimal period of time. It remains a challenge 
to prove uniqueness of solutions in this regime and to characterise a critical value of $\alpha$. From our perspective, 
a natural extension is to consider the system with a common Brownian noise term between particles. 

\end{enumerate}

\subsubsection*{Acknowledgement}
The authors thank the anonymous referees for their helpful corrections. We are grateful to Andreas Sojmark for his very thorough reading and suggestions for improvements. SL thanks Christoph Reisinger and Francois Delarue for discussions on this material.


  \bibliographystyle{plain}

 \end{document}